\def \mmod{/\mkern-3mu /}
\def \MRlevel3{MRlevel3}
\def \MMJDetectSq4{MMJDetectSq4}
\def \BrownGitler{BrownGitler}
\def \CohenZeta{CohenZeta}
\def \MahowaldInf{MahowaldInf}
\def \Mahowaldbor{boresolutions}
\def \BHHM{BHHM}
\newcommand{\mc}[1]{\mathcal{#1}}
\newcommand{\ul}[1]{\underline{#1}}
\newcommand{\mr}[1]{\mathrm{#1}}
\newcommand{\mbf}[1]{\mathbf{#1}}
\newcommand{\abs}[1]{\lvert #1 \rvert}
\newcommand{\bra}[1]{\langle #1 \rangle}
\newcommand{\br}[1]{\overline{#1}}
\newcommand{\td}[1]{\widetilde{#1}}
\newcommand{\ZZ}{\mathbb{Z}}
\newcommand{\QQ}{\mathbb{Q}}
\newcommand{\FF}{\mathbb{F}}
\newcommand{\GG}{\mathbb{G}}
\newcommand{\MS}{\mathbb{S}}
\def \Z3{\mathbb{Z}_{(3)}}
\def \Z{\mathbb{Z}}
\def \F{\mathbb{F}}
\def \Q{\mathbb{Q}}
\def \N{\mathbb{N}}
\def \W{\mathbb{W}}
\def \Einf {\mathbf{E}_{\infty}} % for an E_\infty ring spectrum
\def \ba{\bar a}
\def \bc{\bar c}
\newcommand{\AF}{\mr{AF}}
\newcommand{\TMF}{\mathrm{TMF}}
\newcommand{\Tmf}{\mathrm{Tmf}}
\newcommand{\tmf}{\mathrm{tmf}}
\newcommand{\bo}{\mathrm{bo}}
\newcommand{\bsp}{\mathrm{bsp}}
\newcommand{\HZ}{\mr{H}\Z}
\def \HF2{\mr{H}\FF_2}
\newcommand{\bu}{\mr{bu}}
\newcommand{\MU}{\mr{MU}}
\newcommand{\KU}{\mr{KU}}
\newcommand{\al}{\alpha}
\newcommand{\KO}{\mr{KO}}
\newcommand{\BP}{\mr{BP}}
\newcommand{\K}{\mr{K}}
\newcommand{\E}{\mr{E}}
\newcommand{\bou}{\ul{\bo}}
\newcommand{\tmfu}{\ul{\tmf}}
\def \M{\mathcal{M}}
\def \Mss{\mathcal{M}^{ss}}
\def \Mfg{\mathcal{M}_{fg}}
\def \Htwo{\hat{\mathcal{H}}(2)}
\def \G {\mathbb{G}}
\newcommand{\Sh}[1]{\mathcal{#1}}
\newcommand{\Mcl}[1]{\mathcal{M}_0(#1)}
\newcommand{\Msl}[1]{\mathcal{M}_1(#1)}
\theoremstyle{definition}
 \newtheorem{thm}[equation]{Theorem}
 \newtheorem{cor}[equation]{Corollary}
 \newtheorem{lem}[equation]{Lemma}
 \newtheorem{prop}[equation]{Proposition}
 \newtheorem{defn}[equation]{Definition}
 \newtheorem{ex}[equation]{Example}
 \newtheorem{rmk}[equation]{Remark}
\newtheorem*{thm*}{Theorem}
\newtheorem*{cor*}{Corollary}
\newtheorem*{lem*}{Lemma}
\newtheorem*{prop*}{Proposition}
\newtheorem*{defn*}{Definition}
\newtheorem*{ex*}{Example}
\newtheorem*{exs*}{Examples}
\newtheorem*{rmk*}{Remark}
\newtheorem*{claim*}{Claim}
\numberwithin{equation}{section}
\numberwithin{figure}{section}
\DeclareMathOperator{\Ext}{Ext}
\DeclareMathOperator{\Hom}{Hom}
\DeclareMathOperator{\aut}{Aut}
\DeclareMathOperator{\Map}{Map}
\DeclareMathOperator*{\holim}{holim}
\DeclareMathOperator{\Spf}{Spf}
\DeclareMathOperator{\Aut}{Aut}
\DeclareMathOperator{\Spec}{Spec}
\DeclareMathOperator{\Proj}{Proj}
\newcommand{\s}{\wedge}
\newcommand{\Bigvee}[1]{\underset{#1}{\bigvee}}
\newcommand{\lra}[1]{\overset{#1}{\longrightarrow}}
\DeclareMathOperator{\sq}{Sq}
\newcommand{\si}[1]{\Sigma^{#1}}
\newcommand{\xib}{{\bar{\xi}}}
\title[On the ring of cooperations for $\tmf$]{On the ring of cooperations for $2$-primary connective topological modular forms}
\author{M.~Behrens}\address{University of Notre Dame}\email{mbehren1@nd.edu}
\author{K.~Ormsby}\address{Reed College}\email{ormsbyk@reed.edu}
\author{N.~Stapleton}\address{University of Kentucky}\email{nat.j.stapleton@gmail.com}
\author{V.~Stojanoska}\address{University of Illinois Urbana-Champaign}\email{vesna@illinois.edu}
\dedicatory{This paper is dedicated to the memory of Mark Mahowald}
\thanks{The first author was partially supported by NSF CAREER grant DMS-1050466 and NSF grant DMS-1611786, the second author was partially supported by NSF Postdoctoral Fellowship DMS-1103873 and NSF grant DMS-1406327, the third author was partially supported by NSF grant DMS-0943787, and the fourth author was partially supported by NSF grant DMS-1307390/1606479.}
\begin{document}

\begin{abstract}
We analyze the ring $\tmf_*\tmf$ of cooperations for the connective spectrum of topological modular forms (at the prime $2$) through a variety of perspectives: (1) the $E_2$-term of the Adams spectral sequence for $\tmf \wedge \tmf$ admits a decomposition in terms of $\Ext$ groups for $\bo$-Brown-Gitler modules, (2) the image of $\tmf_*\tmf$ in $\TMF_*\TMF_\QQ$ admits a description in terms of $2$-variable modular forms, and (3) modulo $v_2$-torsion, $\tmf_*\tmf$ injects into a certain product of copies of $\pi_*\TMF_0(N)$, for various values of $N$.  We explain how these different perspectives are related, and leverage these relationships to give complete information on $\tmf_*\tmf$ in low degrees.  We reprove a result of Davis-Mahowald-Rezk, that a piece of $\tmf \wedge \tmf$ gives a connective cover of $\TMF_0(3)$, and show that another piece gives a connective cover of $\TMF_0(5)$.  To help motivate our methods, we also review the existing work on $\bo_*\bo$, the ring of cooperations for ($2$-primary) connective $K$-theory, and in the process give some new perspectives on this classical subject matter. 
\end{abstract}

\maketitle	

\tableofcontents

% !TEX root = tmfcoop.tex

\section{Introduction}\label{sec:intro}

The Adams-Novikov spectral sequence based on a connective spectrum $ E $ ($ E $-ANSS) is perhaps the best available tool for computing stable homotopy groups. For example, $ H\F_p $ and $\BP$ give the classical Adams spectral sequence and the Adams-Novikov spectral sequence, respectively.

To begin to compute with the $E$-ANSS, one needs to know the structure of the smash powers $ E^{\wedge k} $. When $ E $ is one of $ H\F_p $, $ \mathrm{MU} $, or $ \BP $, the situation is simpler than in general, since in this case $ E\wedge E $ is an infinite wedge of suspensions of $ E $ itself, which allows for an algebraic description of the $ E_2 $-term. This is not the case for $ \bu, \bo, $ or $ \tmf $, in which case the $ E_2 $ page is harder to describe, and in fact, has not yet been described in the case of $ \tmf $.

Mahowald and his collaborators have studied the $2$-primary $ \bo $-ANSS to great effect: it gives the only known approach to the calculation of the telescopic $2$-primary $ v_1 $-periodic homotopy in the sphere spectrum \cite{LellmannMahowald,boresolutions}. The starting input in that calculation is a complete description of $ \bo\wedge \bo $ as an infinite wedge of spectra, each of which is a smash product of $\bo$ with a suitable finite complex (as in \cite{Milgram-connectivektheory} and others). The finite complexes involved are the so-called integral Brown-Gitler spectra.  (See also the related work of \cite{ClarkeCrossleyWhitehouse1,ClarkeCrossleyWhitehouse2, MR2434436}.)

Mahowald has worked on a similar description for $ \tmf\wedge \tmf $, but concluded that no analogous result could hold. In this paper we use his insights to explore four different perspectives on $2$-primary $\tmf$-cooperations. While we do not arrive at a complete and closed-form description of $\tmf \wedge \tmf$, we believe our results have the potential to be very useful as a computational tool.  

The four perspectives are the following.
\begin{enumerate}
\item The $E_2$ term of the $2$-primary Adams spectral sequence for $\tmf \wedge \tmf$ admits a splitting in terms of $\bo$-Brown-Gitler modules:
$$ \Ext(\tmf \wedge \tmf) \cong \bigoplus_i \Ext(\Sigma^{8i} \tmf \wedge \bo_i). $$
\item Modulo torsion, $\TMF_*\TMF$ is isomorphic to a subring of the ring of integral two variable modular forms.
\item $K(2)$-locally, the ring spectrum $(\TMF \wedge \TMF)_{K(2)}$ is given by an equivariant function spectrum:
$$ (\TMF \wedge \TMF)_{K(2)} \simeq \Map^c(\GG_2/G_{48}, E_2)^{hG_{48}}. $$
\item By our Theorem \ref{thm:faithful}, $\TMF_*\TMF$ injects into a certain product of homotopy groups of topological modular forms with level structures:  
$$ \TMF \wedge \TMF \hookrightarrow \prod_{\substack{i \in \ZZ, \\ j \ge 0}} \TMF_0(3^j) \times \TMF_0(5^j). $$
\end{enumerate}
The purpose of this paper is to describe and investigate the relationship between these different perspectives. As an application of our method, in Theorems \ \ref{thm:tdtmf03} and  \ref{thm:tdtmf05} we construct connective covers $\td{\tmf}_0(3)$  and $\td{\tmf}_0(5)$ of the periodic spectra $\TMF_0(3)$ and $\TMF_0(5)$, respectively, recovering and extending previous results of Davis, Mahowald, and Rezk \cite{MRlevel3}, \cite{MahowaldConnective}.

Others have also investigated the ring of cooperations for elliptic cohomology.  Clarke and Johnson \cite{MR1232203} conjectured that $\TMF_0(2)_*\TMF_0(2)$ was given by the ring of $2$-variable modular forms for $\Gamma_0(2)$ over $\ZZ[1/2]$.  Versions of this conjecture were subsequently verified by Baker \cite{MR1307488} (in the case of $\TMF[1/6]$) and Laures \cite{Laures} (for all $\TMF(\Gamma)[S^{-1}]$ associated to congruence subgroups, where $S$ is a large enough set of primes to make the theory Landweber exact).  This previous work clearly feeds into perspective (2) (indeed Laures' work is cited as an initial step to establishing perspective (2)).  In retrospect, Baker's work also contains observations related to perspective (4): in \cite{MR1307488} he observes that the ring of $2$-variable modular forms can be regarded as a certain space of functions on a space of isogenies of elliptic curves.  Finally, since the writing of this paper, Culver has produced a similar (but more complete) analysis of $\tmf_1(3)_*\tmf_1(3)$ (a.k.a. $BP\bra{2}_*BP\bra{2}$) \cite{Culver}.

\subsection{A tour of the paper}
For the reader's convenience, we take some time here to outline the contents of the paper.

\subsection*{Section 2} This section reviews Brown-Gitler comodules and Brown-Gitler spectra, splittings associated to these, and exact sequences which relate the various comodules.

Sections 2.1-2.2 begin with a review of mod 2, integral, and $\bo$-Brown-Gitler spectra.  Our interest stems from the fact (Section 2.3) that the $E_2$-term of the Adams spectral sequence for $\bo \wedge \bo$ (respectively $\tmf\wedge\tmf$) splits as a direct sum of $\Ext$-groups for the integral (respectively $\bo$) Brown-Gitler spectra.  Section 2.4 recalls some exact sequences used in \cite{BHHM} which allow for an inductive approach for computing $\Ext$ of $\bo$-Brown-Gitler comodules, and introduces related sequences which allow for an inductive approach to $\Ext$ groups of integral Brown-Gitler comodules.  

\subsection*{Section 3} This section is devoted to the motivating example of $\bo\wedge\bo$.  Sections 3.1-3.3 are primarily expository, based upon the foundational work of Adams, Lellmann, Mahowald, and Milgram.  We make an effort to consolidate their theorems and recast them in modern notation and terminology, and hope that this will prove a useful resource to those trying to learn the classical theory of $\bo$-cooperations and $v_1$-periodic stable homotopy.  To the best of our knowledge, Sections 3.4-3.5 provide new perspectives on this subject.

Section 3.1 is devoted to the homology of the $\HZ_i$ and certain $\Ext_{A(1)_*}$-computations relevant to the Adams spectral sequence computation of $\bo_*\bo$.

We shift perspectives in Section 3.2 and recall Adams's description of $\KU_*\KU$ in terms of numerical polynomials.  This allows us to study the image of $\bu_*\bu$ in $\KU_*\KU$ as a warm-up for our study of the image of $\bo_*\bo$ in $\KO_*\KO$.  

We undertake this latter study in Section 3.3, where we ultimately describe a basis of $\KO_0\bo$ in terms of the ``9-Mahler basis" for 2-adic numerical polynomials with domain $2\mathbb{Z}_2$.  By studying the Adams filtration of this basis, we are able to use the above results to fully describe $\bo_*\bo$ mod $v_1$-torsion elements.

In Section 3.4, we link the above two perspectives, studying the image of $\bo_*\HZ_i$ in $\KO_*\KO$. Theorem \ref{thm:HZjImage} provides a complete description of this image (mod $v_1$-torsion) in terms of the 9-Mahler basis.

We conclude with Section 3.5 which studies a certain map
\[
  \KO\wedge\KO \xrightarrow{\prod \widetilde{\psi}^{3^k}} \prod_{k\in \ZZ} \KO
\]
constructed from Adams operations.  We show that this map is an injection after applying $\pi_*$ and exhibit how it interacts with the Brown-Gitler decomposition of $\bo\wedge\bo$.

\subsection*{Section 4} In Section 4, we recall certain essential features of $\TMF$ and $\tmf$, the periodic and connective topological modular forms spectra.  

Section 4.1 reviews the Goerss-Hopkins-Miller sheaf of $E_\infty$-ring spectra, $\mathcal{O}^{top}$, on the moduli stack of smooth elliptic curves $\mathcal{M}$.  One can use this sheaf to construct $\TMF$ (sections on $\mathcal{M}$ itself), $\TMF_1(n)$ (sections on the moduli stack of $\Gamma_1(n)$-structures after inverting $n$), and $\TMF_0(n)$ (sections on the moduli stack of $\Gamma_0(n)$-structures after inverting $n$).  We consider the maps
\[
  f,q:\TMF[1/n]\to \TMF_0(n)
\]
induced by forgetting the level structure and taking the quotient by it, respectively. We use these maps to produce a $\TMF[1/n]$-module map
\[
  \Psi_n:\TMF[1/n]\wedge \TMF[1/n]\to \TMF_0(n)
\]
important in our subsequent studies.

Section 4.2 reviews Lawson and Naumann's work on the construction of $\BP\langle 2\rangle$ as the $E_\infty$-ring spectrum $\tmf_1(3)$.  We use formal group laws and some computer calculations to compute the maps
\[
  \BP_*\to \tmf_1(3)_*,\quad \BP_*\BP\to \tmf_1(3)_*\tmf_1(3).
\]

We isolate the lowest Adams filtration portion of this map in Section 4.4 via our computation of  $\pi_* f:\TMF_*\to \TMF_1(3)_*$ in Section 4.3.

Finally, we review the $K(2)$-local version of $\TMF\wedge\TMF$ in Section 4.5.

\subsection*{Section 5} With the stage set, our work begins in earnest in Section 5.  Here we study the Adams spectral sequence for $\tmf\wedge\tmf$.  

We study the rational behavior of this spectral sequence in Section 5.1, observing that it collapses after inverting $v_0$. This provides a precise computation of the map
\[
  v_0^{-1}\Ext(\tmf\wedge \Sigma^{8j}\bo_j)\to v_0^{-1}\Ext(\tmf\wedge\tmf).
\]

In Section 5.2, the exact sequences of Section 2.4 are used to perform an inductive computation of $\Ext(\tmf\wedge\Sigma^{8j}\bo_j)$ relative to $\Ext(\tmf\wedge\bo_1^{\wedge k})$.  We produce detailed charts for $\Ext(\tmf\wedge\Sigma^{8j}\bo_j)$ for $j\le 6$.

Sections 5.3 and 5.4 are concerned with identifying the generators of the lattice
\[
  \Ext(\tmf\wedge\Sigma^{8j}\bo_j)/v_0\text{-torsion}
\]
inside of the ``vector space''
\[
  v_0^{-1}\Ext(\tmf\wedge\Sigma^{8j}\bo_j).
\]
In Section 5.3, we produce an inductive method compatible with the exact sequences of Section 2.4.  Section 5.4 completes the task of computing said generators.

\subsection*{Section 6} In Section 6, we study the role of 2-variable modular forms in $\tmf$-cooperations.  Baker and Laures have proved that, after inverting 6, $\TMF$-cooperations are precisely the 2-variable $\Gamma(1)$ modular forms (meromorphic at the cusp).  

After reviewing the Baker-Laures work in Section 6.1, we adapt it to the study of $\TMF_*\TMF$ modulo torsion in Section 6.2.  In particular, we prove that 2-integral 2-variable $\Gamma(1)$-modular forms (again meromorphic at the cusp) are exactly the $0$-line of a descent spectral sequence for $\TMF_*\TMF$.

The efficacy of this result becomes apparent in Section 6.3 where we prove that $\tmf_*\tmf$ modulo torsion injects into the ring of 2-integral 2-variable modular forms with nonnegative Adams filtration.  Moreover, the injection is a rational isomorphism; once again we are primed to identify the generators of a lattice inside a vector space.

Sections 6.4 and 6.5 undertake the task of detecting 2-variable modular forms in the Adams spectral sequence for $\tmf\wedge\tmf$, resulting in a table of 2-variable modular form generators of $\Ext(\tmf\wedge\tmf)/\text{torsion}$ in dimensions $\le 64$.

\subsection*{Section 7}  Our final section studies the level structure approximation map
\[
  \Psi:\tmf\wedge\tmf\to \prod_{i\in\ZZ,j\ge 0} \TMF_0(3^j)\times \TMF_0(5^j).
\]
The first theorem of Section 7.1 is that the analogous map
\[
  \psi:\TMF\wedge\TMF\to \prod_{i\in\ZZ,j\ge 0} \TMF_0(3^j)\times \TMF_0(5^j)
\]
induces an injection on homotopy groups. The proof is quite involved. It includes a reduction to a $K(2)$-local variant of the theorem, whose proof in turn requires the key technical Lemma \ref{lem:faithful} on detecting homotopy fixed points of profinite groups using dense subgroups.  

In Section~\ref{sec:psicomp}, we compute the effect of the maps
\begin{align*}
\Psi_3: \pi_* \tmf \wedge \tmf & \rightarrow \pi_* \TMF_0(3), \\
\Psi_5: \pi_* \tmf \wedge \tmf & \rightarrow \pi_* \TMF_0(5) \\
\end{align*}
on a certain submodules of $\pi_*\tmf \wedge \tmf$.  

In Section~\ref{sec:diffext}, we observe that these computations allow us to deduce differentials and hidden extensions in the corresponding portion of the ASS for $\tmf \wedge \tmf$ using the known homotopy of $\TMF_0(3)$ and $\TMF_0(5)$.  

Davis, Mahowald, and Rezk \cite{MRlevel3}, \cite{MahowaldConnective} observed that one can build a connective cover 
$$ \td{\tmf}_0(3) \rightarrow \TMF_0(3) $$
out of $\tmf \wedge \bo_1$ and a piece of $\tmf \wedge \bo_2$.  In Section~\ref{sec:cover}, we reprove this result, and relate this connective cover to our map $\Psi_3$.  We also show that similar methods allow us build a connective cover
$$ \td{\tmf}_0(5) \rightarrow \TMF_0(5) $$
out of the other part of $\tmf \wedge \bo_2$, $\tmf \wedge \bo_3$, and a piece of $\tmf \wedge \bo_4$.  Note that neither $\td{\tmf}_0(3)$ nor $\td{\tmf}_0(5)$ coincide with the Hill-Lawson connective covers of $\TMF_0(N)$.  The connective covers we consider are $7$-connected and $23$-connected, respectively.

\subsection{Acknowledgments}  We would like to thank the referee, for their helpful suggestions and comments.  We dedicate this paper to the memory of Mark Mahowald, whose visionary ideas permeate almost every aspect of this work.  We also would like to tip our hat to Andy Baker, Don Davis, Gerd Laures, and Charles Rezk, whose previous work on $\tmf$-cooperations and related matters have made this paper possible.

\subsection{Notation and conventions}

In this paper, unless we say explicitly otherwise, we shall always be implicitly working $2$-locally.  We denote homology by $H_*$, and it  will be taken with mod $2$ coefficients, unless specified otherwise.  We let $A = H^*H$ denote the mod 2 Steenrod algebra, and
\[
  A_*=H_*H\cong \FF_2[\xi_1,\xi_2,\ldots]
\]
denotes its dual.  In any Hopf algebra, we let $\overline{x}$ denote the antipode of $x$.  We let $A(i)$ denote the subalgebra of $A$ generated by $\mathrm{Sq}^1,\cdots,\mathrm{Sq}^{2^i}$.  Let $A\mmod A(i)$ be the Hopf algebra quotient of $A$ by $A(i)$ and let $(A\mmod A(i))_*$ be the dual of this Hopf algebra.

We will use $\Ext(X)$ to abbreviate $\Ext_{A_*}(\FF_2, H_* X)$, the $E_2$-term of the Adams spectral sequence (ASS) for $\pi_* X$ and will let $C^*_{A_*}(H^*X)$ denote the corresponding cobar complex.  Given an element $x \in \pi_* X$, we shall let $[x]$ denote the coset of the ASS $E_2$-term which detects $x$.  We let $AF(x)$ denote the Adams filtration of $x$.

We write $\bu$ for the connective complex $\K$-theory spectrum, $\bo$ for the connective real $K$-theory spectrum, and $\bsp$ for the connective symplectic $K$-theory spectrum, so that $\Sigma^4 \bsp$ is the $3$-connected cover of $\bo$.

% !TEX root = tmfcoop.tex

\section{Brown-Gitler comodules and spectra}

Mod $2$ Brown-Gitler spectra were introduced in \cite{\BrownGitler} to study obstructions to immersing manifolds, but immediately found use in studying the stable homotopy groups of spheres (e.g. \cite{\MahowaldInf}, \cite{\CohenZeta} and many other places).  Mahowald, Milgram, and others have used integral Brown-Gitler modules/spectra to decompose the ring of cooperations of $\bo$ \cite{boresolutions}, \cite{Milgram-connectivektheory}, and much of the work of Davis, Mahowald, and Rezk on $\tmf$-resolutions has been based on the use of  $\bo$-Brown-Gitler spectra \cite{MRlevel3},\cite{MahowaldConnective}, \cite{BHHM}.  In this section we collect the things about Brown-Gitler comodules and spectra that pertain to the subject matter of this paper.

\subsection{Brown-Gitler comodules}

Consider the subalgebras of the dual Steenrod algebra
\[ (A \mmod  A(i))_* =\F_2[\bar\xi_1^{2^{i+1}},\bar\xi_2^{2^i},\dots,\bar\xi_{i+1}^2,\bar\xi_{i+2},\dots]. \]
The first few of these arise as the homology of spectra:
\begin{align*}
H_* \HF2 & \cong A_* = (A\mmod A(-1))_*, \\
H_* \HZ & \cong (A\mmod A(0))_*, \\
H_* \bo & \cong (A\mmod A(1))_*, \\
H_* \tmf & \cong (A\mmod A(2))_*.
\end{align*}
The algebra $(A\mmod A(i))_*$ admits an increasing filtration 
by defining $ wt(\bar\xi_j)=2^{j-1} $; every element has filtration divisible by $ 2^{i+1} $. The Brown-Gitler subcomodule $ N_i(j) $ is defined to be the $\F_2$-subspace spanned by all monomials of weight  less than or equal to $ 2^{i+1}j $, which is also an $ A_* $-subcomodule as the coaction cannot increase weight. 

\subsection{Brown-Gitler spectra}

The modules $ N_{-1}(j) $ through $ N_1(j) $ are known to be realizable by the mod-$ 2 $ (classical), integral, and $ \bo $-Brown-Gitler spectra respectively \cite{GoerssJonesMahowald}, which we will denote by $ (H\F_2)_j $, $ H\Z_j $, and $ \bo_j $, since we have
\begin{align*}
\HF2 & \simeq \varinjlim (\HF2)_j, \\
\HZ & \simeq \varinjlim \HZ_j, \\
\bo & \simeq \varinjlim \bo_j.
\end{align*}
To clarify notation we shall underline a spectrum to refer to the corresponding subcomodule of the dual Steenrod algebra, so that we have
\begin{align*}
(\ul{\HF2})_j & := H_* (\HF2)_j = N_{-1}(j), \\
\ul{\HZ}_j & := H_* \HZ_j = N_0(j), \\
\ul{\bo}_j & := H_* \bo_j = N_1(j).
\end{align*}
It is not known if $\tmf$-Brown-Gitler spectra $\tmf_j$ exist in general, though we will still define
 $$ \ul{\tmf}_j := N_2(j).  $$ 
The spectrum $N_3(1)$ is not realizable, by the Hopf-invariant one theorem.  

\subsection{Algebraic and topological splittings}

There are algebraic splittings of $A(i)_*$-comodules
\[ (A\mmod A(i))_* \cong \bigoplus_j \Sigma^{2^{i+1}j}N_{i-1}(j). \]
This splitting is given by the sum of maps:
\begin{equation}\label{eq:splittingmap}
\begin{split}
\Sigma^{2^{i+1}j}N_{i-1}(j) & \rightarrow  (A\mmod A(i))_* \\
\xib_1^{i_1} \xib_2^{i_2}\cdots & \mapsto  \xib_1^{a} \xib_2^{i_1} \xib_3^{i_2} \cdots,
\end{split}
\end{equation}
where the exponent $a$ above is chosen such that the monomial has weight $2^{i+1}j$.
It follows that there are algebraic splittings
\begin{align}
\Ext(\HZ \wedge \HZ) & \cong \bigoplus \Ext_{A(0)_*}(\si{2j} (\HF2)_j), \\
\Ext(\bo \wedge \bo) & \cong \bigoplus \Ext_{A(1)_*}(\si{4j} \HZ_j), \label{eq:boextsplit}
\\
\Ext(\tmf \wedge \tmf) & \cong \bigoplus \Ext_{A(2)_*}(\si{8j} \bo_j). \label{eq:tmfextsplit}
\end{align}
These algebraic splittings can be realized topologically for $i\leq 1$ \cite{\Mahowaldbor}:
\begin{align*}
\HZ\wedge \HZ & \simeq \bigvee_{j} \Sigma^{2j} \HZ \wedge (\HF2)_j ,\\
\bo\wedge \bo &\simeq \bigvee_{j} \Sigma^{4j} \bo \wedge \HZ_j.
\end{align*}
However, the corresponding splitting fails for $\tmf$ as was shown by Davis, Mahowald, and Rezk \cite{MRlevel3}, \cite{MahowaldConnective}, so
$$ \tmf \wedge \tmf \not\simeq \bigvee_j \Sigma^{8j} \tmf \wedge \bo_j. $$
Indeed, they observe that in $\tmf \wedge \tmf$ the homology summands
$$ \Sigma^8 \tmf \wedge \bo_1, \quad \text{and} \quad \Sigma^{16} \tmf \wedge \bo_2 $$ 
are attached non-trivially.  We shall see in Section~\ref{sec:cover} that our methods recover this fact.

\subsection{Short exact sequences relating Brown-Gitler comodules}

The $E_2$ terms of the Adams spectral sequences
\begin{gather*}
\Ext_{A(1)_*}((A\mmod A(1))_*) \Rightarrow \bo_*\bo \\
\Ext_{A(2)_*}((A\mmod A(2))_*) \Rightarrow \tmf_*\tmf
\end{gather*}
split, by (\ref{eq:boextsplit}), (\ref{eq:tmfextsplit}) into summands of the form
\begin{align*}
& \Ext_{A(1)_*}(\ul{\HZ}_j), \\
& \Ext_{A(2)_*}(\bou_j).
\end{align*}
It is therefore desirable to compute the above Ext groups.  In \cite{\BHHM}, this is accomplished inductively by means of a certain exact sequences relating these Brown-Gitler comodules (Lemmas 7.1 and 7.2 of \cite{BHHM}).  

We begin by pointing out that a similar set of exact sequences interrelates the integral Brown-Gitler comodules.

\begin{lem}\label{lem:HZSES}
There are short exact sequences of $A(1)_*$-comodules
\begin{gather}
0 \rightarrow \Sigma^{4j}\ul{\HZ}_j \rightarrow \ul{\HZ}_{2j} \rightarrow \ul{\bo}_{j-1} \otimes (A(1)\mmod A(0))_* \rightarrow 0, 
\label{eq:HZSES1} \\
0 \rightarrow \Sigma^{4j}\ul{\HZ}_j \otimes \ul{\HZ}_1 \rightarrow \ul{\HZ}_{2j+1} \rightarrow \ul{\bo}_{j-1} \otimes (A(1)\mmod A(0))_* \rightarrow 0. \label{eq:HZSES2} 
\end{gather}
\end{lem}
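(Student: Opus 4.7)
My plan uses two algebraic facts: first, the multiplicative isomorphism of $A_*$-comodule algebras
\[ H_* \HZ \cong H_*\bo \otimes_{\FF_2} (A(1)//A(0))_*, \]
where $(A(1)//A(0))_*$ is realized inside $H_*\HZ$ as the sub-Hopf-algebra spanned by $\{1, \bar\xi_1^2, \bar\xi_2, \bar\xi_1^2\bar\xi_2\}$; and second, the splitting of the weight filtration on $H_*\bo$ into $A(1)_*$-subcomodules,
\[ H_*\bo = \bigoplus_{w \ge 0} (H_*\bo)_w, \]
where $(H_*\bo)_w$ denotes the span of weight-$w$ monomials.

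To prove the splitting, I would show that the $A(1)_*$-coaction on every polynomial generator of $H_*\bo$ has all right-hand tensor factors of the same weight as the generator. The coproduct formula $\Delta(\bar\xi_n) = \sum_{i+j=n}\bar\xi_j \otimes \bar\xi_i^{2^j}$ exhibits right factors of weight $2^{i+j-1} = \mathrm{wt}(\bar\xi_n)$ whenever $i \geq 1$; the $i = 0$ term $\bar\xi_n \otimes 1$ (with weight-$0$ right factor) vanishes in the $A(1)_*$-coaction because $\bar\xi_n = 0$ in $A(1)_*$ for $n \geq 3$. For the remaining polynomial generators $\bar\xi_1^4$ and $\bar\xi_2^2$ of $H_*\bo$, a direct computation using $\xi_1^4 = 0 = \xi_2^2$ in $A(1)_*$ shows the analogous low-weight terms drop. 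Multiplicativity of the coaction then extends weight preservation to all monomials.

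Under the algebra isomorphism, an element $f \cdot g \in H_*\HZ$ with $f \in H_*\bo$ and $g \in (A(1)//A(0))_*$ has weight $\mathrm{wt}(f) + \mathrm{wt}(g)$ where $\mathrm{wt}(f) \in 4\ZZ$ and $\mathrm{wt}(g) \in \{0, 2, 2, 4\}$. For $\ul{\HZ}_{2j}$ (weight $\leq 4j$), the constraint forces $g = 1$ when $\mathrm{wt}(f) = 4j$, and allows any $g$ when $\mathrm{wt}(f) \leq 4j - 4$. Invoking the weight splitting above, this gives the direct sum decomposition
\[ \ul{\HZ}_{2j} \cong (H_*\bo)_{4j} \oplus \ul{\bo}_{j-1} \otimes (A(1)//A(0))_* \]
as $A(1)_*$-comodules. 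Identifying $(H_*\bo)_{4j} \cong \Sigma^{4j}\ul{\HZ}_j$ (the top associated-graded piece of the weight filtration on $\ul{\bo}_j$) yields the first SES, which in fact splits. For the second SES, applying the same analysis to $\ul{\HZ}_{2j+1}$ (weight $\leq 4j+2$) allows $g$ of weight $\leq 2$, i.e., $g \in \ul{\HZ}_1 = \FF_2\{1, \bar\xi_1^2, \bar\xi_2\}$, whenever $\mathrm{wt}(f) = 4j$, giving
\[ \ul{\HZ}_{2j+1} \cong (H_*\bo)_{4j} \otimes \ul{\HZ}_1 \oplus \ul{\bo}_{j-1} \otimes (A(1)//A(0))_*, \]
from which the stated sequence follows. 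The main obstacle is establishing the isomorphism $(H_*\bo)_{4j} \cong \Sigma^{4j}\ul{\HZ}_j$ of $A(1)_*$-comodules in general; I would address this by a Poincar\'e-series matching combined with direct verification of coaction compatibility via the coproduct formulas (or by induction on $j$).
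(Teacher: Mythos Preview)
Your argument contains a genuine gap: the claimed isomorphism $H_*\HZ \cong H_*\bo \otimes_{\FF_2} (A(1)//A(0))_*$ of $A_*$-comodules (or even of $A(1)_*$-comodules) is false. The multiplication map from the tensor product to $H_*\HZ$ is only a vector-space isomorphism. Concretely, compute the $A(1)_*$-coaction on $\bar\xi_1^2\bar\xi_2 \in H_*\HZ$:
\[
\psi(\bar\xi_1^2\bar\xi_2) = 1\otimes \bar\xi_1^2\bar\xi_2 + \bar\xi_1\otimes \bar\xi_1^4 + \bar\xi_1^2\otimes\bar\xi_2 + (\bar\xi_2+\bar\xi_1^3)\otimes\bar\xi_1^2 + \bar\xi_1^2\bar\xi_2\otimes 1.
\]
The term $\bar\xi_1\otimes\bar\xi_1^4$ has right-hand factor $\bar\xi_1^4$, which lies in $H_*\bo$ but not in the four-dimensional subspace $\FF_2\{1,\bar\xi_1^2,\bar\xi_2,\bar\xi_1^2\bar\xi_2\}$. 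Under the diagonal coaction on the tensor product, the coaction of $1\otimes\bar\xi_1^2\bar\xi_2$ has all its $H_*\bo$-components equal to $1$; so the multiplication map cannot intertwine the coactions. Equivalently, the four-dimensional subspace is not an $A(1)_*$-subcomodule of $H_*\HZ$, and your direct-sum decomposition of $\ul{\HZ}_{2j}$ is not one of $A(1)_*$-comodules. In fact the sequence does \emph{not} split already for $j=1$: the only graded linear section $(A(1)//A(0))_* \to \ul{\HZ}_2$ is the obvious inclusion (for degree reasons there is nothing to add in degrees $0,2,3,5$), and it fails to be a comodule map precisely because of the $\bar\xi_1\otimes\bar\xi_1^4$ term above.

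What does survive from your approach is the injection. Your observation that the $A(1)_*$-coaction on $H_*\bo$ preserves weight exactly is correct and useful; it shows that $(H_*\bo)_{4j} \hookrightarrow \ul{\HZ}_{2j}$ is an $A(1)_*$-subcomodule, and together with the identification $(H_*\bo)_{4j}\cong\Sigma^{4j}\ul{\HZ}_j$ this recovers exactly the paper's first map. But to identify the cokernel with $\ul{\bo}_{j-1}\otimes(A(1)//A(0))_*$ you still need to check that the explicit projection (writing each monomial as its $H_*\bo$-part times its $(A(1)//A(0))_*$-part and killing those with $H_*\bo$-part of weight $4j$) is an $A(1)_*$-comodule map --- which is exactly the ``direct computation'' the paper invokes. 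The cross-term $\bar\xi_1\otimes\bar\xi_1^4$ does not obstruct this, since $\bar\xi_1^4$ lies in the kernel of the projection; but one must verify that no analogous cross-term lands outside $\ul{\bo}_{j-1}$, and that requires the explicit check rather than the tensor-splitting shortcut.
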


\begin{proof}
The proof is almost identical to that given in \cite{BHHM}.  On the level of basis elements, the map
\begin{gather*}
\si{4j} \ul{\HZ}_j \rightarrow \ul{\HZ}_{2j}
\end{gather*}
is given by
\begin{align*}
\xib_1^{2i_1} \xib_2^{i_2} \cdots & \mapsto \xib_1^a \xib_2^{2i_1} \xib_3^{i_2} \cdots,
\end{align*}
where the integer $a$ is $4j - wt(\xib_2^{2i_1} \xib_3^{i_2} \cdots)$. The map 
\begin{gather*}
\si{4j} \ul{\HZ}_j \otimes \ul{\HZ}_1 \rightarrow \ul{\HZ}_{2j+1}
\end{gather*}
is determined by
\begin{align*}
\xib_1^{2i_1} \xib_2^{i_2} \cdots \otimes  1 & \mapsto (\xib_1^a \xib_2^{2i_1} \xib_3^{i_2} \cdots) \cdot 1, \\
\xib_1^{2i_1} \xib_2^{i_2} \cdots \otimes  \xib_1^2 & \mapsto (\xib_1^a \xib_2^{2i_1} \xib_3^{i_2} \cdots) \cdot \xib_1^2, \\
\xib_1^{2i_1} \xib_2^{i_2} \cdots \otimes   \xib_2 & \mapsto (\xib_1^a \xib_2^{2i_1} \xib_3^{i_2} \cdots) \cdot \xib_2.
\end{align*}
We abbreviate this by writing 
\begin{align}\label{eq:basismap}
\xib_1^{2i_1} \xib_2^{i_2} \cdots \otimes  \{ 1, \xib_1^2, \xib_2 \}& \mapsto (\xib_1^a \xib_2^{2i_1} \xib_3^{i_2} \cdots) \cdot \{1, \xib_1^2, \xib_2 \}.
\end{align}
The maps
\begin{gather*}
\ul{\HZ}_{2j} \rightarrow \ul{\bo}_{j-1} \otimes (A(1)\mmod A(0))_* , \\
\ul{\HZ}_{2j+1} \rightarrow \ul{\bo}_{j-1} \otimes (A(1)\mmod A(0))_*
\end{gather*}
are given by
$$ \xib_1^{4i_1+2\epsilon_1} \xib_2^{2i_2+\epsilon_2} \xib_3^{i_3} \cdots \mapsto
\begin{cases}
\xib_1^{4i_1} \xib_2^{2i_2} \xib_3^{i_3} \cdots \otimes \xib_1^{2\epsilon_1} \xib_2^{\epsilon_2}, & 
wt(\xib_1^{4i_1} \xib_2^{2i_2} \xib_3^{i_3} \cdots) \le 4j-4, \\
0, & \mr{otherwise},
\end{cases}
 $$
where $\epsilon_s \in \{0,1\}$. The proof is now a direct computation.
\end{proof}

The exact sequences of \cite{\BHHM} which relate the different $\bo$-Brown-Gitler comodules take the form:
\begin{gather}
0\to \Sigma^{8j} \ul{\bo}_j \to \ul{\bo}_{2j}\to (A(2)\mmod A(1))_* \otimes \ul{\tmf}_{j-1}  \to \Sigma^{8j+9} \ul{\bo}_{j-1} \to 0 \label{eq:boSES1},
\\
0 \to \Sigma^{8j} \ul{\bo}_j \otimes \ul{\bo}_1 \to \ul{\bo}_{2j+1}\to (A(2)\mmod A(1))_* \otimes \ul{\tmf}_{j-1} \to 0 \label{eq:boSES2}
\end{gather}

\begin{rmk}
Technically speaking, as is addressed in \cite{\BHHM}, the comodules $(A(2)\mmod A(1))_* \otimes \ul{\tmf}_{j-1}$ in the above exact sequences have to be given a slightly different $A(2)_*$-comodule structure from the standard one arising from the tensor product.  However, this different comodule structure ends up being $\Ext$-isomorphic to the standard one.  As we are only interested in Ext groups, the reader can safely ignore this subtlety.
\end{rmk}

We briefly recall how the maps in the exact sequences (\ref{eq:boSES1}) and (\ref{eq:boSES2}) are defined.
On the level of basis elements, the maps
\begin{gather*}
\si{8j} \ul{\bo}_j \rightarrow \ul{\bo}_{2j}, \\
\si{8j} \ul{\bo}_j \otimes \ul{\bo}_1 \rightarrow \ul{\bo}_{2j+1}
\end{gather*}
are given respectively by
\begin{align*}
\xib_1^{4i_1} \xib_2^{2i_2} \xib_3^{i_3} \cdots & \mapsto \xib_1^a \xib_2^{4i_1} \xib_3^{2i_2} \xib_4^{i_3} \cdots,
\\
\xib_1^{4i_1} \xib_2^{2i_2} \xib_3^{i_3} \cdots \otimes  \{ 1, \xib_1^4, \xib_2^2, \xib_3 \} & \mapsto (\xib_1^a \xib_2^{4i_1} \xib_3^{2i_2} \xib_4^{i_3} \cdots) \cdot \{1, \xib_1^4, \xib_2^2, \xib_3 \},
\end{align*}
where $a$ is taken to be $8j - wt(\xib_2^{4i_1} \xib_3^{2i_2} \xib_4^{i_3} \cdots)$. Here, we are using the notation introduced in \eqref{eq:basismap}.
The maps
\begin{gather}
\ul{\bo}_{2j} \rightarrow (A(2)\mmod A(1))_* \otimes \ul{\tmf}_{j-1}  , \label{eq:boSESmap1} \\
\ul{\bo}_{2j+1} \rightarrow (A(2)\mmod A(1))_* \otimes \ul{\tmf}_{j-1} \label{eq:boSESmap2}
\end{gather}
are given by
\begin{multline*}
\xib_1^{8i_1+4\epsilon_1} \xib_2^{4i_2+2\epsilon_2} \xib_3^{2i_3 + \epsilon_3} \xib_4^{i_4}\cdots \mapsto
\\
\begin{cases}
\xib_1^{8i_1} \xib_2^{4i_2} \xib_3^{2i_3} \xib_4^{i_4} \cdots \otimes \xib_1^{4\epsilon_1} \xib_2^{2\epsilon_2} \xib_3^{\epsilon_3}, & 
wt(\xib_1^{8i_1} \xib_2^{4i_2} \xib_3^{2i_3} \xib_4^{i_4} \cdots) \le 8j-8, \\
0, & \mr{otherwise},
\end{cases}
\end{multline*}
where $\epsilon_s \in \{0,1\}$.  The only change from the integral Brown-Gitler case is that while the map (\ref{eq:boSESmap2}) is surjective, the map (\ref{eq:boSESmap1}) is not.  The cokernel is spanned by the submodule
$$ \FF_2\{ \xib_1^4 \xib_2^2 \xib_3 \} \otimes \Sigma^{8j-8} \bou_{j-1} \subset  (A(2)\mmod A(1))_* \otimes \ul{\tmf}_{j-1}. $$
We therefore have an exact sequence
$$ \ul{\bo}_{2j} \rightarrow (A(2)\mmod A(1))_* \otimes \ul{\tmf}_{j-1} \rightarrow \Sigma^{8j+9} \bou_{j-1} \rightarrow 0. $$

\begin{rmk}
The authors do not know if analogues of the exact sequences (\ref{eq:HZSES1}), (\ref{eq:HZSES2}), (\ref{eq:boSES1}), (\ref{eq:boSES2}) exist in general the Brown-Gitler comodules $N_i(j)$.  Culver \cite{Culver} constructs analogs of these in the context of $BP\bra{2}$-cooperations in \cite{Culver}.
\end{rmk}

%\subsection{Integral Brown-Gitler spectra}
%The decomposition of $\bo \wedge \bo$ above is a topological realization of a homology decomposition (see \cite{boresolutions}, \cite{Milgram-connectivektheory}).  Endow the monomials of the $A_*$-comodule
%$$ H_*\HZ = \FF_2[\bar\xi_1^2, \bar\xi_2, \bar\xi_3, \ldots ]$$
%with a weight by defining $wt(\bar\xi_i) = 2^{i-1}$, $wt(1)=0$, and $wt(xy)=wt(x)+wt(y)$ for all $x,y\in H_* \HZ$.  The comodule $H_*\HZ$ admits an increasing filtration by integral Brown-Gitler comodules $\ul{\HZ}_j$, where $\ul{\HZ}_j$ is spanned by elements of weight at most $2j$.  These $A_*$-comodules are realized by the integral Brown-Gitler spectra $\HZ_j$, so that
%$$ H_* \HZ_j \cong \ul{\HZ}_j. $$
%There is a decomposition of $A(1)_*$-comodules:
%$$ H_* \bo = (A\mmod A(1))_* \cong \bigoplus_{j \ge 0} \Sigma^{4j} \ul{\HZ}_j. $$
%This results in a decomposition on the level of Adams $E_2$-terms
%\begin{align*}
% \Ext(\bo \wedge \bo) & \cong \bigoplus_{j \ge 0} \Ext(\Sigma^{4j}\bo \wedge \HZ_j) \\
%& \cong \bigoplus_{j \ge 0} \Ext_{A(1)_*}(\Sigma^{4j}\HZ_j).
%\end{align*}
%This algebraic splitting is topologically realized by a splitting
%$$ \bo \s \bo \simeq \Bigvee{j \ge 0} \Sigma^{4j} \bo \s \HZ_j. $$

\section{Motivation: analysis of $\bo_*\bo$}\label{sec:bo}

In analogy with the four perspectives described in the introduction, there are four primary perspectives on the ring of cooperations for real $K$-theory.
\begin{enumerate}
\item There is a decomposition (at the prime $2$)
\[
\bo \s \bo \simeq \Bigvee{j \ge 0} \Sigma^{4j} \bo \s \HZ_j,
\]
\item There is an isomorphism $\KO_*\KO \cong \KO_* \otimes_{\KO_0} \KO_0\KO$, and $\KO_0\KO$ is isomorphic to a subring of the ring of numerical functions.
\item $K(1)$-locally, the ring spectrum $(\KO \wedge \KO)_{K(1)}$ is given by the function spectrum
$$ (\KO \wedge \KO)_{K(1)} \simeq \Map(\ZZ_2^\times/\{\pm 1\}, \KO^\wedge_2). $$
\item By evaluation on Adams operations, $\KO_*\KO$ injects into a product of copies of $\KO$:  
$$ \KO \wedge \KO \hookrightarrow \prod_{i \in \ZZ} \KO. $$
\end{enumerate}
In this section we will recall results from the literature which relate these four perspectives.  Our discussion of $\bo_*\bo$ will frame our subsequent treatment of $\tmf_*\tmf$.

%The goal of this section is to calculate the images of the maps
%\[
%\bo \s \HZ_j \lra{} \bo \s \bo
%\]
%in the decomposition above in order to illustrate the method used in our analysis of $\tmf \s \tmf$. Even in this case our perspective has some novel elements which provide a conceptual explanation for formulas obtained by Lellmann and Mahowald in \cite{LellmannMahowald}.

\subsection{The Adams spectral sequence for $\bo_*\bo$}
%Just as with $\ul{\HZ}_j$ we define $\ul{\bo}_j$ to be the the submodule of 
%\[
%(A\mmod A(1))_* \cong \FF_2[\bar\xi_1^4, \bar\xi_2^2, \bar\xi_3, \ldots ]
%\]
%generated by elements of weight at most $4j$. These submodules are discussed more thoroughly at the beginning of Section $\ref{sec:ass}$. With these in hand 

In this section we will compute the Adams spectral sequence
\begin{equation}\label{eq:boboASS}
 \Ext_{A(1)_*}((A \mmod A(1))_*) \Rightarrow \bo_*\bo. 
 \end{equation}
The splitting (\ref{eq:boextsplit}) reduces the understanding of the Adams $E_2$-term for $\bo\wedge \bo$ to an understanding of $\Ext_{A(1)_*}(\ul{\HZ}_j)$.

Define
$$ \frac{\Ext_{A(1)_*}(X)}{\text{$v_1$-{tor}}} := \mr{Image}\left(\Ext_{A(1)_*}(X) \rightarrow v_1^{-1}\Ext_{A(1)_*}(X)\right). $$
The following lemma follows from a simple induction (for instance, using the algebraic Atiyah-Hirzebruch spectral sequence), using the fact that $\ul{\HZ}_1$ is given by the following cell diagram.
$$
\xymatrix@R-2em@C-2em{
\xib_2 & \circ \ar@{-}[d]^{\sq^1}
\\
\xib_1^2 & \circ \ar@/^1pc/@{-}[dd]^{\sq^2}
\\ \\
1 & \circ
}
$$
\begin{lem}\label{lem:HZ1^i}
We have 
$$
\frac{\Ext_{A(1)_*}(\ul{\HZ}_1^{\otimes i})}{\text{$v_1$-tor}} \cong
\begin{cases}
\Ext(\bo^{\bra{i}}), & \text{$i$ even}, \\
\Ext(\bsp^{\bra{i-1}}), & \text{$i$ odd}.
\end{cases}
$$
Here, $X^{\bra{i}}$ denotes the $i$th Adams cover.
\end{lem}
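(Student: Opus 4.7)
We proceed by induction on $i$. The base case $i = 0$ reduces to the standard change-of-rings isomorphism $\Ext_{A(1)_*}(\FF_2) \cong \Ext(\bo)$ (coming from $H_*\bo \cong (A//A(1))_*$), together with $\bo^{\bra{0}} = \bo$.

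For the inductive step, the cell diagram of $\ul{\HZ}_1$ determines a short exact sequence of $A(1)_*$-comodules
\[
0 \longrightarrow \FF_2 \longrightarrow \ul{\HZ}_1 \longrightarrow M \longrightarrow 0,
\]
whose sub is the bottom cell $\FF_2\{1\}$ and whose quotient is the $2$-cell comodule $M = \FF_2\{\xib_1^2,\xib_2\}$, with coaction $\psi(\xib_2) = 1\otimes\xib_2 + \xib_1\otimes \xib_1^2$ induced by the $\mr{Sq}^1$-connection at the top of the diagram (the $\mr{Sq}^2$-connection at the bottom is what prevents a direct splitting). Under $M\cong \Sigma^2 H_*(S^0/2)$ one has $\Ext_{A(1)_*}(M) \cong \Sigma^2\Ext(\bo/2)$. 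Tensoring with $\ul{\HZ}_1^{\otimes(i-1)}$ yields
\[
0 \to \ul{\HZ}_1^{\otimes(i-1)} \to \ul{\HZ}_1^{\otimes i} \to M\otimes \ul{\HZ}_1^{\otimes(i-1)} \to 0,
\]
and the associated long exact $\Ext_{A(1)_*}$-sequence is the engine of the induction.

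The heart of the argument is the analysis of the middle term modulo $v_1$-torsion. Using the $v_0$-Bockstein identification $\Ext_{A(1)_*}(H_*(S^0/2)\otimes X) \cong \mr{Cone}\bigl(v_0: \Ext_{A(1)_*}(X) \to \Ext_{A(1)_*}(X)\bigr)$, one shows that after killing $v_1$-torsion the long exact sequence collapses to a connecting isomorphism
\[
\frac{\Ext_{A(1)_*}(\ul{\HZ}_1^{\otimes i})}{\text{$v_1$-tor}} \;\cong\; \left(\frac{\Ext_{A(1)_*}(\ul{\HZ}_1^{\otimes(i-1)})}{\text{$v_1$-tor}}\right)^{\!\langle 1\rangle}
\]
that raises the Adams filtration by one. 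Combining this with the induction hypothesis and the standard identifications $(\bo^{\bra{k}})^{\langle 1\rangle}\simeq \bsp^{\bra{k+1}}$ and $(\bsp^{\bra{k}})^{\langle 1\rangle}\simeq \bo^{\bra{k+1}}$ (which reflect the alternating $\eta$-multiplication pattern between the $\bo$- and $\bsp$-Adams charts), one obtains $\Ext(\bo^{\bra{i}})$ for $i$ even and $\Ext(\bsp^{\bra{i-1}})$ for $i$ odd.

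The main obstacle is verifying that the $v_0$-Bockstein contribution from $\Ext_{A(1)_*}(M\otimes\ul{\HZ}_1^{\otimes(i-1)})$ indeed collapses, modulo $v_1$-torsion, to a single clean shift in Adams filtration at each inductive step. Concretely, this requires tracking how the $v_0$-cone interacts with the $v_1$-periodicity implicit in the alternating $\bo$/$\bsp$ Adams-cover pattern, and checking that any extra $v_1$-torsion-free classes appearing in the cone are exactly cancelled by the connecting homomorphism. This bookkeeping is where the substance of the ``simple induction'' really lies.
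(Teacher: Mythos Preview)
Your approach --- induct on $i$ using a short exact sequence extracted from the cell diagram of $\ul{\HZ}_1$ --- is exactly the method the paper has in mind (the paper gives no further detail than ``simple induction, using the cell diagram'').  So the strategy is right.  But the way you package the inductive step does not quite work as written, and the Adams--cover bookkeeping is off.

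First, the index formulas you quote, $(\bo^{\bra{k}})^{\langle 1\rangle}\simeq \bsp^{\bra{k+1}}$ and $(\bsp^{\bra{k}})^{\langle 1\rangle}\simeq \bo^{\bra{k+1}}$, do not reproduce the statement.  Feeding in $i=0$ gives $\bo^{\bra{0}}\rightsquigarrow \bsp^{\bra{1}}$, whereas the lemma says $i=1$ yields $\bsp^{\bra{0}}$; similarly $i=1\to i=2$ should send $\bsp^{\bra{0}}$ to $\bo^{\bra{2}}$, not $\bo^{\bra{1}}$.  So whatever operation ``tensor with $\ul{\HZ}_1$ and kill $v_1$-torsion'' is, it is not a uniform ``take one Adams cover'': it alternates between $\bo^{\bra k}\mapsto\bsp^{\bra k}$ and $\bsp^{\bra k}\mapsto\bo^{\bra{k+2}}$.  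Any correct write-up has to reflect this asymmetry.

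Second, the long exact sequence you write down does not ``collapse to a connecting isomorphism'' in the way you describe.  In
\[
\cdots\to \Ext^{s}(\ul{\HZ}_1^{\otimes(i-1)})\to \Ext^{s}(\ul{\HZ}_1^{\otimes i})\to \Ext^{s}(M\otimes\ul{\HZ}_1^{\otimes(i-1)})\xrightarrow{\partial}\Ext^{s+1}(\ul{\HZ}_1^{\otimes(i-1)})\to\cdots
\]
the middle group is the one you are computing; it is not the target of $\partial$.  What actually happens (and what the ``simple induction'' hides) is that, modulo $v_1$-torsion, one must identify both the image of the left map and the kernel of $\partial$ and then read off the resulting extension.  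The connecting map here is governed by the $\mathrm{Sq}^2$ attaching the bottom cell, i.e.\ by $h_1$, and it is this $h_1$-multiplication (not a bare filtration shift) that produces the $\bo/\bsp$ alternation.  You have correctly located the difficulty --- the interaction of the $v_0$-Bockstein for $M$ with the $v_1$-periodic pattern --- but the sentence asserting a clean one-step Adams-cover shift is not a valid summary of that analysis.
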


We deduce the following well known result (cf. \cite[Thm.~2.1]{LellmannMahowald}).

\begin{prop}\label{lem:HZ_i}
For a non-negative integer $j$, denote by $\alpha(j)$ the number of 1's in the dyadic expansion of $j$. Then
$$
\frac{\Ext_{A(1)_*}(\ul{\HZ}_j)}{\text{$v_1$-tor}} \cong
\begin{cases}
\Ext(\bo^{\bra{2j-\alpha(j)}}), & \text{$j$ even}, \\
\Ext(\bsp^{\bra{2j-\alpha(j)-1}}), & \text{$j$ odd}.
\end{cases}
$$
\end{prop}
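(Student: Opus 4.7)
The strategy is strong induction on $j$ via the short exact sequences of Lemma~\ref{lem:HZSES}, ultimately reducing to Lemma~\ref{lem:HZ1^i}. The base cases $j = 0, 1$ are immediate: $\ul{\HZ}_0 = \FF_2$ gives $\Ext_{A(1)_*}(\FF_2) = \Ext(\bo^{\bra{0}})$, while the cell diagram of $\ul{\HZ}_1$ displayed just above yields $\Ext(\ul{\HZ}_1)/v_1\text{-tor}\cong \Ext(\bsp^{\bra{0}})$, matching $\alpha(0) = 0$ and $\alpha(1) = 1$.

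For the inductive step, the pivotal observation is that the ``error term'' $\ul{\bo}_{j-1}\otimes (A(1)//A(0))_*$ appearing in both short exact sequences of Lemma~\ref{lem:HZSES} has $v_1$-torsion $\Ext$. Indeed, the change-of-rings isomorphism
$$\Ext_{A(1)_*}\bigl(M\otimes (A(1)//A(0))_*\bigr)\cong \Ext_{A(0)_*}(M)$$
exhibits these Ext groups as concentrated on the $h_0$-line and hence annihilated by $v_1$. Consequently, the long exact Ext sequences arising from Lemma~\ref{lem:HZSES} collapse, modulo $v_1$-torsion, to isomorphisms
$$\frac{\Ext(\ul{\HZ}_{2k})}{v_1\text{-tor}}\cong \frac{\Ext(\Sigma^{4k}\ul{\HZ}_k)}{v_1\text{-tor}},\qquad \frac{\Ext(\ul{\HZ}_{2k+1})}{v_1\text{-tor}}\cong \frac{\Ext(\Sigma^{4k}\ul{\HZ}_k\otimes\ul{\HZ}_1)}{v_1\text{-tor}}.$$

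Iterating these recursions according to the binary expansion of $j$ produces one tensor factor of $\ul{\HZ}_1$ per ``$1$'' in the expansion (totaling $\alpha(j)$ factors) along with an accumulated suspension, which a short induction identifies as $\Sigma^{4(j-\alpha(j))}$. Lemma~\ref{lem:HZ1^i} then identifies $\Ext(\ul{\HZ}_1^{\otimes \alpha(j)})/v_1\text{-tor}$ with $\Ext(\bo^{\bra{\alpha(j)}})$ when $j$ is even (so $\alpha(j)$ is even) and with $\Ext(\bsp^{\bra{\alpha(j)-1}})$ when $j$ is odd. Finally, the $v_1$-periodic structure of $\Ext_{A(1)_*}(\FF_2)$ (specifically that $v_1^4\in\Ext^{4,12}$) yields the relation $\Sigma^{4}X^{\bra{n}}\simeq X^{\bra{n+2}}$ modulo $v_1$-torsion for $X\in\{\bo,\bsp\}$; applying this $(j-\alpha(j))$ times absorbs the suspension into the Adams cover, promoting the cover index from $\alpha(j)$ (resp.\ $\alpha(j)-1$) to $\alpha(j)+2(j-\alpha(j))=2j-\alpha(j)$ (resp.\ $2j-\alpha(j)-1$), as claimed. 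The main technical subtlety is this last absorption step, which hinges on the precise conventions for Adams covers and their interaction with the $v_1$-tower; once that is settled, the argument reduces to elementary bookkeeping with the binary expansion of $j$.
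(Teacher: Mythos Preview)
Your central claim --- that the long exact sequences ``collapse, modulo $v_1$-torsion, to isomorphisms'' $\frac{\Ext(\ul{\HZ}_{2k})}{v_1\text{-tor}}\cong \frac{\Ext(\Sigma^{4k}\ul{\HZ}_k)}{v_1\text{-tor}}$ --- is false, and this is exactly why the paper's proof invokes the two additional facts about $v_0$-towers. The error term is indeed $v_1$-torsion, so after \emph{inverting} $v_1$ the LES collapses; but the functor $M\mapsto M/v_1\text{-tor}$ is not exact, and the induced map on quotients need not be surjective. Concretely, take $k=1$: the unit $1\in\Ext^{0,0}_{A(1)_*}(\ul{\HZ}_2)$ generates a $v_0$-tower at $t-s=0$ which is $v_1$-periodic (this is the paper's fact (1)), yet $\Ext(\Sigma^4\ul{\HZ}_1)$ vanishes for $t-s<4$, so the map $\Ext(\Sigma^4\ul{\HZ}_1)\to\Ext(\ul{\HZ}_2)$ misses this tower entirely modulo $v_1$-torsion. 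The point is that the error term, while $v_1$-torsion, carries $v_0$-towers (the paper's fact (2)), and these are hit by $v_1$-periodic $v_0$-towers in $\Ext(\ul{\HZ}_{2k})$ that do \emph{not} come from $\Ext(\Sigma^{4k}\ul{\HZ}_k)$. Tracking these extra towers is precisely what determines the Adams-cover index, and your argument discards them.

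A secondary symptom of the same gap: your parenthetical ``$j$ even, so $\alpha(j)$ is even'' is false (e.g.\ $j=2$ has $\alpha(j)=1$), so your bookkeeping would in any case produce the wrong member of the $\{\bo,\bsp\}$ dichotomy. The paper's approach --- identifying $v_1$-torsion with $v_0$-torsion in $\Ext(\ul{\HZ}_i)$ via fact (1), and then using fact (2) to track how the error term's $v_0$-towers splice into the LES --- is what makes the induction go through with the correct cover indices.
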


\begin{proof}
This may be established by induction on $j$ using the short exact sequences of Lemma~\ref{lem:HZSES}, by augmenting Lemma~\ref{lem:HZ1^i} with the following facts.
\begin{enumerate}
\item All $v_0$-towers in $\Ext_{A(1)_*}(\ul{\HZ}_i)$ are $v_1$-periodic.  This can be seen as $\Ext_{A(1)_*}(\ul{\HZ}_j)$ is a summand of $\Ext(\bo \wedge \bo)$, and after inverting $v_0$, the latter has no $v_1$-torsion.  Explicitly we have
$$ v_0^{-1} \Ext(\bo \wedge \bo) = \FF_2[v^{\pm 1}_0, u^2, v^2]. $$ 
\item We have
\begin{align*} \frac{\Ext_{A(1)_*}((A(1)\mmod A(0))_* \otimes \ul{\bo}_j)}{\text{$v_0$-tors}} & \cong
\frac{\Ext_{A(0)_*}(\ul{\bo}_j)}{\text{$v_0$-tors}} \\
& \cong \FF_2[v_0]\{ 1, \xi_1^4, \ldots,  \xi_1^{4j}\}.
\end{align*}
This follows from the fact that 
$$ \frac{\Ext_{A(0)_*}(\ul{\HZ}_j)}{\text{$v_0$-tors}} \cong \FF_2[v_0], $$
which, for instance, can be established by induction using the short exact sequences of Lemma~\ref{lem:HZSES}.
\end{enumerate} 
\end{proof}

It turns out the $v_1$-torsion is all concentrated in Adams filtration $0$ (see, for instance, \cite{boresolutions}).  It follows that for dimensional reasons, the only possible non-trivial differentials in spectral sequence (\ref{eq:boboASS}) go from $v_1$-torsion classes to $v_1^4$-periodic classes.  This is not possible, so we deduce

\begin{cor}
The Adams spectral sequence for $\bo_*\bo$ collapses at $E_2$.
\end{cor}

\subsection{The cooperations of $\KU$ and $\bu$}

In order to put the ring of cooperations for $\bo$ in the proper setting, we briefly review the story for $\bu$.
We begin by recalling the Adams-Harris determination of $\KU_*\KU$ \cite[Sec.~II.13]{Adams}.  We have an arithmetic square
$$
\xymatrix{
\KU \wedge \KU \ar[r] \ar[d] & (\KU \wedge \KU)^\wedge_2 \ar[d] \\
(\KU \wedge \KU)_\QQ \ar[r] & ((\KU \wedge \KU)^\wedge_2)_\QQ,
}
$$
which results in a pullback square after applying $\pi_*$
$$
\xymatrix{
\KU_* \KU \ar[r] \ar[d] & \Map^c(\ZZ_2^\times, \pi_* \KU^\wedge_2) \ar[d] \\
\QQ[u^{\pm 1},v^{\pm 1}] \ar[r] & \Map^c(\ZZ_2^\times, \QQ_2[u^{\pm 1}]).
}
$$
Setting $w = v/u$, the bottom map in the above square is given on homogeneous polynomials by
$$ f(u,v) = u^n f(1,w) \mapsto \left(\lambda \mapsto u^n f(1,\lambda) \right). $$
We therefore deduce that $\KU_*\KU = \KU_* \otimes_{\KU_0} \KU_0\KU$, and continuity implies that
$$ \KU_0\KU = \{ f(w) \in \QQ[w^{\pm 1}] \: : \: f(k) \in \ZZ_{(2)}, \text{for all $k \in \ZZ^{\times}_{(2)}$}\}. $$

Note that we can perform a similar analysis for $\KU_*\bu$: since $\bu$ and $\KU$ are $K(1)$-locally equivalent, applying $\pi_*$ to the arithmetic square yields a pullback square with the same terms on the right hand edge
$$
\xymatrix{
\KU_* \bu \ar[r] \ar[d] & \Map^c(\ZZ_2^\times, \pi_* \KU^\wedge_2) \ar[d] \\
\QQ[u^{\pm 1},v] \ar[r] & \Map^c(\ZZ_2^\times, \QQ_2[u^{\pm 1}]).
}
$$
Consequently $\KU_*\bu = \KU_* \otimes_{\KU_0} \KU_0\bu$, with
$$ \KU_0\bu = \{ g(w) \in \QQ[w] \: : \: g(k) \in \ZZ_{(2)}, \text{for all $k \in \ZZ^{\times}_{(2)}$}\}. $$

Consider the related space of \emph{$2$-local numerical polynomials}:
$$ \mr{NumPoly}_{(2)} := \{ h(x) \in \QQ[x] \: : \: h(k) \in \ZZ_{(2)}, \text{for all $k \in \ZZ_{(2)}$}\}. $$
The theory of numerical polynomials states that $\mr{NumPoly}_{(2)}$ is the free $\ZZ_{(2)}$-module generated by the basis elements
$$ h_n(x) := \binom{x}{n} = \frac{x(x-1)\cdots (x-n+1)}{n !}. $$

We can relate $\KU_0\bu$ to $\mr{NumPoly}_{(2)}$ by a change of coordinates.  A function on $\ZZ^\times_{(2)}$ can be regarded as a function on $\ZZ_{(2)}$ via the change of coordinates
\begin{align*}
\ZZ_{(2)} & \xrightarrow{\approx} \ZZ_{(2)}^\times \\
k & \mapsto 2k+1. 
\end{align*}
Observe that
\begin{align*}
\frac{k(k-1)\cdots (k-n+1)}{n !} & = 
\frac{2k(2k-2)\cdots (2k-2n+2)}{2^n n !} \\
& = 
\frac{((2k+1)-1)((2k+1)-3)\cdots ((2k+1)-(2n-1))}{2^n n !}.
\end{align*}
We deduce that a $\ZZ_{(2)}$-basis for $\KU_0\bu$ is given by
\[
g_n(w) = \frac{(w-1)(w-3)\ldots(w-(2n-1))}{2^nn!}.
\]
(Compare with \cite[Prop.~17.6(i)]{Adams}.)

From this we deduce a basis of the image of the map
$$ \bu_*\bu \hookrightarrow \KU_* \KU, $$
as we now explain.
In \cite[p. 358]{Adams} it is shown that this image is the ring
\[
\frac{\bu_*\bu}{\text{$v_1$-tor}} = (\KU_*\bu \cap \Q[u,v])_{\mr{AF} \geq 0},
\]
where $\mr{AF} \geq 0$ means the elements of Adams filtration $\geq 0$. Since the elements $2$, $u$, and $v$ have Adams filtration $1$, this image is equivalently described as
$$ 
\frac{\bu_*\bu}{\text{$v_1$-tor}} = \KU_* \bu \cap \ZZ_{(2)}[u/2, v/2].
$$
To compute a basis for this image 
we need to calculate the Adams filtration of the elements of the basis $\{ g_n(w) \}$ for $\KU_0\bu$. 
Since $w$ has Adams filtration $0$ we need only compute the $2$-divisibility of the denominators of the functions $g_n(w)$. As usual in this subject, for an integer $k \in \Z$ let $\nu_2(k)$ be the largest power of $2$ that divides $k$ and let $\al(k)$ be the number of $1$'s in the binary expansion of $k$. Then
\[
\nu_2(n!) = n - \al(n)
\]
and so
\[
\mr{AF}(g_n) = \al(n) - 2n.
\]

The following is a list of the Adams filtration of the first few basis elements:

\begin{center}
\begin{tabular}{|l|r|r|}
\hline
$n$ & binary & $\mr{AF}(g_n)$ \\
\hline
$0$ & $0$ & $0$ \\
$1$ & $1$ & $-1$ \\
$2$ & $10$ & $-3$ \\
$3$ & $11$ & $-4$ \\
$4$ & $100$ & $-7$ \\
$5$ & $101$ & $-8$ \\
$6$ & $110$ & $-10$ \\
$7$ & $111$ & $-11$ \\
$8$ & $1000$ & $-15$ \\
\hline
\end{tabular}
\end{center}

It follows (compare with \cite[Prop.~17.6(ii)]{Adams})
 that the image of $\bu_*\bu$ in $\KU_* \KU$ is the free module:
$$ \frac{\bu_*\bu}{\text{$v_1$-tor}} = \ZZ_{(2)}\{ 2^{\max(0, 2n-m-\al(n))} u^m g_n(w)\: : \: n \ge 0, m \ge n \}. $$

The Adams chart in Figure \ref{fig:bubu} illustrates how the the Mahler basis can be used to identify $\bu_*\bu/v_1-tors$ as a $\bu_*$-module inside of $\KU_*\KU$.  Namely:
\begin{enumerate}
\item Start with the Mahler basis $g_n(w)$ (on the negative $s$-axis).
\item Draw $u$-towers on each of the $g_n(w)'s$.
\item For each $m \ge n$, add a $v_0$-tower on $u^mg_n(w)$, starting in non-negative Adams filtration. 
\end{enumerate}
Restricted to the first quadrant, this gives the $v_1$-torsion-free summand of the Adams spectral sequence for $\bu_*\bu$. 

\begin{figure}
\centering
	\caption{$\bu_*\bu$}
	\label{fig:bubu}
	\includegraphics[height = .5\textheight, trim = 1cm 13cm 4cm 2cm]{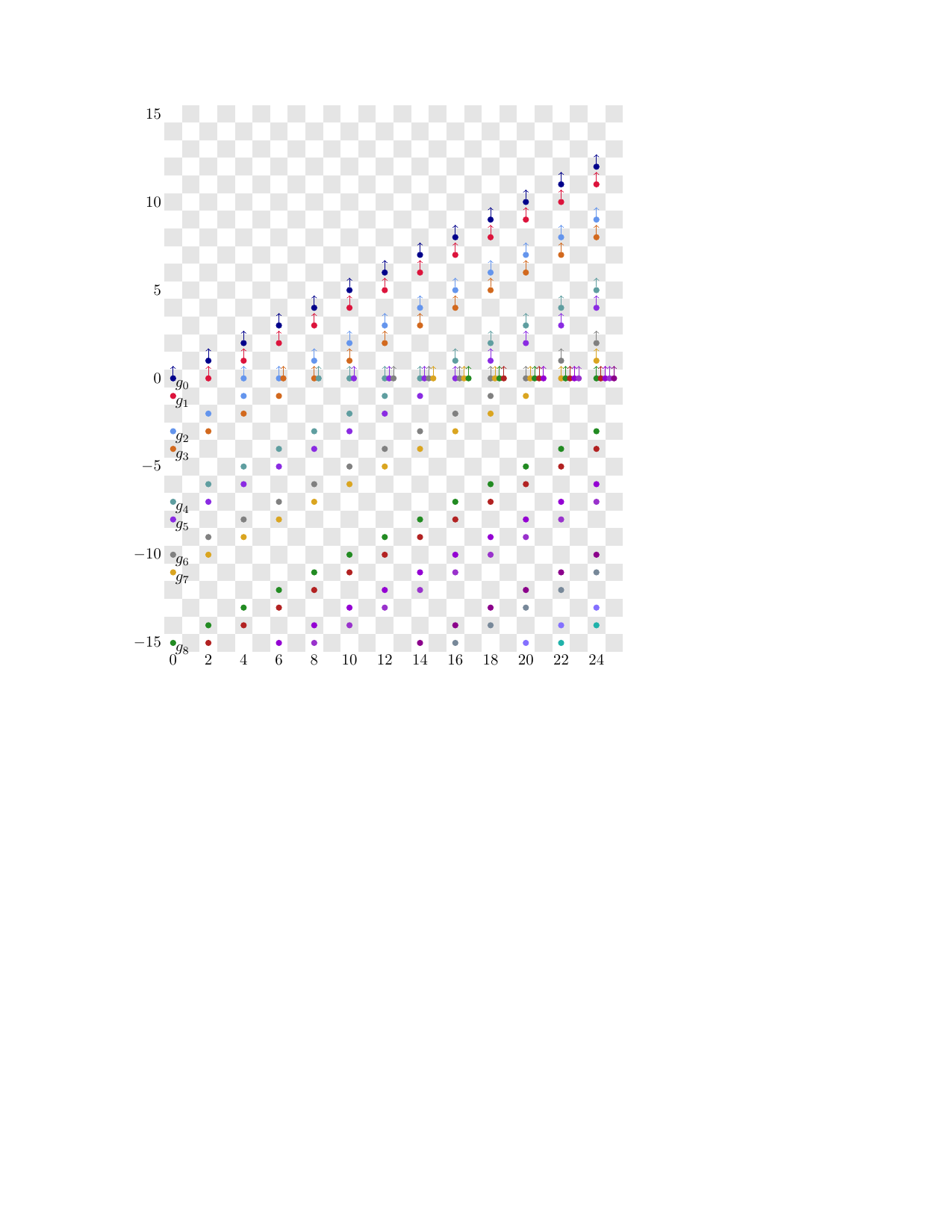}
\end{figure}

\subsection{The cooperations of $\KO$ and $\bo$}

Adams and Switzer computed $\KO_*\KO$  along similar lines \cite[Sec.~II.17]{Adams}.  There is an arithmetic square
$$
\xymatrix{
\KO \wedge \KO \ar[r] \ar[d] & (\KO \wedge \KO)^\wedge_2 \ar[d] \\
(\KO \wedge \KO)_\QQ \ar[r] & ((\KO \wedge \KO)^\wedge_2)_\QQ,
}
$$
which results in a pullback when applying $\pi_*$
$$
\xymatrix{
\KO_* \KO \ar[r] \ar[d] & \Map^c(\ZZ_2^\times/\{\pm 1\}, \pi_* \KO^\wedge_2) \ar[d] \\
\QQ[u^{\pm 2},v^{\pm 2}] \ar[r] & \Map^c(\ZZ_2^\times/\{ \pm 1\}, \QQ_2[u^{\pm 2}]).
}
$$
(One can use the fact that $\KU^\wedge_2$ is a $K(1)$-local $C_2$-Galois extension of $\KO^\wedge_2$ to identify the upper right hand corner of the above pullback.)
Continuing to let $w = v/u$, the bottom map in the above square is given by
$$ f(u^2,v^2) = u^{2n} f(1,w^2) \mapsto \left([\lambda] \mapsto u^{2n} f(1,\lambda^2) \right). $$
We therefore deduce that $\KO_*\KO = \KO_* \otimes_{\KO_0} \KO_0\KO$, with
$$ \KO_0\KO = \{ f(w^2) \in \QQ[w^{\pm 2}] \: : \: f(\lambda^2) \in \ZZ^\times_{2}, \text{for all $[\lambda] \in \ZZ_2^{\times}/\{\pm 1\}$}\}. $$

Again, $\KO_*\bo$ is similarly determined: since $\bo$ and $\KO$ are $K(1)$-locally equivalent, applying $\pi_*$ to the arithmetic square yields a pullback square with the same terms on the right hand edge:
$$
\xymatrix{
\KO_* \bo \ar[r] \ar[d] & \Map^c(\ZZ_2^\times/\{ \pm 1\}, \pi_* \KO^\wedge_2) \ar[d] \\
\QQ[u^{\pm 2},v^2] \ar[r] & \Map^c(\ZZ_2^\times/\{\pm 1\}, \QQ_2[u^{\pm 2}]).
}
$$
We therefore deduce that $\KO_*\bo = \KO_* \otimes_{\KO_0} \KO_0\bo$, with
$$ \KO_0\bo = \{ f(w^2) \in \QQ[w^2] \: : \: f(\lambda^2) \in \ZZ_{2}, \text{for all $[\lambda] \in \ZZ^{\times}_{2}/\{\pm 1 \}$}\}. $$

To produce a basis of this space of functions we use the $q$-Mahler bases developed in \cite{Conrad}, which we promptly recall. 
First note that there is an exponential isomorphism
\[
\Z_2 \lra{\cong} \Z_{2}^{\times}/\{\pm 1\}:k \mapsto [3^k].
\]
Taking $w = 3^k$, we have $w^2 = 9^k$, or in other words, the functions $f(w^2)$ that we are concerned with can be regarded as functions on $2\Z_2$. They take the form
\[
f(9^k):2\Z_2 \cong 1+8\Z_2 \lra{} \Z_2,
\]
where $1+8\Z_2 \subset \Z_{2}^{\times}$ is the image of $2\Z_2$ under the isomorphism given by $3^k$.

To obtain a $q$-Mahler basis as in \cite{Conrad} with $q = 9$ it is important that $\nu_2(9-1)>0$. 
The $q$-Mahler basis is a basis for numerical polynomials with domain restricted to $2\Z_2$.
In the notation of \cite{Conrad} we have that
\[
f(9^k) = \sum_{n\geq 0}c_n \binom{k}{n}_9, 
\]
where $c_n \in \Z_{(2)}$ are coefficients and
\[
\binom{k}{n}_9 = \frac{(9^k-1)(9^k-9)\cdots (9^k - 9^{n-1})}{(9^n-1)(9^n-9)\cdots (9^n - 9^{n-1})}.
\]

Let us set
\begin{align}\label{eq:f_n(w^2)}
f_n(w^2) = \frac{(w^2-1)(w^2-9)\cdots (w^2 - 9^{n-1})}{(9^n-1)(9^n-9)\cdots (9^n - 9^{n-1})};
\end{align}
then any $f\in \KO_0\bo$ is given by
\[
f(w^2) = \sum_{n}c_n f_n(w^2), \qquad c_n \in \Z_{(2)}, 
\]
i.e. a basis for $\KO_0 \bo$ is given by the set $\{ f_n (w^2) \}_{n \ge 0}$.

As in the $\KU$-case, it turns out that the image of $\bo_* \bo$ in $\KO_* \KO$ is given by
$$ \frac{\bo_*\bo}{\text{$v_1$-tor}} = (\KO_*\bo \cap \QQ[u^2, v^2])_{\AF \ge 0}. $$
In order to compute a basis for this we once again need to know the Adams filtration of $f_n$. One can show that
\begin{eqnarray*}
\nu_2((9^n-1)(9^n-9)\cdots (9^n - 9^{n-1})) & = & \nu_2(n!)+3n \\
& = & 4n - \al(n).
\end{eqnarray*}
It follows that we have
\begin{align*}
\frac{\bo_*\bo}{\text{$v_1$-tor}} = & 
\: \ZZ_{(2)}\{ 2^{\max(0, 4n-2m-\al(n))} u^{2m} f_n(w^2)\: : \: n \ge 0, \: m \ge n, \: m \equiv 0 \mod 2 \} \\
& \oplus \ZZ_{(2)}\{ 2^{\max(0, 4n-2m-1-\al(n))} 2u^{2m} f_n(w^2)\: : \: n \ge 0, \: m \ge n, \: m \equiv 0 \mod 2 \} \\
& \oplus \ZZ/2\left\{ u^{2m} f_n(w^2) \eta^c \: : \: 
\begin{array}{l}
n \ge 0, \: m \ge n, \: m \equiv 0 \mod 2, \\
c \in \{1,2\}, \: \alpha(n)-4n+2m+c \ge 0  
\end{array}
\right\}.
\end{align*}

Here is a list of the Adams filtration of the first several elements in the $q$-Mahler basis:
\begin{center}
\begin{tabular}{|l|r|r|}
\hline
$n$ & $f_n$ in terms of $g_i$ & $\mr{AF}(f_n)$ \\
\hline
$0$ & $g_0$ & $0$ \\
$1$ & $g_2 + g_1$ & $-3$ \\
$2$ & $\frac{1}{15}g_4 + \frac{2}{15}g_3 + \frac{1}{15} g_2$ & $-7$ \\
\hline
\end{tabular}
\end{center}

With this information we can now give the Adams chart (Figure \ref{fig:bobo}) of $\bo_*\bo$ modulo $v_1$-torsion.

\begin{figure}
\centering
	\caption{$\bo_*\bo$}
	\label{fig:bobo}
	\includegraphics[height = .3\textheight, trim = 5cm 17.5cm 3cm 2cm]{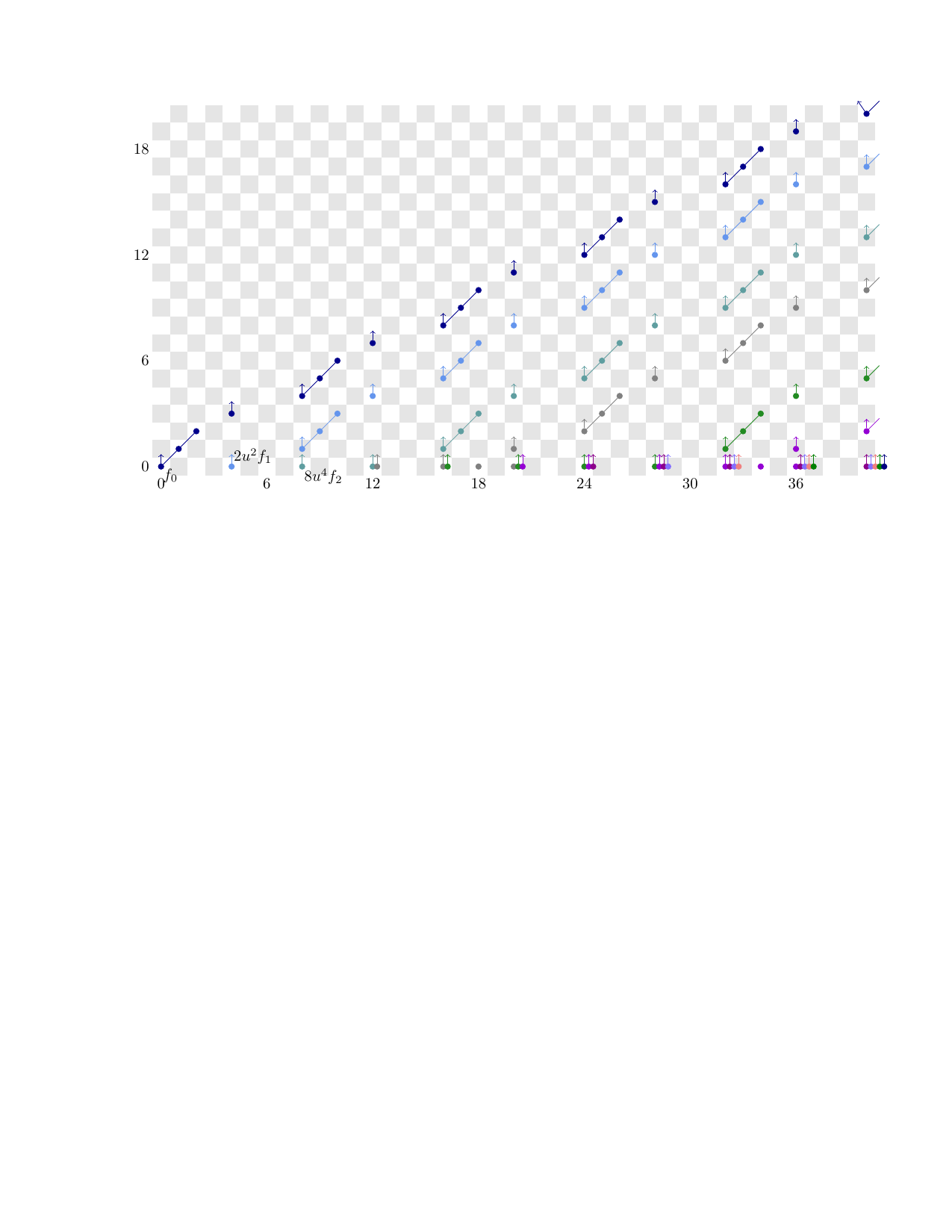}
\end{figure}

\subsection{Calculation of the image of $\bo_* \HZ_j$ in $\KO_* \KO$}
We now compute the image (on the level of Adams $E_\infty$-terms) of the composite
$$ \bo_*\HZ_j \rightarrow \bo_*\bo \rightarrow \KO_*\KO. $$
Since $v_1^{-1}\bo_*\Sigma^{4j}\HZ_j \cong \KO_*$, it suffices to determine the image of the generator 
$$ e_{4j} \in \bo_{4j}(\Sigma^{4j}\HZ_j). $$ 
Because the maps
$$ \bo \wedge \Sigma^{4j} \HZ_j \rightarrow \bo \wedge \bo $$
are constructed to be $\bo$-module maps,
everything else is determined by $2$ and $v_1$, i.e. $u$-multiplication. 
Consider the commutative diagram induced by the maps $\bo \rightarrow \bu$, $\bu \rightarrow \HF2$, and $\BP \rightarrow \bu$
$$
\xymatrix{
\bo \wedge \Sigma^{4j} \HZ_j \ar[r] \ar[d] & 
\bo \wedge \bo \ar[r] \ar[d] & 
\bu \wedge \bu \ar[d] & 
\BP \wedge \BP \ar[ld] \ar[l] 
\\
\HF2 \wedge \Sigma^{4j} \HZ_j \ar[r] & 
\HF2 \wedge \bo \ar[r] & 
\HF2 \wedge \HF2. 
}
$$
On the level of homotopy groups the bottom row of the above diagram takes the form
$$ \FF_2\{\bar{\xi}_1^{4j}, \ldots \} \hookrightarrow \FF_2[\bar{\xi}_1^4, \bar{\xi}_2^2, \bar{\xi}_3, \ldots ] \hookrightarrow \FF_2[\bar{\xi}_1, \bar{\xi}_2, \bar{\xi}_3, \ldots ].
$$ 
Since we have
\begin{align*}
\bo_{*}\Sigma^{4j}\HZ_j & \rightarrow (\HF2)_*\Sigma^{4j}\HZ_j \\
e_{4j} & \mapsto \bar{\xi}_1^{4j},
\end{align*}
it suffices to find an element $b_j \in \bo_{4j}\bo$ such that
\begin{align*}
\bo_{*}\bo & \rightarrow (\HF2)_*\bo \\
b_i & \mapsto \bar{\xi}_1^{4j}.
\end{align*}
Clearly we can take $b_0 = 1 \in \bo_0\bo$.
Note that we have
\begin{align*}
\BP_{*}\BP & \rightarrow (\HF2)_*\HF2 \\
t_1 & \mapsto \bar{\xi}_1^{2}.
\end{align*}
From the equation
$$ \eta_R(v_1) = v_1 + 2t_1 $$
and the fact that the map $\BP_*\BP \to \bu_*\bu$ is one of Hopf algebroids,
we deduce that we have
\begin{align*}
\BP_{*}\BP & \rightarrow \bu_*\bu \\
t_1 & \mapsto \frac{v-u}{2} = ug_1(w).
\end{align*}
Hence we get that
\begin{align*}
\bu_{*}\bu & \rightarrow (\HF2)_*\HF2 \\
\frac{v-u}{2} & \mapsto \bar{\xi}_1^2
\end{align*}
and thus
\begin{align*}
\bu_{*}\bu & \rightarrow (\HF2)_*\HF2 \\
\left(\frac{v^2-u^2}{4}\right)^{j} & \mapsto \bar{\xi}_1^{4j}.
\end{align*}
Since
$$ 2^{2j - \alpha(j)}u^{2j}f_j(w^2) = \left(\frac{v^2-u^2}{4}\right)^{j} \quad \text{modulo terms of higher $\AF$} $$
by \eqref{eq:f_n(w^2)} we see that we have
\begin{align*}
\bo_{*}\bo & \rightarrow (\HF2)_*\bo \\
2^{2j - \alpha(j)}u^{2j}f_j(w^2) & \mapsto \bar{\xi}_1^{4j},
\end{align*}
so that we can take
$$ b_j = 2^{2j - \alpha(j)}u^{2j}f_j(w^2). $$

We have therefore arrived at the following well-known theorem (see \cite[Cor.~2.5(a)]{LellmannMahowald}).

\begin{thm}\label{thm:HZjImage}
The image of the map
$$
\frac{\Ext(\bo \wedge \Sigma^{4j} \ul{\HZ}_j)}{\text{$v_1$-tors}}  \rightarrow 
 \frac{\Ext(\bo \wedge bo)}{\text{$v_1$-tors}}
$$
is the submodule
\begin{align*}
& \FF_2[v_0]\{ v_0^{\max(0, 4j-2m-\al(j))} u^{2m} f_j(w^2)\: : \: m \ge j, \: m \equiv 0 \mod 2 \} \\
& \oplus \FF_2[v_0] \{ v_0^{\max(0, 4j-2m-1-\al(j))} v_0 u^{2m} f_j(w^2)\: : \: m \ge j, \: m \equiv 0 \mod 2 \} \\
& \oplus \FF_2\left\{ u^{2m} f_j(w^2) \eta^c \: : \: 
\begin{array}{l}
m \ge j, \: m \equiv 0 \mod 2, \\
c \in \{1,2\}, \: \alpha(j)-4j+2m+c \ge 0  
\end{array}
\right\}.
\end{align*}
\end{thm}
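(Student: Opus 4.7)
The preceding calculation identifies the image of the bottom-cell generator as $b_j = 2^{2j-\alpha(j)} u^{2j} f_j(w^2) \in \bo_{4j}(\bo) \subset \KO_{4j}\KO$, and the map $\bo \wedge \Sigma^{4j}\HZ_j \to \bo \wedge \bo$ is one of $\bo$-modules. The plan is to identify the image on $\Ext$-groups (modulo $v_1$-torsion) by combining the Mahowald splitting of $\bo \wedge \bo$ with the description of $\bo_*\bo/v_1\text{-tors}$ inside $\KO_*\KO$ obtained in the preceding subsection.

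First I would use the wedge decomposition $\bo \wedge \bo \simeq \bigvee_i \bo \wedge \Sigma^{4i}\HZ_i$ to express $\bo_*\bo/v_1\text{-tors}$ as a direct sum of summands $M_i := \bo_*\Sigma^{4i}\HZ_i/v_1\text{-tors}$. After inverting $v_1$ this becomes $\KO_*\KO \cong \bigoplus_i \KO_*\{b_i\}$, and since $u \in \KO_8$ is a unit we have $\KO_*\{b_j\} = \KO_* \cdot f_j(w^2)$ as a $\KO_*$-submodule of $\KO_*\KO$. Consequently $M_j$ is exactly the intersection
$$ M_j = (\bo_*\bo/v_1\text{-tors}) \cap (\KO_* \cdot f_j(w^2)). $$

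Restricting the explicit $\ZZ_{(2)}$-basis of $\bo_*\bo/v_1\text{-tors}$ recorded in the preceding subsection to those basis elements whose ``$n$-index'' equals $j$ gives an explicit additive basis of $M_j$. Passing to Adams $E_\infty$-terms replaces each factor of $2$ by the detecting element $v_0$ and converts $\ZZ_{(2)}$ summands into $\FF_2[v_0]$ summands and $\ZZ/2$ summands into $\FF_2$ summands; this yields precisely the description asserted in the statement.

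The main point requiring care is essentially bookkeeping rather than substance: one must verify that the parity condition $m \equiv 0 \pmod 2$, the ``max'' formulas for the exponents of $v_0$, and the $\eta^c$ torsion constraints carry over intact from the full basis to the $n=j$ sub-basis. Since all of these features are already built into the description of $\bo_*\bo/v_1\text{-tors}$, the verification is purely combinatorial.
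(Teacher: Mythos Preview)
Your proposal is correct and follows essentially the same route as the paper. Both arguments rest on the preceding computation that the bottom cell $e_{4j}$ maps to $b_j = 2^{2j-\alpha(j)}u^{2j}f_j(w^2)$, together with the fact that the map is one of $\bo$-modules; the paper phrases the remaining step as ``everything else is determined by $2$ and $v_1$, i.e.\ $u$-multiplication'' (implicitly invoking Proposition~\ref{lem:HZ_i}), while you phrase it as intersecting the already-computed basis of $\bo_*\bo/v_1\text{-tors}$ with the rank-one $\KO_*$-summand $\KO_*\cdot f_j(w^2)$, but these are two descriptions of the same identification.
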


\begin{rmk}
For each $j \geq 0$, this theorem describes a submodule of $\frac{\Ext(\bo \wedge bo)}{\text{$v_1$-tors}}$. These submodules are represented by the different colors in Figure \ref{fig:bobo}. 
\end{rmk}

\subsection{The embedding into $\prod \KO$}

The final step is to consider the maps of $\KO$-algebras given by the composite
$$ \td{\psi}^{3^k}: \KO \wedge \KO \xrightarrow{1 \wedge \psi^{3^k}} \KO \wedge \KO \xrightarrow{\mu} 
\KO $$
where $\psi^{3^k}$ is the $3^k$-th Adams operation. Together, they result in a map of $\KO$-algebras
$$ \KO \wedge \KO \xrightarrow{\prod \td{\psi}^{3^k}} \prod_{k \in \ZZ} \KO. $$

\begin{rmk}
The map above has a modular interpretation. Let $\M_{fg}$ denote the moduli stack of formal groups, and let
\[
(\Spec\Z)\mmod C_2 \rightarrow \M_{fg}
\]
classify $\hat{\mathbb{G}}_m$ with the action of $[-1]$. This map equips $(\Spec \Z)\mmod C_2$ with a sheaf of $\Einf$-rings, such that the derived global sections are $\KO$; the reader is referred to the appendix of \cite{LawsonNaumann2} for details. The spectrum $\KO \wedge \KO$ is the global sections of the pullback
\[
\left(\Spec \Z \times_{\M_{fg}} \Spec \Z\right)\mmod (C_2 \times C_2).
\]
For $k \in \ZZ$ we may consider the map of stacks
\[
(\Spec \Z)\mmod C_2 \rightarrow \left(\Spec\Z \times_{\M_{fg}} \Spec\ZZ \right)\mmod (C_2 \times C_2)
\]
sending $\hat{\mathbb{G}}_m$ to the object $[3^k]:\hat{\mathbb{G}}_m \rightarrow \hat{\mathbb{G}}_m$. As $k$ varies this induces the map $\prod \td{\psi}^{3^k}$.
\end{rmk}

\begin{prop}
The map
$$ \KO_* \KO \xrightarrow{\prod \td{\psi}^{3^k}} \prod_{k \in \ZZ} \KO_* $$
is an injection.
\end{prop}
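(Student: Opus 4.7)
The plan is to exploit the description of $\KO_*\KO$ developed in the preceding subsections, combined with the fact that $\{3^k : k \in \ZZ\}$ is dense in $\ZZ_2^\times/\{\pm 1\}$ (since $3$ is a topological generator of $\ZZ_2^\times/\{\pm 1\} \cong \ZZ_2$).

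First, I unpack the action of $\td{\psi}^{3^k}$. Since this map is $\KO_*$-linear on the left factor and $\psi^{3^k}$ sends $v^2 \mapsto 9^k v^2$, while $\mu$ identifies $v$ with $u$, a degree-$2M$ generator $u^{2M} f(w^2)$ is sent to $u^{2M} f(9^k)$ for any $f \in \KO_0\KO$. Rationally, a general element in degree $2M$ is a finite sum $\sum_b c_b u^{2(M-b)} v^{2b}$ whose image at $k$ is $u^{2M} P(9^k)$, where $P(x) = \sum_b c_b x^b \in \QQ[x^{\pm 1}]$. Since a nonzero Laurent polynomial has only finitely many zeros, vanishing at every $9^k$ for $k \in \ZZ$ forces $P = 0$, hence all $c_b = 0$. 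This proves injectivity rationally.

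For the $\eta$-torsion portion, an element $\eta^c u^{2M} f(w^2)$ (with $c \in \{1,2\}$) vanishes in $\KO_*\KO$ if and only if $f \in 2\KO_0\KO$, equivalently $f(\lambda^2) \in 2\ZZ_2$ for every $\lambda \in \ZZ_2^\times$. Viewing $f$ as a continuous function on $\ZZ_2^\times/\{\pm 1\}$ and using density of $\{[3^k]\}$, this condition reduces to $f(9^k) \in 2\ZZ_2$ for every $k \in \ZZ$, which is exactly the vanishing of $\td{\psi}^{3^k}(\eta^c u^{2M} f(w^2)) = \eta^c u^{2M} f(9^k)$ for all $k$. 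Thus injectivity on the $\eta$-torsion part follows as well.

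Together these two cases give full injectivity. A cleaner packaging is: after $2$-completion, $\pi_*(\KO \wedge \KO)^\wedge_2 \cong \Map^c(\ZZ_2^\times/\{\pm 1\}, \pi_*\KO^\wedge_2)$, and $\td{\psi}^{3^k}$ becomes evaluation at $[3^k]$; a continuous function vanishing on a dense set is identically zero, and the arithmetic square combines this with the rational statement to recover the integral result. This density-of-Adams-operations viewpoint previews the more elaborate $K(2)$-local argument in Section~6. The only real subtlety is handling the $\eta$-torsion, which the density observation resolves uniformly.
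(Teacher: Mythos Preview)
Your proof is correct. The paper's argument is close to your ``cleaner packaging'' remark at the end: it observes that the $2$-completion maps
\[
\KO_*\KO \hookrightarrow (\KO_*\KO)^{\wedge}_2 \cong \Map^c(\ZZ_2^\times/\{\pm 1\}, (\KO_*)^\wedge_2), \qquad \prod_k \KO_* \hookrightarrow \prod_k (\KO_*)^\wedge_2
\]
are injective (since $\bigcap_i 2^i \KO_*\KO = 0$ and $\bigcap_i 2^i \KO_* = 0$), and that after completion the map becomes restriction of continuous functions along the dense inclusion $3^{\ZZ} \hookrightarrow \ZZ_2^\times/\{\pm 1\}$, which is injective.

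Your direct argument takes a slightly different route: you split into the torsion-free and $\eta$-torsion pieces of $\KO_*\KO = \KO_* \otimes_{\KO_0} \KO_0\KO$ and treat them separately. On the torsion-free part you use the elementary fact that a nonzero Laurent polynomial has only finitely many zeros (rather than density), while on the $\eta$-torsion part your argument is essentially the paper's density argument applied mod~$2$. The paper's version is more uniform---one injectivity-of-completion step plus one density step covers everything at once---whereas yours is more concrete and avoids checking that $\KO_*\KO$ has no infinitely $2$-divisible elements. Both are perfectly fine; the completion viewpoint is what generalizes most cleanly to the $\TMF$ situation in Section~6.
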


\begin{proof}
Consider the diagram
$$
\xymatrix{
\KO_* \KO \ar[r]^{\prod \td{\psi}^{3^k}} \ar[d] & \prod_{k \in \ZZ} \KO_*  \ar[d] 
\\
(\KO_* \KO)^{\wedge}_{2} \ar[r]^{\prod \td{\psi}^{3^k}} \ar@{=}[d] &  \prod_{k \in \ZZ} (\KO_*)^{\wedge}_2 \ar@{=}[d] 
\\
\Map^c(\ZZ_2^\times/\{\pm 1\}, (\KO_*)^\wedge_2) \ar[r] 
& \Map(3^\ZZ, (\KO_*)^\wedge_2), 
} 
$$
where the bottom horizontal map is the map induced from the inclusion of groups 
$$ 3^\ZZ \hookrightarrow \ZZ_2^\times/\{ \pm 1 \}. $$
The vertical maps are injections, since 
$$ \bigcap_i 2^i \KO_*\KO = 0, \quad \rm{and} \quad \bigcap_i 2^i \KO_* = 0. $$
The bottom horizontal map is an injection since $3^\ZZ$ is dense in $\ZZ_2^\times/\{\pm 1\}$.
The result follows.
\end{proof}

We investigated the Brown-Gitler wedge decomposition
$$ \bigvee_j \bo \wedge \Sigma^{4j} \HZ_j \xrightarrow{\simeq} \bo \wedge \bo, $$
and we now end this section by explaining how the map
$$ \KO \wedge \KO \xrightarrow{\prod \td{\psi}^{3^k}} \prod_{k \in \ZZ} \KO $$
is compatible with the above decomposition.

\begin{prop}
The composites
$$ \bo \wedge \HZ_j \rightarrow \bo \wedge \bo \rightarrow \KO \wedge \KO \xrightarrow{\td{\psi}^{3^j}} \KO $$
are equivalences after inverting $v_1$.
\end{prop}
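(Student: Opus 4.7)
The strategy is a direct computation on the generator of the source after inverting $v_1$. The key inputs are Theorem~\ref{thm:HZjImage}, which records $b_j = 2^{2j-\alpha(j)}u^{2j}f_j(w^2) \in \KO_{4j}\KO$ as the image of the Brown--Gitler generator of (a $4j$-fold suspension of) $\bo \wedge \HZ_j$ inside $\bo\wedge\bo$, together with the explicit formula \eqref{eq:f_n(w^2)} for $f_j(w^2)$.

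First, I would compute the image of the source generator under $\td{\psi}^{3^j}$. Recall that $\td{\psi}^{3^j}$ acts on $\KO_*\KO \cong \KO_*\otimes_{\KO_0}\KO_0\KO$ as the identity on the $\KO_*$-factor and by evaluation at $[3^j] \in \ZZ_2^\times/\{\pm 1\}$ (i.e., at $w = 3^j$, hence $w^2 = 9^j$) on the $\KO_0\KO$-factor. Substituting $w^2 = 9^j$ into
\[
  f_j(w^2) = \frac{(w^2-1)(w^2-9)\cdots(w^2-9^{j-1})}{(9^j-1)(9^j-9)\cdots(9^j-9^{j-1})}
\]
makes the numerator equal the denominator, so $f_j(9^j) = 1$; therefore $\td{\psi}^{3^j}(b_j) = 2^{2j-\alpha(j)}u^{2j} \in \KO_{4j}$.

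Next, I would compare this image to the expected generator after $v_1$-inversion. By Proposition~\ref{lem:HZ_i}, the non-$v_1$-torsion part of $\Ext_{A(1)_*}(\ul{\HZ}_j)$ is a single $v_0$-tower whose bottom class sits at Adams filtration $2j-\alpha(j)$ (with $\bsp$ replacing $\bo$ when $j$ is odd), so after inverting $v_1$ the source is a rank-one $\KO$-module whose generator $e_{4j}$ lies at stem $4j$ and Adams filtration $2j-\alpha(j)$. Since $u^{2j}$ becomes a unit in $v_1^{-1}\KO_*$ and the factor $2^{2j-\alpha(j)}$ accounts for exactly the Adams-filtration shift of the source generator, the assignment $e_{4j}\mapsto 2^{2j-\alpha(j)}u^{2j}$ identifies the source with $\KO$ as rank-one $v_1$-periodic $\KO$-modules.

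The main obstacle is the filtration bookkeeping in the second step: one must verify that the power of $2$ obtained from $\td{\psi}^{3^j}(b_j)$ matches exactly the Adams-cover shift predicted by Proposition~\ref{lem:HZ_i}, so that the map is a genuine identification of rank-one $\KO$-modules via their filtration-adjusted generators rather than a proper inclusion onto a sub-$\KO$-module. Modulo this bookkeeping, the proof reduces to the clean identity $f_j(9^j)=1$.
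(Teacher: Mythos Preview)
Your computation $f_j(9^j)=1$ is exactly the paper's one-line proof. The second step you outline --- matching the explicit $2$-power $2^{2j-\alpha(j)}$ in $\td\psi^{3^j}(b_j)$ against the Adams-cover index of the source generator --- is unnecessary, and as you note you do not actually complete it.

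Here is why the identity alone suffices. After inverting $v_1$ the source is a rank-one free $\KO_*$-module summand of $\KO_*\KO$ (a summand of a free module, rationally rank one), so the composite to $\KO$ is multiplication by some $\lambda_j\in\ZZ_{(2)}$; one only needs $\lambda_j$ odd. Theorem~\ref{thm:HZjImage} is a statement about the Adams associated graded: the image of the $j$th summand there is spanned purely by $f_j(w^2)$. Hence a $\KO_*$-generator $g_j$ of the summand satisfies $g_j\equiv f_j\pmod 2$ in $\KO_0\KO$, and $\lambda_j=g_j(9^j)\equiv f_j(9^j)=1\pmod 2$, a unit. No explicit $2$-power accounting is required. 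If you prefer to push your route through, two points need attention: first, $b_j=2^{2j-\alpha(j)}u^{2j}f_j(w^2)$ is only a representative for the image of $e_{4j}$ modulo higher Adams filtration, so your equality $\td\psi^{3^j}(e_{4j})=2^{2j-\alpha(j)}u^{2j}$ holds only modulo $2^{2j-\alpha(j)+1}\KO_{4j}$; second, $e_{4j}$ is not a $\KO_*$-generator of the source but (by the Adams-cover description of Proposition~\ref{lem:HZ_i}) exactly $2^{2j-\alpha(j)}$ times one. Together these also give $\lambda_j$ odd, but the mod-$2$ argument gets there without the detour.
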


\begin{proof}
This follows from the fact that $f_j(9^j) = 1.$
\end{proof}

\begin{rmk}
In fact, the ``matrix" representing the composite
$$
\bigvee_j \bo \wedge \HZ_j \rightarrow \bo \wedge \bo \rightarrow \KO \wedge \KO \xrightarrow{\prod \td{\psi}^{3^k}} \prod_{k \in \ZZ} \KO
$$ 
is upper triangular, as we have
\begin{equation}\label{eq:UT}
f_j(9^k) = 
\begin{cases}
0, & k < j, \\
1, & k = j. \\
\end{cases}
\end{equation}
This is related to a result of Barker and Snaith \cite{BarkerSnaith} in the following way.  They prove that with respect to the decomposition
\begin{equation}\label{eq:bubodecomp}
 \bu \wedge \bo \simeq \bigvee_j \Sigma^{4j} \bu \wedge H\ZZ_j 
 \end{equation}
the automorphism
$$ 1 \wedge \psi^3: \bu \wedge \bo \rightarrow \bu \wedge \bo $$
is represented by a matrix conjugate to 
$$
\begin{pmatrix}
1 & 1 & 0 & 0  & \cdots \\
0  & 9 & 1 & 0  & \cdots \\
0 & 0  & 9^2 & 1 & \cdots \\
\vdots  & \vdots  & \vdots  & \ddots & \ddots
\end{pmatrix}.
$$
Therefore $1 \wedge \psi^{3^k}$ corresponds to a matrix of the form
\begin{equation}\label{eq:matrixreloaded}
\begin{pmatrix}
\smash[b]{\overbrace{* \cdots *}^{k-1}} & 1 & 0 & 0 & \cdots \\
 \vdots  & \vdots  & \vdots  &  \vdots  & \\
 \vdots  & \vdots  & \vdots  &  \vdots  & \\
\end{pmatrix}.
\end{equation}
Using the fact that the composite
$$ \bu \wedge \bo \rightarrow \bu \wedge \bu \xrightarrow{\mu} \bu $$
corresponds to projection on the $j = 0$ summand of (\ref{eq:bubodecomp}), it follows that (\ref{eq:UT}) is consistent with the top row of the matrix (\ref{eq:matrixreloaded}).
\end{rmk}

% !TEX root = tmfcoop.tex

\section{Recollections on topological modular forms}\label{sec:review}

\subsection{Generalities} \emph{In this subsection, we work integrally.}
The remainder of this paper is concerned with determining as much information as we can about the cooperations in the homology theory $\tmf$ of connective topological modular forms, following our guiding example of $\bo$. 
Even more than in the $\bo$ case, an extensive cast of characters will play supporting roles. First of all, we will extensively use the periodic spectrum $\TMF$, which is the analogue of $\KO$. In particular, we will use the fact that this periodic form of topological modular forms arises as the global sections of the Goerss-Hopkins-Miller sheaf of ring spectra $\mathcal{O}^{top}$ on the moduli stack of smooth elliptic curves $\M$. As the associated homotopy sheaves are
\[\pi_{k}\mathcal{O}^{top}=\begin{cases} \omega^{\otimes k/2}, \text{ if } $k$ \text{ is even},\\
0,  \text{ if } $k$ \text{ is odd},  \end{cases} \]
there is a descent spectral sequence 
\[ H^s(\M, \omega^{\otimes t}) \Rightarrow \pi_{2t-s} \TMF.\]

Morally, the connective $\tmf$ should arise as global sections of an analogous sheaf on the moduli stack of all cubic curves (i.e. allowing nodal and cuspidal singularities); however, this has not been formally carried out. Nevertheless, $\tmf$ can be constructed as an $\Einf$-ring spectrum from $\TMF$ as a result of the gap in the homotopy of a third, non-connective and non-periodic, version of topological modular forms associated to the compactification of $\M$.

Rationally, every smooth elliptic curve $C/S$ is locally isomorphic to a cubic of the form \[ y^{2}= x^{3}-27c_{4}x-54c_{6},\] with the discriminant $\Delta=c_{4}^{3}-c_{6}^{2}$ invertible. Here $c_i$ is a section of the line bundle $\omega^{\otimes i}$ over the \'etale map $S\to \M$ classifying $C$. This translates to the fact that $\M_{\Q}\cong \Proj \Q[c_4,c_6][\Delta^{-1}]$, which in turn implies that $(\TMF_{*})_{\Q}=\Q[c_{4},c_{6}][\Delta^{-1}].$ The connective version has $(\tmf_{*})_{\Q}=\Q[c_{4},c_{6}]$.

The spectrum of topological modular forms is, of course, not complex orientable, and just like in the case of $\bo$, we will need the aid of a related complex orientable spectrum. The periodic spectrum $\TMF$ admits ring maps to several families of orientable (as well as non-orientable) spectra which come from the theory of elliptic curves. Namely, an elliptic curve $C$ is an abelian group scheme, and in particular it has a subgroup scheme $C[n]$ of points of order $n$ for any positive integer $n$. When $n$ is invertible, $C[n]$ is locally isomorphic to the constant group $(\Z/n)^{2}$. Based on this observation, there are various additional structures that one can assign to an elliptic curve. In this work we will be concerned with two types, the so-called $\Gamma_1(n)$ and $\Gamma_0(n)$ level structures.

A $\Gamma_1(n)$ level structure on an elliptic curve $C$ is a specification of a point $P$ of (exact) order $n$ on $C$, whereas a $\Gamma_0(n)$ level structure is a specification of a cyclic subgroup $H$ of $C$ of order $n$. The corresponding moduli problems are denoted $\Msl{n}$ and $\Mcl{n}$. Assigning to the pair $(C,P)$ the pair $(C,H_{P})$, where $H_P$ is the subgroup of $C$ generated by $P$, determines an \'etale map of moduli stacks
\[ g: \Msl{n}\to \Mcl{n}. \]
Moreover, there are two morphisms
\[f,q: \Mcl{n}\to \M[1/n] \]
which are \'etale; $f$ forgets the level structure whereas $q$ quotients $C$ by the level structure subgroup. Composing with $g$, we obtain analogous maps from $\Msl{n}$. We can take sections of $\mathcal{O}^{top}$ over the forgetful maps and obtain ring spectra $\TMF_1(n)$ and $\TMF_0(n)$, ring maps $\TMF[1/n]\to \TMF_0(n)\to \TMF_1(n)$  as well as maps of descent spectral sequences
\[\xymatrix{
 H^*(\M[1/n], \omega^{\otimes *} ) \ar@{=>}[r]\ar[d] &\pi_* \TMF[1/n] \ar[d]\\
 H^*(\M_?(n), \omega^* ) \ar@{=>}[r] & \pi_* \TMF_?(n), }\]
obtained by pulling back. 
In particular, for any odd integer $n$ we have such a situation $2$-locally.  

We use the ring map $f:\TMF[1/n]\to \TMF_0(n)$ induced by the forgetful $f:\Mcl{n} \to \M[1/n]$ to equip $\TMF_0(n)$ with a $\TMF[1/n]$-module structure.
With this convention, the map $q:\TMF[1/n]\to \TMF_0(n)$ induced by the quotient map on the moduli stacks does not respect the $\TMF[1/n]$-module structure. However, one can uniquely extend $q$ to 
\begin{align}\label{eq:mapPsi_n}
\xymatrix{
\TMF[1/n] \ar[r]^{q}\ar[d] & \TMF_0(n).\\
\TMF[1/n] \wedge \TMF[1/n] \ar@{-->}[ru]_{\Psi_n}
} 
 \end{align}
Another way to define $\Psi_n$ is as the composition of $f\wedge q$
with the multiplication on $\TMF_0(n)$.

Finally, we will be interested in the morphism
\[
  \phi_{[n]}:\M[1/n]\to \M[1/n],
\]
which is the \'{e}tale map induced by the multiplication-by-$n$ isogeny on an
elliptic curve, and the induced map $\phi_{[n]}:\TMF[1/n]\to
\TMF[1/n]$ is an Adams operation on $\TMF[1/n]$.

In Section \ref{sec:approxlevel} below, we will make heavy use of the maps $\Psi_3$ and $\Psi_5$. Their usefulness is due to the relative ease with which their behavior on non-torsion homotopy groups can be computed. 

\begin{rmk}
There is a subtlety in defining the maps
\begin{align*}
 q: & \TMF[1/n] \rightarrow \TMF_0(n), \\
 \phi_{[n]}: & \TMF[1/n] \rightarrow \TMF[1/n]
 \end{align*}
which is glossed over in the above discussion.  The definition of the map $q$ presupposes a canonical identification of the sections of $\mc{O}^{top}$ on the \'etale opens $f$ and $q$, and the definition of the map $\phi_{[n]}$ somehow associates a map of spectra to an isogeny of elliptic curves.
The \emph{real} origin of these maps of spectra comes from Lurie's generalization of the Goerss-Hopkins-Miller Theorem (see \cite{TAF}), which actually presents the $p$-completions of the sheaf $\mc{O}^{top}$ as a sheaf on the \'etale site of the moduli stack of height two $1$-dimensional $p$-divisible groups $\M_{pd}$.
The $p$-torsion of an elliptic curve $C$ gives a $p$-divisible group $C[p^\infty]$.
Let
$$ u: \M_?(n)^{\wedge}_{p} \rightarrow \M_{pd} $$
denote the map which forgets level structure and outputs the $p$-divisible group of the underlying elliptic curve (where $(p,n) = 1$).  The Serre-Tate theorem implies this map is \'etale, and $\TMF_?(n)^{\wedge}_p$ is the associated spectrum of sections.  Given a cyclic subgroup $H$ of order $n$, the isogeny
$$ C \rightarrow C/H $$
induces an isomorphism of associated $p$-divisible groups, and hence gives a $2$-cell making the following diagram of stacks homotopy commute:
$$
\xymatrix@R-2em@C-2em{
\M_0(n)^\wedge_p \ar[dd]_q \ar[drr]^u && \\
& \Downarrow & \M_{pd} \\
\M^{\wedge}_p \ar[urr]_u 
}
$$
This induces a map on sections
$$ q: \TMF^{\wedge}_p \rightarrow \TMF_0(n)^{\wedge}_p $$
(see, for instance, \cite[Ch. 5]{TMF}).
The map
$$ q: \TMF[1/n] \rightarrow \TMF_0(n) $$
is then obtained by constructing the map rationally, and assembling over an arithmetic square.
The map
$$ \phi_{[n]} : \TMF[1/n] \rightarrow \TMF[1/n] $$
is obtained using the diagram
$$
\xymatrix@R-2em@C-2em{
\M^\wedge_p \ar@{=}[dd] \ar[drr]^u && \\
& \Downarrow & \M_{pd} \\
\M^{\wedge}_p \ar[urr]_u 
}
$$
induced by the isogeny
$$ [n]: C \rightarrow C. $$
A different perspective on these maps between $\TMF$-spectra can be found in \cite{betacong}, but that treatment also secretly relies on Lurie's generalized Goerss-Hopkins-Miller theorem. The reader uncomfortable with relying on unpublished work could also obtain the morphisms $q$ and $\phi_{[n]}$ using the obstruction theoretic construction of $\TMF$ described in \cite[Ch. 12]{TMF}: the isogenies induce isomorphisms on formal groups, and the functoriality of the Goerss-Hopkins-Miller theorem gives maps on the $K(2)$ localizations of $\TMF$. An explicit map of $\theta$-algebras corresponding to the respective isogenies gives, via $K(1)$-local $E_\infty$ obstruction theory, a map of the $K(1)$-localizations of $\TMF$.  These assemble via chromatic fracture to give a map on the $p$-completions of $\TMF$ for $(p,n) = 1$, and these then assemble via the arithmetic square to give the desired maps.
\end{rmk}

\subsection{Details on $\tmf_1(3) $ as $\BP\bra{2}$}\label{sec:tmf13BP2}
\emph{We return to the convention that everything is $2$-local.}
The significance of $\bu$ in the computation of $\bo_*\bo$ was that at the prime $2$, $\bu$ is a truncated Brown-Peterson spectrum $\BP\bra{1}$ with a ring map $\bo \to \bu$ which upon $K(1)$-localization becomes the inclusion of homotopy fixed points
$(\KU\hat{_2})^{hC_2}\to \KU\hat{_2}$. In particular, the image of $\KO\hat{_2} \to \KU\hat{_2}$ in homotopy is describable as certain invariant elements.
 By work of Lawson-Naumann \cite{LawsonNaumann}, we know that there is a $2$-primary form of $\BP\bra{2}$ obtained from topological modular forms; this will be our analogue of $\bu$ in the $\tmf$-cooperations case.

Lawson-Naumann study the ($2$-local) compactification of the moduli stack $\Msl{3}$. Given an elliptic curve $C$ (over a $2$-local base), it is locally isomorphic to a Weierstrass curve of the form
\[ y^2+a_1xy+a_3 y = x^3 +a_4x+a_6. \]
A point $P=(r,s)$ of order $3$ is an inflection point of such a curve; transforming the curve so that the given point $P$ is moved to have coordinates $(0,0)$ puts $C$ in the form
\begin{align}\label{eq:Ca1a3}
 y^2+a_1 xy +a_3 y = x^3.
 \end{align} 
 This is the universal equation of an elliptic curve together with a $\Gamma_1(3)$ level structure.
 The discriminant of this curve is $\Delta = (a_1^3-27a_3)a_3^3$, and $\Msl{3} \simeq \Proj \Z_{(2)}[a_1,a_3][\Delta^{-1}]$. Consequently, $\pi_* \TMF_1(3) = \Z_{(2)}[a_1,a_3][\Delta^{-1}]$. Lawson-Naumann show that the compactification $\bar{\mathcal{M}}_1(3)\simeq \Proj\Z_{(2)}[a_1,a_3]$ also admits a sheaf of $\Einf$-ring spectra, giving rise to a non-connective and non-periodic spectrum $\Tmf_1(3)$ with a gap in its homotopy allowing to take a connective cover $\tmf_1(3)$ which is an $\Einf$-ring spectrum with
 \[\pi_* \tmf_1(3) =\Z_{(2)}[a_1,a_3]. \]
 This spectrum is complex oriented such that the composite map of graded rings
 \[ \Z_{(2)} [v_1,v_2] \subset \BP_* \to (\MU_{(2)})_* \to \tmf_1(3)_* \]
 is an isomorphism \cite[Theorem 1.1]{LawsonNaumann}, where the $v_i$ are Hazewinkel generators. Of course, the map $\BP_* \to \tmf_1(3)_*$ classifies the $p$-typicalization of the formal group associated to the curve \eqref{eq:Ca1a3}, which starts as \cite[IV.2]{Silverman}, \cite{sage}:
 \begin{align*} 
 F(X,Y) &= X + Y -a_1X Y -2 a_3 X^3 Y -3 a_3 X^2 Y^2 +
-2 a_3 X Y^3 \\
&-2 a_1 a_3 X^4 Y -a_1 a_3 X^3 Y^2 
-a_1 a_3 X^2 Y^3 -2 a_1 a_3 X Y^4 + O(X,Y)^6.
\end{align*}
We used Sage to compute the logarithm of this formal group law, 
from which we read off the coefficients $l_i$ \cite[A2.1.27]{Ravenel} in front of $X^{2^i}$ as
\begin{align*}
l_1&= \frac{a_1}{2}, \qquad l_2 = \frac{a_1^3+2a_3}{4},\\
l_3 & = \frac{ a_1^7 + 30 a_1^4 a_3 + 30 a_1 a_3^2}{8}\dots.
\end{align*}

Now the formula \cite[A2.1.1]{Ravenel} $\displaystyle{ pl_n =\sum_{0\leq i <n } l_i v_{n-i}^{2^i} }$ (in which $l_0$ is understood to be $1$) allows us to recursively compute the map $\BP_* \to \tmf_1(3)_*$. For the first few values of $n$, we have that
\begin{align*}
v_1 \mapsto a_1, \qquad 
v_2 \mapsto a_3, \qquad
v_3 & \mapsto 7a_1a_3(a_1^3+a_3)\dots.
\end{align*}

We can do even more with this orientation of $\tmf_1(3)$, as 
 \[ \BP_* \BP \to \tmf_1(3)_* \tmf_1(3)_{\QQ}\]
 is a morphism of Hopf algebroids.
Recall that $\BP_*\BP=\Z_{(2)}[v_1,v_2,\dots][t_1,t_2,\dots]$ with $v_i$ and $t_i$ in degree $2(2^i-1)$ and the right unit is
$ \eta_R:\BP_* \to \BP_*\BP $ determined by the fact \cite[A2.1.27]{Ravenel} that \[\eta_R(l_n) = \sum_{0\leq i \leq n} l_i t_{n-i}^{2^i} \]
with $l_0=t_0=1$ by convention. On the other hand, \[\tmf_1(3)_* \tmf_1(3)_{\Q} = \Q[a_1,a_3,\ba_1, \ba_3]\] and the right unit $\tmf_1(3)_* \to \tmf_1(3)_*\tmf_1(3) $ sends $a_i$ to $\ba_i$. With computer aid from Sage, we can recursively compute the images of each $t_i$ in $\tmf_1(3)_* \tmf_1(3)$. As an example, we include here the first three values
\begin{equation}\label{eq:tiformulas}
\begin{aligned}
t_1 & \mapsto \frac{1}{2}({\ba_1-a_1}),\\
t_2 & \mapsto \frac{1}{8}( 4 \ba_3 + 2 \ba_1^3 - a_1\ba_1^2 + 2 a_1^2\ba_1 - 4a_3 - 3 a_1^3 ), \\
t_3 & \mapsto \frac{1}{128} (480 \ba_1 \ba_3^2 - 16 a_1\ba_3^2 + 480 \ba_1^4 \ba_3 - 16 a_1 \ba_1^3 \ba_3 + 8 a_1^2 \ba_1^2 \ba_3 - 16 a_1^3\ba_1\ba_3 \\
&+ 32 a_1 a_3 \ba_3 + 24 a_1^4 \ba_3 + 16 \ba_1^7 - 4 a_1\ba_1^6 + 4 a_1^2 \ba_1^5 - 4 a_3 \ba_1^4 - 11 a_1^3 \ba_1^4 + 32 a_1 a_3\ba_1^3 \\
&+ 24 a_1^4 \ba_1^3 - 32 a_1^2 a_3 \ba_1^2 - 22 a_1^5 \ba_1^2 + 32 a_1^3 a_3\ba_1 + 20 a_1^6 \ba_1 - 496 a_1 a_3^2 - 508 a_1^4 a_3 - 27 a_1^7 )
\end{aligned}
\end{equation}
and rather than urging the reader to analyze the terms, we simply point out the exponential increase of their number. In Section \ref{subsec:AF} we will use the Adams filtration to extract leading terms from these expressions, allowing us to extract meaningful information from these formulas.

\begin{rmk}
Just as we used $\bu_*\bu$ as a means of porting formulas in $BP_*BP$ to $\bo_*\bo$, so we are using $\tmf_1(3)_*\tmf_1(3)$ to analyze $\tmf_*\tmf$.  The reader might wonder why we do not give a complete analysis of $\tmf_1(3)_*\tmf_1(3)$.  In fact, such an analysis has recently been completed by Culver \cite{Culver}.
\end{rmk}

\subsection{The relationship between $\TMF_1(3)$ and $\TMF $ and their connective versions}
As we mentioned already, the forgetful map $f: \Msl{3}\to \M$ is \'etale; moreover, $f^* \omega = \omega$. As a consequence, we have a \v Cech descent spectral sequence
\[E_1=H^p (\Msl{3}^{\times_{\M}(q+1)}, \omega^{\otimes *}) \Rightarrow H^{p+q} (\M, \omega^{\otimes *}). \]
With it, the modular forms $H^0(\M,\omega^{\otimes *}) $ can be computed as the equalizer of the diagram
 \begin{align}\label{eq:equalizer}
  \xymatrix{ H^0 (\Msl{3} ,\omega^{\otimes *})  \ar@<0.5ex>[r]^-{p_1^*} \ar@<-0.5ex>[r]_-{p_2^*} &H^0(\Msl{3} \times_{\M}\Msl{3}, \omega^{\otimes *} ), }
  \end{align}
in which $p_1$ and $p_2$ are the left and right projection maps. 
The interpretation is that the $\M$-modular forms $MF_*$ are precisely the invariant $\Msl{3}$-modular forms.

To be more explicit, note that $\Msl{3} \times_{\M}\Msl{3}$ classifies tuples $((C,P), (C^\prime, P^\prime), \varphi)$ of elliptic curves with a point of order $3$ and an isomorphism $\varphi:C \to C^\prime$ of elliptic curves which does not need to preserve the level structures. This data is locally given by 
\begin{equation}
\begin{aligned}
C: \qquad & y^2+ a_1xy+a_3y = x^3,\\
C^\prime: \qquad &y^2+ a_1^\prime xy+a_3^\prime y = x^3,\\
\varphi: \qquad & x \mapsto u^{-2}x+ r
& y \mapsto u^{-3} y + u^{-2} sx +t,
\end{aligned}
\end{equation}
such that the following relations hold
\begin{equation}\label{eq:relations}
\begin{aligned}
&sa_1 - 3r + s^2 = 0,\\
&sa_3+(t+rs)a_1 - 3r^2+2st=0,\\
&r^3-ta_3-t^2-rta_1=0 ,\\
&a_1' = \eta_R(a_1) ,\\
&a_3' = \eta_R(a_3).
\end{aligned}
\end{equation}
(Note: For more details on this presentation of $\Msl{3}$, see the beginning of \cite[\S 4]{grpcohcalc}; the relations follow from the general transformation formulas in \cite[III.1]{Silverman} by observing that the coefficients $a_{even}$ must remain zero.  See also \cite{Bauer}, where $\Msl{3}$ is implicitly used to compute the $2$-primary descent spectral sequence for $\tmf$.)

Hence, the diagram \eqref{eq:equalizer} becomes
\[ \Z_{(2)}[a_1,a_3] \rightrightarrows \Z_{(2)}[a_1,a_3][u^{\pm 1},r,s,t]/(\sim) \]
(where $\sim$ denotes the relations \eqref{eq:relations}) with $p_1^*$ being the obvious inclusion and $p_2^*$ determined by
\begin{align*}
a_1 &\mapsto u(a_1+2s),\\
a_3 &\mapsto u^3(a_3+ra_1+2t),
\end{align*}
which is in fact a Hopf algebroid representing $\M_{(2)}$. Note that we do not need to localize at $2$ but only to invert $3$ to obtain this presentation. 

As a consequence of this discussion we can explicitly compute that the modular forms $MF_*$ are the subring of $MF_1(3)_*$ generated by
\begin{align}\label{eq:ModForms}
 c_4 =  a_1^4 - 24a_1a_3 ,\qquad \qquad c_6 = a_1^6 + 36 a_1^3 a_3 -216a_3^2, \qquad\text{and}\qquad \Delta=(a_1^3-27a_3)a_3^3,
\end{align}
which in particular determines the map $\TMF_* \to \TMF_1(3)_*$ on non-torsion elements.

\subsection{Adams filtrations}\label{subsec:AF}
The maps $\BP_* \to \tmf_1(3)_*$ and $\BP_*\BP \to \tmf_1(3)_*\tmf_1(3)$ respect the Adams filtration, which allows us to determine the Adams filtration on the right hand sides.
Recall that \[ AF(v_i) = 1 , \qquad i\geq 0\]
where as usual, $v_0 =2$. 
Consequently, $AF(a_1)=AF(a_3)=1$, which in turn implies via \eqref{eq:ModForms} that 
\begin{align} \label{eq:afs}
AF(c_4)=4, \qquad \qquad AF(c_6)=5, \qquad \text{and} \qquad AF(\Delta)=4. 
\end{align}
More precisely, modulo higher Adams filtration (we use $\sim$ to denote equality modulo terms in higher Adams filtration) we have
\begin{equation}\label{eq:cia1a3}
c_4 \sim a_1^4, \qquad \qquad c_6\sim 216 a_3^2 \sim 8a_3^2, \qquad \qquad 
\Delta \sim a_3^4.
\end{equation}

Note that the Adams filtration of each $t_i$ is zero.

\subsection{Supersingular elliptic curves and $\K(2)$-localizations} 

At the prime $2$, there is a unique isomorphism class of supersingular elliptic curves; one representative is the Weierstrass curve
\begin{align*}
C: \qquad y^2+y=x^3
\end{align*}
over $\F_2$.
Recall that a supersingular elliptic curve is one whose formal completion at the identity section $\hat C$ is a formal group of height two.\footnote{As opposed to an ordinary elliptic curve whose formal completion has height one. These two are the only options.} Under the natural map $\M \to \Mfg$ from the moduli stack of elliptic curves to the one of formal groups sending an elliptic curve to its formal completion at the identity section, the supersingular elliptic curves (in fixed characteristic) are sent to the (unique up to isomorphism, by Cartier's theorem \cite[Appendix B]{Ravenel}) formal group of height two in that characteristic.

Let $\Mss$ denote a formal neighborhood of the supersingular point $C$ of $\M$, and let $\Htwo$ denote a formal neighborhood of the characteristic $2$ point of height two of $\Mfg$. Formal completion yields a map $\Mss\to\Htwo$ which is used to explicitly describe the $\K(2)$-localization of $\TMF$ (or equivalently, $\tmf$) in terms of Morava $\E$-theory.

The formal stack $\Htwo $ has a pro-Galois cover by $\Spf \W(\F_4)[[u_1]] $ for the extended Morava stabilizer group $\G_2$. The Goerss-Hopkins-Miller theorem implies in particular that this quotient description of $ \Htwo $ has a derived version, namely the stack $ \Spf \E_2 \mmod \G_2 $, where $ \E_2 $ is a Lubin-Tate spectrum of height two. As we are working with elliptic curves, we take the Lubin-Tate spectrum associated to the formal group $\hat C$ over $\F_4$, and $\G_2=\Aut_{\F_4}(\hat C) \rtimes Gal(\F_4/\F_2) $. 
 
Let $G$ denote the automorphism group of $C$; it is a finite group of order $48$ given as an extension of the binary tetrahedral group with the Galois group of $\F_4/\F_2$. Then $G$ embeds in $\G_2$ as a maximal finite subgroup and $\Spf \E_2$ is a Galois cover of $\Mss$ for the group $G$. In particular, taking sections of the structure sheaf $\Sh{O}^{top}$ over $\Mss$ gives the $\K(2)$-localization of $\TMF$ which is equivalent to $\E_2^{hG}$. Moreover, we have $\K(2)$-local equivalences

 \[ (\TMF \wedge \TMF)_{\K(2)} \simeq \Hom^c(\G_2/G,\E_2)^{hG} \simeq \prod_{x\in G \backslash (\G_2) /G} \E_2^{h(G \cap xGx^{-1})}.\]
 
 The decomposition on the right hand side is interesting though we will not pursue it further in this work. The interested reader is referred to Peter Wear's explicit calculation of the double cosets in \cite{PeterWear}.

% !TEX root = tmfcoop.tex

\section{The Adams spectral sequence for $\tmf _*\tmf$ and $\bo$-Brown-Gitler modules}\label{sec:ass}

\emph{Recall that we are concerned with the prime $2$, hence everything is implicitly $2$-localized.}

\subsection{Rational calculations}\label{sec:rational}

Recall that we have
$$ \tmf_*\tmf_\QQ \cong \QQ[c_4, c_6, \bar{c}_4, \bar{c}_6]$$
and consider the (collapsing) $v_0$-inverted ASS
$$ \bigoplus_j v_0^{-1} \Ext_{A(2)_*}(\si{8j} \bou_j) \Rightarrow \tmf_* \tmf \otimes \QQ_2. $$
 In this section we explain the decomposition imposed on the $E_\infty$-term of this spectral sequence from the decomposition on the $E_2$-term.  In particular, given a torsion-free element $x \in \tmf_*\tmf$, this will allow us to determine which $\bo$-Brown-Gitler module detects it in the $E_2$-term of the ASS for $\tmf \wedge \tmf$.

Recall from Section~\ref{sec:review} that $\tmf_1(3) \simeq \BP\bra{2}$.  In particular, we have
$$ H^*(\tmf_1(3)) \cong A \mmod  E[Q_0, Q_1, Q_2]. $$
We begin by studying the map between $v_0$-inverted ASS's induced by the map $\tmf \rightarrow \tmf_1(3)$    
$$ 
\xymatrix{
v_0^{-1}\Ext^{*,*}_{A(2)_*}(\FF_2) \ar@{=>}[r] \ar[d] & 
\pi_* \tmf \otimes \QQ_2 \ar[d]
\\
v_0^{-1}\Ext^{*,*}_{E[Q_0, Q_1, Q_2]_*}(\FF_2) \ar@{=>}[r] & 
\pi_* \tmf_1(3) \otimes \QQ_2.
} $$
We have 
$$ v_0^{-1} \Ext^{*,*}_{E[Q_0, Q_1, Q_2]_*}(\FF_2) \cong \FF_2[v_0^{\pm 1}, v_1, v_2], $$
where the $v_i$'s have $(t-s,s)$ bidegrees:
\begin{align*}
\abs{v_0} & = (0, 1), \\
\abs{v_1} & = (2, 1), \\
\abs{v_2} & = (6,1). 
\end{align*}
Recall from Section~\ref{sec:review} that $\pi_* \tmf_1(3)_\QQ = \QQ[a_1, a_3]$, and that
\begin{align*}
v_1 & = [a_1], \\
v_2 & = [a_3]. 
\end{align*}
Of course $\pi_*\tmf_{\QQ} = \QQ[c_4, c_6]$, with corresponding localized Adams $E_2$-term
$$ v_0^{-1} \Ext^{*,*}_{A(2)_*}(\FF_2) \cong \FF_2[v_0^{\pm 1}, c_4, c_6], $$
where the $[c_i]$'s have $(t-s,s)$ bidegrees
\begin{align*}
\abs{[c_4]} & = (8, 4), \\
\abs{[c_6]} & = (12, 5).
\end{align*}
Recall also from Section~\ref{sec:review} that the formulas for $c_4$ and $c_6$ in terms of $a_1$ and $a_3$ imply that the map of $E_2$-terms of spectral sequences above is injective, and is given by
\begin{equation}\label{eq:c4c6ASS}
\begin{split}
[c_4] & \mapsto [a_1^4], \\
[c_6] & \mapsto [8a_3^2].
\end{split}
\end{equation}
Corresponding to the isomorphism
$$ \pi_* \tmf_\QQ \cong \mr{H}\QQ_* \tmf $$
there is an isomorphism of localized Adams $E_2$-terms
$$ v_0^{-1} \Ext_{A(2)}(\FF_2) \cong v_0^{-1} \Ext_{A(0)} ((A\mmod A(2))_*). $$
Since the decomposition
$$ A\mmod A(2)_* \cong \bigoplus_{j} \si{8j} \bou_j $$ 
is a decomposition of $A(2)_*$-comodules, it is in particular a decomposition of $A(0)_*$-comodules, and therefore there is a decomposition
\begin{equation}\label{eq:Qsplit} 
v_0^{-1} \Ext_{A(2)_*} (\FF_2) \cong \bigoplus_j v_0^{-1} \Ext_{A(0)_*}(\si{8j} \bou_j).
\end{equation}

\begin{prop}
Under the decomposition (\ref{eq:Qsplit}), we have
\begin{align*}
v_0^{-1} \Ext_{A(0)_*}(\Sigma^{8j} \bou_j) & = \FF_2[v_0^{\pm 1}]\{ [c_4^{i_1}c_6^{i_2}] \: : \: i_1 + i_2 = j\} \\
& \subset v_0^{-1}\Ext_{A(2)_*}(\FF_2).
\end{align*}
\end{prop}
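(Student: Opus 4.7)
Since $v_0^{-1}\Ext_{A(2)_*}(\FF_2) = \FF_2[v_0^{\pm 1}][c_4, c_6]$ is a polynomial ring over $\FF_2[v_0^{\pm 1}]$, the monomials $[c_4^{i_1} c_6^{i_2}]$ form an $\FF_2[v_0^{\pm 1}]$-basis. The Brown-Gitler decomposition of $(A//A(2))_*$ groups elements by weight, weight is a multiplicative grading, and $v_0$ acts trivially on weight. Consequently the induced decomposition of $v_0^{-1}\Ext_{A(0)_*}((A//A(2))_*)$ is a grading of $\FF_2[v_0^{\pm 1}]$-algebras. The plan is to show that $[c_4]$ and $[c_6]$ each lie in the $j = 1$ (weight-$8$) summand; multiplicativity then places $[c_4^{i_1} c_6^{i_2}]$ in the $j = i_1+i_2$ summand, and since these monomials already span $v_0^{-1}\Ext_{A(2)_*}(\FF_2)$ as an $\FF_2[v_0^{\pm 1}]$-module and the Brown-Gitler summands are disjoint, they span each summand as claimed.

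\textbf{Locating the generators.}
The isomorphism $v_0^{-1}\Ext_{A(2)_*}(\FF_2) \cong v_0^{-1}\Ext_{A(0)_*}((A//A(2))_*)$ preserves the $(s,t)$-bigrading, so a class of filtration $s$ and stem $n$ on the left corresponds to $v_0^s$ times an $A(0)_*$-primitive of $(A//A(2))_*$ in internal degree $n$ on the right. Since $(A//A(2))_* = \FF_2[\xib_1^8, \xib_2^4, \xib_3^2, \xib_4,\ldots]$, the only monomials in internal degrees $8$ and $12$ are $\xib_1^8$ and $\xib_2^4$, respectively. Both are $A(0)_*$-primitive: from
\[
\Delta \xib_1^8 = \xib_1^8 \otimes 1 + 1 \otimes \xib_1^8, \qquad \Delta \xib_2^4 = \xib_2^4 \otimes 1 + \xib_1^8 \otimes \xib_1^4 + 1 \otimes \xib_2^4,
\]
every non-unit left tensor factor projects to $0$ under $A_* \twoheadrightarrow A(0)_* = \FF_2[\xib_1]/\xib_1^2$. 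Matching bigrading ($(s,t) = (4,12)$ for $[c_4]$ and $(5,17)$ for $[c_6]$) therefore forces the correspondences $[c_4] \leftrightarrow v_0^4 \cdot [\xib_1^8]$ and $[c_6] \leftrightarrow v_0^5 \cdot [\xib_2^4]$. Since $wt(\xib_1^8) = 8$ and $wt(\xib_2^4) = 2 \cdot 4 = 8$, both generators lie in the $j = 1$ summand.

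\textbf{Main obstacle and conclusion.}
By multiplicativity of weight, $[c_4^{i_1} c_6^{i_2}] \leftrightarrow v_0^{4i_1 + 5i_2} \cdot [\xib_1^{8i_1} \xib_2^{4i_2}]$ lies in the weight-$8(i_1+i_2)$ summand, i.e. in the $j = i_1 + i_2$ summand, and the counting argument of Paragraph 1 completes the proof. The main technical point needing care is that the isomorphism of the text is genuinely a bigraded ring isomorphism compatible with the Brown-Gitler splitting on the right-hand side. This follows by factoring it as $v_0^{-1}\Ext_{A(2)_*}(\FF_2) \cong v_0^{-1}\Ext_{A_*}(\FF_2, (A//A(2))_*) \cong v_0^{-1}\Ext_{A(0)_*}((A//A(2))_*)$: the first map is the standard (multiplicative) change-of-rings for $A(2) \hookrightarrow A$, and the splitting $(A//A(2))_* = \bigoplus_j \si{8j} \bou_j$ is a decomposition of $A(2)_*$-comodules (hence of $A(0)_*$-comodules) that is compatible with the algebra structure on $(A//A(2))_*$ by multiplicativity of weight.
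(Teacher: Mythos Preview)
Your argument is correct and considerably more direct than the paper's. You pin down $[c_4]$ and $[c_6]$ by pure dimension counting (there is a unique monomial of $(A//A(2))_*$ in internal degrees $8$ and $12$, it is $A(0)_*$-primitive, and the bigraded change-of-rings isomorphism forces the match), and then propagate by multiplicativity of both the weight grading and the isomorphism. This recovers the same explicit identifications $\xib_1^8 = [c_4/16]$, $\xib_2^4 = [c_6/32]$ that the paper obtains.

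The paper takes a longer route: it pushes everything into $\tmf_1(3)$ via the commutative diagram relating $\BP_*\BP$, $H_*\tmf_1(3)$, $\HZ_*\tmf$, and $\tmf_1(3)_*\tmf_1(3)_\QQ$, tracks $t_1$ and $t_2$ to $a_1/2$ and $a_3/2$ (mod higher Adams filtration), and then matches against the formulas $[c_4]\mapsto [a_1^4]$, $[c_6]\mapsto [8a_3^2]$. The payoff of that approach is not the proposition itself but the infrastructure: the same diagram and the identification $t_k\mapsto \xib_k^2$ underpin the detection criterion for $2$-variable modular forms (Proposition~\ref{prop:detection}) and the explicit low-dimensional computations in Section~\ref{sec:2var}. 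Your argument gets the proposition faster; the paper's argument is a dress rehearsal for machinery used later.
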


\begin{proof}
Statement~(2) of the proof of Lemma~\ref{lem:HZ_i} implies that we have
$$ v_0^{-1} \Ext_{A(0)_*}(\bou_j) \cong \FF_2[v_0^{\pm 1}]\{ \xib_1^{4i} \: : \: 0 \le i \le j \}.$$
Using the map (\ref{eq:splittingmap}), we deduce that we have
\begin{align*}
v_0^{-1} \Ext_{A(0)_*}(\si{8j} \bou_j) & \cong \FF_2[v_0^{\pm 1}]\{ \xib_1^{8i_1}\xib_2^{4i_2} \: : \: i_1 + i_2 = j \} \\
& \subset \Ext_{A(0)_*}((A\mmod A(2))_*).
\end{align*}
Consider the diagram:
\begin{equation}\label{eq:Qtmfdiag}
\xymatrix{
H_* \tmf \ar[r] & H_* \tmf_1(3) & \BP_* \BP \ar[l] \ar[d] 
\\
\HZ_*\tmf \ar[u] \ar[d] \ar[r] & \HZ_* \tmf_1(3) \ar[u] \ar[d] & \tmf_1(3)_* \tmf_1(3) \ar[l] \ar[d] 
\\
\rm{H}\QQ_* \tmf \ar[r] & \rm{H}\QQ_* \tmf_1(3) & \tmf_1(3)_* \tmf_1(3)_\QQ. \ar[l] 
}
\end{equation}
The map
$$ \BP_*\BP \rightarrow H_* \tmf_1(3) \cong \FF_2[\xib_1^2, \xib_2^2, \xib_3^2, \xib_4, \ldots ] $$
sends $t_i$ to $\xib_i^2$.  Thus the elements 
\begin{align*}
\xib_1^{8i_1} \xib_2^{4i_2} & \in H_* \tmf, \\
t_1^{4i_1}t_2^{2i_2} & \in \BP_*\BP
\end{align*}
have the same image in $H_* \tmf_1(3)$.
However, using the formulas of Section~\ref{sec:review}, we deduce that the images of $t_1$ and $t_2$ in
$$ \tmf_1(3)_* \tmf_1(3)_\QQ = \QQ[a_1, a_3, \bar{a}_1, \bar{a}_3] $$
are given by
\begin{align*}
t_1 & \mapsto (\bar{a}_1 + a_1)/2,  \\
t_2 & \mapsto ( 4\bar{a}_3 - a_1\bar{a}_1^2 - 4a_3 - a_1^3  ) / 8 + \text{terms of higher Adams filtration}.
\end{align*}
Since the map
$$ \tmf_1(3)_* \tmf_1(3)_\QQ \rightarrow \mr{H}\QQ_* \tmf_1(3) = \QQ[a_1, a_3] $$
of diagram~(\ref{eq:Qtmfdiag}) sends $\bar{a}_i$ to $a_i$ and $a_i$ to zero, we deduce that the image of $t_1$ and $t_2$ in $\mr{H}\QQ_* \tmf_1(3)$ is
 \begin{align*}
t_1 & \mapsto a_1/2,  \\
t_2 & \mapsto a_3/2 + \text{terms of higher Adams filtration}.
\end{align*}
It follows that under the map of $v_0$-localized ASS's induced by the map $\tmf \rightarrow \tmf_1(3)$
$$ v_0^{-1} \Ext_{A(2)_*}(\FF_2) \rightarrow v_0^{-1}\Ext_{E[Q_0, Q_1, Q_2]_*} (\FF_2) $$
we have
$$
\xib_1^{8i_1} \xib_2^{4i_2} \mapsto [a_1/2]^{4i_1} [a_3/2]^{2i_2}.
$$
Therefore, by (\ref{eq:c4c6ASS}), we have the equality (in $v_0^{-1}\Ext_{A(0)_*}((A\mmod A(2))_*)$)
$$ \xib_1^{8i_1} \xib_2^{4i_2} = [c_4/16]^{i_1}[c_6/32]^{i_2} $$
and the result follows.
\end{proof}

Corresponding to the K\"unneth isomorphism for $\mr{H}\QQ$, there is an isomorphism
$$ v_0^{-1}\Ext_{A(0)_*}(M \otimes N) \cong v_0^{-1} \Ext_{A(0)_*}(M) \otimes_{\FF_2[v_0^{\pm 1}]} v_0^{-1}\Ext_{A(0)_*}(N). $$
In particular, since the maps
$$ v_0^{-1}\Ext(\tmf \wedge \si{8j} \bo_j) \rightarrow v_0^{-1} \Ext(\tmf \wedge \tmf) $$
can be identified with the maps
\begin{multline*}
v_0^{-1}\Ext_{A(0)_*}((A\mmod A(2))_*) \otimes_{\FF_2[v_0^{\pm 1}]} v_0^{-1}\Ext_{A(0)_*}(\Sigma^{8j} \bou_j) 
\\
\rightarrow  v_0^{-1}\Ext_{A(0)_*}((A\mmod A(2))_*) \otimes_{\FF_2[v_0^{\pm 1}]} v_0^{-1}\Ext_{A(0)_*}((A\mmod A(2))_*)
\end{multline*}
we have the following corollary.

\begin{cor}
The map
$$ v_0^{-1}\Ext(\tmf \wedge \si{8j} \bo_j) \rightarrow v_0^{-1} \Ext(\tmf \wedge \tmf) $$
obtained by localizing  (\ref{eq:tmfextsplit}) is the canonical inclusion
$$ \FF_2[v_0^{\pm 1}, [c_4], [c_6]] \{ [\bar{c}_4]^{i_1} [\bar{c}_6]^{i_2} \: : \: i_1 + i_2 = j \} \hookrightarrow \FF_2[v_0^{\pm 1}, [c_4], [c_6], [\bar{c}_4], [\bar{c}_6]]. $$
\end{cor}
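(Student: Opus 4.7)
The plan is to derive the corollary as a formal consequence of the preceding proposition together with the K\"unneth isomorphism for $v_0^{-1}\Ext_{A(0)_*}$ stated just above it.

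First, I would apply change of rings to rewrite
$$ \Ext(\tmf \wedge \si{8j}\bo_j) \cong \Ext_{A(2)_*}(\si{8j}\ul{\bo}_j), $$
and, by the same argument that produces the isomorphism $v_0^{-1}\Ext_{A(2)_*}(\FF_2) \cong v_0^{-1}\Ext_{A(0)_*}((A//A(2))_*)$ used in the proof of the preceding proposition, upgrade this to
$$ v_0^{-1}\Ext(\tmf \wedge \si{8j}\bo_j) \cong v_0^{-1}\Ext_{A(0)_*}\!\bigl((A//A(2))_* \otimes \si{8j}\ul{\bo}_j\bigr). $$
The same manipulation gives
$$ v_0^{-1}\Ext(\tmf \wedge \tmf) \cong v_0^{-1}\Ext_{A(0)_*}\!\bigl((A//A(2))_* \otimes (A//A(2))_*\bigr), $$
and under these identifications the map in the corollary is induced by the inclusion $\si{8j}\ul{\bo}_j \hookrightarrow (A//A(2))_*$ on the right tensor factor, tensored with the identity on the left.

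Next, I would apply the K\"unneth isomorphism to split each side into a tensor product of two $v_0^{-1}\Ext_{A(0)_*}$ factors. The left factor $v_0^{-1}\Ext_{A(0)_*}((A//A(2))_*)$ is, by summing the preceding proposition over all $j$, the polynomial algebra $\FF_2[v_0^{\pm 1},[c_4],[c_6]]$. On the right, the preceding proposition applied at fixed $j$ identifies $v_0^{-1}\Ext_{A(0)_*}(\si{8j}\ul{\bo}_j)$ with the free $\FF_2[v_0^{\pm 1}]$-module on monomials of total degree $j$ in the modular form generators, which I label $[\bar{c}_4]^{i_1}[\bar{c}_6]^{i_2}$ to indicate that they arise from the right-hand copy of $\tmf$ (resp.\ from $\bo_j$). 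Assembling these identifications on source and target yields precisely the inclusion of $\FF_2[v_0^{\pm 1},[c_4],[c_6]]$-submodules stated in the corollary.

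There is no real obstacle here: the corollary is essentially bookkeeping, with all the serious content already carried by the preceding proposition and the K\"unneth isomorphism. The only point requiring a moment's thought is that the K\"unneth decomposition of $v_0^{-1}\Ext$ applied to $H_*(\tmf \wedge X)$ is compatible with applying the Brown-Gitler splitting of $(A//A(2))_*$ independently on each tensor factor; this is immediate because both decompositions come from $A(2)_*$-comodule splittings applied factorwise via~(\ref{eq:splittingmap}).
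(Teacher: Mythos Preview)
Your proposal is correct and follows essentially the same approach as the paper: both reduce to $v_0^{-1}\Ext_{A(0)_*}$ via change of rings, apply the K\"unneth isomorphism to separate the two tensor factors, and then invoke the preceding proposition on each factor. The paper in fact presents this as an immediate consequence, placing the K\"unneth identification just before the corollary without further argument, so your write-up is if anything slightly more detailed than what appears there.
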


\subsection{Inductive computation of $\Ext_{A(2)_*}(\bou_j)$}\label{sec:boSES}

The exact sequences (\ref{eq:boSES1}),(\ref{eq:boSES2}) provide an inductive method of computing $\Ext_{A(2)_*}(\bou_j)$ in terms of $\Ext_{A(1)_*}$-computations and $\Ext_{A(2)_*}(\ul{\bo}_1^i)$.  

We give some low dimensional examples. We shall use the shorthand
$$ M \Leftarrow \bigoplus M_i[k_i]  $$
to denote the existence of a spectral sequence
$$  \bigoplus \Ext_{A(2)_*}^{s-k_i,t+k_i}(M_i) \Rightarrow \Ext_{A(2)_*}^{s,t}(M). $$
In the notation above, we shall abbreviate $M_i[0]$ as $M_i$.
We have
\begin{equation}\label{eq:boSES_low}
\begin{split}
\Sigma^{16} \bou_2 & \Leftarrow \Sigma^{16} (A(2)\mmod A(1))_* \oplus \Sigma^{24}\bou_1 \oplus \Sigma^{32} \FF_2[1],
\\
\Sigma^{24} \bou_3 & \Leftarrow
\Sigma^{24} (A(2)\mmod A(1))_* \oplus \Sigma^{32} \bou_1^{2} ,
\\
\si{32} \bou_4 & \Leftarrow
(A(2)\mmod A(1))_* \otimes \left( \si{32} \tmfu_1 \oplus \si{48} \FF_2 \right) \oplus \si{56} \bou_1 \oplus \si{56} \bou_1[1] \oplus \si{64}\FF_2[1],
\\
\si{40}\bou_5 & \Leftarrow 
(A(2)\mmod A(1))_* \otimes \left( \si{40} \tmfu_1 \oplus \si{56} \bou_1 \right)  \oplus \si{64}\bou_1^{2} \oplus \si{72} \bou_1[1],
\\
\si{48} \bou_6 & \Leftarrow 
(A(2)\mmod A(1))_* \otimes \left(\si{48} \tmfu_2 \oplus \si{72} \FF_2 \oplus \si{80} \FF_2[1] \right) 
\\
& \quad \quad \oplus \si{80} \bou_1^2 \oplus \si{88} \bou_1[1] \oplus  \si{96}\FF_2[2],
\\
\si{56} \bou_7 & \Leftarrow 
(A(2)\mmod A(1))_* \otimes \left(\si{56} \tmfu_2 \oplus \si{80} \bou_1 \right) \oplus \si{88} \bou_1^3,
\\
\si{64} \bou_8 & \Leftarrow 
(A(2)\mmod A(1))_* \otimes \left(\si{64}\tmfu_3 \oplus \si{96}\tmfu_1 \oplus \si{112}\FF_2 \oplus \si{104} \FF_2[1] \right)  \\
& \quad \quad \oplus \si{112}\bou_1^2[1] \oplus \si{120}\bou_1 \oplus \si{120} \bou_1[1] \oplus \si{128}\FF_2[1].
\end{split}
\end{equation}

In practice, these spectral sequences tend to collapse.  In fact, in the range computed explicitly in this paper, there are no differentials in these spectral sequences, and the authors have not yet encountered any differentials in these spectral sequences.  These spectral sequences collapse with $v_0$-inverted, for dimensional reasons.

In principle, the exact sequences (\ref{eq:boSES1}) and (\ref{eq:boSES2}) allow one to inductively compute $\Ext_{A(2)_*}(\bou_j)$ given $\Ext_{A(2)_*}(\bou_1^{\otimes k})$, where $\bou_1$ is depicted in Figure \ref{fig:bo1}.
\begin{figure}[h]
$$
\xymatrix@R-2em@C-2em{
\xib_3 & \circ \ar@{-}[d]^{\sq^1}
\\
\xib_2^2 & \circ \ar@/^1pc/@{-}[dd]^{\sq^2}
\\ \\
\xib_1^4 & \circ \ar@{-} `r[dddd] `[dddd]^{\sq^4} [dddd]
\\ \\ \\ \\
1 & \circ
}
$$
\caption{$\bou_1$}\label{fig:bo1}
\end{figure}
The problem is that, unlike the $A(1)$-case, we do not have a  closed form computation of $\Ext_{A(2)_*}(\bou_1^{\otimes k})$.  These computations for $k \le 3$ appeared in \cite{BHHM} (the cases of $k = 0, 1$ appeared elsewhere). We include in Figures \ref{fig:A2andbo12} through \ref{fig:bo5andbo6}  the charts for $\si{8j} \bou_j$, for $0 \leq j\leq 6$, as well as $\Sigma^8\bou_1^2$ in dimensions $\le 64$. 
In these figures, the different contributions to $\bou_j$ coming from the different summands of the $E_1$-term of the spectral sequences \ref{eq:boSES_low} are denoted with different colors.

\begin{figure}
\centering
	\begin{subfigure}{0.49\textwidth}
	\includegraphics[height =\textheight]{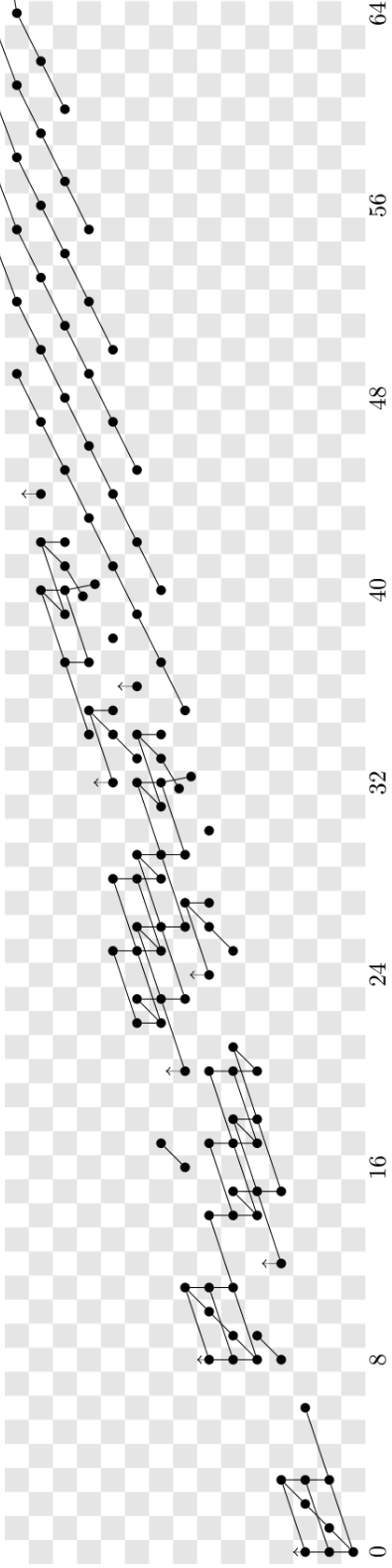}
	\caption{$\bou_0$}
	\end{subfigure}
	\begin{subfigure}{0.49\textwidth}
	\includegraphics[height =\textheight]{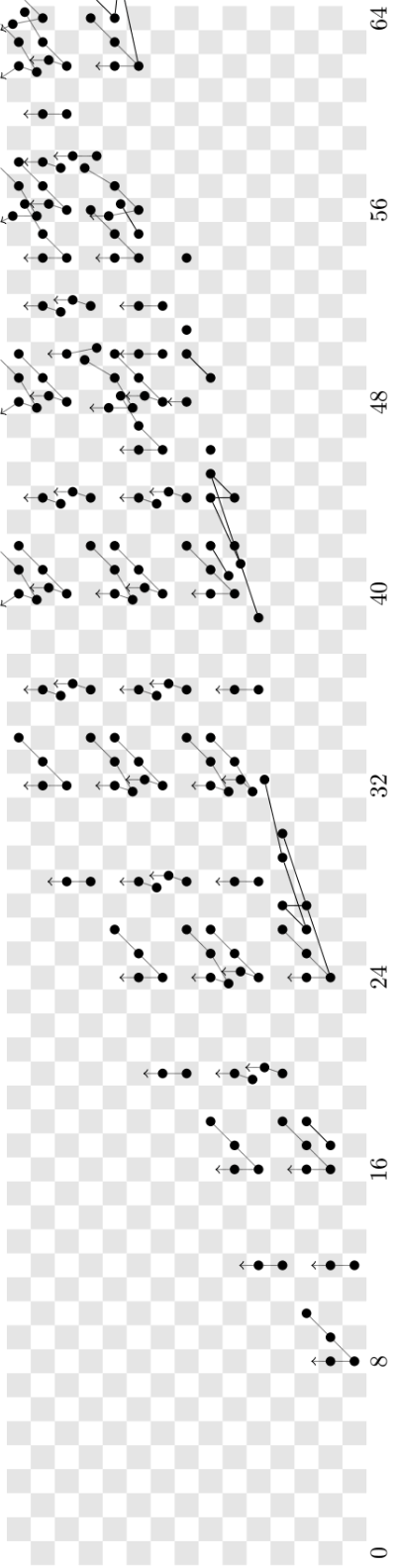}	
	\caption{$\Sigma^8\bou_1^2$}
	\end{subfigure}
\caption{}
\label{fig:A2andbo12}
\end{figure}

\begin{figure}
\centering
	\begin{subfigure}{0.49\textwidth}
	\includegraphics[height =\textheight]{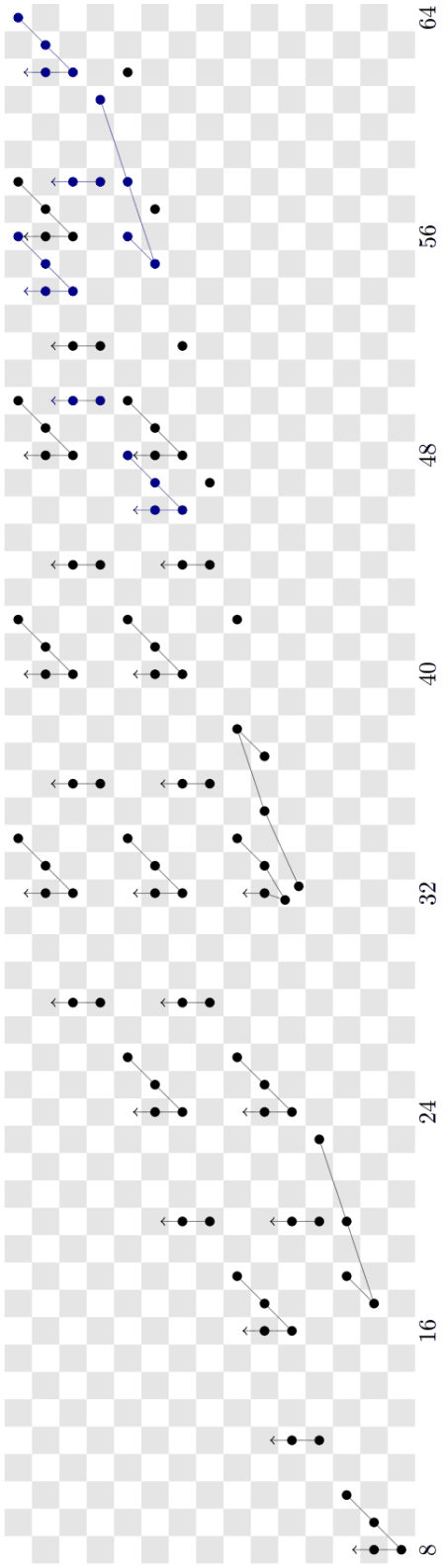}
	\caption{$\Sigma^8\bou_1$}
	\end{subfigure}
	\begin{subfigure}{0.49\textwidth}
	\includegraphics[height =\textheight]{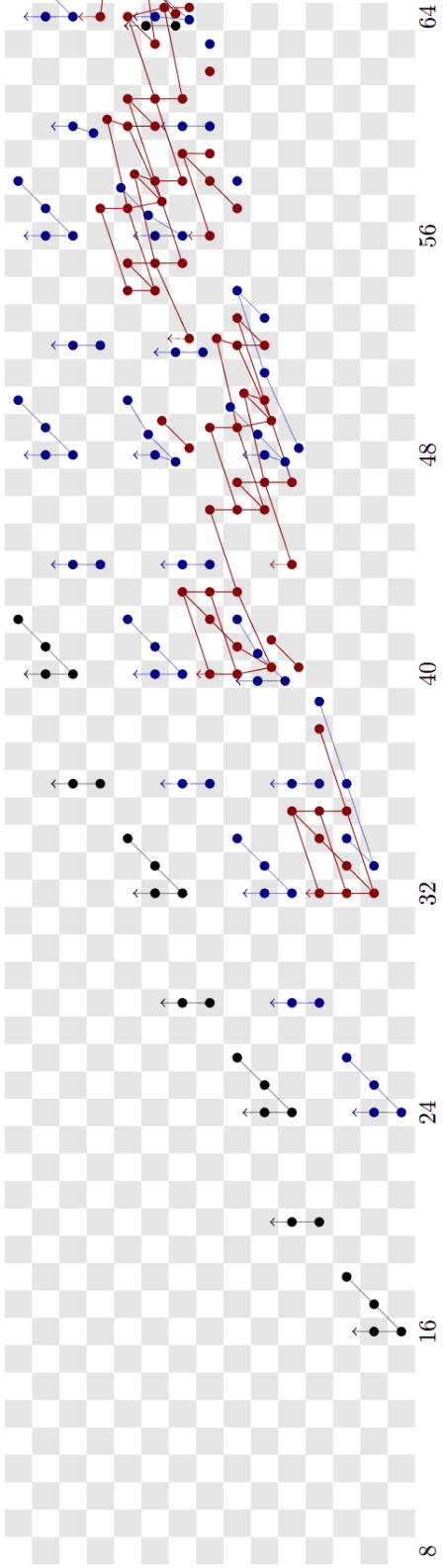}
	\caption{$\Sigma^{16}\bou_2$}
	\end{subfigure}
\caption{}\label{fig:bo1andbo2}
\end{figure}

\begin{figure}
\centering
	\begin{subfigure}{0.49\textwidth}
	\includegraphics[height =\textheight]{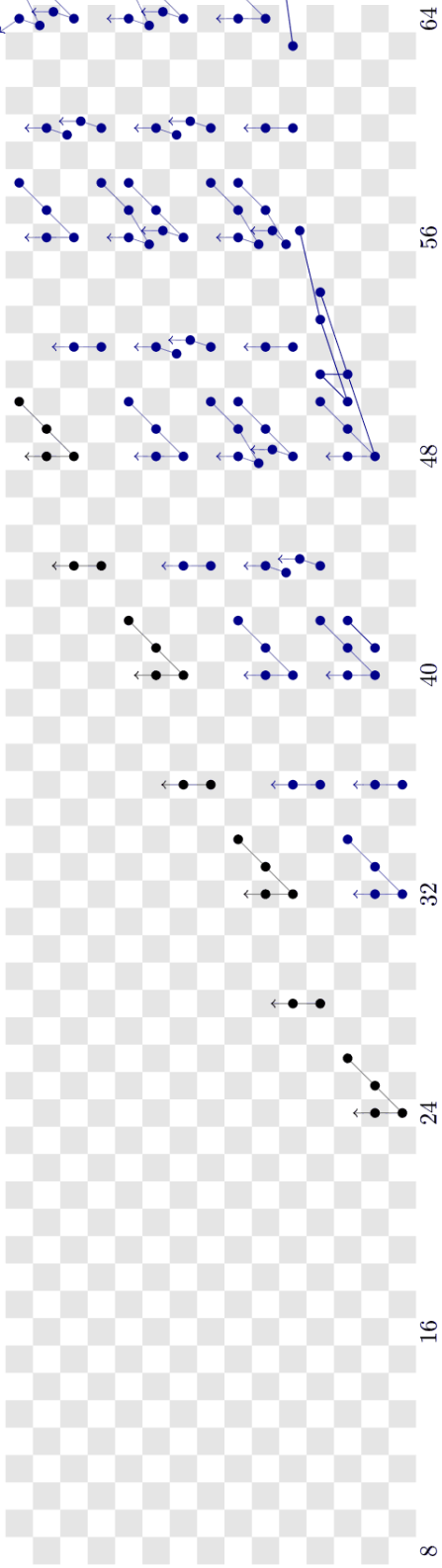}
	\caption{$\Sigma^{24}\bou_3$}
	\end{subfigure}
	\begin{subfigure}{0.49\textwidth}
	\includegraphics[height =\textheight]{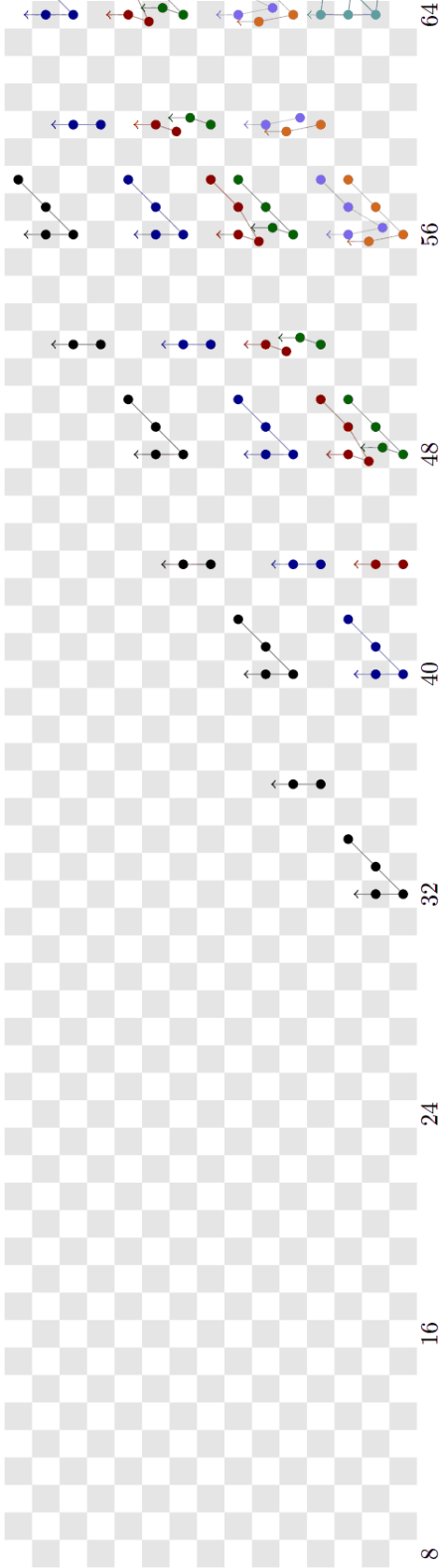}
	\caption{$\Sigma^{32}\bou_4$}
	\end{subfigure}
\caption{}\label{fig:bo3andbo4}
\end{figure}

\begin{figure}
\centering
	\begin{subfigure}{0.49\textwidth}
	\includegraphics[height =\textheight]{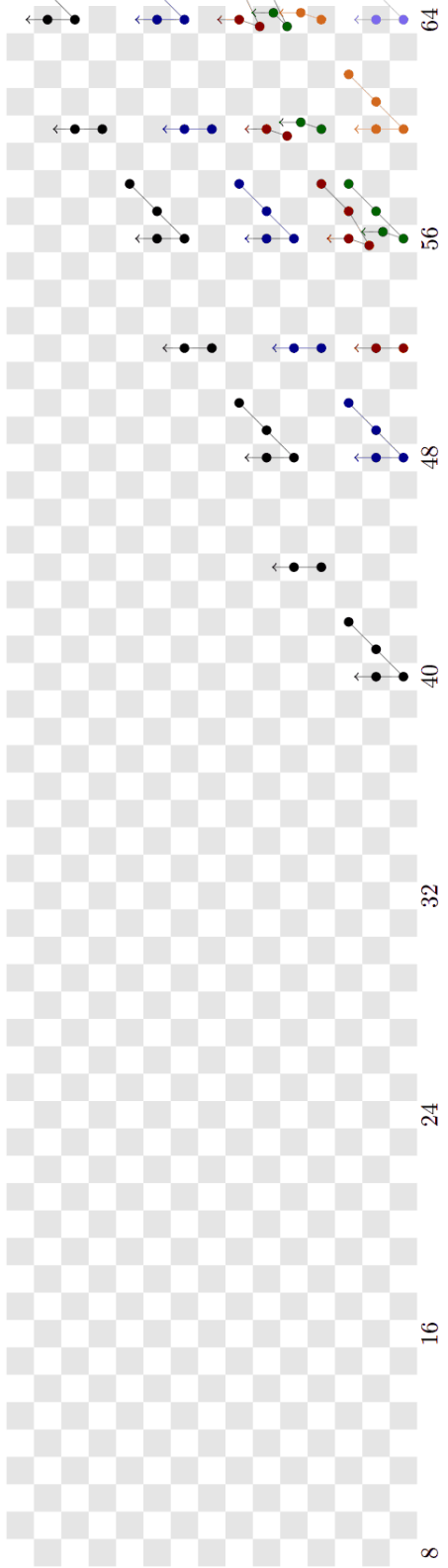}
	\caption{$\Sigma^{40}\bou_5$}
	\end{subfigure}
	\begin{subfigure}{0.49\textwidth}
	\includegraphics[height =\textheight]{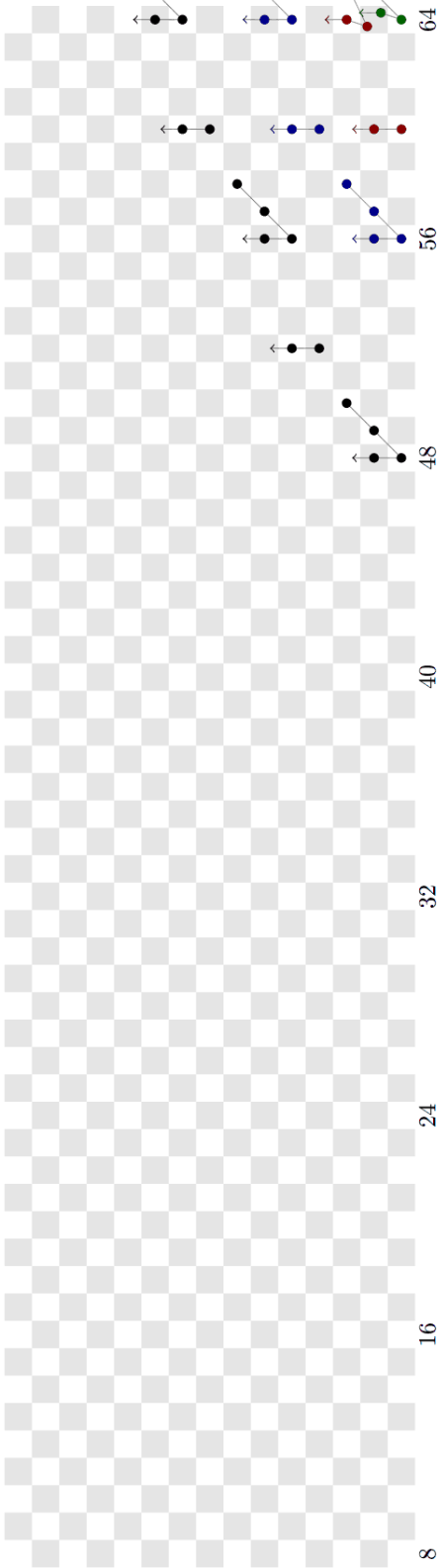}
	\caption{$\Sigma^{48}\bou_6$}
	\end{subfigure}
\caption{}\label{fig:bo5andbo6}
\end{figure}

\subsection{Rational behavior of the exact sequences}\label{sec:rationalgens}

We finish this section with a discussion on how to identify the generators of  $\frac{\Ext_{A(2)_*}(\si{8j} \bou_j)}{v_0-tors}$.
On one hand, the inclusion
$$
\xymatrix@C-1em{
\frac{\Ext_{A(2)_*}(\si{8j} \bou_j)}{v_0-tors} 
\ar@{^{(}->}[r] 
&  
v_0^{-1}\Ext_{A(2)_*}(\si{8j} \bou_j)  \ar@{^{(}->}[d]
&{}\save[]+<2.2cm,0cm>* {=\FF_2[v_0^{\pm 1}, [c_4], [c_6]]\{ \xib^{8i_1}\xib^{4i_2} \: : \: i_1 + i_2  = j \}}
\restore \\
& 
 v_0^{-1}\Ext_{A(2)_*}((A\mmod A(2))_*)
}
$$
discussed in Section~\ref{sec:rational}
informs us that the $h_0$-towers of $\Ext_{A(2)_*}(\si{8j}\bou_j)$ are all generated by
$$ h^k_0 [c_4]^{p} [c_6]^q \xib_1^{8i_1} \xib_2^{4i_2} $$
for appropriate (possibly negative) values of $k$ depending on $i_1, i_2, p,$ and $q$.

The problem is that the terms
\begin{align}
v_0^{-1} \Ext_{A(2)}(\si{16j}(A(2)\mmod A(1))_* \otimes \ul{\tmf}_{j-1}) & \subset v_0^{-1}\Ext_{A(2)_*}(\si{16j} \bou_{2j}),
\label{eq:boSES12} \\
v_0^{-1} \Ext_{A(2)}(\si{16j+8}(A(2)\mmod A(1))_* \otimes \ul{\tmf}_{j-1}) & \subset v_0^{-1}\Ext_{A(2)_*}(\si{16j+8} \bou_{2j+1}) \label{eq:boSES22}
 \end{align}
in the short exact sequences (\ref{eq:boSES1}), (\ref{eq:boSES2}) are not free over $\FF_2[v_0^{\pm 1}, [c_4], [c_6]]$ (however, they are free over $\FF_2[v_0^{\pm 1}, [c_4]]$).

We therefore instead identify the generators of $v_0^{-1}\Ext_{A(2)_*}((A\mmod A(2))_*)$ corresponding to the generators of (\ref{eq:boSES12}) and (\ref{eq:boSES22}) as modules over $\FF_2[v_0^{\pm 1}, [c_4]]$, as well as those generators coming (inductively) from
\begin{align}
v_0^{-1} \Ext_{A(2)_*}(\si{24j} \bou_j) & \subset v^{-1}_0\Ext_{A(2)_*}(\si{16j}\bou_{2j}),
\label{eq:boSES11}\\
v_0^{-1} \Ext_{A(2)_*}(\si{24j+8} \bou_j \otimes \bou_1) & \subset v^{-1}_0\Ext_{A(2)_*}(\si{16j+8}\bou_{2j+1}) \label{eq:boSES21}
\end{align}
in the following two lemmas, whose proofs are immediate from the definitions of the maps in (\ref{eq:boSES1}), (\ref{eq:boSES2}).

\begin{lem}\label{lem:boSES_2}
The summands (\ref{eq:boSES12}) (respectively (\ref{eq:boSES22})) are generated, as modules over $\FF_2[v_0^{\pm 1}, [c_4]]$, by the elements
$$ \xib_1^a \xib_2^{8i_1} \xib_3^{4i_3}, \: \xib_1^{a-8} \xib_2^{8i_1+4} \xib_3^{4i_3} 
\in (A\mmod A(2))_*,
$$
with $i_1 + i_2 \le j-1$ and $a = 16j - 16i_1 - 16i_2$ (respectively $a = 16j+8 - 16i_1 - 16i_2$).
\end{lem}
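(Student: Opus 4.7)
The proof will be a direct tracking of generators through the explicit maps defined in the short exact sequences (\ref{eq:boSES1}) and (\ref{eq:boSES2}) and the splitting (\ref{eq:splittingmap}), combined with the change-of-rings isomorphism and the standard computation of $\Ext_{A(1)_*}(\FF_2)$.

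First we would apply the change-of-rings isomorphism
$$\Ext_{A(2)_*}\bigl((A(2)//A(1))_* \otimes M\bigr) \cong \Ext_{A(1)_*}(M)$$
to identify the summands (\ref{eq:boSES12}) and (\ref{eq:boSES22}) with $v_0^{-1}\Ext_{A(1)_*}$ of the appropriate suspension of $\tmfu_{j-1}$. Using the formula $\Delta(\xib_n) = \sum_{k+l=n}\xib_k^{2^l} \otimes \xib_l$, a short calculation shows that the right $A(1)_*$-coaction on $\xib_1^8$ and $\xib_2^4$ in $\tmfu_{j-1}$ is trivial: the only non-trivial right-hand factors arising in $\Delta$ are $\xib_1^{\ge 4}$ or $\xib_2^{\ge 2}$, which vanish in $A(1)_* = \FF_2[\xi_1,\xi_2]/(\xi_1^4, \xi_2^2)$. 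Hence the sub-comodule of $\tmfu_{j-1}$ spanned by monomials $\xib_1^{8i_1}\xib_2^{4i_2}$ with $i_1+i_2 \le j-1$ has trivial $A(1)_*$-coaction and contributes a free summand to the $\Ext$ computation.

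The second ingredient is the standard presentation
$$v_0^{-1}\Ext_{A(1)_*}(\FF_2) = \FF_2[v_0^{\pm 1}, v, w]/(v^2 - v_0^2 w),$$
which is free of rank two over $\FF_2[v_0^{\pm 1}, w]$ with basis $\{1, v\}$, where $w = v_1^4$ is the image of $[c_4]$ under the change-of-rings map. Tensoring with the trivial sub-comodule above gives, for each valid pair $(i_1,i_2)$, two generators over $\FF_2[v_0^{\pm 1}, [c_4]]$---one from $1$ and one from $v$---which are precisely the two families listed in the lemma.

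Finally, we would identify each generator explicitly by lifting $1 \otimes \xib_1^{8i_1}\xib_2^{4i_2}$ (resp.\ $v \otimes \xib_1^{8i_1}\xib_2^{4i_2}$) back to $\bou_{2j}$ (resp.\ $\bou_{2j+1}$) via the section of the map in (\ref{eq:boSES1}) (resp.\ (\ref{eq:boSES2})) described right after the displayed equation, then applying the splitting (\ref{eq:splittingmap}) to land in $(A//A(2))_*$. The calculation for the first family yields $\xib_1^a\xib_2^{8i_1}\xib_3^{4i_2}$ with the stated value of $a$; for the second family, multiplication by $v$---whose cobar representative involves $\xi_2$ and $\xi_1^2$---has the effect of exchanging a factor of $\xib_1^8$ for $\xib_2^4$ modulo higher Adams filtration, giving $\xib_1^{a-8}\xib_2^{8i_1+4}\xib_3^{4i_2}$. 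The main technical obstacle will be making the $v$-shift precise: producing an explicit cobar representative for $v$ and verifying that its action on our representatives induces exactly this exchange, while carefully tracking internal degree and Adams filtration through the change-of-rings identifications.
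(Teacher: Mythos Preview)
Your approach is essentially the paper's approach---trace the generators through the explicit maps---but you have made the final step harder than it needs to be. The paper's own proof is a single clause: ``immediate from the definitions of the maps in (\ref{eq:boSES1}), (\ref{eq:boSES2}).'' What this means concretely is the following, and it bypasses your cobar analysis of $v$ entirely.

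Under the change-of-rings isomorphism, the basis $\{1,v\}$ of $v_0^{-1}\Ext_{A(1)_*}(\FF_2)$ over $\FF_2[v_0^{\pm 1},[c_4]]$ corresponds to the classes represented by $\{1,\xib_1^4\}$ in $(A(2)//A(1))_*$ (this is exactly the content of the later display (\ref{eq:integralcase1}), and is the standard identification of the bottom two cells of $A(2)//A(1)$ with the generator of $\pi_4\bo$). So the generators of $v_0^{-1}\Ext_{A(2)_*}((A(2)//A(1))_*\otimes\tmfu_{j-1})$ are simply the tensors $\xib_1^{8i_1}\xib_2^{4i_2}\otimes\xib_1^{4\epsilon}$ with $\epsilon\in\{0,1\}$. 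Now read off the section of the surjection (\ref{eq:boSESmap1}) (or (\ref{eq:boSESmap2})) from its defining formula: it sends this tensor to $\xib_1^{8i_1+4\epsilon}\xib_2^{4i_2}\in\bou_{2j}$. Finally apply the splitting map (\ref{eq:splittingmap}), which shifts every subscript up by one and prepends the appropriate power of $\xib_1$; the factor $\xib_1^{4\epsilon}$ becomes $\xib_2^{4\epsilon}$, and the two families drop out immediately. There is no need to produce an explicit cobar cocycle for $v$ or to argue about what ``multiplication by $v$'' does modulo higher filtration---the $\epsilon=1$ case is handled on exactly the same footing as $\epsilon=0$, purely by the combinatorics of the section and the index shift.

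Two minor points. First, the maps you want sections of are the surjections (\ref{eq:boSESmap1})/(\ref{eq:boSESmap2}), not the injections that open the sequences (\ref{eq:boSES1})/(\ref{eq:boSES2}); your text is ambiguous here. Second, your argument as written only exhibits these elements in the $\Ext$ group; to conclude they \emph{generate} you should either invoke the paper's preceding assertion that the summand is free over $\FF_2[v_0^{\pm 1},[c_4]]$ and do a rank count, or pass (as in (\ref{eq:integralcase1})) all the way to $v_0^{-1}\Ext_{A(0)_*}$ where the K\"unneth decomposition makes the full basis transparent.
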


\begin{lem}\label{lem:boSES_1}
Suppose inductively (via the exact sequences (\ref{eq:boSES1}),(\ref{eq:boSES2})) that the summand
$$ v_0^{-1}\Ext_{A(2)_*}(\si{8j}\bou_j) \subset v_0^{-1}\Ext_{A(2)_*}((A\mmod A(2))_*) $$
has generators of the form
$$ \{ \xib_1^{i_1}\xib_2^{i_2}\ldots \}. $$
Then the summand (\ref{eq:boSES11}) is generated by
$$ \{ \xib_2^{i_1} \xib_3^{i_2} \cdots \} $$
and the summand (\ref{eq:boSES21}) is generated by
$$ \{ \xib_2^{i_1} \xib_3^{i_2} \cdots \} \cdot \{ \xib_1^8, \xib_2^4 \}. $$
\end{lem}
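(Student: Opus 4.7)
My plan is to prove both claims by a direct chase of basis elements through the composites
\[
\si{24j}\ul{\bo}_j \hookrightarrow \si{16j}\ul{\bo}_{2j} \hookrightarrow (A//A(2))_* \quad\text{and}\quad \si{24j+8}\ul{\bo}_j\otimes\ul{\bo}_1 \hookrightarrow \si{16j+8}\ul{\bo}_{2j+1} \hookrightarrow (A//A(2))_*,
\]
in which the first arrow in each case is (the suspension of) the relevant map from \eqref{eq:boSES1} or \eqref{eq:boSES2} described explicitly at the start of this subsection, and the second arrow is the splitting map \eqref{eq:splittingmap}.

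For the first claim, I would apply both maps to a monomial $x = \xib_1^{4i_1}\xib_2^{2i_2}\xib_3^{i_3}\cdots \in \bou_j$ and use the weight-$16j$ constraint defining the splitting to solve for the resulting $\xib_1$-exponent. A short arithmetic check shows this exponent is forced to be $0$, so the composite acts as the pure index shift $\xib_k \mapsto \xib_{k+1}$. Comparing with the direct splitting $\si{8j}\bou_j \hookrightarrow (A//A(2))_*$, whose image contains by the inductive hypothesis the generators $\{\xib_1^{i_1}\xib_2^{i_2}\cdots\}$, this index shift produces exactly the claimed generators $\{\xib_2^{i_1}\xib_3^{i_2}\cdots\}$ for the summand \eqref{eq:boSES11}.

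For the second claim, I would run the same weight-bookkeeping for the four basis elements $y \in \{1, \xib_1^4, \xib_2^2, \xib_3\}$ of $\bou_1$. The computation yields that $y = 1$ and $y = \xib_1^4$ give images of the form $\xib_1^8 \cdot \varphi(x)$ and $\xib_2^4 \cdot \varphi(x)$ respectively, where $\varphi(x)$ is the index-shifted monomial produced in the first claim; these two cases account for the factors $\{\xib_1^8, \xib_2^4\}$ appearing in the statement. The remaining cases $y = \xib_2^2$ and $y = \xib_3$ produce images of the form $\xib_2^{a'}\xib_3^{4i_1+2}\xib_4^{2i_2}\cdots$ and $\xib_2^{a'}\xib_3^{4i_1}\xib_4^{2i_2+1}\cdots$ respectively, which are not manifestly in the $\FF_2[v_0^{\pm 1}, [c_4]]$-module generated by the first two.

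The main obstacle is therefore to show that these two extra cases contribute no additional independent generators after $v_0$-inversion. My argument is that $\xib_2^2$ and $\xib_3$ are connected by $\sq^1$ inside $\bou_1$ (visible in Figure \ref{fig:bo1}), so they span an acyclic $A(0)_*$-subcomodule of $\bou_1$ and hence die in $v_0^{-1}\Ext_{A(0)_*}(\bou_1)$. Extending the change-of-rings identification of Section \ref{sec:rational} from $\FF_2$ to $\bou_1$ (using that $v_1$ and $v_2$ act nilpotently in $v_0^{-1}\Ext$), this vanishing propagates to the summand $v_0^{-1}\Ext_{A(2)_*}(\si{24j+8}\bou_j \otimes \bou_1)$, so only the $y = 1$ and $y = \xib_1^4$ contributions give independent $\FF_2[v_0^{\pm 1},[c_4]]$-module generators, completing the proof.
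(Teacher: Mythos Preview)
Your proof is correct and follows essentially the same approach the paper intends: the paper states that this lemma (together with Lemma~\ref{lem:boSES_2}) is ``immediate from the definitions of the maps in (\ref{eq:boSES1}), (\ref{eq:boSES2}),'' and your basis-element chase through the composites is exactly the verification one would perform to unpack that sentence. Your weight arithmetic showing the $\xib_1$-exponent is forced to $0$ in the first claim, and to $8$ or $0$ (yielding the factors $\xib_1^8$, $\xib_2^4$) in the second, matches the paper's intended computation.

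Your handling of the cases $y = \xib_2^2$ and $y = \xib_3$ is more explicit than anything the paper writes, but it is correct and is implicitly subsumed by the framework of Section~\ref{sec:rational} and equation~(\ref{eq:integralcase2}): there the paper records directly that $v_0^{-1}\Ext_{A(2)_*}(\bou_1^k) \cong \FF_2[v_0^{\pm 1},[c_4],[c_6]] \otimes \FF_2\{1,\xib_1^4\}^{\otimes k}$, which already encodes the fact that $\xib_2^2$ and $\xib_3$ contribute no new generators after inverting $v_0$. So the paper's ``immediate'' presupposes this identification, whereas you re-derive it from the $A(0)_*$-comodule structure of $\bou_1$. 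Both routes are valid; yours is self-contained, while the paper's relies on the rational analysis already established in the preceding subsections.
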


The remaining term
\begin{equation}\label{eq:boSES13}
v_0^{-1}\Ext_{A(2)_*}(\si{24j+8}\bou_{j-1}[1]) \subset v_0^{-1} \Ext_{A(2)_*}(\bou_{2j}) 
\end{equation}
coming from (\ref{eq:boSES1}) is handled by the following lemma.

\begin{lem}\label{lem:boSES13}
Consider the summand
$$
v_0^{-1}\Ext_{A(1)_*}(\si{24j-8}\bou_{j-1}) \subset   
v_0^{-1} \Ext_{A(1)_*}(\si{16j}\ul{\tmf}_{j-1}) \subset v_0^{-1}\Ext_{A(2)_*}(\si{16j} \bou_{2j})
$$
generated as a module over $\FF_2[v_0^{\pm 1}, [c_4]]$ by the generators
$$
\xib^{16}_1 \xib^{8i_1}_2 \xib_3^{4i_2}, \: \xib^{8}_1 \xib^{8i_1+4}_2 \xib_3^{4i_2} 
\in (A\mmod A(2))_*,
$$
with $i_1 + i_2 = j-1$. 
Let $x_i$ ($0 \le i \le j-1$) be the generator of the summand (\ref{eq:boSES13}), as a module over $\FF_2[v_0^{\pm 1}, [c_4], [c_6]]$ corresponding to the generator $\xib_1^{4i} \in \bou_{j-1}.$
Then we have
$$ [c_6]\xib^{8}_1 \xib_2^{8i_1+4} \xib^{4i_2}_3 = v_0^4 x_{i_2} + \cdots $$
in $v_0^{-1}\Ext_{A(2)_*}(\si{16j} \bou_{2j})$, where the additional terms not listed above all come from the summand
$$ v_0^{-1} \Ext_{A(2)_*}(\si{24j}\bou_{j}) \subset v_0^{-1} \Ext_{A(2)_*}(\si{16j} \bou_{2j}). $$
\end{lem}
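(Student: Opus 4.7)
The plan is to leverage the identification $v_0^{-1}\Ext_{A(2)_*}(\FF_2)\cong v_0^{-1}\Ext_{A(0)_*}((A//A(2))_*)=\FF_2[v_0^{\pm 1}]\otimes(A//A(2))_*$ from Section~\ref{sec:rational}, under which $[c_6]$ corresponds to $v_0^5\xib_2^4$. Multiplying in the ambient polynomial ring gives
\[
v_0^5\xib_2^4\cdot \xib_1^8\xib_2^{8i_1+4}\xib_3^{4i_2}=v_0^5\xib_1^8\xib_2^{8i_1+8}\xib_3^{4i_2},
\]
a class of first-level weight $16j+8$, which strictly exceeds the weight bound $8(j-1)$ for $\ul{\tmf}_{j-1}$. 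Thus in the decomposition of $v_0^{-1}\Ext_{A(2)_*}(\si{16j}\bou_{2j})$ as the direct sum
\[
v_0^{-1}\Ext(\si{24j}\bou_j)\oplus v_0^{-1}\Ext_{A(1)_*}(\si{16j}\ul{\tmf}_{j-1})\oplus v_0^{-1}\Ext(\si{24j+8}\bou_{j-1}[1])
\]
afforded by the four-term exact sequence (\ref{eq:boSES1}), the contribution of $[c_6]\cdot \xib_1^8\xib_2^{8i_1+4}\xib_3^{4i_2}$ to the middle summand vanishes.

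Next, I would identify the contribution to the third summand via the connecting homomorphism of the long exact sequence associated to the surjection $(A(2)//A(1))_*\otimes\ul{\tmf}_{j-1}\twoheadrightarrow\si{8j+9}\bou_{j-1}$ in (\ref{eq:boSES1}). Using the explicit form of (\ref{eq:boSESmap1}) together with the $A(1)_*$-splitting $\ul{\tmf}_{j-1}\cong\bigoplus_{k=0}^{j-1}\si{8k}\bou_k$, the excess monomial $\xib_1^8\xib_2^{8i_1+8}\xib_3^{4i_2}$ should correspond, modulo contributions pulled off into the $\si{24j}\bou_j$-summand, to the cokernel class at the generator $\xib_1^{4i_2}\in\bou_{j-1}$. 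Since the connecting map raises Adams filtration by $1$, the factor $v_0^5$ is decremented to $v_0^{5-1}=v_0^4$ when transferred to the third summand, yielding the claimed $v_0^4 x_{i_2}$; a quick filtration check ($[c_6]$ has filtration $5$ and $x_{i_2}$ has filtration $1$ due to the $[1]$, so $v_0^4x_{i_2}$ sits in filtration $5$) confirms the bookkeeping.

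The main obstacle will be the careful verification of the identification in the previous paragraph. Specifically, one must trace $\xib_1^8\xib_2^{8i_1+8}\xib_3^{4i_2}$ through both the surjection $(A(2)//A(1))_*\otimes\ul{\tmf}_{j-1}\twoheadrightarrow\si{8j+9}\bou_{j-1}$ (whose image depends on the top class $\xib_1^4\xib_2^2\xib_3\in(A(2)//A(1))_*$) and the $A(1)_*$-splitting of $\bou_{j-1}$, confirming that the net effect is precisely the generator $\xib_1^{4i_2}$. This parallels the inductive generator identifications of Lemmas~\ref{lem:boSES_1} and~\ref{lem:boSES_2}, but demands additional care because of the Adams-filtration shift absorbed into the $[1]$.
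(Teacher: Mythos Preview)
Your opening identification is wrong, and the entire argument rests on it. You claim
\[
v_0^{-1}\Ext_{A(0)_*}((A//A(2))_*)=\FF_2[v_0^{\pm 1}]\otimes(A//A(2))_*,
\]
but $(A//A(2))_*$ is \emph{not} $A(0)_*$-primitive: for instance $\psi(\xib_4)=1\otimes\xib_4+\xib_1\otimes\xib_3^2$ in $A(0)_*\otimes(A//A(2))_*$. The correct description (which is what Section~\ref{sec:rational} actually says) is $v_0^{-1}\Ext_{A(0)_*}((A//A(2))_*)\cong\FF_2[v_0^{\pm 1},[c_4],[c_6]]$, a polynomial ring on \emph{two} generators besides $v_0$, not infinitely many. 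So ``multiplying in the ambient polynomial ring'' to get $v_0^5\xib_1^8\xib_2^{8i_1+8}\xib_3^{4i_2}$ has no meaning: there is no such ambient ring, and the monomial names $\xib_1^8\xib_2^{8i_1+4}\xib_3^{4i_2}$ used in Section~\ref{sec:rationalgens} are labels for module generators (defined via the iterated exact sequences), not ring elements you can multiply. More to the point, the $[c_6]$-action on $v_0^{-1}\Ext_{A(2)_*}(\si{16j}\bou_{2j})$ is an internal module operation and cannot produce a class of weight $16j+8$ lying outside that summand. Your weight argument and the subsequent ``connecting homomorphism absorbs a $v_0$'' heuristic are therefore built on sand.

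The paper's proof is much more direct and uses a different mechanism. Under the map $\bou_{2j}\to(A(2)//A(1))_*\otimes\ul{\tmf}_{j-1}$, the generator $\xib_1^8\xib_2^{8i_1+4}\xib_3^{4i_2}$ (pulled back to $\bou_{2j}$ as $\xib_1^{8i_1+4}\xib_2^{4i_2}$) is sent to $\xib_1^4\otimes\xib_1^{8i_1}\xib_2^{4i_2}$. The single ingredient needed is a fact about $\Ext_{A(2)_*}((A(2)//A(1))_*)=\Ext_{A(1)_*}(\FF_2)$ as a module over $\Ext_{A(2)_*}(\FF_2)$: with $v_0$ inverted, $[c_6]$ carries the cell $\xib_1^4$ to $v_0^4$ times the top cell $\xib_1^4\xib_2^2\xib_3$. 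Since the cokernel of (\ref{eq:boSESmap1}) is precisely $\FF_2\{\xib_1^4\xib_2^2\xib_3\}\otimes\Sigma^{8j-8}\bou_{j-1}$, and $\xib_1^{8i_1}\xib_2^{4i_2}$ is the image of $\xib_1^{4i_2}\in\bou_{j-1}$ under the top splitting map, this immediately gives $[c_6]\cdot\xib_1^8\xib_2^{8i_1+4}\xib_3^{4i_2}=v_0^4 x_{i_2}$ modulo the $\si{24j}\bou_j$-summand. The attaching-map fact is the real content; your argument never touches it.
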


\begin{proof}
This follows from the definition of the last map in (\ref{eq:boSES1}), together with the fact that with $v_0$ inverted, the cell $\xib_1^{4}\xib_2^2\xib_3 \in (A(2)\mmod A(1))_*$ attaches to the cell $\xib_1^4$ with attaching map $[c_6]/v_0^4$.
\end{proof}

Lemmas~\ref{lem:boSES_2}, \ref{lem:boSES_1}, and \ref{lem:boSES13} give an inductive method of identifying a collection of generators for $v_0^{-1}\Ext_{A(2)_*}(\bou_j)$, which are compatible with the exact sequences (\ref{eq:boSES1}), (\ref{eq:boSES2}).  We tabulate these below for the decompositions arising from the spectral sequences (\ref{eq:boSES_low}). For those summands of the form $(A(2)\mmod A(1))_* \otimes -$ these are generators over $\FF_2[v_0^{\pm 1}, [c_4]]$, for the other summands these are generators over $\FF_2[v_0^{\pm 1}, [c_4], [c_6]]$.
\begin{alignat*}{3}
\bou_0 : & \quad & \FF_2: & \quad && 1 \\
\si{8}\bou_1: && \si{8} \bou_1: &&& \xib_1^{8}, \xib_2^4 \\
\Sigma^{16} \bou_2: 
&& \Sigma^{16} (A(2)\mmod A(1))_*: &&& \xib_1^{16}, \xib_1^8 \xib_2^4 \\
&& \Sigma^{24}\bou_1: &&& \xib_2^8, \xib_3^4 \\
&& \Sigma^{32}\FF_2[1]: &&& v_0^{-4}[c_6] \xib_1^8 \xib_2^4 + \cdots \\
\Sigma^{24} \bou_3: 
&& \Sigma^{24} (A(2)\mmod A(1))_*: &&& \xib_1^{24}, \xib_1^{16} \xib_2^4 \\
&& \Sigma^{32} \bou_1^{2}: &&& \{ \xib_2^{8}, \xib_3^4 \}\cdot\{\xib_1^8, \xib_2^4 \} \\ 
\si{32} \bou_4: 
&& \si{32} (A(2)\mmod A(1))_* \otimes  \tmfu_1: &&&  \xib_1^32, \xib_1^{24}\xib_2^4, \xib_1^{16} \xib_2^8, \xib_1^8 \xib_2^{12}, \xib_1^{16} \xib_3^4, \xib_1^{8} \xib_2^4 \xib_3^4 \\
&& \si{48} (A(2)\mmod A(1))_*: &&& \xib_2^{16}, \xib_2^8\xib_3^4 \\
&& \si{56} \bou_1: &&& \xib_3^8, \xib_4^4 \\
&& \si{64}\FF_2[1]: &&& v_0^{-4}[c_6] \xib_2^8\xib_3^4 + \cdots  \\
&& \si{56} \bou_1[1]: &&& v_0^{-4}[c_6]\xib_1^8 \xib_2^{12} + \cdots , v_0^{-4}[c_6]\xib_1^{8} \xib_2^4 \xib_3^4 + \cdots \\
\si{40}\bou_5: 
&& \si{40} (A(2)\mmod A(1))_* \otimes \tmfu_1: &&& \xib_1^{40}, \xib_1^{32} \xib_2^4, \xib_1^{24} \xib_2^8, \xib_1^{16} \xib_2^{12}, \xib_1^{24} \xib_3^4, \xib_1^{16} \xib_2^4 \xib_3^4 \\
&& \si{56} (A(2)\mmod A(1))_* \otimes\bou_1: &&& \{ \xib_2^{16}, \xib_2^8 \xib_3^4 \}\cdot \{ 
\xib_1^8, \xib_2^4 \} \\
&& \si{64}\bou_1^{2}: &&& \{ \xib_3^8, \xib_4^4 \} \cdot \{ \xib_1^8, \xib_2^4 \}\\
&& \si{72} \bou_1[1]: &&& \{ v_0^{-4}[c_6] \xib_2^8 \xib_3^4 + \cdots \} \cdot \{ \xib_1^8, \xib_2^4 \}\\
\si{48} \bou_6:
&& \si{48} (A(2)\mmod A(1))_* \otimes \tmfu_2: &&& 
\xib_1^{48}, \xib_1^{40} \xib_2^4, \xib_1^{32} \xib_2^8, \xib_1^{24} \xib_2^{12}, \xib_1^{32} \xib_3^4, \xib_1^{24} \xib_2^4 \xib_3^4,  \\
&& &&& \quad \xib_1^{16} \xib_2^{16}, \xib_1^8 \xib_2^{20}, \xib_1^{16} \xib_2^8 \xib_3^4, \xib_1^{8} \xib_2^{12} \xib_3^4, \xib_1^{16} \xib_3^8, \xib_1^8 \xib_2^4 \xib_3^8 \\
&& \si{72} (A(2)\mmod A(1))_*: &&& \xib_2^{24}, \xib_2^{16} \xib_3^4 \\ 
&& \si{80} \bou_1^2: &&& \{ \xib_3^{8}, \xib_4^4 \}\cdot\{\xib_2^8, \xib_3^4 \} \\
&& \si{80} \bou_2[1] &&&  v_0^{-4}[c_6]\xib_1^8 \xib_2^{20} + \cdots ,  v_0^{-4}[c_6]\xib_1^{8} \xib_2^{12} \xib_3^4 + \cdots ,  \\
&& &&& \quad v_0^{-4}[c_6]\xib_1^8 \xib_2^4 \xib_3^8 + \cdots \\
\si{56} \bou_7:
&& \si{56} (A(2)\mmod A(1))_* \otimes  \tmfu_2: &&& 
\xib_1^{56}, \xib_1^{48} \xib_2^4, \xib_1^{40} \xib_2^8, \xib_1^{32} \xib_2^{12}, \xib_1^{40} \xib_3^4, \xib_1^{32} \xib_2^4 \xib_3^4,  \\
&& &&& \quad \xib_1^{24} \xib_2^{16}, \xib_1^{16} \xib_2^{20}, \xib_1^{24} \xib_2^8 \xib_3^4, \xib_1^{16} \xib_2^{12} \xib_3^4, \xib_1^{24} \xib_3^8, \xib_1^{16} \xib_2^4 \xib_3^8 \\
&&\si{80} (A(2)\mmod A(1))_* \otimes \bou_1: &&&  \{ \xib_2^{24}, \xib_2^{16} \xib_3^4 \} \cdot \{ \xib_1^8, \xib_2^4 \} \\ 
&& \si{88} \bou_1^3: &&& \{ \xib_3^{8}, \xib_4^4 \}\cdot\{\xib_2^8, \xib_3^4 \} \cdot \{ \xib_1^8, \xib_2^4 \} \\
\si{64} \bou_8:
&& \si{64} (A(2)\mmod A(1))_* \otimes \tmfu_3: &&& 
\xib_1^{64}, \xib_1^{56} \xib_2^4, \xib_1^{48} \xib_2^8, \xib_1^{40} \xib_2^{12}, \xib_1^{48} \xib_3^4, \xib_1^{40} \xib_2^4 \xib_3^4,  \\
&& &&& \quad \xib_1^{32} \xib_2^{16}, \xib_1^{24} \xib_2^{20}, \xib_1^{32} \xib_2^8 \xib_3^4, \xib_1^{24} \xib_2^{12} \xib_3^4, \xib_1^{32} \xib_3^8, \xib_1^{24} \xib_2^4 \xib_3^8, \\
&& &&& \quad \xib_1^{16} \xib_2^{24}, \xib_1^{8} \xib_2^{28}, \xib_1^{16} \xib_2^{16} \xib_3^4, \xib_1^8 \xib_2^{20} \xib_3^4, \xib_1^{16} \xib_2^{8} \xib_3^8, \xib_1^8 \xib_2^{12} \xib_3^8, \\
&& &&& \quad \xib_1^{16} \xib_3^{12}, \xib_1^8 \xib_2^4 \xib_3^{12} \\
&& \si{96} (A(2)\mmod A(1))_* \otimes \tmfu_1: &&& \xib_2^32, \xib_2^{24} \xib_3^4, \xib_2^{16} \xib_3^8, \xib_2^8 \xib_3^{12}, \xib_2^{16} \xib_4^4, \xib_2^{8} \xib_3^4 \xib_4^4 \\ 
&& \si{112} (A(2)\mmod A(1))_*: &&& \xib_3^{16}, \xib_3^8\xib_4^4 \\
&& \si{120}\bou_1: &&& \xib_4^8, \xib_5^4 \\ 
&& \si{128}\FF_2[1]: &&& v_0^{-4}[c_6] \xib_3^8\xib_4^4 + \cdots \\
&& \si{120} \bou_1[1]: &&& v_0^{-4}[c_6]\xib_2^8 \xib_3^{12} + \cdots , v_0^{-4}[c_6]\xib_2^{8} \xib_3^4 \xib_4^4 + \cdots\\ 
&& \si{104} \bou_3 [1]: &&&
v_0^{-4}[c_6]\xib_1^{8} \xib_2^{28}+\cdots, v_0^{-4}[c_6]\xib_1^8 \xib_2^{20} \xib_3^4+\cdots, \\
&& &&& \quad v_0^{-4}[c_6]\xib_1^8 \xib_2^{12} \xib_3^8 + \cdots
\end{alignat*}

\subsection{Identification of the integral lattice}

Having constructed useful bases of the summands
$$ v_0^{-1} \Ext_{A(2)_*}(\si{8j}\bou_j) \subset v_0^{-1}\Ext_{A(2)_*}(A\mmod A(2)_*) $$
it remains to understand the lattices
$$ \frac{\Ext_{A(2)_*}(\si{8j}\bou_j)}{v_0-tors} \subset v_0^{-1} \Ext_{A(2)_*}(\si{8j}\bou_j). $$
This can accomplished inductively; the rational generators we identified in the last section are compatible with the exact sequences (\ref{eq:boSES1}), (\ref{eq:boSES2}), and $\frac{\Ext_{A(2)_*}}{v_0-tors}$ of the terms in these exact sequences are determined by the $\frac{\Ext_{A(1)_*}}{v_0-tors}$-computations of Section~\ref{sec:bo}, and knowledge of 
$$ \frac{\Ext_{A(2)_*}(\bou_1^k)}{v_0-tors}. $$
Unfortunately the latter requires separate explicit computation for each $k$, and hence does not yield a general answer.

Nevertheless, in this section we will give some lemmas which provide convenient criteria for identifying the $i$ so that given a rational generator $x \in (A\mmod A(2))_*$  
(as in the previous section) we have
$$ v_0^i x \in \frac{\Ext_{A(2)*}((A\mmod A(2))_*)}{v_0-tors} \subset v_0^{-1} \Ext_{A(2)*}((A\mmod A(2))_*). $$

We first must clarify what we actually mean by ``rational generator''.   The generators identified in the last section originate from the exact sequences (\ref{eq:boSES1}), (\ref{eq:boSES12}). More precisely, they come from the generators of $v_0^{-1} \Ext_{A(2)_*}(M)$ where $M$ is given by 
\begin{align*}
\text{Case 1:} & \quad M = \bou_1^k, \\
\text{Case 2:} & \quad M = (A(2)\mmod A(1))_* \otimes \tmfu_j.
\end{align*}

In Case 1, a generator $x$ of $v_0^{-1}\Ext_{A(2)_*}(M)$ is a generator as a module over $\FF_2[v_0^{\pm 1}, [c_4], [c_6]]$, using the isomorphisms
\begin{equation}\label{eq:integralcase2}
\begin{split}
& v_0^{-1}\Ext_{A(2)_*}(\bou_1^k) \\
& \quad \cong v_0^{-1}\Ext_{A_*}((A\mmod A(2))_* \otimes \bou_1^k) \\
& \quad \xrightarrow[\cong]{\alpha} v_0^{-1}\Ext_{A(0)_*}((A\mmod A(2))_* \otimes \bou^k_1) \\
& \quad \cong v_0^{-1}\Ext_{A(0)_*}((A\mmod A(2))_*) \otimes_{\FF_2[v_0^{\pm 1}]}  v_0^{-1}\Ext_{A(0)_*}(\bou_1^k) \\
& \quad \cong \FF_2[v_0^{\pm 1}, [c_4], [c_6]] \otimes_{\FF_2}  
\FF_2\{ 1, \xib_1^4 \}^{\otimes k}.
\end{split}
\end{equation}
The rational generators in this case correspond to the generators 
$$ x \in \{ 1, \xib_1^4 \}^{\otimes k}. $$

In Case 2, a generator $x$ of $v_0^{-1}\Ext_{A(2)_*}(M)$ is a generator as a module over $\FF_2[v_0^{\pm 1}, [c_4]]$ using the isomorphisms
\begin{equation}\label{eq:integralcase1}
\begin{split}
& v_0^{-1}\Ext_{A(2)_*}((A(2)\mmod A(1))_* \otimes \tmfu_j)  \\
& \quad \cong v_0^{-1}\Ext_{A(1)_*}(\tmfu_j) \\
& \quad \cong v_0^{-1}\Ext_{A_*}((A\mmod A(1))_* \otimes \tmfu_j) \\
& \quad \xrightarrow[\cong]{\alpha} v_0^{-1}\Ext_{A(0)_*}((A\mmod A(1))_* \otimes \tmfu_j) \\
& \quad \cong v_0^{-1}\Ext_{A(0)_*}((A\mmod A(1))_*) \otimes_{\FF_2[v_0^{\pm 1}]}  v_0^{-1}\Ext_{A(0)_*}(\tmfu_j) \\
& \quad \cong \FF_2[v_0^{\pm 1}, [c_4]]\{ 1, \xib_1^4 \} \otimes_{\FF_2}  
\FF_2\{  \xib_1^{8i_1} \xib_2^{4i_2} \: : \: i_1 + i_2 \le j \}.
\end{split}
\end{equation}
The rational generators in this case correspond to the generators 
$$ x = \xib_1^{4\epsilon} \otimes \xib_1^{8i_1} \xib_2^{4i_2}. $$

In either case, the maps $\alpha$ in both (\ref{eq:integralcase1}) and (\ref{eq:integralcase2}) arise from surjections of cobar complexes
$$ C^*_{A_*}(N) \rightarrow C^*_{A(0)_*}(N) $$
induced by the surjection
$$ A_* \rightarrow A(0)_*. $$
Thus a term $v_0^i x \in C^*_{A(0)_*}(N)$ representing an element in $v_0^{-1} \Ext_{A(0)_*}(N)$  corresponds (for $i$ sufficiently large) to a term $[\xib_1]^ix + \cdots  \in C^*_{A_*}(N)$.  Then we have determined an element of the integral lattice
$$ \left\lbrack [\xib_1]^ix + \cdots  \right\rbrack \in \frac{\Ext_{A_*}(N)}{v_0-tors} \subset v_0^{-1}\Ext_{A_*}(N). $$

\begin{lem}\label{lem:integralCnu}
Suppose that the $A(2)_*$-coaction on $x \in (A\mmod A(2))_*$ satisfies 
$$ \psi(x) = 1\otimes x + \xib_1^4 \otimes y $$
with $y$ primitive, as in the following ``cell diagram'':
$$
\xymatrix@C-2em@R-2em{
x & \circ \ar@{-} `r[ddd] `[ddd]^{\sq^4} [ddd]
\\ \\ \\ 
y & \circ
}
$$
Then
$$ v_0^3 x \in \frac{\Ext_{A(2)*}((A\mmod A(2))_*)}{v_0-tors} \subset v_0^{-1} \Ext_{A(2)*}((A\mmod A(2))_*) $$
and is represented by
$$
[\xib_1|\xib_1 | \xib_1]x + \left( [\xib_1|\xib_2|\xib_2] + [\xib_1|\xib_1|\xib_1^2\xib_2] + 
[\xib_1|\xib_1\xib_2|\xib_1^2] + [\xib_2|\xib_1^2|\xib_1^2] \right)y
$$
in the cobar complex $C^*_{A(2)_*}((A\mmod A(2))_*)$.
\end{lem}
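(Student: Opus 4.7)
The proof is a direct verification in the cobar complex $C^*_{A(2)_*}((A//A(2))_*)$: I want to exhibit an integral 3-cocycle whose $v_0$-localization equals $v_0^3 x$. Since $\xib_1$ is $A(2)_*$-primitive, $v_0\in\Ext^{1,1}$ is represented by $[\xib_1]$ in the cobar, so $v_0^3 x$ in $v_0^{-1}\Ext$ has naive representative $[\xib_1|\xib_1|\xib_1]x$. Integrally this fails: using $\Delta'(\xib_1)=0$, the cobar differential reduces to $d([\xib_1|\xib_1|\xib_1]x) = [\xib_1|\xib_1|\xib_1|\psi'(x)]$, whose leading term by hypothesis is $[\xib_1|\xib_1|\xib_1|\xib_1^4]y$. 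I need to cancel this (and inductively the lower-dimensional analogs coming from $\psi'(x) - \xib_1^4\otimes y$).

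Since $y$ is assumed primitive, a 3-cochain of the form $[a|b|c]y$ has differential $[\Delta'(a)|b|c]y + [a|\Delta'(b)|c]y + [a|b|\Delta'(c)]y$. I would use the relevant reduced coproducts in $A(2)_*$, computed directly from $\Delta(\xib_n)=\sum \xib_i\otimes\xib_{n-i}^{2^i}$ modulo 2:
$$\Delta'(\xib_1)=\Delta'(\xib_1^2)=0,\qquad \Delta'(\xib_2)=\xib_1\otimes\xib_1^2,$$
$$\Delta'(\xib_1\xib_2)=\xib_1^2\otimes\xib_1^2+\xib_1\otimes\xib_2+\xib_2\otimes\xib_1+\xib_1\otimes\xib_1^3,$$
$$\Delta'(\xib_1^2\xib_2)=\xib_1^3\otimes\xib_1^2+\xib_1^2\otimes\xib_2+\xib_2\otimes\xib_1^2+\xib_1\otimes\xib_1^4.$$
A bookkeeping of the resulting eleven terms (mod 2) shows that the differential of
$$\bigl([\xib_1|\xib_2|\xib_2] + [\xib_1|\xib_1|\xib_1^2\xib_2] + [\xib_1|\xib_1\xib_2|\xib_1^2] + [\xib_2|\xib_1^2|\xib_1^2]\bigr)y$$
equals exactly $[\xib_1|\xib_1|\xib_1|\xib_1^4]y$: the target term arises uniquely from the $\xib_1\otimes\xib_1^4$ summand in $\Delta'(\xib_1^2\xib_2)$, while each of the five other cochains produced (such as $[\xib_1|\xib_1|\xib_1^2|\xib_2]y$ and $[\xib_1|\xib_2|\xib_1|\xib_1^2]y$) appears exactly twice in the sum and so cancels.

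Adding this correction to $[\xib_1|\xib_1|\xib_1]x$ thus yields an integral cocycle, up to further correction terms required by the lower-dimensional part of $\psi'(x)$, which are produced inductively by the same procedure but are suppressed in the statement. To identify the resulting class I would invoke the localization isomorphism $\alpha$ of Section~\ref{sec:rationalgens}: under the decomposition \eqref{eq:integralcase2}, the leading summand $[\xib_1|\xib_1|\xib_1]x$ maps to $v_0^3\cdot x$ in $v_0^{-1}\Ext_{A(0)_*}$, whereas each $y$-correction lands in the $y$-component of the tensor factor and therefore does not contribute to the $x$-coefficient. Hence the cocycle represents a class in $\Ext_{A(2)_*}((A//A(2))_*)/v_0\text{-tors}$ lifting $v_0^3 x$, as claimed.

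The main obstacle is the mod-2 cancellation in the middle step: in particular one must not drop the non-obvious summand $\xib_1\otimes\xib_1^3$ in $\Delta'(\xib_1\xib_2)$ (nor $\xib_1^3\otimes\xib_1^2$ in $\Delta'(\xib_1^2\xib_2)$), since these pair off the $[\xib_1|\xib_1|\xib_1^3|\xib_1^2]y$ terms that would otherwise prevent the correction from being a lift. Beyond this combinatorial check the argument is structural, and readily adapts to produce analogous correction formulas for the iterated attachments appearing in Lemmas~\ref{lem:boSES_2}--\ref{lem:boSES13}.
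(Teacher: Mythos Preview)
Your proof is correct, and the core computation is exactly the one underlying the paper's argument, just carried out explicitly. The paper proceeds more conceptually: it observes that the 2-cell comodule $\FF_2\{y,x\}$ agrees with $(A(2)//A(1))_*$ through dimension~4, so its $\Ext_{A(2)_*}$ agrees with $\Ext_{A(1)_*}(\FF_2)$ there, which immediately identifies the bottom of the $h_0$-tower at $t-s=4$ as $v_0^3 x$. For the explicit cocycle the paper then simply notes that the four-term correction is the cochain that kills $h_0^3 h_2$ in $\Ext_{A(2)_*}(\FF_2)$. Your direct cobar verification that
\[
d\bigl([\xib_1|\xib_2|\xib_2] + [\xib_1|\xib_1|\xib_1^2\xib_2] + [\xib_1|\xib_1\xib_2|\xib_1^2] + [\xib_2|\xib_1^2|\xib_1^2]\bigr) = [\xib_1|\xib_1|\xib_1|\xib_1^4]
\]
is precisely the content of that claim, worked out term by term. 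The paper's phrasing has the advantage of explaining \emph{why} the power is exactly~3 and where the correction comes from (the relation $h_0^3 h_2 = 0$), while yours is self-contained and does not rely on prior knowledge of $\Ext_{A(1)_*}(\FF_2)$ or of bounding cochains in $\Ext_{A(2)_*}(\FF_2)$. Both handle the ``lower-dimensional terms'' of $\psi(x)$ the same way: by working in the 2-cell subquotient and leaving any further corrections implicit.
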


\begin{proof}
Since the cell complex depicted agrees with $A(2)\mmod A(1)$ through dimension $4$, $\Ext_{A(2)_*}$ of this comodule agrees with $\Ext_{A(1)_*}(\FF_2)$ through dimension $4$.  In particular, $v_0^3 x + \cdots $ generates an $\frac{\Ext_{A(2)_*}}{v_0-tors}$-term in this dimension.  To determine the exact representing cocycle, we note that 
$$ [\xib_1|\xib_2|\xib_2] + [\xib_1|\xib_1|\xib_1^2\xib_2] + 
[\xib_1|\xib_1\xib_2|\xib_1^2] + [\xib_2|\xib_1^2|\xib_1^2] $$
kills $h_0^3 h_2$ in $\Ext_{A(2)_*}(\FF_2)$.
\end{proof}

\begin{ex}
Let $\alpha = \xib^{8j_1}_{i_1} \xib_{i_2}^{8j_2} \cdots $ be a monomial with exponents all divisible by $8$. A typical instance of a set of generators of $(A\mmod A(2))_*$ satisfying the hypotheses of Lemma~\ref{lem:integralCnu} is 
$$
\xymatrix@C-2em@R-2em{
\xib_i^4 \alpha & \circ \ar@{-} `r[ddd] `[ddd]^{\sq^4} [ddd]
\\ \\ \\ 
\xib_{i-1}^8 \alpha & \circ
}
$$
\end{ex}

The following corollary will be essential to relating the integral generators of Lemma~\ref{lem:integralCnu} to $2$-variable modular forms in Section~\ref{sec:2var}.

\begin{cor}\label{cor:integralCnu}
Suppose that $x$ satisfies the hypotheses of Lemma~\ref{lem:integralCnu}.  The image of the corresponding integral generator
$$ v_0^3 x + \cdots \in \Ext_{A(2)_*}((A\mmod A(2)_*)) $$
in $\Ext_{E[Q_0, Q_1, Q_2]_*}((A\mmod E[Q_0, Q_1, Q_2])_*)$ is given by
$$ v_0^3 x + v_0 [a_1]^2 y. $$
\end{cor}

\begin{proof}
Note the equality 
$$ E[Q_0, Q_1, Q_2]_* = \FF_2[\xib_1, \xib_2, \xib_3]/(\xib_1^2, \xib_2^2, \xib_3^2). $$
Therefore
the image of the integral generator of Lemma~\ref{lem:integralCnu} under the map
$$ C^*_{A(2)_*}((A\mmod A(2))_*) \rightarrow  C^*_{E[Q_0, Q_1, Q_2]_*}((A\mmod E[Q_0, Q_1, Q_2])_*) $$
is 
$$ 
[\xib_1|\xib_1 | \xib_1]x + [\xib_1|\xib_2|\xib_2] y
$$
and this represents $v_0^3 x + v_0 [a_1]^2 y$.
\end{proof}

Similar arguments provide the following slight refinement.

\begin{lem}\label{lem:integralCnuHZ1}
Suppose that the $A(2)_*$-coaction on $x \in (A\mmod A(2))_*$ satisfies 
$$ \psi(x) = 1\otimes x + \xib_1^4 \otimes y $$
with $y$ primitive, and that there exists $w$ and $z$ satisfying
$$ \psi(z) = 1\otimes z + \xib_1^2 \otimes y $$
and 
$$ \psi(w) = 1\otimes w + \xib_1 \otimes z + \xib_2 \otimes y $$
as in the following ``cell diagram'':
$$
\xymatrix@C-2em@R-1em{
x & \circ \ar@{-} `r[dddd] `[dddd]^{\sq^4} [dddd] \\
w & \circ  \ar@{-}[d]_{\sq^1} \\ 
z & \circ \ar@/_1pc/@{-}[dd]_{\sq^2} \\ 
\\
y & \circ
}
$$
Then
$$ v_0 x \in \frac{\Ext_{A(2)*}((A\mmod A(2))_*)}{v_0-tors} \subset v_0^{-1} \Ext_{A(2)*}((A\mmod A(2))_*) $$
is represented by
$$
[\xib_1]x + [\xib_1^2]w + \left( [\xib_1^3]+[\xib_2]\right)z + [\xib_1^2 \xib_2]y
$$
in the cobar complex $C^*_{A(2)_*}((A\mmod A(2))_*)$.
\end{lem}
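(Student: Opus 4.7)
The plan is to verify by direct cobar-level computation that the given $1$-cochain $\omega := [\xib_1]x + [\xib_1^2]w + ([\xib_1^3]+[\xib_2])z + [\xib_1^2\xib_2]y$ is a cocycle in $C^1_{A(2)_*}((A//A(2))_*)$ (modulo the unnamed ``lower dimension terms'' inherited from the hypothesized coactions), and then to identify its class by reading off the leading summand. Conceptually, the hypotheses on $x,w,z,y$ express that the subspace spanned by these elements, together with the attaching maps depicted, forms a subcomodule isomorphic through internal degree $|x|$ to a shifted copy of $\bou_1$ (Figure \ref{fig:bo1andbo2}) sitting inside $(A//A(2))_*$. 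Since $\Ext_{A(2)_*}(\bou_1)$ carries a $v_0$-multiple of the top cell at bidegree $(s,t)=(1,\,|x|+1)$, the existence of such an integral class is guaranteed a priori, and the content of the lemma is just to record an explicit representative.

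The cocycle verification uses the reduced coproducts $\bar\Delta(\xib_1)=0=\bar\Delta(\xib_1^2)$, $\bar\Delta(\xib_1^3)=\xib_1\otimes\xib_1^2+\xib_1^2\otimes\xib_1$, $\bar\Delta(\xib_2)=\xib_1\otimes\xib_1^2$, and
$$\bar\Delta(\xib_1^2\xib_2)=\xib_1^2\otimes\xib_2+\xib_2\otimes\xib_1^2+\xib_1\otimes\xib_1^4+\xib_1^3\otimes\xib_1^2,$$
together with the hypothesized reduced coactions on $x,w,z,y$. First I would expand $d\omega$ piece by piece; each of the five summands contributes a $[\bar\Delta(\cdot)|\cdot]$ part and a $[\cdot|\bar\psi(\cdot)]$ part. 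Tabulating these produces a short list of cobar $2$-monomials, each of which appears exactly twice and therefore vanishes mod $2$. In particular, the summand $[\xib_1^2\xib_2]y$ is chosen precisely so that the four monomials from $[\bar\Delta(\xib_1^2\xib_2)]y$ cancel against the corresponding monomials arising from $[\xib_1|\bar\psi(x)]$, $[\xib_1^2|\bar\psi(w)]$, $[\xib_1^3|\bar\psi(z)]$, and $[\xib_2|\bar\psi(z)]$; the summand $([\xib_1^3]+[\xib_2])z$ is the unique $\FF_2$-combination pairing off the two monomials $[\xib_1|\xib_1^2]z$ and $[\xib_1^2|\xib_1]z$ produced by $\bar\Delta(\xib_1^3)$ and $\bar\Delta(\xib_2)$ against $[\xib_1^2|\bar\psi(w)]$.

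Finally, since $[\xib_1]$ represents $v_0\in\Ext^{1,1}_{A(2)_*}(\FF_2)$, the leading summand $[\xib_1]x$ identifies the class of $\omega$ as $v_0\cdot x$ under the embedding of Section \ref{sec:rationalgens}, while the three remaining summands contribute only to the components of the splitting (\ref{eq:Qsplit}) associated to the cells $w$, $z$, and $y$, so do not alter the $x$-component. The main obstacle is purely bookkeeping: once the reduced coproducts and coactions are laid out, every cancellation is mechanical, and the role of the exotic mixed term $[\xib_1^2\xib_2]y$ only becomes transparent after fully expanding $d\omega$. The ``lower dimension'' contributions in $\bar\psi(x)$, $\bar\psi(w)$, $\bar\psi(z)$ feed into $d\omega$ but land on cells strictly below $y$, and so are absorbed into tacit lower-dimensional corrections to $\omega$, exactly as in the proof of Lemma \ref{lem:integralCnu}.
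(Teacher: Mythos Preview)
Your direct cobar verification is correct and is exactly the ``similar arguments'' the paper invokes; the paper provides no further detail than this, and your tabulation of the six pairwise cancellations is accurate. One small correction to your motivational paragraph: the four-cell complex $\{y,z,w,x\}$ with cells in relative degrees $0,2,3,4$ is not a truncated $\bou_1$ (whose cells lie in degrees $0,4,6,7$); rather, through degree $4$ it agrees with $(A(2)//A(1))_* \otimes \HZu_1$, so the a priori existence of the class comes from $\Ext_{A(1)_*}(\HZu_1)$ (compare Lemma~\ref{lem:HZ1^i}). This misidentification is harmless, since your explicit cocycle and its image under $C^*_{A(2)_*}\to C^*_{A(0)_*}$ already establish both existence and the identification with $v_0 x$.
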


\begin{ex}
Let $\alpha = \xib^{8j_1}_{i_1} \xib_{i_2}^{8j_2} \cdots $ be a monomial with exponents all divisible by $8$.
A typical instance of a set of generators of $(A\mmod A(2))_*$ satisfying the hypotheses of Lemma~\ref{lem:integralCnuHZ1} is 
$$
\xymatrix@C-2em@R-1em{
\xib^{4}_{i} \xib^{4}_{i'} \alpha & \circ \ar@{-} `r[dddd] `[dddd]^{\sq^4} [dddd] \\
(\xib^{8}_{i-1} \xib_{i'+2} + \xib_{i+2} \xib^8_{i'-1}) \alpha & \circ  \ar@{-}[d]_{\sq^1} \\ 
(\xib^{8}_{i-1} \xib^2_{i'+1} + \xib^2_{i+1} \xib^8_{i'-1}) \alpha & \circ \ar@/_1pc/@{-}[dd]_{\sq^2} \\ 
\\
(\xib^{8}_{i-1} \xib^4_{i'} + \xib^4_i \xib^8_{i'-1}) \alpha & \circ
}
$$
\end{ex}

\begin{cor}\label{cor:integralCnuHZ1}
Suppose that $x$ satisfies the hypotheses of Lemma~\ref{lem:integralCnuHZ1}.  The image of the corresponding integral generator
$$ v_0 x + \cdots \in \Ext_{A(2)_*}((A\mmod A(2)_*)) $$
in $\Ext_{E[Q_0, Q_1, Q_2]_*}((A\mmod E[Q_0, Q_1, Q_2])_*)$ is given by
$$ v_0 x + [a_1] z. $$
\end{cor}

% !TEX root = tmfcoop.tex

\section{The image of $\tmf_*\tmf$ in $\TMF_*\TMF_\QQ$: two variable modular forms}\label{sec:2var}

\subsection{Review of Baker-Laures work on cooperations}

\emph{In this brief subsection, we do \emph{not} work $2$-locally, but integrally.}

For $N > 1$, the spectrum $\TMF_1(N)$ is even periodic, with 
$$ \TMF_1(N)_{2*} \cong M_*(\Gamma_1(N))[\Delta^{-1}]_{\ZZ[1/N]}. $$
In particular, its homotopy is torsion-free.  As a result, there is an embedding
\begin{align*}
\TMF_1(N)_{2*} \TMF_1(N) & \hookrightarrow \TMF_1(N)_{2*}\TMF_1(N)_\QQ \\
& \cong M_*(\Gamma_1(N))[\Delta^{-1}]_\QQ \otimes M_*(\Gamma_1(N))[\Delta^{-1}]_\QQ.
\end{align*}
Consider the multivariate $q$-expansion map
$$
 M_*(\Gamma_1(N))[\Delta^{-1}]_\QQ \otimes M_*(\Gamma_1(N))[\Delta^{-1}]_\QQ
\rightarrow \QQ((q, \bar{q})).
$$

In \cite[Thm.~2.10]{Laures}, Laures uses it to determine the image of $\TMF_1(N)_*\TMF_1(N)$ under the embedding above.

\begin{thm}[Laures]\label{thm:Laures}
The multivariate $q$-expansion map gives a pullback
$$
\xymatrix{
\TMF_1(N)_*\TMF_1(N) \ar[r] \ar[d] & 
\TMF_1(N)_*\TMF_1(N)_\QQ \ar[d] 
\\
\ZZ[1/N]((q, \bar{q})) \ar[r] & 
\QQ((q, \bar{q})).
}
$$
Therefore, elements of $\TMF_1(N)_*\TMF_1(N)$ are given by sums
$$ \sum_i f_i \otimes g_i \in  M_*(\Gamma_1(N))[\Delta^{-1}]_\QQ \otimes M_*(\Gamma_1(N))[\Delta^{-1}]_\QQ $$ 
with
$$ \sum_i f_i(q) \otimes g_i(\bar{q}) \in \ZZ[1/N]((q, \bar{q})).$$ 
\end{thm}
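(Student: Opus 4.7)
The plan is to reduce this to a two-variable $q$-expansion principle for $\Gamma_1(N)$-modular forms, leveraging the Landweber exactness of $\TMF_1(N)$.

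The first step is to invoke Landweber exactness. Since $\TMF_1(N)$ is the global sections of $\mathcal{O}^{top}$ on the affine stack $\Msl{N}[\Delta^{-1}]$ (after inverting $N$), its associated formal group is Landweber exact, and hence
\[
  \TMF_1(N)_*\TMF_1(N) \cong \MU_*\TMF_1(N) \otimes_{\MU_*} \TMF_1(N)_*.
\]
This identifies $\TMF_1(N)_{2*}\TMF_1(N)$ with the ring of global sections of $\omega^{\otimes *}\boxtimes \omega^{\otimes *}$ on the product $\Msl{N}[\Delta^{-1}] \times_{\Spec \ZZ[1/N]} \Msl{N}[\Delta^{-1}]$, i.e.\ with weakly holomorphic two-variable modular forms for $\Gamma_1(N)$ over $\ZZ[1/N]$. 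Landweber exactness also forces $\TMF_1(N)_*\TMF_1(N)$ to be torsion-free (so the left vertical map is an inclusion), and the decomposition of the rationalization as the claimed tensor product follows from the K\"unneth formula for quasi-coherent sheaves, which is valid since $\Msl{N}_\QQ$ is smooth.

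The second step is to identify the bottom map of the square as a bivariate $q$-expansion. Choosing a cusp of the compactification $\overline{\Msl{N}}$ where the Tate curve provides an integral model (after inverting $N$), one obtains the classical $q$-expansion $M_*(\Gamma_1(N))[\Delta^{-1}]_{\ZZ[1/N]} \hookrightarrow \ZZ[1/N]((q))$; performing this in each factor yields the bivariate $q$-expansion into $\ZZ[1/N]((q,\bar{q}))$. The pullback claim then reduces to a two-variable $q$-expansion principle: a rational two-variable weakly holomorphic form is defined over $\ZZ[1/N]$ iff its bivariate $q$-expansion has coefficients in $\ZZ[1/N]$.

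The final step is to establish this two-variable principle by iterating the classical single-variable one (Deligne--Rapoport, Katz--Mazur): view a two-variable form as a one-variable form valued in the one-variable cooperations ring, apply the principle in the first variable, then in the second. The main obstacle is controlling the interaction with denominators during this iteration---specifically, verifying that the one-variable $q$-expansion principle remains valid after flat base change from $\ZZ[1/N]$ to the ring of $q$-expansions in the other variable. This reduces to the flatness of $\Msl{N}[\Delta^{-1}]$ over $\Spec \ZZ[1/N]$ together with the compatibility of the Tate curve construction with arbitrary base change, both of which are standard consequences of Deligne--Rapoport theory.
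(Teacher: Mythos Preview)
The paper does not supply its own proof of this theorem; it is stated with attribution to Laures and cited as \cite[Thm.~2.10]{Laures}. So there is no in-paper argument to compare against, only the question of whether your sketch is a valid proof.

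Your overall architecture---torsion-freeness from Landweber exactness, rational identification with the tensor product of modular forms, then a $q$-expansion argument to pin down the integral lattice---is indeed the shape of Laures's argument. But your Step~1 contains a genuine error that undermines the rest. Landweber exactness gives
\[
\TMF_1(N)_*\TMF_1(N) \;\cong\; \TMF_1(N)_* \otimes_{\MU_*} \MU_*\MU \otimes_{\MU_*} \TMF_1(N)_*,
\]
which is the ring of functions on $\Msl{N}[\Delta^{-1}] \times_{\Mfg} \Msl{N}[\Delta^{-1}]$, \emph{not} on the product over $\Spec\ZZ[1/N]$. These are different schemes: the fiber product over $\Mfg$ parametrizes pairs of elliptic curves together with an isomorphism of their formal groups, and its ring of functions is strictly larger than the naive tensor product $M_*\otimes_{\ZZ[1/N]} M_*$. (The $\KU$ analogue makes this vivid: $\KU_*\KU$ contains numerical polynomials such as $\binom{w}{n}$ that are not in $\ZZ[u^{\pm 1},v^{\pm 1}]$.) If your identification were correct, the theorem would be nearly tautological---and it is not.

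Because of this, your Step~3 does not go through as written. Iterating the classical $q$-expansion principle variable-by-variable would establish integrality detection for sections on $\Msl{N}\times_{\ZZ[1/N]}\Msl{N}$, but that is not the ring in question. What actually needs to be shown is a $q$-expansion principle for the fiber product over $\Mfg$: that an element of the rational cooperations with integral bivariate $q$-expansion lies in the integral cooperations. This requires engaging with the formal-group-isomorphism data, and is where the real content of Laures's argument lies; it cannot be reduced to flat base change of the one-variable principle alone. Your rational identification in Step~1 is correct (since over $\QQ$ all formal groups are additive and the extra data trivializes), and that is the one piece that survives.
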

We shall let $M^{2-var}_*(\Gamma_1(N))[\Delta^{-1}, \bar{\Delta}^{-1}]$ denote this ring of integral $2$-variable modular forms (meromorphic at the cusps).  We shall denote the subring of those integral $2$-variable modular forms which have holomorphic multivariate $q$-expansions by $M^{2-var}_*(\Gamma_1(N))$.

\begin{rmk}
Baker \cite{MR1307488} showed that in the case of $N = 1$, with $6$ inverted, we have
$$ \TMF_*\TMF[1/6] \cong M^{2-var}_*(\Gamma(1))[1/6, \Delta^{-1}, \bar{\Delta}^{-1}]. $$
Laures's methods also apply to this case.
\end{rmk}

\subsection{Representing $\TMF_*\TMF/tors$ with $2$-variable modular forms}

\emph{From now on, everything is again implicitly $2$-local.}

We now turn to adapting Laures's perspective to identify $\TMF_*\TMF/tors$.  To do this, we use the descent spectral sequence for 
$$ \TMF  \rightarrow \TMF_1(3) . $$  
Let $(B_*, \Gamma_{B_*})$ denote the Hopf algebroid encoding descent from $\Msl{3}$ to $\M$, with
\begin{align*}
B_* & = \pi_* \TMF_1(3)  = \ZZ [a_1, a_3, \Delta^{-1}], \\
\Gamma_{B_*} & = \pi_* \TMF_1(3) \wedge_\TMF \TMF_1(3)  = B_*[r,s,t]/(\sim),
\end{align*}
(see Section~\ref{sec:review}) where $\sim$ denotes the relations (\ref{eq:relations}).
The Bousfield-Kan spectral sequence associated to the cosimplicial resolution
$$ \TMF  \rightarrow \TMF_1(3)  \Rightarrow \TMF_1(3)^{\wedge_{\TMF} 2}   \Rrightarrow \TMF_1(3)^{\wedge_{\TMF} 3}  \cdots $$
yields a Baker-Lazarev spectral sequence \cite{BakerLazarev}
$$ \Ext^{s,t}_{\Gamma_{B_*}}(B_*) \Rightarrow \pi_{t-s} \TMF . $$

We can use parallel methods to construct a Baker-Lazarev spectral sequence for the extension
$$ \TMF \wedge \TMF  \rightarrow \TMF_1(3) \wedge \TMF_1(3) . $$
Let $(B^{(2)}_*, \Gamma_{B^{(2)}_*})$ denote the associated Hopf algebroid encoding descent, with
\begin{align*}
B^{(2)}_* & = \pi_* \TMF_1(3) \wedge \TMF_1(3) ,  \\
\Gamma_{B^{(2)}_*} & = \pi_* (\TMF_1(3)^{\wedge_\TMF 2}  \wedge \TMF_1(3)^{\wedge_\TMF 2}) . 
\end{align*}
The Bousfield-Kan spectral sequence associated to the cosimplicial resolution
$$ \TMF^{\wedge 2}  \rightarrow \TMF_1(3)^{\wedge 2}  \Rightarrow \left( \TMF_1(3)^{\wedge_\TMF 2}\right) ^{\wedge 2}  \Rrightarrow \left( \TMF_1(3)^{\wedge_\TMF 3}\right)^{\wedge 2} \cdots $$
yields a descent spectral sequence
$$ \Ext^{s,t}_{\Gamma_{B^{(2)}_*}}(B^{(2)}_*) \Rightarrow \TMF_{t-s} \TMF . $$

\begin{lem}\label{lem:2varlem1}
The map induced from the edge homomorphism
$$ \TMF_*\TMF /tors \rightarrow \Ext^{0,*}_{\Gamma^{(2)}_{B_*}}(B_*^{(2)}) $$
is an injection.
\end{lem}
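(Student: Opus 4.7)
The plan is to show that any element of $\TMF_*\TMF$ in the kernel of the edge homomorphism is $2$-power torsion, and therefore vanishes in $\TMF_*\TMF/\mathit{tors}$. The kernel of the edge map is precisely the submodule of $\TMF_*\TMF$ detected in strictly positive descent filtration of the spectral sequence $\Ext^{s,t}_{\Gamma_{B^{(2)}_*}}(B^{(2)}_*)\Rightarrow \TMF_{t-s}\TMF$, so it suffices to prove that positive filtration implies torsion. The quotient by torsion is well-formed because $\Ext^{0,*}$, being an equalizer of left and right units into the torsion-free ring $B^{(2)}_*=\pi_*(\TMF_1(3)\wedge\TMF_1(3))$, is itself torsion-free.

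The first ingredient I would establish is that $\Ext^{s,*}_{\Gamma_{B^{(2)}_*}}(B^{(2)}_*)$ is $2$-power torsion for every $s\ge 1$. The Hopf algebroid $(B^{(2)}_*,\Gamma_{B^{(2)}_*})$ presents descent along the étale cover $\Msl{3}\times\Msl{3}\to\M\times\M$ (working $2$-locally, so $3$ is a unit). Rationally, $\M_\QQ\simeq\Proj\QQ[c_4,c_6]$ has trivial higher cohomology in the line bundles $\omega^{\otimes t}$; by a K\"unneth argument the same holds for $\M_\QQ\times\M_\QQ$ in the external tensor products $\omega^{\otimes t_1}\boxtimes\omega^{\otimes t_2}$. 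Hence $\Ext^{s,*}_{\Gamma_{B^{(2)}_*}}(B^{(2)}_*)\otimes\QQ=0$ for $s\ge 1$, which forces the integral Ext to be $2$-power torsion in these degrees. The second ingredient is a horizontal vanishing line $E_2^{s,*}=0$ for $s\ge s_0$, which follows from the known bound on the cohomological dimension of $\M_{(2)}$ in weighted sheaves (coming from the finite stabilizers), carried over to the external product by a K\"unneth-style argument.

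Combining these, suppose $x\in\TMF_*\TMF$ has filtration $s\ge 1$ with detecting class $[x]\in E_\infty^{s,*}$. Since $E_\infty^{s,*}$ is a subquotient of the torsion group $E_2^{s,*}$, there is some $a_1$ with $2^{a_1}[x]=0$, so $2^{a_1}x$ has strictly greater filtration. Iterating, the filtration of $2^{a_1+a_2+\cdots+a_k}x$ grows unboundedly, and once it exceeds $s_0$ it is forced to be zero. Hence $2^Nx=0$ for some $N$, so $x$ is torsion and its class in $\TMF_*\TMF/\mathit{tors}$ vanishes.

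The main obstacle I anticipate is making the derived stack-theoretic identifications precise enough to justify the K\"unneth vanishing statements for both the rational collapse and the vanishing line, and checking strong convergence of the Bousfield-Kan descent spectral sequence for $\TMF\wedge\TMF$ so that the filtration-lowering step is legitimate. Once these structural facts are pinned down, the positive-filtration-implies-torsion conclusion is formal.
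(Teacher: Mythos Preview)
Your argument is correct, but the paper's proof is shorter and avoids one of your ingredients. You establish that $E_2^{s,*}$ is $2$-power torsion for $s\ge 1$ (equivalently, that the rational $E_2$ is concentrated on $s=0$), then climb the filtration using a horizontal vanishing line to conclude that positive-filtration elements are torsion. The paper instead uses the first ingredient directly: the rationalization map $\TMF\wedge\TMF\to(\TMF\wedge\TMF)_\QQ$ induces a map of descent spectral sequences, and since the rational spectral sequence collapses to the $s=0$ line, any $x\in\TMF_*\TMF$ of filtration $s\ge 1$ has image of filtration $\ge s$ in $\TMF_*\TMF_\QQ$, hence zero. Thus $x$ is torsion, and the edge map is injective mod torsion.

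The practical difference is that the paper's comparison-of-spectral-sequences argument needs only the rational collapse, not the vanishing line. Your approach has the virtue of working entirely within the integral spectral sequence and isolating the stronger intermediate statement ``positive filtration $\Rightarrow$ torsion,'' at the cost of invoking the finite cohomological dimension of $\M\times\M$. Both are fine; the paper's is a one-line reduction once you observe the rational $E_2$ vanishes above $s=0$, so you might streamline your writeup by replacing the vanishing-line-plus-iteration step with the direct comparison map.
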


\begin{proof}
This follows from the fact that the map
$$ \TMF \wedge \TMF  \rightarrow \TMF \wedge \TMF_\QQ $$
induces a map of descent spectral sequences
$$
\xymatrix{
\Ext^{s,t}_{\Gamma^{(2)}_{B_*}}(B^{(2)}_*) \ar@{=>}[r] \ar[d]  
& \TMF_{t-s} \TMF  \ar[d] 
\\
\Ext^{s,t}_{\Gamma^{(2)}_{B_*}}(B^{(2)}_* \otimes \QQ) \ar@{=>}[r]  
& \TMF_{t-s} \TMF_{\QQ}
}
$$
and the rational spectral sequence is concentrated on the $s  = 0$ line.
\end{proof}

The significance of this homomorphism is that the target is the space of $2$-local two-variable modular forms for $\Gamma(1)$.

\begin{lem}\label{lem:2varlem2}
The $0$-line of the descent spectral sequence for $\TMF_*\TMF $ may be identified with the space of $2$-local two-variable modular forms of level $1$ (meromorphic at the cusp):
$$ \Ext^{0, 2*}_{\Gamma^{(2)}_{B_*}}(B_*^{(2)}) = 
M_*^{2-var}(\Gamma(1))[\Delta^{-1}, \bar{\Delta}^{-1}] . $$ 
\end{lem}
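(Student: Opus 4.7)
The plan is to interpret $\Ext^{0,*}_{\Gamma^{(2)}_{B_*}}(B^{(2)}_*)$ geometrically as the global sections of $\omega^{\boxtimes *}$ on the product moduli stack $\mathcal{M}_{(2)} \times \mathcal{M}_{(2)}$ (with $\Delta$ and $\bar\Delta$ inverted), and then identify this ring of sections with the $2$-integral two-variable $\Gamma(1)$-modular forms, using Laures's Theorem~\ref{thm:Laures} to reduce to a computation on the étale cover $\mathcal{M}_1(3)_{(2)}^{\times 2}$.

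First I would carry out the geometric identifications. Applying Laures's Theorem~\ref{thm:Laures} at $N = 3$ and base-changing to $\mathbb{Z}_{(2)}$ (permissible since $3$ is invertible $2$-locally) gives
$$
B^{(2)}_* = \pi_* \TMF_1(3) \wedge \TMF_1(3) \cong M^{2-var}_*(\Gamma_1(3))[\Delta^{-1}, \bar\Delta^{-1}],
$$
i.e., $B^{(2)}_*$ is the ring of sections of $\omega^{\boxtimes *}$ on $\mathcal{M}_1(3)_{(2)} \times \mathcal{M}_1(3)_{(2)}$ (away from the cusps). The same reasoning applied to each of the four smash factors identifies $\Gamma^{(2)}_{B_*}$ with the ring of sections of $\omega^{\boxtimes *}$ on $(\mathcal{M}_1(3) \times_{\mathcal{M}} \mathcal{M}_1(3))^{\times 2}_{(2)}$, and the two structure maps $B^{(2)}_* \rightrightarrows \Gamma^{(2)}_{B_*}$ are the pullbacks along the two projections.

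Next, since $\mathcal{M}_1(3)_{(2)} \to \mathcal{M}_{(2)}$ is an étale cover, so is the square $\mathcal{M}_1(3)_{(2)}^{\times 2} \to \mathcal{M}_{(2)}^{\times 2}$, and faithfully flat descent identifies
$$
\Ext^{0,*}_{\Gamma^{(2)}_{B_*}}(B^{(2)}_*) = \mathrm{eq}\bigl(B^{(2)}_* \rightrightarrows \Gamma^{(2)}_{B_*}\bigr) \cong H^0\bigl(\mathcal{M}_{(2)} \times \mathcal{M}_{(2)},\, \omega^{\boxtimes *}\bigr)[\Delta^{-1}, \bar\Delta^{-1}].
$$
A Künneth argument (valid because $\mathcal{M}_{(2)}$ has affine diagonal, as witnessed by the étale presentation by the scheme $\mathcal{M}_1(3)_{(2)}$) then yields
$$
H^0\bigl(\mathcal{M}_{(2)} \times \mathcal{M}_{(2)},\, \omega^{\boxtimes *}\bigr)[\Delta^{-1}, \bar\Delta^{-1}] \cong MF_*(\Gamma(1))[\Delta^{-1}] \otimes_{\mathbb{Z}_{(2)}} MF_*(\Gamma(1))[\Delta^{-1}],
$$
and the (bivariate) $q$-expansion principle identifies this tensor square with $M^{2-var}_*(\Gamma(1))[\Delta^{-1}, \bar\Delta^{-1}]$ in the sense of Laures's integrality condition.

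The main obstacle is packaging the Künneth step together with the $q$-expansion identification cleanly. A compact way to do both is to perform the entire argument on the affine étale cover: the equalizer diagram for the product cover factors as the external tensor square of the single-variable equalizer diagram for $\mathcal{M}_1(3)_{(2)} \rightrightarrows \mathcal{M}_{(2)}$, and for sections on affine schemes this external tensor product of equalizers computes the equalizer of the tensor product. The comparison with the $q$-expansion definition then reduces to the classical univariate $q$-expansion principle applied one variable at a time.
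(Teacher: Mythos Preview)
Your proposal has a genuine gap, rooted in a misidentification of the geometry underlying $B^{(2)}_*$. You correctly invoke Laures's theorem to get $B^{(2)}_* \cong M^{2\text{-}var}_*(\Gamma_1(3))[\Delta^{-1},\bar\Delta^{-1}]$, but then you equate this with ``the ring of sections of $\omega^{\boxtimes *}$ on $\mathcal{M}_1(3)_{(2)} \times \mathcal{M}_1(3)_{(2)}$.'' It is not: sections on the \emph{absolute} product of these affine schemes would be just $B_* \otimes_{\ZZ_{(2)}} B_* = \ZZ_{(2)}[a_1,a_3,\bar a_1,\bar a_3,\Delta^{-1},\bar\Delta^{-1}]$, whereas $B^{(2)}_* = \TMF_1(3)_*\TMF_1(3)$ is strictly larger. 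For example the image of $t_1 \in \BP_*\BP$ is $(\bar a_1 - a_1)/2 \in B^{(2)}_*$ (Section~\ref{sec:review}), and this does not lie in $B_* \otimes B_*$. The relevant object is the fiber product over $\mathcal{M}_{fg}$, not the absolute product, and that is precisely why Laures's $2$-variable forms, rather than a naive tensor product, appear.

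This error propagates to your conclusion. Your K\"unneth identification $\Ext^0 \cong MF_*[\Delta^{-1}] \otimes_{\ZZ_{(2)}} MF_*[\Delta^{-1}]$ cannot be correct, because the actual answer $M^{2\text{-}var}_*(\Gamma(1))[\Delta^{-1},\bar\Delta^{-1}]$ is strictly larger: the form $f_1 = (c_4 - \bar c_4)/16$ from Table~\ref{tab:gens} has $2$-integral $q$-expansion but does not lie in the integral tensor product. So the final ``bivariate $q$-expansion principle'' you appeal to would be identifying two rings that are not equal.

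The paper's argument avoids these issues entirely. It stacks two pullback squares: the bottom one is Laures's theorem for $\TMF_1(3)_*\TMF_1(3)$, and the top one compares integral and rational primitives in $B^{(2)}_*$. The top square is a pullback because $\Gamma_{B^{(2)}_*}$ is torsion-free (as $\TMF_1(3) \wedge_\TMF \TMF_1(3)$ is Landweber exact), so an element of $B^{(2)}_*$ is primitive if and only if its rationalization is. No K\"unneth or absolute-product geometry is needed.
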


\begin{proof}
This follows from the composition of pullback squares
$$
\xymatrix{
\Ext^{0, *}_{\Gamma_{B^{(2)}_*}}(B_*^{(2)}) \ar@{^{(}->}[r] \ar@{^{(}->}[d] & 
\Ext^{0, *}_{\Gamma_{B^{(2)}_*}}(B_*^{(2)} \otimes \QQ) \ar@{^{(}->}[d]
\\
\TMF_1(3)_*\TMF_1(3)  \ar@{^{(}->}[r] \ar[d] & 
\TMF_1(3)_*\TMF_1(3)_\QQ \ar[d] 
\\
\ZZ ((q, \bar{q})) \ar[r] & 
\QQ((q, \bar{q})).
}
$$
The bottom square is a pullback by Theorem~\ref{thm:Laures}.  Note that since $\TMF_1(3) \wedge_\TMF \TMF_1(3)$ is Landweber exact, $\Gamma_{B_*^{(2)}}$ is torsion-free.  Thus an element of $B_*^{(2)}$ is $\Gamma_{B^{(2)}_*}$-primitive if and only if its image in $B_*^{(2)} \otimes \QQ$ is primitive.  This shows that the top square is a pullback.
\end{proof}

\subsection{Representing $\tmf_*\tmf /tors$ with $2$-variable modular forms}

Recall from Equation \ref{eq:afs} that the Adams filtration of $c_4$ is 4 and the Adams filtration of $c_6$ is 5.  Regarding $2$-variable modular forms as a subring
$$ M^{2-var}_*(\Gamma(1))  \subset \QQ[c_4, c_6, \bar{c}_4, \bar{c}_6], $$
we shall denote by $M^{2-var}_*(\Gamma(1))^{AF \ge 0} $ the subring of 2-variable modular forms with non-negative Adams filtration.
The results of the previous section now easily give the following result.

\begin{prop}
The composite induced by Lemmas~\ref{lem:2varlem1} and \ref{lem:2varlem2}
$$ \tmf_{2*}\tmf /tors \rightarrow \TMF_{2*}\TMF /tors \hookrightarrow M_*^{2-var}(\Gamma(1))[\Delta^{-1}, \bar{\Delta}^{-1}]  $$
induces an injection
$$ \tmf_{2*}\tmf /tors \hookrightarrow M_*^{2-var}(\Gamma(1))^{AF \ge 0}  $$
which is a rational isomorphism.
\end{prop}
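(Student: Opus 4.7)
The proposition has three assertions: (a) the composite lands in $M^{2-var}_*(\Gamma(1))^{AF\ge 0}$; (b) the resulting map is injective; (c) it is a rational isomorphism. I would organize these around the square
$$
\xymatrix{
\tmf_*\tmf/\text{tors} \ar[r] \ar@{-->}[d] & \TMF_*\TMF/\text{tors} \ar@{^{(}->}[d] \\
M^{2-var}_*(\Gamma(1))^{AF\ge 0} \ar@{^{(}->}[r] & M^{2-var}_*(\Gamma(1))[\Delta^{-1},\bar\Delta^{-1}]
}
$$
in which the right-hand injection is the content of Lemmas~\ref{lem:2varlem1} and~\ref{lem:2varlem2}, and the bottom inclusion is tautological. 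What must be verified is that the composite factors through the dashed arrow and that this dashed arrow is injective with the claimed rational image.

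For (a), every element of $\tmf_*\tmf$ has Adams filtration $\ge 0$ because the ASS $E_2$-page is concentrated in $s \ge 0$. All maps in sight preserve Adams filtration (recall from Section~\ref{subsec:AF} that $c_4, c_6, \bar c_4, \bar c_6$ have filtrations $4, 5, 4, 5$), so the image lies in the $AF\ge 0$ part. Moreover, the multivariate $q$-expansion of any element of $\tmf\wedge\tmf$ lies in $\ZZ_{(2)}[[q,\bar q]]$, since $\tmf$ itself has a holomorphic $q$-expansion; this confines the image to $M^{2-var}_*(\Gamma(1))\subset \QQ[c_4,c_6,\bar c_4,\bar c_6]$ rather than to its $\Delta\bar\Delta$-localization.

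For (b), the diagram reduces the problem to showing that $\tmf_*\tmf/\text{tors} \to \TMF_*\TMF/\text{tors}$ is injective. Since $\TMF$ is obtained from $\tmf$ by inverting a power of $\Delta$, this map is localization at the multiplicative set generated by $\Delta$ and $\bar\Delta$, and its kernel consists of $(\Delta\bar\Delta)$-power-torsion elements. These vanish because $\tmf_*\tmf \otimes \QQ \cong \QQ[c_4,c_6,\bar c_4,\bar c_6]$ is a domain in which $\Delta, \bar\Delta$ correspond to the nonzero polynomials $(c_4^3 - c_6^2)/1728$ and $(\bar c_4^3 - \bar c_6^2)/1728$. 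For (c), rationalizing the diagram identifies $\tmf_*\tmf\otimes\QQ$ with $\QQ[c_4,c_6,\bar c_4,\bar c_6]$ on the left; on the right $M^{2-var}_*(\Gamma(1))^{AF\ge 0}\otimes\QQ$ likewise equals $\QQ[c_4,c_6,\bar c_4,\bar c_6]$, because the 2-integrality and $AF\ge 0$ conditions both become vacuous after inverting $2$ (every monomial in $c_4,c_6,\bar c_4,\bar c_6$ already has $AF \ge 0$). The rational map is then the identity on generators. The main obstacle I anticipate is justifying the rational identification $M^{2-var}_*(\Gamma(1))\otimes\QQ = \QQ[c_4,c_6,\bar c_4,\bar c_6]$, which extends Laures's theorem (stated for $\Gamma_1(N)$, $N>1$) to $\Gamma(1)$; this should follow from rational descent along $\Msl{3}\to \M$, using collapse of the rational descent spectral sequence together with the classical generation of $MF_*(\Gamma(1))_\QQ$ by $c_4, c_6$ applied in both variables.
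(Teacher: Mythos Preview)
Your proof is correct and follows essentially the same approach as the paper, which packages the three steps into a single commutative cube whose front face (the square with corners $M^{2-var}_*(\Gamma(1))$, its $\Delta,\bar\Delta$-localization, and their rationalizations) is a pullback. Your anticipated ``main obstacle'' is not one: $M^{2-var}_*(\Gamma(1))$ is by definition the subring of $\QQ[c_4,c_6,\bar c_4,\bar c_6]$ with $2$-integral $q$-expansion, so tensoring with $\QQ$ immediately recovers all of $\QQ[c_4,c_6,\bar c_4,\bar c_6]$ without any appeal to Laures or descent.
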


\begin{proof}
Consider the commutative cube
$$
\xymatrix@C-3em@R-1em{
\tmf_{2*}\tmf /tors \ar[rr] \ar@{^{(}->}[dd] \ar@{.>}[dr] 
&& \TMF_{2*}\TMF /tors \ar[rd] \ar[dd]|{\phantom{MM}} 
\\
& M^{2-var}_*(\Gamma(1))  \ar[rr] \ar[dd] 
&& M_*^{2-var}(\Gamma(1))[\Delta^{-1}, \bar{\Delta}^{-1}]  \ar[dd] 
\\
\tmf_{2*}\tmf_{\QQ} \ar[rr]|{\phantom{MM}} \ar@{=}[dr] 
&& \TMF_{2*}\TMF_{\QQ} \ar@{=}[rd]  
\\
& M^{2-var}_*(\Gamma(1))_{\QQ} \ar[rr] 
&& M_*^{2-var}(\Gamma(1))[\Delta^{-1}, \bar{\Delta}^{-1}]_{\QQ} .
}
$$
(The dotted arrow exists because the front face of the cube is a pullback.)  The commutativity of the diagram, and the fact that rationally the top face is isomorphic to the bottom face, give an injection
$$ \tmf_{2*}\tmf /tors \hookrightarrow M_*^{2-var}(\Gamma(1))  $$
that is a rational isomorphism.
Since all of the elements of the source have Adams filtration $\ge 0$, this injection factors through the subring
$$ \tmf_{2*}\tmf /tors \hookrightarrow M_*^{2-var}(\Gamma(1))^{AF \ge 0} . $$
\end{proof}

\subsection{Detecting $2$-variable modular forms in the ASS}

\begin{defn}\label{def:detect}
Suppose that we are given a class
$$ x \in \Ext(\tmf \wedge \tmf) $$
and a $2$-variable modular form
$$ f \in M^{2-var}_*(\Gamma(1))^{AF \ge 0} . $$
We shall say that \emph{$x$ detects $f$}
if the image of $x$ in $v_0^{-1}\Ext(\tmf \wedge \tmf)$
detects the image of $f$ in $M^{2-var}_*(\Gamma(1)) \otimes \QQ_2$ in the localized ASS
$$ v_0^{-1}\Ext(\tmf \wedge \tmf) \Rightarrow \tmf_*\tmf \otimes \QQ_2 \cong M^{2-var}_*(\Gamma(1)) \otimes {\QQ_2}. $$
\end{defn}

\begin{rmk} Suppose $x$ as above is a permanent cycle in the unlocalized ASS
$$ \Ext(\tmf \wedge \tmf) \Rightarrow \tmf_*\tmf^\wedge_2, $$
and detects $\zeta \in \tmf_*\tmf^\wedge_2$. If $f$ is the image of $\zeta$ under the map
$$ \tmf_*\tmf^\wedge_2 \rightarrow [M^{2-var}_*(\Gamma(1))^\wedge_2]^{AF \ge 0}, $$
then $x$ detects $f$ in the sense of Definition \ref{def:detect}.
\end{rmk}

Given a class $x \in \Ext(\tmf \wedge \tmf)$, we wish to find a $2$-variable modular form it detects.  To accomplish this, we contemplate the following diagram
\begin{equation}\label{eq:masterdiag1}
\xymatrix@C-1em{
 \frac{\tmf_*\tmf^\wedge_2}{tors} \ar@{^{(}->}[r] \ar@{^{(}->}[d] \ar@/_5pc/[ddd] & 
\frac{\tmf_1(3)_*\tmf_1(3)^\wedge_2}{tors} \ar@{^{(}->}[d] \ar@/^5pc/[ddd] \\
 M^{2-var}_*(\Gamma(1))^\wedge_2 \ar@{^{(}->}[d] \ar@{^{(}->}[r] & 
M^{2-var}_*(\Gamma_1(3))^\wedge_2 \ar@{^{(}->}[d] \\
\QQ_2[c_4, c_6, \bar{c}_4, \bar{c}_6] \ar@{^{(}->}[r] \ar@{=}[d] &
\QQ_2[a_1, a_3, \bar{a}_1, \bar{a}_3] \ar@{=}[d] \\
\tmf_*\tmf \otimes \QQ_2 \ar@{^{(}->}[r] &
\tmf_1(3)_*\tmf_1(3) \otimes \QQ_2 
}
\end{equation}
and the associated ``Ext version'':
\begin{equation}\label{eq:masterdiag2}
\xymatrix@C-1em{
 \frac{\Ext(\tmf \wedge \tmf)}{v_0-tors} \ar@{^{(}->}[r] \ar@{^{(}->}[d] \ar@/_6pc/[ddd] & 
\frac{\Ext(\tmf_1(3)\wedge \tmf_1(3))}{tors} \ar@{^{(}->}[d] \ar@/^6pc/[ddd]^{(2)} \\
 E_0^{AF}M^{2-var}_*(\Gamma(1))^\wedge_2 \ar@{^{(}->}[d] \ar@{^{(}->}[r] & 
E_0^{AF} M^{2-var}_*(\Gamma_1(3))^\wedge_2 \ar@{^{(}->}[d] \\
\FF_2[v_0^{\pm}][[c_4], [c_6], [\bar{c}_4], [\bar{c}_6]] \ar@{^{(}->}[r]^{(1)} \ar@{=}[d] &
\FF_2[v_0^{\pm}][[a_1], [a_3], [\bar{a}_1], [\bar{a}_3]] \ar@{=}[d] \\
v_0^{-1}\Ext(\tmf \wedge \tmf)  \ar@{^{(}->}[r] &
v_0^{-1}\Ext(\tmf_1(3) \wedge \tmf_1(3)) 
}
\end{equation}
Here, $E_0^{AF}M^{2-var}_*$ denotes the associated graded with respect to Adams filtration (AF), where, as usual (see Section~\ref{subsec:AF}), we set
$$ AF(2) = AF(a_1) = AF(a_3) = 1, \quad AF(c_4) = 4, \quad AF(c_6) = 5. $$
As indicated, in both of the above diagrams, all of the arrows are injections.  To determine whether a class $x \in \Ext(\tmf \wedge \tmf)$ detects $f \in M^{2-var}_*(\Gamma(1))$, it suffices to determine whether the image of $x$ in $\Ext(\tmf_1(3) \wedge \tmf_1(3))$ detects the image of $f$ in $M^{2-var}_*(\Gamma_1(3))$.

The following lemma follows immediately from (\ref{eq:cia1a3}).

\begin{lem}\label{lem:cia1a3}
The map $(1)$ of Diagram~(\ref{eq:masterdiag2}) is given by
\begin{align*}
[c_4] & \mapsto [a_1]^4, \\
[c_6] & \mapsto v_0^3[a_3]^2.
\end{align*}
\end{lem}

Given a $2$-variable modular form $f \in M^{2-var}_*(\Gamma(1))$, let
$f(a_i, \bar{a}_i)$ denote its image in
$$ M^{2-var}_*(\Gamma_1(3))\otimes \QQ_2 \cong \QQ_2[a_1, a_3, \bar{a}_1, \bar{a}_3] \cong \tmf_1(3)_*\tmf_1(3) \otimes \QQ_2. $$
and let
$$ [f(a_i, \bar{a}_i)] \in v_0^{-1} \Ext(\tmf_1(3) \wedge \tmf_1(3)) \cong \FF_2[v_0^{\pm 1}, [a_1],[a_3], [\bar{a}_1], [\bar{a}_3]] $$
denote the element which detects it in the (collapsing) $v_0$-localized ASS.

Similarly, let $t_k(a_i, \bar{a}_i)$ denote the images of $t_k$ in $\tmf_1(3)_*\tmf_1(3) \otimes \QQ_2$  (as in Section~\ref{sec:tmf13BP2}), and let $[t_k(a_i, \bar{a}_i)]$ denote the elements of $\Ext$ which detect these images in the $v_0$-localized ASS for $\tmf_1(3)_*\tmf_1(3) \otimes \QQ_2$.

The map $(2)$ of Diagram~(\ref{eq:masterdiag2}) is essentially determined by the following lemma.

\begin{lem}\label{lem:xibiti}
The subalgebra 
$$ \FF_2[\xib_k^2 \: : \: k \ge 1] \subset (A\mmod E[Q_0, Q_1, Q_2])_* = \FF_2[\xib_1^{2}, \xib_2^2, \xib_3^2, \xib_4 \cdots ] $$
is contained in 
$$ \Ext^0_{E[Q_0, Q_1, Q_2]_*}((A\mmod E[Q_0, Q_1, Q_2])_*) = \Ext(\tmf_1(3) \wedge \tmf_1(3)). $$
Furthermore, map (2) of Diagram~(\ref{eq:masterdiag2}) is determined by
$$ \xib_k^2 \mapsto [t_k(a_i, \bar{a}_i)]. $$
\end{lem}

\begin{proof}
The elements $\xib_k^2$ are easily checked to be primitive with respect to the $E[Q_0, Q_1, Q_2]_*$-coaction.  The second part follows from the fact that in the diagram
$$
\xymatrix{
\BP_*\BP \ar[rr] \ar[dr] && H_*H \\
& H_* \tmf_1(3) \ar@{^{(}->}[ur]
}
$$
$t_k$ is mapped to $\xib_k^2$ by the top horizontal map.
\end{proof}

\begin{rmk}
In fact, Lemma~\ref{lem:xibiti} completely determines map (2).  This is because $\Ext(\tmf_1(3) \wedge \tmf_1(3))/v_0-tors$ is generated as an $\FF_2[v_0, a_1, a_3]$-algebra by the elements $\xib_k^2$ (see \cite{Culver}).
\end{rmk}
 
We assemble these observations to give the following convenient criterion for determining when a particular element $z \in \Ext(\tmf \wedge \tmf)$ detects a $2$-variable modular form $f$.  

\begin{prop}\label{prop:detection}
Suppose that we are given an element $z \in \Ext(\tmf \wedge \tmf)$
whose image in 
$$ \Ext(\tmf_1(3) \wedge \tmf_1(3)) = \Ext_{E[Q_0, Q_1, Q_2]_*}((A\mmod E[Q_0, Q_1, Q_2])_*) $$
is given by
$$ \bar{z} = \sum_j \bar{z}_j \xib_1^{2k_{1,j}}\xib_2^{2k_{2,j}} \cdots $$
with $\bar{z}_j \in \Ext(\tmf_1(3))$.
The element $z$ detects a $2$-variable modular form
$$ f \in M^{2-var}_*(\Gamma(1)) ^{AF \ge 0} $$
if and only if 
$$ [f(a_i, \bar{a}_i)] = \sum_j \bar{z}_j [t_1(a_i, \bar{a}_i)]^{k_{1,j}} [t_2(a_i, \bar{a}_i)]^{k_{2,j}} \cdots. $$
\end{prop}

\subsection{Low dimensional computations of $2$-variable modular forms}

Below is a table of generators of $\Ext(\tmf \wedge \tmf)/tors$, as a module over $\FF_2[v_0, [c_4]]$, through dimension 64, with $2$-variable modular forms they detect.  The columns of this table are:
\begin{description}
\item[dim] dimension of the generator,
\item[$\text{\bf bo}_k$] indicates generator lies in the summand $\Ext_{A(2)_*}(\bou_k)$ (see the charts in Section~\ref{sec:ass}),
\item[AF] the Adams filtration of the generator,
\item[cell] the name of the image of the generator in $v_0^{-1}\Ext_{A(2)_*}(\bou_k)$, in the sense of Section~\ref{sec:rationalgens},
\item[form] a two-variable modular form which is detected by the generator in the $v_0$-localized ASS (where $f_k$ are defined below).  
\end{description}
The table below also gives a basis of $M^{2-var}_*(\Gamma(1)) $ as a $\ZZ [c_4]$-module: in dimension $2k$, a form $\alpha g$ in the last column, with $\alpha \in \QQ$ and $g$ a monomial in $\ZZ[c_4, c_6, \Delta, f_k]$ not divisible by 2, corresponds to a generator $g$ of $M^{2-var}_k(\Gamma(1)) $.\footnote{There is one exception: there is a $2$-variable modular form $\td{c_4f_{10}}$ which agrees with $c_4f_{10}$ modulo terms of higher Adams filtration, but which is $2$-divisible.  See Example~\ref{ex:echelon2}.}

\begin{table}[h]
\caption{Table of generators of $\Ext(\tmf \wedge \tmf)/tors$.}\label{tab:gens}
\end{table}
\begin{alignat*}{5}
&	\mr{dim} \quad	&&	\bo_k	\quad &&	\mr{AF}\quad 	&&	\mr{cell}		& \quad &	\mr{form}		\\
&	8	&&	1	&&	0	&&	\xib_1^{8	}	&&	 f_{1	}	\\
&	12	&&	1	&&	3	&&	[8 ]\xib_2^{4	}	&&	2 f_{2	}	\\
&	16	&&	2	&&	0	&&	\xib_1^{16	}	&&	 f_{1}^{2	}	\\
&	20	&&	1	&&	3	&&	[c_6/4 ]\cdot \xib_1^{8	}	&&	2 f_{3	}	\\
&	20	&&	2	&&	3	&&	[8 ]\xib_1^{8} \xib_2^{4	}	&&	2 f_{1}  f_{2	}	\\
&	24	&&	1	&&	4	&&	[c_6/2 ]\cdot  \xib_2^{4	}	&&	 f_{4	}	\\
&	24	&&	2	&&	0	&&	\xib_2^{8	}	&&	 f_{5	}	\\
&	24	&&	3	&&	0	&&	\xib_1^{24	}	&&	 f_{1}^{3	}	\\
&	28	&&	2	&&	3	&&	[8 ]\xib_3^{4	}	&&	2 f_{6	}	\\
&	28	&&	3	&&	3	&&	[8 ]\xib_1^{16} \xib_2^{4	}	&&	2 f_{1}^{2}  f_{2	}	\\
&	32	&&	1	&&	4	&&	[  \Delta  ]\xib_1^{8  	}	&&	   \Delta  f_{1	}	\\
&	32	&&	2	&&	1	&&	[c_6/16 ]\cdot \xib_1^{8} \xib_2^{4}+ [c_4/8 ]\cdot  \xib_2^{8	}	&&	 f_{9	}	\\
&	32	&&	3	&&	0	&&	\xib_1^{8} \xib_2^{8	}	&&	 f_{1}  f_{5	}	\\
&	32	&&	4	&&	0	&&	\xib_1^{32	}	&&	 f_{1}^{4	}	\\
&	36	&&	1	&&	7	&&	[8  \Delta  ]\xib_2^{4  	}	&&	2   \Delta  f_{2	}	\\
&	36	&&	2	&&	3	&&	[c_6/4 ]\cdot \xib_2^{8	}	&&	2 f_{7	}	\\
&	36	&&	3	&&	3	&&	[8 ]\xib_2^{12	}	&&	2 f_{2}  f_{5	}	\\
&	36	&&	3	&&	0	&&	\xib_1^{8} \xib_3^{4}+ \xib_2^{12	}	&&	 f_{10	}	\\
&	36	&&	4	&&	3	&&	[8  ]\xib_1^{24} \xib_2^{4	}	&&	2 f_{1}^{3}  f_{2	}	\\
&	40	&&	2	&&	4	&&	[c_6/2 ]\cdot  \xib_3^{4  	}	&&	 f_{8	}	\\
&	40	&&	3	&&	1	&&	[2 ]\xib_2^{4} \xib_3^{4	}	&&	 f_{11	}	\\
&	40	&&	4	&&	0	&&	\xib_1^{16} \xib_2^{8	}	&&	 f_{1}^{2}  f_{5	}	\\
&	40	&&	5	&&	0	&&	\xib_1^{20	}	&&	 f_{1}^{5	}	\\
&	44	&&	1	&&	7	&&	[   \Delta c_6/4 ]\cdot  \xib_1^{8  	}	&&	2   \Delta  f_{3	}	\\
&	44	&&	2	&&	7	&&	[c_6/4] ( [c_6/16 ]\cdot  \xib_1^{8} \xib_2^{4}+ [c_4/8]\cdot  \xib_2^{8	})	&&	c_{6}  f_{9}/4		\\
&	44	&&	3	&&	3	&&	[c_6/4]\cdot  \xib_1^{8} \xib_2^{8	}	&&	2 f_{1}  f_{7	}	\\
&	44	&&	4	&&	3	&&	[8 ]\xib_1^{8} \xib_2^{12	}	&&	2 f_{1}  f_{2}  f_{5	}	\\
&	44	&&	4	&&	0	&&	\xib_1^{16} \xib_3^{4}+ \xib_1^{8} \xib_2^{12	}	&&	2 f_{13	}	\\
&	44	&&	5	&&	3	&&	[8 ]\xib_1^{32} \xib_2^{4	}	&&	2 f_{1}^{4}  f_{2	}	\\
&	48	&&	1	&&	8	&&	[  \Delta c_6/2 ]\cdot \xib_2^{4 	}	&&	   \Delta  f_{4	}	\\
&	48	&&	2	&&	4	&&	[  \Delta  ]\xib_2^{8  	}	&&	   \Delta  f_{5	}	\\
&	48	&&	3	&&	4	&&	[c_6/2 ]\cdot  \xib_2^{12  	}	&&	 f_{2}  f_{7	}	\\
&	48	&&	3	&&	1	&&	[c_6/16 ]\cdot ( \xib_1^{8} \xib_3^{4}+ \xib_2^{12	})	&&	 f_{14	}	\\
&	48	&&	4	&&	0	&&	\xib_2^{16	}	&&	 f_{5}^{2	}	\\
&	48	&&	4	&&	1	&&	[2 ]\xib_1^{8} \xib_2^{4} \xib_3^{4	}	&&	 f_{1}  f_{11	}	\\
&	48	&&	5	&&	0	&&	\xib_1^{24} \xib_2^{8	}	&&	 f_{1}^{3}  f_{5	}	\\
&	48	&&	6	&&	0	&&	\xib_1^{48	}	&&	 f_{1}^{6	}	\\
&	52	&&	2	&&	7	&&	[8  \Delta  ]\xib_3^{4  	}	&&	2   \Delta  f_{6	}	\\
&	52	&&	3	&&	4	&&	[c_6/2]\cdot  \xib_2^{4} \xib_3^{4  	}	&&	2 f_{15	}	\\
&	52	&&	4	&&	3	&&	[8 ]\xib_2^{8} \xib_3^{4	}	&&	2 f_{5}  f_{6	}	\\
&	52	&&	5	&&	3	&&	[8 ]\xib_1^{16} \xib_2^{12	}	&&	2 f_{1}^{2}  f_{2}  f_{5	}	\\
&	52	&&	5	&&	0	&&	\xib_1^{24} \xib_3^{4}+ \xib_1^{16} \xib_2^{12	}	&&	2 f_{1}  f_{13	}	\\
&	52	&&	6	&&	3	&&	[8 ]\xib_1^{40} \xib_2^{4	}	&&	2 f_{1}^{5}  f_{2	}	\\
&	56	&&	1	&&	8	&&	[ {\Delta^2} \xib_1^{8	}	&&	   \Delta^{2}  f_{1	}	\\
&	56	&&	2	&&	8	&&	[ \Delta]([c_6/2 ]\cdot \xib_1^{8} \xib_2^{4}+ [c_4 ]\cdot  \xib_2^{8	})	&&	8    \Delta  f_{9	}	\\
&	56	&&	3	&&	4	&&	[  \Delta  ]\xib_1^{8} \xib_2^{8	}	&&	   \Delta  f_{5}  f_{1	}	\\
&	56	&&	4	&&	1	&&	[c_6/16 ]\cdot  \xib_1^{8} \xib_2^{12}+ [c_4/8 ]\cdot  \xib_2^{16	}	&&	 f_{5}  f_{9	}	\\
&	56	&&	4	&&	0	&&	\xib_3^{8	}	&&	 f_{16	}	\\
&	56	&&	5	&&	0	&&	\xib_1^{8} \xib_2^{16	}	&&	 f_{1}  f_{5}^{2	}	\\
&	56	&&	5	&&	1	&&	[2 ]\xib_1^{16} \xib_2^{4} \xib_3^{4	}	&&	 f_{1}^{2}  f_{11	}	\\
&	56	&&	6	&&	0	&&	\xib_1^{32} \xib_2^{8	}	&&	 f_{1}^{4}  f_{5	}	\\
&	60	&&	1	&&	11	&&	[8 \Delta^2 ]\cdot  \xib_2^{4	}	&&	2   \Delta^{2}  f_{2	}	\\
&	60	&&	2	&&	7	&&	[  \Delta c_6/4 ]\cdot  \xib_2^{8	}	&&	2   \Delta  f_{7	}	\\
&	60	&&	3	&&	7	&&	[8  \Delta  ]\xib_2^{12	}	&&	2   \Delta  f_{5}  f_{2	}	\\
&	60	&&	3	&&	4	&&	[ \Delta]( \xib_1^{8} \xib_3^{4}+ \xib_2^{12	})	&&	   \Delta  f_{10	}	\\
&	60	&&	4	&&	4	&&	[c_6/2 ]\cdot  \xib_1^{8} \xib_2^{4} \xib_3^{4}+ [c_4 ]\cdot  \xib_2^{8} \xib_3^{4	}	&&	2 f_{6}  f_{9	}	\\
&	60	&&	4	&&	3	&&	[8 ]\xib_4^{4	}	&&	2  f_{17	}	\\
&	60	&&	5	&&	0	&&	\xib_2^{20}+ \xib_1^{8} \xib_2^{8} \xib_3^{4	}	&&	 f_{18	}	\\
&	60	&&	5	&&	3	&&	[8 ]\xib_1^{8} \xib_2^{8} \xib_3^{4	}	&&	2 f_{1}  f_{5}  f_{6	}	\\
&	60	&&	6	&&	3	&&	[8 ]\xib_1^{24} \xib_2^{12	}	&&	2 f_{1}^{3}  f_{2}  f_{5	}	\\
&	60	&&	6	&&	0	&&	\xib_1^{32} \xib_3^{4	}	&&	2 f_{1}^{2}  f_{13	}	\\
&	60	&&	7	&&	3	&&	[8 ]\xib_1^{48} \xib_2^{4	}	&&	2 f_{1}^{6}  f_{2	}	\\
&	64	&&	2	&&	8	&&	[  \Delta c_6/2 ]\cdot  \xib_3^{4	}	&&	   \Delta  f_{8	}	\\
&	64	&&	3	&&	5	&&	[2  \Delta  ]\xib_2^{4} \xib_3^{4  	}	&&	   \Delta  f_{11	}	\\
&	64	&&	4	&&	2	&&	[c_6/16 ]\cdot  \xib_2^{8} \xib_3^{4}+ [c_4/8 ]\cdot  \xib_3^{8	}	&&	 f_{9}^{2	}/2	\\
&	64	&&	5	&&	1	&&	[2 ]\xib_2^{12} ]\xib_3^{4}		&&	 f_{1}  f_{5}  f_{9	}	\\
&	64	&&	5	&&	0	&&	\xib_1^{8} \xib_3^{8	}	&&	 f_{1}  f_{16	}	\\
&	64	&&	6	&&	0	&&	\xib_1^{16} \xib_2^{16	}	&&	 f_{5}^{2}  f_{1}^{2	}	\\
&	64	&&	6	&&	1	&&	[2 ]\xib_1^{24} \xib_2^{4} \xib_3^{4	}	&&	 f_{11}  f_{1}^{3	}	\\
&	64	&&	7	&&	0	&&	\xib_1^{40} \xib_2^{8	}	&&	 f_{1}^{5}  f_{5	}	\\
&	64	&&	8	&&	0	&&	\xib_1^{64	}	&&	 f_{1}^{8	}	
\end{alignat*}

The $2$-variable modular forms $f_k \in M^{2-var}_*(\Gamma(1)) $ in the above table are the generators of $M^{2-var}_*(\Gamma(1)) $ as an $M_*(\Gamma(1)) $-algebra in this range, and are defined as follows.

\begin{align*}
f_{1 } & := ( -\bc_{4}+ c_{4})/ 16 \\
f_{2 } & := ( -\bc_{6}+ c_{6})/ 8 \\
f_{3 } & := ({ 5}  f_{1}  c_{6}+{ 21}  f_{2}  c_{4})/ 8 \\
f_{4 } & := ({ 5}  f_{2}  c_{6}+{ 21}  f_{1}  c_{4}^{2})/ 8 \\
f_{5 } & := ( -f_{1}^{2}  c_{4}+ f_{2}^{2})/ 16 \\
f_{6 } & := ( -c_{4}^{2}  c_{6}+ c_{4}^{2}  c_{6}+{ 544}  f_{2}  c_{4}^{2}+{ 768}  f_{3}  c_{4}+{ 1792}  f_{1}  f_{2}  c_{4})/ 2048 \\
f_{7 } & := ({ 4}  f_{2}  \Delta+ f_{5}  c_{6}+{ 5}  f_{2}  c_{4}^{3}+{ 6}  f_{3}  c_{4}^{2}+{ 5}  f_{1}  f_{2}  c_{4}^{2}+{ 7}  f_{6}  c_{4}+{ 4}  f_{1}^{2}  f_{2}  c_{4})/ 8 \\
f_{8 } & := ({ 4}  f_{1}  c_{4}  \Delta+ f_{6}  c_{6}+{ 5}  f_{1}  c_{4}^{4}+{ 5}  f_{1}^{2}  c_{4}^{3}+{ 7}  f_{5}  c_{4}^{2}+{ 2}  f_{4}  c_{4}^{2}+{ 4}  f_{1}^{3}  c_{4}^{2})/ 8 \\
f_{9 } & := ({ 32}  f_{1}  \Delta+ f_{1}  f_{2}  c_{6}+{ 33}  f_{1}^{2}  c_{4}^{2}+{ 8}  f_{5}  c_{4}+{ 32}  f_{4}  c_{4}+{ 32}  f_{1}^{3}  c_{4})/ 64 \\
f_{10 } & := ({ 2}  f_{2}  c_{4}^{3}+ f_{1}  f_{2}  c_{4}^{2}+{ 2}  f_{6}  c_{4}+{ 3}  f_{1}^{2}  f_{2}  c_{4}+ f_{1}  f_{6}+ f_{2}  f_{5})/ 4 \\
f_{11 } & := ({ 4}  f_{1}  c_{4}  \Delta+{ 11}  f_{1}^{2}  c_{4}^{3}+{ 34}  f_{5}  c_{4}^{2}+{ 28}  f_{4}  c_{4}^{2}+{ 23}  f_{1}^{3}  c_{4}^{2}+{ 4}  f_{9}  c_{4}+ f_{1}  f_{5}  c_{4}+{ 4}  f_{1}^{4}  c_{4} \\
& \qquad +{ 4}  f_{8}+ f_{2}  f_{6})/ 8 \\
f_{12 } & := ( f_{1}  f_{5}  c_{6}+{ 8}  f_{2}  c_{4}^{4}+{ 8}  f_{3}  c_{4}^{3}+{ 8}  f_{1}  f_{2}  c_{4}^{3}+{ 8}  f_{6}  c_{4}^{2}+{ 8}  f_{1}^{2}  f_{2}  c_{4}^{2}+ f_{2}  f_{5}  c_{4})/ 8 \\
f_{13 } & := ({ 8}  f_{3}  \Delta+{ 80}  f_{2}  c_{4}^{4}+{ 56}  f_{3}  c_{4}^{3}+{ 80}  f_{1}  f_{2}  c_{4}^{3}+{ 76}  f_{6}  c_{4}^{2}+{ 55}  f_{1}^{2}  f_{2}  c_{4}^{2 }+{ 4}  f_{10}  c_{4}\\
&\qquad +{ 18}  f_{2}  f_{5}  c_{4}+{ 11}  f_{1}^{3}  f_{2}  c_{4}+{ 4}  f_{12}+ f_{1}^{2}  f_{6}+ f_{1}  f_{2}  f_{5}+{ 4}  f_{1}^{4}  f_{2})/ 8 \\
f_{14 } & := ({ 21}  f_{1}  c_{4}^{2}  \Delta+{ 8}  f_{5}  \Delta+{ 16}  f_{4}  \Delta+{ 20}  f_{1}^{3}  \Delta+ f_{10}  c_{6}+{ 11}  f_{1}  c_{4}^{5}+{ 36}  f_{1}^{2}  c_{4}^{4}+{ 591}  f_{5}  c_{4}^{3} \\
& \qquad +{ 490}  f_{4}  c_{4}^{3}+{ 437}  f_{1}^{3}  c_{4}^{3}+{ 119}  f_{9}  c_{4}^{2}+{ 140}  f_{1}  f_{5}  c_{4}^{2}+{ 75}  f_{1}^{4}  c_{4}^{2}+{ 10}  f_{11}  c_{4}+{ 11}  f_{8}  c_{4} \\
& \qquad +{ 32}  f_{1}^{5}  c_{4}+{ 8}  f_{1}  f_{2}  f_{6})/ 16 \\
f_{15 } & := ({ 4}  f_{6}  \Delta+ f_{1}^{2}  f_{2}  \Delta+{ 76}  f_{2}  c_{4}^{5}+{ 54}  f_{3}  c_{4}^{4}+{ 90}  f_{1}  f_{2}  c_{4}^{4}+{ 73}  f_{6}  c_{4}^{3}+{ 50}  f_{1}^{2}  f_{2}  c_{4}^{3}+{ 3}  f_{10}  c_{4}^{2} \\
& \qquad +{ 8}  f_{7}  c_{4}^{2}+{ 20}  f_{2}  f_{5}  c_{4}^{2}+{ 8}  f_{1}^{3}  f_{2}  c_{4}^{2}+{ 7}  f_{12}  c_{4}+{ 4}  f_{1}  f_{2}  f_{5}  c_{4})/ 8 \\
f_{16 } & := ({ 2}  f_{1}  \Delta^{2}+{ 24}  f_{1}  c_{4}^{3}  \Delta+{ 9}  f_{5}  c_{4}  \Delta+{ 18}  f_{4}  c_{4}  \Delta+{ 4}  f_{1}^{3}  c_{4}  \Delta+{ 2}  f_{9}  \Delta+ f_{1}  f_{5}  \Delta \\
& \qquad +{ 36}  f_{1}^{2}  c_{4}^{5}+{ 480}  f_{5}  c_{4}^{4}+{ 402}  f_{4}  c_{4}^{4}+{ 359}  f_{1}^{3}  c_{4}^{4}+{ 94}  f_{9}  c_{4}^{3}+{ 112}  f_{1}  f_{5}  c_{4}^{3}+{ 55}  f_{1}^{4}  c_{4}^{3} \\
& \qquad +{ 12}  f_{11}  c_{4}^{2}+{ 14}  f_{8}  c_{4}^{2}+{ 20}  f_{1}^{5}  c_{4}^{2}+{ 2}  f_{14}  c_{4}+{ 5}  f_{2}  f_{7}  c_{4}+ f_{5}^{2}  c_{4}+{ 4}  f_{1}^{3}  f_{5}  c_{4}+ f_{1}  f_{14} \\
& \qquad + f_{5}  f_{9}+ f_{1}  f_{2}  f_{7})/ 2 \\
f_{17 } & := ({ 2}  f_{2}  \Delta^{2}+{ 22}  f_{3}  c_{4}^{2}  \Delta+{ 11}  f_{6}  c_{4}  \Delta+ f_{2}  f_{5}  \Delta+{ 19}  f_{9}  c_{4}^{2}  c_{6}+{ 682}  f_{2}  c_{4}^{6}+{ 480}  f_{3}  c_{4}^{5} \\
& \qquad +{ 768}  f_{1}  f_{2}  c_{4}^{5}+{ 648}  f_{6}  c_{4}^{4}+{ 462}  f_{1}^{2}  f_{2}  c_{4}^{4}+{ 30}  f_{10}  c_{4}^{3}+{ 63}  f_{7}  c_{4}^{3}+{ 185}  f_{2}  f_{5}  c_{4}^{3} \\
& \qquad +{ 84}  f_{1}^{3}  f_{2}  c_{4}^{3}+{ 12}  f_{13}  c_{4}^{2}+{ 27}  f_{12}  c_{4}^{2}+{ 29}  f_{1}  f_{2}  f_{5}  c_{4}^{2}+{ 16}  f_{1}^{4}  f_{2}  c_{4}^{2}+{ 4}  f_{15}  c_{4}+{ 4}  f_{5}  f_{6}  c_{4} \\
& \qquad +{ 2}  f_{1}^{2}  f_{2}  f_{5}  c_{4}+ f_{2}  f_{14}+ f_{6}  f_{9})/ 2 \\
f_{18 } & := ({4}  f_{2}  \Delta^{2}+{168}  f_{3}  c_{4}^{2}  \Delta+{96}  f_{6}  c_{4}  \Delta+{8}  f_{2}  f_{5}  \Delta+{168}  f_{9}  c_{4}^{2}  c_{6}+{5880}  f_{2}  c_{4}^{6} \\
& \qquad +{4140}  f_{3}  c_{4}^{5}+{6648}  f_{1}  f_{2}  c_{4}^{5}+{5592}  f_{6}  c_{4}^{4}+{3980}  f_{1}^{2}  f_{2}  c_{4}^{4}+{248}  f_{10}  c_{4}^{3}+{560}  f_{7}  c_{4}^{3} \\
& \qquad +{1586}  f_{2}  f_{5}  c_{4}^{3}+{744}  f_{1}^{3}  f_{2}  c_{4}^{3}+{112}  f_{13}  c_{4}^{2}+{220}  f_{12}  c_{4}^{2}+{265}  f_{1}  f_{2}  f_{5}  c_{4}^{2} \\
& \qquad +{136}  f_{1}^{4}  f_{2}  c_{4}^{2}+{40}  f_{15}  c_{4}+{4}  f_{1}  f_{13}  c_{4}+{34}  f_{5}  f_{6}  c_{4}+{19}  f_{1}^{2}  f_{2}  f_{5}  c_{4}+{8}  f_{1}^{5}  f_{2}  c_{4} \\
& \qquad +{4}  f_{6}  f_{9}+ f_{1}  f_{5}  f_{6}+ f_{2}  f_{5}^{2})/4.
\end{align*}

We shall now indicate the methods used to generate Table~\ref{tab:gens}, and make some remarks about its contents.

The short exact sequences (\ref{eq:boSES1}),(\ref{eq:boSES2}) were used in  Section~\ref{sec:boSES} to give an inductive scheme for computing $\Ext_{A(2)_*}(\bou_k)$, and the charts in that section display the computation through dimension $64$.   In Section~\ref{sec:rationalgens}, these short exact sequences are used to give an inductive scheme for identifying the generators of $v_0^{-1}\Ext_{A(2)_*}(\bou_k)$, and appropriate multiples of these generators generate the image of $\Ext_{A(2)_*}(\bou_k)/tors$ in these localized Ext groups.  These generators are listed in the fourth column of Table~\ref{tab:gens}.  

The two variable modular forms in the last column of Table~\ref{tab:gens} are detected by the generators in the fourth column, in the sense of the previous section.  
In each instance, if necessary, we use Corollary~\ref{cor:integralCnu} or  \ref{cor:integralCnuHZ1} to find the image of the generator in $\Ext(\tmf_1(3) \wedge \tmf_1(3))$ and then apply Proposition~\ref{prop:detection}.
 
The 2-variable modular forms were generated by the following inductive method.  Let $\{z_i\}$ be a basis of 
$$ \frac{\Ext^{*,*+{2n}}(\tmf \wedge \tmf)}{v_0-tors} $$
as an $\FF_2[v_0]$-module.  We wish to produce a basis
$$ \{ f_{z_i} \} \subset MF^{2-var}_n(\Gamma(1)) $$
such that for appropriate $n(z_i) \ge 0$, the $z_i$ detects $2^{n(z_i)}f_{z_i}$.  
Suppose inductively that we have found such $2$-variable modular forms
$f_{z_i}$
for all $z_i$ with Adams filtration (AF) greater than $s$, and let $z \in \{z_i\}$ be a basis element with $AF(z) = s$.
We wish to produce a $2$-variable modular form $f_z$ such that $z$ detects $2^{n(z)}f_z$, and so that
$$ f_z \not\in \FF_2\{f_{z_i} \: : \: AF(z_i) > s \} \subseteq MF^{2-var}_{n}(\Gamma(1))\otimes \FF_2.$$
This will be accomplished by writing a finite sequence of approximations
$$ f^{(0)}_z, f^{(1)}_z, \ldots, f^{(l)}_z $$
with
$$ f^{(j)}_z \in \frac{1}{2^{k-j}} M^{2-var}_*(\Gamma(1)) \subset M^{2-var}_*(\Gamma(1)) \otimes \QQ. $$ 
and
$$ [f^{(j)}_z] = z. $$
We will then take $n(z) := l-k$ and $f_z := \frac{1}{2^{n(z)}}f^{(l)}_z$. 
\begin{description}
\item[Step 1] 
Find an element
$$ f^{(0)}_z \in \QQ[c_4, c_6, \bar{c}_4, \bar{c}_6] $$
so that 
$$ [f^{(0)}_z] = z \in v_0^{-1}\Ext(\tmf \wedge \tmf). $$
Such an $f^{(0)}_z$ can be produced in one of two ways:
\begin{description}
\item[Technique (a)] Find a representative 
$$  \bar{z} = \sum_j \bar{z}_j\xib_1^{2k_{1,j}}\xib_2^{2k_{2,j}} \cdots $$
for the image of $z$ in $\Ext(\tmf_1(3)\wedge \tmf_1(3))$ using  Corollary~\ref{cor:integralCnu} or  \ref{cor:integralCnuHZ1}.  Then by Lemma~\ref{lem:xibiti} we have
$$ \bar{z} = \sum_j \bar{z}_j [t_1(a_i, \bar{a}_i)^{k_{1,j}}t_2(a_i, \bar{a}_i)^{k_{2,j}}\cdots ]. $$
Then use Lemma~\ref{lem:cia1a3} to find $f^{(0)}_z$ so that
$$ [f^{(0)}_z] \mapsto \sum \bar{z}_j [t_1(a_i, \bar{a}_i)^{k_{1,j}}t_2(a_i, \bar{a}_i)^{k_{2,j}}\cdots ] $$
under map (1) of Diagram~\ref{eq:masterdiag2}.

\item[Technique (b)] If $z = \sum_j v_0^{-i_j}[c^{i'_j}_4] [c_6^{i''_j}]z_{1,j}z_{2,j}\cdots $, 
where inductively you already have $2$-variable modular forms $f_{z_{k,j}}$ which $z_{k,j}$ detect, you may also take $f^{(0)}_z$ to be
$$ f^{(0)}_z = \sum_j 2^{-i_j}c^{i'_j}_4 c_6^{i''_j}f_{z_{1,j}}f_{z_{2,j}}\cdots. $$
\end{description}
\item[Step 2]
Write the $q$-expansion of $f^{(0)}_z$ as 
$$ f^{(0)}_z(q,\bar{q}) = g^{(0)}(q,\bar{q})/2^k $$ 
where $g^{(0)}(q,\bar{q})$ is the $q$-expansion of 2-integral 2-variable modular form.
\item[Step 3] Write $g^{(0)}(q, \bar{q})$ as a linear combination of the $q$-expansions of the $2$-variable modular forms of Adams filtration greater than $s+k$ already produced mod $2$:
$$ g^{(0)}(q, \bar{q}) \equiv \sum_i h_i(q, \bar{q}) \mod 2. $$
\item[Step 4] Set
$$ f^{(1)}_z = f^{(0)}_z + \frac{1}{2^k}\sum_i h_i. $$
Then 
$$ [f^{(1)}_z] = [f^{(0)}_z] = z $$
and 
$$ f^{(1)}_z(q,\bar{q}) = g^{(1)}(q,\bar{q})/2^{k-1} $$
where $g^{(1)}$ is a $2$-integral $2$-variable modular form.
\item[Step 5] Repeat steps 3 and 4 to inductively produce $f^{(i)}_z$.
\end{description}

We explain all of this by working it through some low degrees:
\begin{description}
\item[$\mbf{f_1}$]  The corresponding generator of $\Ext^{0,8}_{A(2)_*}(\Sigma^8 \bou_1)$ is $\xib_1^8$.
Using ``Technique (a)'', we compute the image of $\xib_1^8$ in $\Ext(\tmf_1(3) \wedge \tmf_1(3))$ to be
$$ [t_1(a_i, \bar{a_i})^4] = \left[\frac{\ba_1^4 + a_1^4}{2^4}\right]. $$
Using Lemma~\ref{lem:cia1a3}, we take
$$ f^{(0)}_{\xib_1^8} := \frac{-\bc_4 + c_4}{2^4}. $$
We find that $f^{(0)}_{\xib_1^8}$ 
has an integral $q$-expansion, and therefore take
$$ f_1 := f^{(0)}_{\xib_1^8}. $$

\item[$\mbf{2f_2}$]  The corresponding generator of $\Ext^{3,15}_{A(2)_*}(\Sigma^8 \bou_1)$ is $[8]\xib_2^4$.
Using ``Technique (a)'', we compute (appealing to Corollary~\ref{cor:integralCnu}) its image in $\Ext(\tmf_1(3) \wedge \tmf_1(3))$ to be
$$ [8t_2(a_i, \bar{a}_i)^2 + 2a_1^2t_1(a_i, \bar{a}_i)^4] = \left[2\ba_3^2 + 2a_3^2 \right] $$ 
Using Lemma~\ref{lem:cia1a3}, we take
$$ f^{(0)}_{[8]\xib_2^4}
:= \frac{-\bc_6 + c_6}{4}. $$
We find that $f^{(0)}_{\xib_1^8}$ 
has an integral $q$-expansion.  In fact, 
$$ f^{(0)}_{[8]\xib_2^4}(q,\bar{q}) \equiv 0 \mod 2, $$
so $f^{(1)}_{[8]\xib_2^4} = f^{(0)}_{[8]\xib_2^4}$ and we define
$$ f_2 := f^{(1)}_{[8]\xib_2^4}/2. $$

\item[$\mbf{f_1^2}$]  The corresponding generator of $\Ext^{0,16}_{A(2)_*}(\Sigma^{16}\bou_2)$ is $\xib_1^{16}$.  Since $\xib_1^8$ detects $f_1$, we can simply use ``Technique (b)'' to get
$$ f^{(0)}_{\xib_1^{16}} := f_1^2. $$
The process terminates here, as $f_1^2$ is $2$-integral since $f_1$ is.

\item[$\mbf{2f_1f_2}$] The corresponding generator of $\Ext^{3,23}_{A(2)_*}(\Sigma^{16}\bou_2)$ is $\xib_1^{8}\xib_2^4$.  Again we use ``Technique (b)''.  Since $\xib_1^8$ detects $f_1$ and $[8]\xib_2^4$ detects $2f_2$, $[8]\xib_1^{8}\xib_2^4$ detects $2f_1f_2$.

\item[$\mbf{2f_3}$] The corresponding generator of $\Ext^{3,23}_{A(2)_*}(\Sigma^8 \bou_1)$ is $[c_6/4]\xib_1^8$.
Since $\xib_1^8$ detects $f_1$, we use ``Technique (b)'' to begin with 
$$ f^{(0)}_{[c_6/4]\xib_1^8} := c_6 f_1/4. $$  
This $2$-variable modular form is not $2$-integral, but the form
$$ g^{(0)} := c_6 f_1 $$
is $2$ integral (``Step 2'').
Moving on to ``Step 3'', we find
$$ c_6(q) f_1(q, \bar{q}) \equiv f_2(q, \bar{q}) c_4(q) \equiv 0 \mod 2. $$
We define 
$$ f^{(1)}_{[c_6/4]\xib_1^8} := \frac{c_6 f_1}{2} + \frac{f_2 c_4}{2}.$$
It turns out (``Step 4'')
$$ f^{(1)}_{[c_6/4]\xib_1^8}(q,\bar{q}) \equiv 0 \mod 2. $$
Therefore we define 
$$ f^{(2)}_{[c_6/4]\xib_1^8} := \frac{c_6 f_1 + f_2 c_4}{4}. $$
In fact
$$ 5c_6(q) f_1(q, \bar{q}) + 21f_2(q, \bar{q}) c_4(q) \equiv 0 \mod 8, $$
so we set
$$ f^{(3)}_{[c_6/4]\xib_1^8} := \frac{5c_6 f_1 + 21 f_2 c_4}{4} $$
and 
$$ f_3 := \frac{1}{2}f^{(3)}_{[c_6/4]\xib_1^8}.$$
\end{description}

% !TEX root = tmfcoop.tex

\section{Approximating by level structures}\label{sec:approxlevel}

Recall from \S\ref{sec:review} the maps
\[
  \Psi_n:\TMF[1/n]\wedge \TMF[1/n]\to \TMF_0(n)
\]
and
\[
  \phi_{[n]}:\TMF\wedge \TMF[1/n]\to \TMF\wedge \TMF[1/n].
\]
Here $\Psi_n$ is induced by the forgetful and quotient maps $f,q:\mathcal{M}_0(n)\to
\mathcal{M}[1/n]$, while $\phi_{[n]} = 1\wedge [n]$ where
$[n]:\TMF[1/n]\to \TMF[1/n]$ is the ``Adams operation'' associated to
the multiplication by $n$ isogeny on $\mathcal{M}[1/n]$.  For reasons
which will become clear in the next section, 
we are interested in the composite map $\Psi$ given as
\[
\xymatrix{
  \tmf\wedge\tmf \ar[rr]^-\Psi\ar[d] &&\prod\limits_{i\in \ZZ,j\ge
    0}\TMF_0(3^j)\times\TMF_0(5^j),\\
  \TMF\wedge \TMF \ar[urr]_-{\psi}
}
\]
where
\[
  \psi = \prod_{i\in \ZZ,j\ge 0}\Psi_{3^j}\phi_{[3^i]}\times \Psi_{5^j}\phi_{[5^i]}.
\]
We will abuse notation and refer to the composite
\[
  \tmf\wedge\tmf \to \TMF\wedge\TMF \xrightarrow{\Psi_n}
  \TMF_0(n)
\]
(for $(2,n)=1$) as $\Psi_n$ as well; these are the $i=0$ factors of $\Psi$.

In order to study $\Psi_n$ we consider the square
\[\xymatrix{
  \tmf_*\tmf \ar[r]^{\pi_*\Psi_n}\ar[d] &\pi_*\TMF_0(n)\ar[d]\\
  M_*^{2-var}(\Gamma(1)) \ar[r]_-{\psi_n} &M_*(\Gamma_0(n)).
}\]
Here the left-hand vertical map is the composite
\[
  \tmf_*\tmf \to \tmf_*\tmf /tors\hookrightarrow
  M_*^{2-var}(\Gamma(1)) ^{AF\ge 0}\hookrightarrow M_*^{2-var}(\Gamma(1)) ,
\]
and $M_*(\Gamma_0(n))$ is the ring of level $\Gamma_0(n)$-modular
forms.  The bottom horizontal map is also induced by $f$ and $q$; if
we consider a $2$-variable modular form as a polynomial
$p(c_4,c_6,\bar{c}_4,\bar{c}_6)$, then $\psi_n(p) =
p(f^*c_4,f^*c_6,q^*c_4,q^*c_6)$.

We are especially interested in the cases $n=3,5$.  Recall from \cite{MRlevel3}
(or \cite[\S3.3]{Q5}) that $M_*(\Gamma_0(3))$ has a convenient presentation
as a subalgebra of $M_*(\Gamma_1(3))$.  More precisely,
$M_*(\Gamma_1(3))  = \ZZ [a_1,a_3,\Delta^{-1}]$ with
$\Delta = a_3^3(a_1^3-27a_3)$, and $M_*(\Gamma_0(3)) $ is the
subring 
\[
  M_*(\Gamma_0(3))  = \ZZ [a_1^2,a_1a_3,a_3^2,\Delta^{-1}].
\]
Using the formulas from \emph{loc.~cit.}, we may compute
\[
\begin{aligned}
  f^*(c_4) &= a_1^4-24a_1a_3, &q^*(c_4)&= a_1^4 + 216a_1a_3,\\
  f^*(c_6) &= -a_1^6+36a_1^3a_3-216a_3^2, &q^*(c_6)&= -a_1^6 + 540a_1^3a_3 + 5832a_3^2.
\end{aligned}
\]

There are similar formulas for the $n=5$ case which we recall from 
\cite[\S3.4]{Q5}.  Here the ring of $\Gamma_0(5)$-modular forms takes
the form
\[
  M_*(\Gamma_0(5))  = \ZZ [b_2,b_4,\delta,\Delta^{-1}]/(b_4^2=b_2^2\delta-4\delta^2),
\]
where $|b_2| = 2$ and $|b_4| = |\delta| = 4$.  (These are the
algebraic, rather than topological, degrees.)  The discriminant takes
the form
\[
  \Delta = \delta^2b_4-11\delta^3
\]
and we have
\[
\begin{aligned}
  f^*(c_4) &= b_2^2-12b_4+12\delta, &q^*(c_4) &=
  b_2^2+228b_4+492\delta,\\
  f^*(c_6) &= -b_2^3+18b_2b_4-72b_2\delta, &q^*(c_6) &= -b_2^3+522b_2b_4+10008b_2\delta.
\end{aligned}
\]

% !TEX root = tmfcoop.tex

\subsection{Faithfullness of $\psi$}

In this section we will prove the following theorem.

\begin{thm}\label{thm:faithful}
The map on homotopy
$$ \psi_* : \TMF_*\TMF  \rightarrow \prod\limits_{i\in \ZZ,j\ge
    0} \pi_* \TMF_0(3^j) \times \pi_*\TMF_0(5^j)  $$
induced by the map $\psi$ defined in the last section is injective. 
\end{thm}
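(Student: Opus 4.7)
The plan is to verify the injectivity of $\psi_*$ by decomposing $\TMF\wedge\TMF$ at the prime $2$ via a chromatic fracture square, paralleling the $\bo$-cooperations argument from Section~\ref{sec:bo}. Since $\TMF$ (at the prime $2$) is $E(2)$-local, a standard fracture-square argument reduces the statement to checking injectivity of $\psi_*$ after rationalization and after $K(2)$-localization. The $K(2)$-local part carries the main content; the rational part is easier but still requires an argument.

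For the rational statement, I would use the embedding $\TMF_*\TMF_\QQ \hookrightarrow \TMF_1(3)_*\TMF_1(3)_\QQ \cong \QQ[a_1,a_3,\ba_1,\ba_3]$ established in Section~\ref{sec:2var}. The Adams operation $\phi_{[n]}$ scales a weight-$k$ class by $n^k$, so varying $i \in \ZZ$ (with $n=3$ or $5$) separates the weight grading on the right-hand factor of $\TMF\wedge\TMF$. Combined with the maps $\Psi_3,\Psi_5$---whose actions via $q^*$ on $\ba_4,\ba_6$ are algebraically distinct from the identity action via $f^*$---a Vandermonde-type argument shows that no nonzero rational two-variable form can vanish under all such evaluations, yielding rational injectivity.

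For the $K(2)$-local statement, I would invoke the identification
\[
  L_{K(2)}(\TMF\wedge\TMF) \simeq \Map^c(\GG_2/G_{48}, E_2)^{hG_{48}}
\]
from Section~\ref{sec:review}. Under this identification, each composite $\Psi_{n^j}\phi_{[n^i]}$ corresponds (modulo the $G_{48}$-action) to evaluation at a coset in $\GG_2/G_{48}$ represented by an endomorphism of $\hat C$ of degree $n^{i+j}$ inside the quaternion order $\mathrm{End}(\hat C) \subset \GG_2$. The key technical Lemma~\ref{lem:faithful} should then reduce $K(2)$-local injectivity to the assertion that the cosets so obtained are dense in $\GG_2/G_{48}$.

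The main obstacle is verifying this density. The central inclusion $\ZZ_2^\times \hookrightarrow \GG_2$ already places $\br{\bra{3,5}}$ densely in its image, which handles the pure ``Adams operation'' direction. The isogeny elements produce non-central elements of $\mathrm{End}(\hat C)$, and one must show that together with $G_{48}$ these generate a subset whose closure is all of $\GG_2/G_{48}$. I expect this to follow from strong approximation for the quaternion algebra over $\QQ$ ramified at $2$ and $\infty$ combined with the facts that $G_{48}$ is a maximal finite subgroup of $\GG_2$ and that integer isogenies of degree coprime to $2$ sweep out a dense subset of the odd part of $\mathrm{End}(\hat C)^\times$---but the careful bookkeeping of double cosets and the passage to the profinite quotient is where the main technical work lies.
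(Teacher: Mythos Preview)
Your reduction step has a gap. The chromatic fracture square for $E(2)$-local spectra is between $K(2)$-localization and $E(1)$-localization, not between $K(2)$-localization and rationalization; and even with the correct localizations, a pullback square of spectra does not automatically reduce injectivity of a map on $\pi_*$ to injectivity on the two corners---you would have to control the boundary map in the Mayer--Vietoris sequence. The paper avoids this entirely: it proves directly (via the Adams--Novikov spectral sequence) that the $K(2)$-localization map $\TMF_*\TMF \to \pi_*(\TMF\wedge\TMF)_{K(2)}$ is itself injective, because no element of $\TMF_*\TMF$ is infinitely divisible by the ideal $(2,c_4)$. Once that is in hand, injectivity after $K(2)$-localization alone suffices, and the rational argument you sketch is unnecessary.

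For the $K(2)$-local step your outline is broadly right, but you have misidentified where the main difficulty lies. The density of the relevant isogeny cosets in $\MS_2/G_{24}$ is not proved in this paper; it is quoted from \cite{BehrensLawson} (and indeed follows from strong approximation, as you suggest). The genuine technical obstacle is that density does \emph{not} immediately imply injectivity on $\pi_*$ of the continuous mapping spectrum: that spectrum is a $K(2)$-local colimit over open subgroups $U$ with $G_{24}\le U$, so an element of its homotopy is a ``locally constant section'' with values in groups of the form $\pi_*\bar E_2^{\,hG_{24}\cap xUx^{-1}}$, and the restriction maps to the stalks $\pi_*\bar E_2^{\,hG_{24}\cap xG_{24}x^{-1}}$ need not be injective. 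The heart of the paper's argument is Lemma~\ref{lem:open}, which produces a finite refinement of the open cover by double cosets so that each restriction map factors through a stalk; this is what allows vanishing at the dense set of isogeny points to force the section to vanish.
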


Theorem~	\ref{thm:faithful} will be proven in two steps.  Consider the following diagram
\begin{equation}\label{diag:faithful}
\xymatrix{
\TMF_*\TMF  \ar[r]^-{\psi_*} \ar[d] 
&  \prod\limits_{i\in \ZZ,j\ge  0} \pi_* \TMF_0(3^j) \times \pi_*\TMF_0(5^j)  \ar[d] 
\\
\pi_*(\TMF \wedge \TMF)_{K(2)} \ar[r]^-{(\psi_{K(2)})_*} 
&  \prod\limits_{i\in \ZZ,j\ge  0} \pi_* \TMF_0(3^j)_{K(2)} \times \pi_*\TMF_0(5^j)_{K(2)}
}
\end{equation}
where the vertical maps are the localization maps.
We will first argue that the left vertical map in (\ref{diag:faithful}) is injective, and we will observe that the same argument shows the right hand vertical map is injective.  Secondly, we will show that the bottom horizontal map of (\ref{diag:faithful}) is injective.  Theorem~\ref{thm:faithful} then follows from the commutativity of (\ref{diag:faithful}) and these injectivity results.

\begin{lem}
The localization map
$$ \TMF_*\TMF  \rightarrow \TMF_*\TMF_{K(2)}$$
is injective.
\end{lem}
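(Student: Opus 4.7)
The approach is to exploit the chromatic structure of the $E(2)$-local spectrum $\TMF\wedge\TMF$. Since $\TMF$ is $E(2)$-local (arising as global sections of $\mathcal{O}^{top}$ on the height-two stack $\M$), so is $\TMF\wedge\TMF$, and the chromatic fracture square
\[
\xymatrix{
\TMF\wedge\TMF \ar[r] \ar[d] & L_{K(2)}(\TMF\wedge\TMF) \ar[d] \\
L_{E(1)}(\TMF\wedge\TMF) \ar[r] & L_{E(1)}L_{K(2)}(\TMF\wedge\TMF)
}
\]
realizes $\TMF\wedge\TMF$ as a homotopy pullback. The associated Mayer--Vietoris long exact sequence then shows that any $\alpha\in\TMF_n\TMF$ mapping to zero in $\pi_n L_{K(2)}(\TMF\wedge\TMF)$ has image in $\pi_n L_{E(1)}(\TMF\wedge\TMF)$ lying in the kernel of the bottom horizontal map, with the only further indeterminacy coming from the connecting homomorphism out of $\pi_{n+1}L_{E(1)}L_{K(2)}(\TMF\wedge\TMF)$.

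The first step is to show the bottom horizontal map is injective on homotopy. Applying a second chromatic fracture to split each $L_{E(1)}$-term into its rational and $K(1)$-local pieces, the rational part is trivial because $L_{K(2)}$ of a rational spectrum vanishes; what remains is the injectivity of $L_{K(1)}(\TMF\wedge\TMF)\to L_{K(1)}L_{K(2)}(\TMF\wedge\TMF)$ on homotopy. The Goerss--Hopkins--Miller descriptions of both sides as continuous mapping spectra associated to the ordinary and supersingular loci, together with the description of $L_{K(1)}\TMF$ via the Tate-curve picture and its relationship with $\KO^\wedge_2$, reduce this to a completion statement for a torsion-free module over the $2$-complete ring $\pi_*L_{K(1)}(\TMF\wedge\TMF)$.

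The second step is to eliminate the connecting-homomorphism contribution to the kernel. Using that $\TMF_*\TMF/\mathit{tors}$ injects into the ring of $2$-variable modular forms (Section~\ref{sec:2var}) and that rationalization factors through $L_{E(1)}$, any class in the kernel of $\TMF_*\TMF\to\pi_*L_{E(1)}(\TMF\wedge\TMF)$ must be $2$-power torsion. But the $2$-power torsion in $\TMF_*\TMF$ is supported at the supersingular locus of $\M\times\M$, as can be read off from the descent spectral sequence $H^*(\M\times\M,\omega^{\otimes*}\boxtimes\omega^{\otimes*})\Rightarrow\TMF_*\TMF$ (for which the positive filtration is generated by the group cohomology of $G_{48}\times G_{48}$ at the supersingular point), so such torsion is faithfully detected by $K(2)$-localization and therefore cannot simultaneously lie in the kernel of the top map. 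The main obstacle will be making this supersingular-support claim rigorous; once it is in place, the three injections assemble cleanly to prove the lemma.
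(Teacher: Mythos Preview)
Your approach differs substantially from the paper's, and it has a genuine gap. The paper's argument is far more direct: since $\TMF\wedge\TMF$ is $E(2)$-local, $K(2)$-localization is $(2,c_4)$-completion in the sense that $(\TMF\wedge\TMF)_{K(2)}\simeq\holim_{i,j}\TMF\wedge\TMF\wedge M(2^i,v_1^j)$. Injectivity then reduces to showing that no element of $\TMF_*\TMF$ is infinitely divisible by elements of the ideal $(2,c_4)$. This is handled by the Adams--Novikov spectral sequence: the $E_1$-term already has no such infinite divisibility, and any infinite divisibility in the abutment would require infinitely many hidden extensions, forcing elements into negative filtration, which is impossible. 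That is the whole proof.

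Your second step is where the real problem lies. You reduce to showing that $2$-power torsion in $\TMF_*\TMF$ is ``faithfully detected by $K(2)$-localization,'' justified by the claim that such torsion is supported at the supersingular locus. But this assertion is essentially the lemma itself, restricted to torsion classes; invoking it here is circular. Your attempted justification via the descent spectral sequence for $\M\times\M$ does not close the loop: even granting that positive-filtration classes in the $E_2$-page are supported at the supersingular locus, you would still need to know that torsion in the abutment is detected in positive filtration (there can be torsion created by differentials hitting $h_0$-towers in filtration zero), and that ``supported at the supersingular point in $E_2$'' implies ``nonzero after $K(2)$-localization on $\pi_*$.'' Neither of these follows formally. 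Your first step is also underspecified: the claim that the rational part of the comparison $L_{E(1)}(\TMF\wedge\TMF)\to L_{E(1)}L_{K(2)}(\TMF\wedge\TMF)$ is ``trivial because $L_{K(2)}$ of a rational spectrum vanishes'' conflates $(L_{K(2)}X)_\QQ$ with $L_{K(2)}(X_\QQ)$; the former need not vanish. The paper's filtration argument sidesteps all of this.
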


\begin{proof}
Since $\TMF \wedge \TMF $ is $E(2)$-local, we have
$$ (\TMF \wedge \TMF)_{K(2)} \simeq \holim_{i,j} \TMF \wedge \TMF \wedge M(2^i, v_1^j) $$
where $(i,j)$ above run over a suitable cofinal range of $\N^+ \times \N^+$.  In order to conclude that there is an isomorphism
$$ \pi_*(\TMF \wedge \TMF)_{K(2)} \cong \TMF_*\TMF^{\wedge}_{(2,c_4)} $$
and for the map
$$ \TMF_*\TMF  \rightarrow \TMF_*\TMF^{\wedge}_{(2,c_4)} $$
to be injective we must show that no element of $\TMF_*\TMF $ is infinitely divisible by elements of the ideal $(2,c_4)$.  Consider the Adams-Novikov spectral sequence for $\TMF_* \TMF $.  This spectral sequences converges since $\TMF \wedge \TMF $ is $E(2)$-local \cite[Thm.~5.3]{HoveySadofsky}.  The $E_1$-term of this spectral sequence is easily seen to not be infinitely divisible by elements of the ideal $(2,c_4)$.  Therefore, any infinite divisibility in $\TMF_*\TMF $ would have to occur through infinitely many hidden extensions.  This would result in elements in negative Adams-Novikov filtration, which is impossible.  
\end{proof}

The same argument shows that the various maps
$$ \pi_*\TMF_0(N)  \rightarrow \pi_*\TMF_0(N)_{K(2)} $$
are injections.  The only remaining step to proving Theorem~\ref{thm:faithful} is to show
the bottom arrow of Diagram~(\ref{diag:faithful}) is an injection.  This is the heart of the matter.

\begin{lem}\label{lem:faithful}
The map
$$ \pi_*(\TMF \wedge \TMF)_{K(2)} \xrightarrow{(\psi_{K(2)})_*} \prod\limits_{i\in \ZZ,j\ge  0} \pi_* \TMF_0(3^j)_{K(2)} \times \pi_*\TMF_0(5^j)_{K(2)} $$
is an injection.
\end{lem}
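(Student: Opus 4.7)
The plan is to use the $K(2)$-local presentation
\[
  (\TMF \wedge \TMF)_{K(2)} \simeq \Map^c(\GG_2/G, E_2)^{hG}
\]
recalled in Section~3.5, where $G = G_{48}$ is the automorphism group of the supersingular elliptic curve $C$ at $p=2$. Each factor $\TMF_0(n)_{K(2)}$ (for $n$ odd) admits a parallel description as a finite product of homotopy-fixed-point spectra $E_2^{hH}$, where $H$ ranges over stabilizers inside $G$ of the $G$-orbits of $\Gamma_0(n)$-structures on the supersingular curve. First I would translate each of our maps $\Psi_{p^j}\phi_{[p^i]}$, for $p\in\{3,5\}$, $j\geq 0$, and $i\in\ZZ$, into an evaluation map on the function spectrum $\Map^c(\GG_2/G,E_2)^{hG}$: via the Goerss--Hopkins--Miller moduli interpretation, such a composite corresponds geometrically to a pair of supersingular elliptic curves connected by an isogeny of degree $p^{2i+j}$ (the quotient isogeny of order $p^j$ postcomposed with multiplication by $p^i$), and this pair determines a coset $g_{i,j,p}G \in \GG_2/G$ obtained by $2$-adically completing the isogeny.

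The crux of the argument is then a density statement: the collection
\[
  S := \{\, g_{i,j,p}G : p\in\{3,5\},\ j\geq 0,\ i\in\ZZ \,\}
\]
is dense in $\GG_2/G$. This is a strong-approximation input: the rational endomorphism algebra $\mathrm{End}^{0}(C)$ is the quaternion algebra $B$ over $\QQ$ ramified precisely at $\{2,\infty\}$, and the group of units in $B[\tfrac{1}{15}]^{\times}$ sits densely inside $B\otimes\QQ_2 \supset \GG_2$ by strong approximation away from $\{2,\infty\}$. The subsemigroup generated by elements of reduced norm a power of $3$ or $5$ is still dense, and these are exactly the isogenies recorded by our maps $\Psi_{p^j}\phi_{[p^i]}$ (up to the conjugation action of $G$, which is precisely accounted for by passing to cosets $\GG_2/G$).

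Finally, I would invoke a general "detection via dense subsets" lemma for profinite continuous function spectra: for any dense $S \subseteq \GG_2/G$, the evaluation map
\[
  \Map^c(\GG_2/G, E_2)^{hG} \;\longrightarrow\; \prod_{gG\in S} E_2^{h(G\cap gGg^{-1})}
\]
is injective on homotopy. On the nonequivariant function spectrum this is immediate because a continuous function out of a profinite set is determined by its values on a dense subset; passing to the outer $G$-homotopy fixed points preserves injectivity because the descent spectral sequence is compatible with the inclusion of continuous cochains (the relevant $\lim^1$ obstructions are controlled by profiniteness of $\GG_2/G$ and finiteness of $G$). Combined with the identification of the right-hand side with the $K(2)$-localization of the product of $\pi_*\TMF_0(3^j)$ and $\pi_*\TMF_0(5^j)$, this yields the desired injectivity.

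The main obstacle is the density assertion for the cosets $g_{i,j,p}G$ in $\GG_2/G$. It requires identifying the image of the rational isogenies of odd degree inside the Morava stabilizer group with sufficient precision to invoke strong approximation, and checking that restricting to degrees supported on the two primes $\{3,5\}$ still gives a dense image. The choice of two primes (rather than one) is essential here, since a single odd prime generates only a cyclic subgroup whose closure in $\GG_2/G$ need not be all of $\GG_2/G$; the reduced norms of $3$-power and $5$-power isogenies must jointly cover enough of the norm group $\ZZ_2^{\times}$ to make the image dense modulo the isotropy $G$.
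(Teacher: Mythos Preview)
Your overall architecture matches the paper's: identify $(\TMF\wedge\TMF)_{K(2)}$ with $\Map^c(\MS_2/G_{24},E_2)^{hG_{24}}$ (plus Galois), identify each factor of the target as evaluation at cosets coming from isogenies of $3$- or $5$-power degree, and use density of those cosets (the paper cites \cite{BehrensLawson} for this; your strong-approximation sketch is the same input). So the first two-thirds of your plan is exactly right.

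The gap is in your ``general detection via dense subsets lemma.'' You write that on the nonequivariant function spectrum injectivity is immediate, and that passing to $G$-homotopy fixed points preserves it because the descent spectral sequence is compatible with inclusion of continuous cochains. But this is precisely the step the paper flags as the difficulty, and your argument does not address it. Concretely, an element of $\pi_*\Map^c(\MS_2/G_{24},E_2)^{hG_{24}}$ is represented at some finite stage by a tuple $(z_{G_{24}xU})_{[x]}$ with $z_{G_{24}xU}\in\pi_*E_2^{hG_{24}\cap xUx^{-1}}$ for an open $U\geq G_{24}$. Evaluating at a point $y$ of the dense subset lands in $\pi_*E_2^{hG_{24}\cap yG_{24}y^{-1}}$, and the comparison map
\[
\pi_*E_2^{hG_{24}\cap xUx^{-1}} \longrightarrow \pi_*E_2^{hG_{24}\cap yG_{24}y^{-1}}
\]
is \emph{not} in general injective. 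So a class could vanish at every point of the dense subset without vanishing in the colimit. This is not a $\lim^1$ issue and is not fixed by profiniteness of $\MS_2/G_{24}$ or finiteness of $G_{24}$.

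The paper closes this gap with a separate technical lemma (Lemma~\ref{lem:open}): for any open $U\geq G_{24}$ one can find a finite open cover $\{y_kU_k\}$ of $\MS_2$ with $G_{24}\leq U_k\leq U$ and $G_{24}\cap y_kU_ky_k^{-1}=G_{24}\cap y_kG_{24}y_k^{-1}$. Setting $U'=\bigcap_k U_k$, density lets one choose the $y_k$ in $\Gamma_3\cup\Gamma_5$, and then the restriction from level $U$ to level $U'$ factors through the stalk at some $y_k$, so vanishing at all the $y_k$ forces vanishing in the colimit. Your proposal needs this extra ingredient; the rest stands.
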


In order to prove this lemma, we will need the following technical observation.

\begin{lem}\label{lem:open}
Suppose that $G$ is a profinite group, $H$ is a finite subgroup of $G$, and $U$ is an open subgroup of $G$ containing $H$.  Then there is a finite set of open subgroups $U_i \le U$ which contain $H$, and a corresponding finite set $\{y_k\}$ of elements in $G$ such that
\begin{enumerate}  
\item $\{y_k U_{k} \}$ forms an open cover of $G$, and 
\item $H \cap y_kU_ky_{k}^{-1} = H \cap y_kHy_k^{-1}. $
\end{enumerate}
\end{lem}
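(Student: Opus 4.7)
The plan is to produce the data in two stages: first, for each individual $y \in G$, construct an open subgroup $U_y \le U$ containing $H$ that satisfies the pointwise version of condition (2); then extract a finite subcover of $G$ from the open cover $\{yU_y\}_{y\in G}$ using compactness of $G$.

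For the pointwise construction, observe that $H \cap yHy^{-1} \subseteq H \cap yU_yy^{-1}$ holds automatically because $H \subseteq U_y$, so condition (2) reduces to the reverse inclusion. Substituting $u = y^{-1}hy$, this is equivalent to the requirement
\[
  (y^{-1}Hy) \cap U_y \subseteq H,
\]
i.e.\ that the finite set $F_y := (y^{-1}Hy)\setminus H$ is disjoint from $U_y$. Since $F_y$ is disjoint from $H$, no element of $F_y$ equals the identity, so the finite set $\{h^{-1}f : h \in H,\ f \in F_y\}$ avoids $1 \in G$. Using that $G$ is profinite and Hausdorff, open normal subgroups of $G$ form a neighborhood basis of the identity, so I can pick an open normal subgroup $N \trianglelefteq G$ with $N \subseteq U$ and $h^{-1}f \notin N$ for every $h \in H,\ f \in F_y$. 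I then set $U_y := HN$, which is a subgroup by normality of $N$, open as a finite union of cosets of $N$, contained in $U$ since $H,N \subseteq U$, and visibly contains $H$. The avoidance condition on $N$ forces $U_y \cap F_y = \emptyset$, which is exactly what is needed.

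Having produced $U_y$ for each $y$, the family $\{yU_y\}_{y\in G}$ is an open cover of $G$ (each $y$ lies in its own coset $yU_y$), and compactness of $G$ yields a finite subcover $\{y_k U_{y_k}\}$. Setting $U_k := U_{y_k}$ delivers both (1) and (2) simultaneously.

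The main technical obstacle is packaging the three requirements on $U_y$ — being a subgroup, being open and containing $H$, and missing the finite obstruction set $F_y$ — into a single construction. The key idea, which leverages profiniteness essentially, is the ansatz $U_y = HN$ for an open normal $N$; normality gives the subgroup property, openness of $N$ propagates to $HN$, and the neighborhood-basis property of open normal subgroups lets one shrink $N$ to avoid any prescribed finite set off the identity.
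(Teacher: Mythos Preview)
Your proof is correct and follows the same two-stage strategy as the paper: first construct for each $y\in G$ an open subgroup $U_y$ with $H\le U_y\le U$ satisfying the pointwise version of condition~(2), then extract a finite subcover by compactness. The only difference is in the construction of $U_y$: the paper takes $U_y$ to be a finite intersection of open subgroups $V_z$ (with $H\le V_z\le_o U$) chosen to exclude each obstruction $z\in H\setminus yHy^{-1}$, whereas you build $U_y=HN$ from a small open normal $N\trianglelefteq G$; your version is slightly more concrete and sidesteps the need to invoke $H=\bigcap_{H\le V\le_o U}V$, but the arguments are otherwise interchangeable.
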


\begin{proof}
We have
$$ H = \bigcap_{H \le V \le_o U }V $$
(where we use $\le_o$ to denote ``open subgroup'').  Therefore, for each $y \in G$, we have
$$ H \cap yHy^{-1} = \bigcap_{H \le V \le_o U} H \cap yVy^{-1}. $$
Therefore, for each $z \in H$ with $z \not\in yHy^{-1}$, there must be a subgroup $H \le V_z \le_o U$ so that $z \not\in yV_zy^{-1}$.  Define
$$ U_y = \bigcap_z V_z. $$
(If the set of all such $z$ is empty, define $U_y = U$.)
Since $H$ is finite, this is a finite intersection, hence $U_y$ is open.  Note that $U_y$ has the property that $H \le U_y \le_o U$ and
$$ H \cap yU_y y^{-1} = H \cap yHy^{-1}. $$
Consider the cover $\{ yU_y \}_y$ where $y$ ranges over the elements of $G$.  Since $G$ is compact, there is a finite subcover $\{y_k U_{y_k}\}$.  We may therefore take $U_k = U_{y_k}$.
\end{proof}

\begin{proof}[Proof of Lemma~\ref{lem:faithful}]
Let $\MS_2$ denote the second Morava stabilizer group, and let $\bar{E}_2$ denote the version of Morava $E$-theory associated to a height $2$ formal group over $\bar{\FF}_2$.  The spectrum $\bar{E}_2$ admits an action by the group $\MS_2 \rtimes Gal$ where $Gal$ is the Galois group of $\bar{\FF}_2$ over $\FF_2$, and we have
$$ \TMF_{K(2)} \simeq \left( \bar{E}_2^{hG_{24}} \right)^{hGal} $$
where $G_{24}$ is the group of automorphisms of the (unique) supersingular elliptic curve $C$ over $\bar{\FF}_2$.
In \cite{GHMR}, it is shown that this homotopy fixed point description of $\TMF_{K(2)}$ gives rise to the following description of $(\TMF \wedge \TMF)_{K(2)}$
\begin{align*} (\TMF \wedge \TMF)_{K(2)} & \simeq 
\left( \Map^c(\MS_2/G_{24}, \bar{E}_2)^{hG_{24}} \right)^{hGal}.
\end{align*}
There is a subtlety being hidden with the above notation: the Galois group is acting on the continuous mapping spectrum with the conjugation action, where it acts on the source through the left action on
$$ (\MS_2 \rtimes Gal)/(G_{24} \rtimes Gal) \cong \MS_2/G_{24}. $$
For $N$ coprime to $2$, let $\Mss_0(N)(\bar{\FF}_2)$ denote the groupoid whose objects are pairs $(C,H)$ where $C$ is a supersingular elliptic curve over $\bar{\FF}_2$ and $H \le C(\bar{\FF}_2)$ is a cyclic subgroup of order $N$, and whose morphisms are isomorphisms of elliptic curves which preserve the subgroup.  
Then we have
$$ \TMF_0(N)_{K(2)} \simeq \left( \prod_{[C,H] \in \Mss_0(N)(\bar{\FF}_2)} \bar{E}^{h\aut(C,H)}_2 \right)^{hGal}. $$
For a prime $\ell \ne 2$, let $\mr{Isog}^{ss}_\ell(\bar{\FF}_2)$ denote the groupoid whose objects are quasi-isogenies
$$ \phi: C_1 \rightarrow C_2 $$
with $C_1, C_2$ supersingular curves over $\bar{\FF}_2$,
and whose morphisms from $\phi$ to $\phi'$ are pairs of isomorphisms $(\alpha_1, \alpha_2)$ making the following square commute
$$
\xymatrix{
C_1 \ar[r]^{\phi} \ar[d]_{\alpha_1} & C_2 \ar[d]^{\alpha_2} \\
C_1' \ar[r] _{\phi'} & C_2'
}.
$$
It is easy to see that there is an equivalence of groupoids
$$ \coprod_{i \in \ZZ, j \ge 0} \Mss_0(\ell^j)(\bar{\FF}_2) \xrightarrow{\simeq} \mr{Isog}^{ss}_\ell(\bar{\FF}_2) $$
given by sending a pair $(C,H)$ to the quasi-isogeny $\phi$ given by the composite
$$ \phi: C \xrightarrow{[\ell^i]} C \rightarrow C/H. $$
However, since there is a unique supersingular elliptic curve $C$ over $\bar{\FF}_2$, the category $\mr{Isog}^{ss}_\ell(\bar{\FF}_2)$ admits the following alternative description (we actually only need that $C$ is unique up to $\ell$-power isogeny).  
Let $\Gamma_\ell$ denote the group of quasi-isogenies $\phi: C \rightarrow C$ whose order is a power of $\ell$.  There is an inclusion
$$ \Gamma_\ell \hookrightarrow \MS_2 $$
given by associating to a quasi-isogeny $\phi$ the associated automorphism $\widehat{\phi}$ of the formal group $\widehat{C}$.
Then there is a bijection between the isomorphism classes of objects of $\mr{Isog}_\ell^{ss}(\bar{\FF}_2)$ and the double cosets
$$ G_{24} \backslash \Gamma_\ell / G_{24}. $$
Moreover, given an element $[\phi] \in G_{24} \backslash \Gamma_\ell / G_{24}$, the corresponding automorphisms of the associated object $\phi$ in $\mr{Isog}^{ss}_{\ell}(\bar{\FF}_2)$ is the group
$$ G_{24} \cap \phi G_{24} \phi^{-1} \subset \Gamma_\ell. $$
Putting this all together, we have
\begin{align*}
\left( \Map(\Gamma_\ell/G_{24}, \bar{E}_2)^{hG_{24}} \right)^{hGal}
& \simeq \left( \prod_{[\phi] \in G_{24} \backslash \Gamma_\ell / G_{24}} \bar{E}_2^{hG_{24} \cap \phi G_{24} \phi^{-1}} \right)^{hGal} \\
& \simeq \left( \prod_{[\phi] \in \mr{Isog}^{ss}_\ell(\bar{\FF}_2)} \bar{E}_2^{h\aut(\phi)} \right)^{hGal} \\
& \simeq \left( \prod_{i \in \ZZ, j \ge 0} \prod_{[(C,H)] \in \Mss_0(\ell^j)(\bar{\FF}_2)} \bar{E}_2^{h\aut(C,H)} \right)^{hGal} \\
& \simeq \prod_{i \in \ZZ, j \ge 0} \TMF_0(\ell^j)_{K(2)} 
\end{align*}
and under the equivalences described above, the map
$$ \psi_{K(2)}: (\TMF \wedge \TMF)_{K(2)} \rightarrow \prod\limits_{i\in \ZZ,j\ge  0}  \TMF_0(3^j)_{K(2)} \times \TMF_0(5^j)_{K(2)} $$
can be identified with the map
\begin{equation}\label{eq:dense}
\left( \Map^c(\MS_2/G_{24}, \bar{E}_2)^{hG_{24}} \right)^{hGal} \rightarrow 
\left( \Map(\Gamma_3/G_{24} \amalg \Gamma_5/G_{24}, \bar{E}_2)^{hG_{24}} \right)^{hGal}.
\end{equation}
induced by the map 
\begin{equation}\label{eq:densecoprod}
\Gamma_3/G_{24} \amalg \Gamma_5/G_{24} \rightarrow \MS_2/G_{24}.
\end{equation}
In \cite{BehrensLawson} it is shown that the image of the above map is dense.  
Intuitively, one would like to say that this density implies that a continuous function on $\MS_2/G_{24}$ is determined by its restrictions to $\Gamma_3/G_{24}$ and $\Gamma_5/G_{24}$, and this should imply that the map (\ref{eq:dense}) is injective on homotopy.  The difficulty lies in making this argument precise.  

Before we make the argument precise (which is rather technical) we pause to give the reader an idea of the intuition behind the argument.  An element in 
$$ \pi_* \Map^c(\MS_2/G_{24}, \bar{E}_2)^{hG_{24}} $$
is something like a section of a sheaf over $G_{24} \backslash \MS_2/G_{24}$ whose stalk over $[x] \in G_{24}\backslash \MS_2/ G_{24}$ is
$$\pi_* \bar{E}^{hG_{24} \cap xG_{24}x^{-1}}_2. $$
One would like to say a section of this sheaf is trivial if its values on the stalks are trivial.  However, the actual space of continuous maps is a ($K(2)$-local) colimit of maps
\begin{align*}
\Map^c(\MS_2/G_{24}, \bar{E}_2)^{hG_{24}} & \simeq \varinjlim_{G_{24} \le U \le_o \MS_2} \Map(\MS_2/U, \bar{E}_2)^{hG_{24}} \\
& \simeq \varinjlim_{G_{24} \le U \le_o \MS_2} \prod_{[x] \in G_{24} \backslash \MS_2 /U} \bar{E}_2^{hG_{24}\cap x U x^{-1}},
\end{align*}
so an element of the homotopy of the continuous mapping space is actually represented by a kind of locally constant section with constant value over $G_{24}xU$ lying in the group
$$ \pi_* \bar{E}^{hG_{24} \cap xUx^{-1}}_2. $$
The difficulty is that there are only maps
$$ \pi_* \bar{E}^{hG_{24} \cap xUx^{-1}}_2
\rightarrow \pi_* \bar{E}^{hG_{24} \cap xG_{24}x^{-1}}_2
$$
and these maps are not necessarily injections.  The point of Lemma~\ref{lem:open} is that the open cover of $\MS_2$ given by the double cosets $G_{24}xU$ admits a finite refinement, over which 
 the ``constant sections'' have values in one of the stalks, and hence the vanishing of a value at a stalk implies the vanishing of the constant section.  
 
We now make this argument completely precise.
We have
\begin{align*}
\pi_*\left( \Map^c(\MS_2/G_{24}, \bar{E}_2)^{hG_{24}} \right)^{hGal} & \cong \varprojlim_{i,j} \varinjlim_{G_{24} \le U\le_o \MS_2} \left( \frac{\pi_* \Map(\MS_2/U, \bar{E}_2)^{hG_{24}}}{(2^i, v_1^j)} \right)^{Gal} \\ 
& \cong 
\varprojlim_{i,j} \varinjlim_{G_{24} \le U\le_o \MS_2} \left( \prod_{[x] \in G_{24} \backslash \MS_2 / U} \frac{\pi_* \bar{E}^{hG_{24} \cap xUx^{-1}}_2}{(2^i, v_1^j)} \right)^{Gal}
\end{align*}
and
\begin{align*}
\pi_*\left( \Map(\Gamma_\ell/G_{24}, \bar{E}_2)^{hG_{24}} \right)^{hGal} & \cong \varprojlim_{i,j} \left( \frac{\pi_* \Map(\Gamma_\ell/G_{24}, \bar{E}_2)^{hG_{24}}}{(2^i, v_1^j)} \right)^{Gal} \\ 
& \cong 
\varprojlim_{i,j} \left( \prod_{[x] \in G_{24} \backslash \Gamma_\ell / G_{24}} \frac{\pi_* \bar{E}^{hG_{24} \cap xG_{24}x^{-1}}_2}{(2^i, v_1^j)} \right)^{Gal}
\end{align*}
for suitable pairs $(i,j)$.
Consider the natural maps
$$
\phi_\ell : \varinjlim_{G_{24} \le U\le_o \MS_2} \prod_{[x] \in G_{24} \backslash \MS_2 / U} \frac{\pi_* \bar{E}^{hG_{24} \cap xUx^{-1}}_2}{(2^i, v_1^j)}  \rightarrow 
 \prod_{[x] \in G_{24} \backslash \Gamma_\ell / G_{24}} \frac{\pi_* \bar{E}^{hG_{24} \cap xG_{24}x^{-1}}_2}{(2^i, v_1^j)}. 
$$
Lemma~\ref{lem:faithful} will be proven if we can show that
if we are given an open subgroup $G_{24} \le U \le_o \MS_2$
and a sequence in the product
$$ (z_{G_{24}xU})_{[x]} \in \prod_{[x] \in G_{24} \backslash \MS_2 / U} \frac{\pi_* \bar{E}^{hG_{24} \cap xUx^{-1}}_2}{(2^i, v_1^j)} $$
such that
$$ \phi_\ell(z_{G_{24}xU}) = 0 $$
for $\ell = 3,5$, then there is another subgroup $G_{24} \le U' \le_o U$ such that the associated sequence
$$ 
(z_{G_{24}xU'})_{[x]} \in \prod_{[x] \in G_{24} \backslash \MS_2 / U'} \frac{\pi_* \bar{E}^{hG_{24} \cap xU'x^{-1}}_2}{(2^i, v_1^j)}
$$ 
is zero, where $ z_{G_{24}xU'} $ is the restriction to $U^\prime$ of $z_{G_{24}xU}$.  

Suppose that $(z_{G_{24}xU})_{[x]}$ is such a sequence in the kernel of $\phi_3$ and $\phi_5$.  Take a cover $\{ y_k U_k \}$ of $\MS_2$ as in Lemma~\ref{lem:open}, and let $U' = \cap_k U_k$.    
Regarding $\Gamma_3$ and $\Gamma_5$ as subgroups of $\MS_2$, the density of the image of the map (\ref{eq:densecoprod}) implies that the map
$$ \Gamma_3/U' \amalg \Gamma_5/U' \rightarrow \MS_2/U'$$
is surjective.  We therefore may assume without loss of generality that the elements  $y_k$ are either in $\Gamma_3$ or $\Gamma_5$.  We need to show that the associated sequence $(z_{G_{24}xU'})_{[x]}$ is zero.  Take a representative $x$ of a double coset $[x] \in G_{24} \backslash \MS_2 /U'$.  Then $x \in y_kU_k$ for some $k$.  Note that we therefore have
$$ G_{24} \cap x U' x^{-1} \le G_{24} \cap x U_k x^{-1} = G_{24}\cap y_k U_k y_k^{-1} = G_{24} \cap y_k G_{24} y_k^{-1} \le G_{24} \cap xUx^{-1}. $$   
Consider the associated composite of restriction maps
$$ \frac{\pi_*\bar{E}_2^{hG_{24} \cap xUx^{-1}}}{(2^i, v_1^j)} \rightarrow 
\frac{\pi_*\bar{E}_2^{hG_{24} \cap y_kG_{24}y_k^{-1}}}{(2^i, v_1^j)} \rightarrow
\frac{\pi_*\bar{E}_2^{hG_{24} \cap xU'x^{-1}}}{(2^i, v_1^j)}.
$$
The element $z_{G_{24}xU'}$ is the image of $z_{G_{24}xU}$ under the above composite.  However, since $z_{G_{24}xU}$ is in the kernel of $\phi_3$ and $\phi_5$, it follows that the image of $z_{G_{24}xU}$ is zero in
$$ \frac{\pi_*\bar{E}_2^{hG_{24} \cap y_kG_{24}y_k^{-1}}}{(2^i, v_1^j)}. $$
We therefore deduce that $z_{G_{24}xU'}$ is zero, as desired.
\end{proof}

\subsection{Computation of $\Psi_3$ and $\Psi_5$ in low degrees}\label{sec:psicomp}

Using the formulas for $f^*$ and $q^*$ for $\Gamma_0(3)$ and $\Gamma_0(5)$ in the beginning of this section, we now compute the effect of the maps $\Psi_3$ and $\Psi_5$ on a piece of $\tmf \wedge \tmf$.  Using the notation of (\ref{eq:boSES_low}), we have decompositions:
\begin{align*}
\Ext^{*,*}_{A(2)_*}(\Sigma^{16}\bou_2) & \cong \underbrace{\Ext^{*,*}_{A(1)_*}(\Sigma^{16} \FF_2) \oplus \Ext^{*,*}_{A(2)_*}(\Sigma^{24} \bou_1)}_{\Ext^{*,*}_{A(2)}(\Sigma^{16}\td{\bou}_2)} \oplus \underbrace{\Ext^{*,*}_{A(2)_*}(\Sigma^{32}\FF_2[1])}_{\Ext^{*,*}_{A(2)_*}(\Sigma^{16} \td{\td{\bou}}_2)}, \\
\Ext^{*,*}_{A(2)_*}(\Sigma^{24}\bou_3) & \cong \Ext^{*,*}_{A(1)_*}(\Sigma^{24} \FF_2) \oplus \Ext^{*,*}_{A(2)_*}(\Sigma^{32} \bou_1^{2}), \\ 
\Ext^{*,*}_{A(2)}(\Sigma^{32}\bou_4) & \cong \underbrace{\Ext^{*,*}_{A(2)_*}(\si{64}\FF_2[1])}_{\Ext^{*,*}_{A(2)_*}(\Sigma^{32}\td{\bou}_4)} \oplus 
\left(
\begin{array}{c}
\Ext^{*,*}_{A(1)_*}\left( \si{32} \tmfu_1 \oplus \si{48} \FF_2 \right) \\
\oplus 
\Ext^{*,*}_{A(2)_*}(\si{56} \bou_1 \oplus \si{56} \bou_1[1]) 
\end{array}
\right).
\end{align*}
As indicated by the underbraces above, we shall refer to the first piece of $\bou_2$ as $\td{\bou}_2$, and the second piece as $\td{\td{\bou}}_2$, and the first piece of $\bou_4$ as ${\td{\bou}}_4$.

We define a $\tmf_*$-\emph{lattice} of $\pi_*\TMF_0(\ell) $ to be a $\pi_*\tmf $-submodule $I <  \pi_*\TMF_0(\ell)$ which is finitely generated as a $\pi_*\tmf $-module, and has the property that
$$ \Delta^{-1}I = \pi_*\TMF_0(\ell) . $$
Note that the first condition forces $I$ to be concentrated in $\pi_{\ge N}\TMF_0(\ell) $ for some $N$.

We will show that a portion $I_3$ of $\tmf_*\tmf $ detected by
$$ \Ext^{*,*}_{A(2)_*}(\Sigma^{8} \bou_1 \oplus \Sigma^{16} \td{\bou}_2) $$
in the ASS maps isomorphically onto a $\tmf_*$-lattice of $\pi_*\TMF_0(3) $, recovering an observation of Davis, Mahowald, and Rezk \cite{MRlevel3}, \cite{MahowaldConnective}.  Similarly, we will show that a portion $I_5$ of $\tmf_*\tmf $ detected by
$$ \Ext^{*,*}_{A(2)_*}(\Sigma^{16} \td{\td{\bou}}_2 \oplus \Sigma^{24} \bou_3 \oplus \Sigma^{32}\td{\bou}_4) $$
in the ASS maps isomorphically onto a $\tmf_*$-lattice of $\pi_*\TMF_0(5) $.  This is a new phenomenon.

Actually, Davis, Mahowald, and Rezk proved something stronger in \cite{MRlevel3}, \cite{MahowaldConnective}: they showed ($2$-locally) that there is actually a $\tmf$-module 
$$ \td{\tmf}_0(3) := \tmf \wedge (\Sigma^{16}\bo_1 \cup \Sigma^{24} \td{\bo}_2) \cup_\beta \Sigma^{33} \tmf $$
which maps to $\TMF_0(3)$ as a \emph{connective cover}, in the sense that on homotopy groups it gives the aforementioned $\tmf_*$-lattice.  In the last section of this paper we will reprove and strengthen their result, and show that there is also a ($2$-local) $\tmf$-module
$$ \td{\tmf}_0(5) := \Sigma^{32}\tmf \cup \Sigma^{24} \tmf \wedge \bo'_3 \cup \Sigma^{64} \tmf$$
(where $\tmf \wedge \bo_3'$ is a $\tmf$-module whose cohomology is isomorphic to the cohomology of $\tmf \wedge \bo_3$ as an $A$-module)
which maps to $\TMF_0(5)$ as a connective cover, topologically realizing the corresponding $\tmf_*$-lattice of $\pi_*\TMF_0(5)$.

It will turn out that to verify these computational claims, it will suffice to compute the maps
\begin{gather*}
\Psi_3: I_3 \rightarrow \pi_*\TMF_0(3)  \\
\Psi_5: I_5 \rightarrow \pi_*\TMF_0(5) 
\end{gather*}
rationally.  The behavior of the torsion classes will then be forced.

{\it The case of $\TMF_0(3)$.}

Observe that we have
\begin{align*}
& v_0^{-1}\Ext^{*,*}_{A(2)_*}(\Sigma^{8} \bou_1 \oplus \Sigma^{16} \td{\bou}_2) \\
\\
&\quad  = v_0^{-1}\Ext^{*,*}_{A(2)_*}(\Sigma^8\bou_1) \\
& \quad \quad  \oplus v_0^{-1} \Ext^{*,*}_{A(1)_*}(\Sigma^{16} \FF_2) \\
&\quad \quad \oplus v_0^{-1} \Ext^{*,*}_{A(2)_*}(\Sigma^{24}\bou_1) \\
\\
& \quad = \FF_2[v_0^{\pm 1}, [c_4], [\Delta]]\{ [f_1], [f_2], [f_3], [f_4] \} \\
& \quad \quad \oplus \FF_2[v_0^{\pm 1}, [c_4]]\{ [f_1^2],[f_1f_2]\} \\
& \quad \quad \oplus  \FF_2[v_0^{\pm 1}, [c_4], [\Delta]]\{ [f_5], [f_6], [f_7], [f_8] \} .
\end{align*}

Recall that 
$$ M_*(\Gamma_0(3))  = \ZZ [a_1^2, a_1a_3, a_3^2] $$
(regarded as a subring of $\ZZ [a_1, a_3]$).
For a $\Gamma_0(3)$ modular form $f$, we will write
$$ f = 2^i a_1^j a_3^k + \cdots, $$
where we have
\begin{enumerate}
\item $f \equiv 0 \mod (2^{i})$, and
\item $ f \equiv 2^i a_1^j a_3^k \mod (2^{i+1}, a_1^{j+1}). $
\end{enumerate}
We shall refer to $2^i a_1^j a_3^k$ as the \emph{leading term} of $f$.

The forgetful map
$$ f^*: M_*(\Gamma(1))  \rightarrow M_*(\Gamma_0(3))  $$
is computed on the level of leading terms by
\begin{align*}
f^*(c_4) & = a_1^4 + \cdots, \\
f^*(c_6) & = a_1^6 + \cdots, \\
f^*(\Delta) & = a_3^4 + \cdots. 
\end{align*}
Using the formulas for $f^*$ and $q^*$ given in the beginning of this section, we have
\begin{equation}\label{eq:Psi_3}
\begin{array}{lll} 
\Psi_3(f_1 ) =a_1a_3 +\cdots  &\quad & \Psi_3(f_2 ) =a_1^3a_3 +\cdots \\ 
\Psi_3(f_3 ) =a_1a_3^3 +\cdots && \Psi_3(f_4 ) =a_1^3a_3^3 +\cdots \\ 
\Psi_3(f_1^2 ) =a_1^2a_3^2 +\cdots && \Psi_3(f_1 f_2 ) =a_1^4a_3^2 +\cdots \\ 
\Psi_3(f_5 ) =a_3^4 +\cdots &&\Psi_3(f_6 ) =a_3^4a_1^2 +\cdots \\ 
\Psi_3(f_7 ) =a_3^6 +\cdots && \Psi_3(f_8 ) =a_3^6a_1^2 +\cdots .\\
\end{array}
\end{equation}
It follows that on the level of leading terms, the $(\tmf_*)_\QQ$-submodule of $\tmf_*\tmf_\QQ$ given by
\begin{gather*}
\QQ[c_4, \Delta]\{f_1, f_2, f_3, f_4\} \\
\oplus \QQ[c_4]\{f_1^2, f_1f_2\} \\
\oplus \QQ[c_4, \Delta]\{f_5, f_6, f_7, f_8\}
\end{gather*}
maps under $\Psi_3$ to the $(\tmf_*)_\QQ$-lattice given by the ideal 
$$ (I_3)_\QQ := (a_1a_3, a_3^2) \subset M_*(\Gamma_0(3))_{\QQ}$$
expressed as
\begin{gather*} 
\QQ[a_1^4,  a_3^4]\{a_1a_3 , a_1^3a_3 , a_1a_3^3 , a_1^3a_3^3
\} \\
\oplus \QQ[a_1^4]\{a_1^2a_3^2, a_1^4a_3^2\}  \\
\oplus \QQ[a_1^4, a_3^4]\{a_3^4, a_3^4a_1^2, a_3^6, a_3^6a_1^2\}.
\end{gather*}

{\it The case of $\TMF_0(5)$.}

Observe that we have
\begin{align*}
& v_0^{-1}\Ext^{*,*}_{A(2)_*}(\Sigma^{16} \td{\td{\bou}}_2 \oplus \Sigma^{24} \bou_3 \oplus \Sigma^{32}\td{\bou}_4) \\
\\
&\quad  = v_0^{-1}\Ext^{*,*}_{A(2)_*}(\Sigma^{32}\FF_2[1]) \\
& \quad \quad  \oplus v_0^{-1} \Ext^{*,*}_{A(1)_*}(\Sigma^{24} \FF_2) \\
&\quad \quad \oplus v_0^{-1} \Ext^{*,*}_{A(2)_*}(\Sigma^{32}\bou_1^2) \\
&\quad \quad \oplus v_0^{-1} \Ext^{*,*}_{A(2)_*}(\Sigma^{64}\FF_2^1) \\
\\
& \quad = \FF_2[v_0^{\pm 1}, [c_4], [\Delta]]\{ [f_9], [c_6f_9] \} \\
& \quad \quad \oplus \FF_2[v_0^{\pm 1}, [c_4]]\{ [f_1^3],[f_1^2f_2]\} \\
& \quad \quad \oplus  \FF_2[v_0^{\pm 1}, [c_4], [\Delta]]\{ [f_5f_1], [f_5f_2], [f_{10}], [f_{11}], [f_7f_1], [f_7f_2], [f_{14}], [f_{15}] \} \\
& \quad \quad \oplus  \FF_2[v_0^{\pm 1}, [c_4], [\Delta]]\{[f_9^2], [c_6 f_9^2]  \} .
\end{align*}

Recall that 
$$  M_*(\Gamma_0(5))  = \ZZ [b_2,b_4,\delta]/(b_4^2=b_2^2\delta-4\delta^2). $$
For a $\Gamma_0(5)$ modular form $f$, we will write
$$ f = 2^i b_2^j \delta^k b_4^\epsilon + \cdots, $$
where $\epsilon \in \{0,1\}$ and 
\begin{enumerate}
\item $f \equiv 0 \mod (2^{i})$, and
\item $ \quad $
$$ \begin{cases} 
f \equiv 2^i b_2^j (\delta^k + \alpha \delta^{k-1}b_4) \mod (2^{i+1}, b_2^{j+1}), & \epsilon = 0, \\
f \equiv 2^i b_2^j \delta^{k}b_4 \mod (2^{i+1}, b_2^{j+1}), & \epsilon = 1. 
\end{cases}
$$
\end{enumerate}
We shall refer to $2^i b_2^j \delta^k b_4^\epsilon$ as the \emph{leading term} of $f$.

The forgetful map
$$ f^*: M_*(\Gamma(1))  \rightarrow M_*(\Gamma_0(5))  $$
is computed on the level of leading terms by
\begin{align*}
f^*(c_4) & = b_2^2 + \cdots, \\
f^*(c_6) & = b_2^3 + \cdots, \\
f^*(\Delta) & = \delta^3 + \cdots. 
\end{align*}
Unlike the case of $\Gamma_0(3)$, the $M_*(\Gamma(1))$-submodule of $2$-variable modular forms generated by the forms listed above in 
$$ v_0^{-1}\Ext^{*,*}_{A(2)_*}(\Sigma^{16} \td{\td{\bou}}_2 \oplus \Sigma^{24} \bou_3 \oplus \Sigma^{32}\td{\bou}_4) $$
does \emph{not} map nicely into $M_*(\Gamma_0(5))$.  Rather, we choose different generators as listed below.  These generators were chosen inductively (first by increasing degree, and second, by decreasing Adams filtration) by using a row echelon algorithm based on leading terms (see Examples~\ref{ex:echelon1} and \ref{ex:echelon2}).   In every case, a generator named $\td{x}$ agrees with $x$ modulo terms of higher Adams filtration:
 \begin{equation}\label{eq:newforms}
\begin{split}
 \td{f_9 } & =f_9+\Delta f_1+ c_4^2 f_1^2 ,\\
 \td{c_6f_9 } & = c_6 f_9+ c_4 \Delta f_2+ c_4^3 f_1 f_2 ,\\
 \td{f_1^3 } & =f_1^3+f_4+ c_4 f_1^2 ,\\
 \td{f_1^2f_2 } & =f_1^2 f_2 +  c_4 f_3 +  c_4 f_1 f_2 ,\\
 \td{f_5f_1 } & =f_1 f_5+\Delta f_1 ,\\
 \td{f_5f_2 } & =f_5 f_2+\Delta f_2 ,\\
 \td{f_7f_1 } & =f_1 f_7+\Delta f_3+ c_4 f_7+ c_4 \Delta f_2+ c_4^2 f_6+ c_4^3 f_1 f_2+ c_4^4 f_2 ,\\
 \td{f_7f_2 } & =f_2 f_7+\Delta f_4+ c_4 f_8+ c_4^2 \Delta f_1+ c_4^4 f_1^2 ,\\
 \td{f_{14} } & =f_{14}+\Delta f_4+ c_4^3 f_5+ c_4^3 f_4 ,\\
 \td{f_{15} } & =f_{15}+ c_4 \Delta f_3+ c_4^3 f_6+ c_4^4 f_3. \\
\end{split}
\end{equation}
The following forms, while not detected by
$\Ext^{*,*}_{A(2)_*}(\Sigma^{16} \td{\td{\bou}}_2 \oplus \Sigma^{24} \bou_3 \oplus \Sigma^{32}\td{\bou}_4)$, will be needed:
\begin{align*}
 \td{f_1^4 } & =f_1^4+ c_4 f_5+ c_4 f_4+ c_4^2 f_1^2, \\
 \td{f_1^3f_2 } & =f_1^3 f_2+ c_4 f_6+ c_4^2 f_3+ c_4^3 f_2.
\end{align*}
We now define:
\begin{align*}
 \td{f_{10} } & =f_{10}+f_7+ c_4 f_6+ c_4^2 f_1 f_2, \\
 \td{f_{11} } & =f_{11}+f_8+ c_4 \Delta f_1+ c_4^2 f_5, \\
 \td{c_4f_{10} } & = c_4\td{f_{10}}+\td{c_6f_9}+ c_4 \td{f_1^3f_2}+ c_4^2 \td{f_1^2f_2} .\\
\end{align*}
Again, the following forms are not detected by
$\Ext^{*,*}_{A(2)_*}(\Sigma^{16} \td{\td{\bou}}_2 \oplus \Sigma^{24} \bou_3 \oplus \Sigma^{32}\td{\bou}_4)$, but will be needed:
\begin{align*}
 \td{f_1^4f_2 } & =f_1^4 f_2+ c_4 \Delta f_2+ c_4^2 f_6+ c_4^3 f_3+ c_4^4 f_2+ c_4 \td{f_5f_2}, \\
 \td{f_{13} } & =f_{13}+\Delta f_3+ c_4 f_7+ c_4 \Delta f_2+ c_4^2 f_6+ c_4^3 f_3+ c_4^3 f_1 f_2+ c_4^4 f_2+\td{f_7f_1}+\frac{\td{c_4f_{10}}}{2} \\
& \quad \quad +\td{c_6f_9}+ c_4 \td{f_5f_2}+\td{f_1^4f_2}+ c_4^2\td{f_1^2f_2} .\\
\end{align*}
We then define:
\begin{align*}
 \td{f_9^2 } & =\td{f_9}^2 ,\\
 \td{c_4f_9^2 } & = c_4 \td{f_9}^2+\Delta \td{f_7f_2}+ c_4 \Delta \td{f_{11}}+ c_4^2 \Delta \td{f_5f_1}+ c_4^3 \td{f_{14}}+ c_4^5 \td{f_9}+ c_4^5 \td{f_5f_1}+ c_4^5 \td{f_1^4} ,\\
 \td{c_6f_9^2 } & = c_6 \td{f_9^2}+ c_4 \Delta \td{f_7f_1}+ c_4 \Delta \frac{\td{c_4f_{10}}}{2}+ c_4 \Delta \td{c_6f_9}+ c_4^2 \Delta \td{f_5f_2}+ c_4^4 \frac{\td{c_4f_{10}}}{2}+ c_4^4 \td{f_1^4f_2} \\
& \quad \quad + c_4^5 \td{f_1^3f_2}+ c_4^4 \td{f_{13}}.
\end{align*}

Using the formulas for $f^*$ and $q^*$ given in the beginning of this section, we have
\begin{equation}\label{eq:Psi_5}
\begin{array}{lll} 
\Psi_5( \td{f_9 }) = \delta^4 + \cdots &\quad &\Psi_5( \td{c_6f_9 }) = b_2^3\delta^4 + \cdots \\
\Psi_5( \td{f_1^3 }) = b_2^2\delta^2 + \cdots && \Psi_5( \td{f_1^2f_2 }) = b_2^3\delta^2 + \cdots \\
\Psi_5( \td{f_5f_1 }) = \delta^3b_4 + \cdots && \Psi_5( \td{f_5f_2 }) = b_2\delta^3b_4 + \cdots \\
\Psi_5( \td{f_7f_1 }) = b_2\delta^5 + \cdots && \Psi_5( \td{f_7f_2 }) = b_2^2\delta^5 + \cdots \\
\Psi_5( \td{f_{14} }) = \delta^6 + \cdots && \Psi_5( \td{f_{15} }) = b_2\delta^6 + \cdots \\
\Psi_5( \td{f_1^4 }) = b_2^2\delta^2b_4 + \cdots && \Psi_5( \td{f_1^3f_2 }) = b_2^3\delta^2b_4 + \cdots \\
\Psi_5( \td{f_{10} }) = b_2\delta^4 + \cdots && \Psi_5( \td{f_{11} }) = \delta^4b_4 + \cdots \\
\Psi_5( \td{c_4f_{10} }) = 2b_2\delta^4b_4 + \cdots && \Psi_5( \td{f_1^4f_2 }) = b_2^5\delta^3 + \cdots \\
\Psi_5( \td{f_{13} }) = b_2^9\delta + \cdots && \Psi_5( \td{f_9^2 }) = \delta^8 + \cdots \\
\Psi_5( \td{c_4f_9^2 }) = 2\delta^8b_4 + \cdots && \Psi_5( \td{c_6f_9^2 }) = b_2\delta^8b_4 + \cdots .
\end{array}
\end{equation}

\begin{ex}\label{ex:echelon1}
We explain how the above generators were produced by working through the example of $\td{f_{10}}$.  
\begin{description}
\item[Step 1] Add terms to $f_{10}$ of higher Adams filtration to ensure that $\Psi_3(\td{f_{10}}) \equiv 0 \mod 2$.  For example, we compute
$$ \Psi_3(f_{10}) = a_3^6 + \cdots. $$
According to (\ref{eq:Psi_3}), we have $\Psi_3(f_7) = a_3^6 + \cdots $.  Since $f_7$ has higher Adams filtration, we can add it to $f_{10}$ without changing the element detecting it in the ASS, to cancel the leading term of $a_3^6$.  We compute
$$ \Psi_3(f_{10} + f_7)  = a_1^6 a_3^4 + \cdots. $$
Again, using (\ref{eq:Psi_3}), we see that $\Psi_3(c_4f_6)$ (of higher Adams filtration) also has this leading term, so we now compute:
$$ \Psi_3(f_{10}+f_7+c_4 f_6) = a_1^{12}a_3^2+\cdots. $$
We see that $\Psi_3(c_4^2f_1f_2)$ also has this leading term, and 
$$ \Psi_3(f_{10}+f_7+c_4 f_6+c_4^2f_1f_2) \equiv 0 \mod 2. $$
\item[Step 2] Add terms to $f_{10}+f_7+c_4 f_6+c_4^2f_1f_2$ to ensure that the leading term of $\Psi_5(\td{f_{10}})$ is distinct from those generated by elements in lower degree, or higher Adams filtration.  In this case, we compute 
$$ \Psi_5(f_{10}+f_7+c_4 f_6+c_4^2f_1f_2) = b_2\delta^4 + \cdots. $$
By induction we know the leading term of $\Psi_5$ on generators in lower degree and higher Adams filtration, and in particular (\ref{eq:Psi_5}) tells us that this leading term is distinct from leading terms generated from elements of lower degree.  We therefore define
$$ \td{f_{10}} = f_{10}+f_7+c_4 f_6+c_4^2f_1f_2. $$
\end{description}
\end{ex}

\begin{ex}\label{ex:echelon2}
We now explain a subtlety which may arise by working through the example of $\td{c_4f_{10}}$.  
\begin{description}
\item[Step 1] We would normally add terms to $c_4f_{10}$ of higher Adams filtration to ensure that $\Psi_3(\td{c_4f_{10}}) \equiv 0 \mod 2$.  Of course, because we already know that $\Psi_3(\td{f_{10}}) \equiv 0 \mod 2$, we have
$$ \Psi_3(c_4\td{f_{10}}) \equiv 0 \mod 2. $$
\item[Step 2] We now add terms to $c_4 \td{f_{10}}$ to ensure that the leading term of $\Psi_5(\td{c_4 f_{10}})$ is distinct from those generated by elements in lower degree.  In this case, we compute 
$$ \Psi_5(c_4\td{f_{10}}) = b_2^3\delta^4 + \cdots. $$
By induction we know the leading term of $\Psi_5$ on generators in lower degree and higher Adams filtration, but now (\ref{eq:Psi_5}) tells us that 
$$ \Psi_5(\td{c_6 f_9}) = b_2^3 \delta^4 + \cdots.$$  
Since $c_6 f_9$ has higher Adams filtration, we add it to $c_4\td{f_{10}}$ and compute
$$  \Psi_5(c_4\td{f_{10}}+ \td{c_6f_9}) = b_2^5\delta^2b_4. $$
We inductively know that $\Psi_5(\td{f_1^3f_2}) = b_2^3 \delta^2b_4 + \cdots$, and we compute
$$ \Psi_5(c_4\td{f_{10}}+ \td{c_6f_9}+c_4 \td{f_1^3f_2}) = b_2^7\delta^2. $$
We inductively know that $\Psi_5(\td{f_1^2f_2}) = b_2^3 \delta^2 + \cdots$, and we compute
$$ \Psi_5(c_4\td{f_{10}}+ \td{c_6f_9}+c_4 \td{f_1^3f_2}+c_4^2\td{f_1^2f_2}) = 2b_2\delta^4 b_4 + \cdots. $$
In other words, the expression above is congruent to $0$ mod $2$, and therefore the leading term is divisible by $2$!
However, this leading term is distinct from leading terms generated from elements of lower degree, so we define
$$ \td{c_4f_{10}} = c_4\td{f_{10}}+ \td{c_6f_9}+c_4 \td{f_1^3f_2}+c_4^2\td{f_1^2f_2} $$
and record the leading term of $\Psi_5(\td{c_4f_{10}})$ as $2b_2\delta^4 b_4$.
(In fact, the 2-variable modular form $\td{c_4f_{10}}$ is $2$-divisible, and this is why some of the equations in (\ref{eq:newforms}) involve the term $\frac{\td{c_4 f_{10}}}{2}$.)
\end{description}
\end{ex}

In light of the form the leading terms of (\ref{eq:Psi_5}) take, we rewrite
\begin{align*}
& v_0^{-1}\Ext^{*,*}_{A(2)_*}(\Sigma^{16} \td{\td{\bou}}_2 \oplus \Sigma^{24} \bou_3 \oplus \Sigma^{32}\td{\bou}_4) \\
& \quad = \FF_2[v_0^{\pm 1}, [c_4], [\Delta]]\{ [f_9], [c_6f_9] \} \\
& \quad \quad \oplus \FF_2[v_0^{\pm 1}, [c_4]]\{ [f_1^3],[f_1^2f_2]\} \\
& \quad \quad \oplus  \FF_2[v_0^{\pm 1}, [c_4], [\Delta]]\{ [f_5f_1], [f_5f_2], [f_{10}], [f_{11}], [f_7f_1], [f_7f_2], [f_{14}], [f_{15}] \} \\
& \quad \quad \oplus  \FF_2[v_0^{\pm 1}, [c_4], [\Delta]]\{[f_9^2], [c_6 f_9^2]  \}
\end{align*}
in the form
\begin{gather*}
\FF_2[v_0^{\pm 1}, [c_4], [\Delta]]\{ [\td{f_9}], [\td{c_6f_9}] \} \oplus \\
 \FF_2[v_0^{\pm 1}, [c_4]]\{ [\td{f_1^3}],[\td{f_1^2f_2}]\} \oplus \\
  \FF_2[v_0^{\pm 1}, [c_4], [\Delta]]\{ [\td{f_5f_1}], [\td{f_5f_2}],  [\td{f_{11}}], [\td{c_4f_{10}}], [\td{f_7f_1}], [\td{f_7f_2}], [\td{f_{14}}], [\td{f_{15}}] \}
\oplus \FF_2[v_0^{\pm 1}, [\Delta]]\{[\td{f_{10}}]\} \\
\oplus  \FF_2[v_0^{\pm 1}, [c_4], [\Delta]]\{[\td{c_4 f_9^2}], [\td{c_6 f_9^2}]  
\} 
\oplus \FF_2[v_0^{\pm 1}, [\Delta]]\{[\td{f_{9}^2]}\}.
\end{gather*}
It follows from (\ref{eq:Psi_5}) that on the level of leading terms, the $(\tmf_*)_\QQ$ submodule of $\tmf_*\tmf_\QQ$ given by
\begin{gather*}
\QQ[c_4, \Delta]\{ \td{f_9}, \td{c_6f_9} \} \\
\oplus \QQ[c_4]\{ \td{f_1^3},\td{f_1^2f_2}\} \\
\oplus  \QQ[c_4, \Delta]\{ \td{f_5f_1}, \td{f_5f_2},  \td{f_{11}}, \frac{\td{c_4f_{10}}}{2}, \td{f_7f_1}, \td{f_7f_2}, \td{f_{14}}, \td{f_{15}} \} \oplus \QQ[\Delta]\{\td{f_{10}}\} \\
\oplus  \QQ[c_4, \Delta]\{\frac{\td{c_4 f_9^2}}{2}, \td{c_6 f_9^2}  \} \oplus \QQ[\Delta]\{\td{f_{9}^2}\}
\end{gather*}
maps under $\Psi_5$ to the $(\tmf_*)_\QQ$-lattice  
$$ (I_5)_\QQ = \QQ[b_2, \delta^3] \{b_2^2 \delta^2, \delta^3b_4, \delta^4, \delta^4b_4, b_2\delta^5,  \delta^6, \delta^8, \delta^8b_4 \} \subset M_*(\Gamma_0(5))_{\QQ}$$
expressed as
\begin{gather*} 
\QQ[b_2^2, \delta^3]\{ \delta^4, b_2^3\delta^4 \} \\
\oplus \QQ[b_2^2]\{b_2^2\delta^2, b_2^3\delta^2 \} \\
\oplus  \QQ[b_2^2, \delta^3]\{ \delta^3b_4, b_2\delta^3b_4,  \delta^4b_4, b_2\delta^4b_4, b_2\delta^5, b_2^2\delta^5, \delta^6, b_2\delta^6 \} \oplus \QQ[\Delta]\{b_2\delta^4\} \\
\oplus  \QQ[c_4, \Delta]\{\delta^8b_4, b_2\delta^8b_4  \} \oplus \QQ[\Delta]\{\delta^8 \}.
\end{gather*}

\subsection{Using level structures to detect differentials and hidden extensions in the ASS}\label{sec:diffext}

In the previous section we observed that $\Psi_3$ maps a $\tmf_*$-submodule of $\tmf_*\tmf $ detected in the ASS by
$$ \Ext^{*,*}_{A(2)_*}(\Sigma^{8} \bou_1 \oplus \Sigma^{16} \td{\bou}_2) $$
to a $\tmf_*$-lattice $I_3 \subset \pi_*\TMF_0(3) $, and $\Psi_5$ maps a $\tmf_*$-submodule of $\tmf_*\tmf $ detected in the ASS
$$ \Ext^{*,*}_{A(2)_*}(\Sigma^{16} \td{\td{\bou}}_2 \oplus \Sigma^{24} \bou_3 \oplus \Sigma^{32}\td{\bou}_4) $$
to a $\tmf_*$-lattice $I_5 \subset \pi_*\TMF_0(5) $.

We now observe that using the known structure of $\pi_*\TMF_0(3)$ and $\pi_*\TMF_0(5)$, we can deduce differentials in the portion of the ASS detected by 
$$ \Ext^{*,*}_{A(2)_*}(\Sigma^8 \bou_1 \oplus \Sigma^{16}\bou_2 \oplus \Sigma^{24}\bou_3 \oplus \Sigma^{32} \td{\bou}_4). $$

\begin{figure}
\centering
	\includegraphics[height =\textheight]{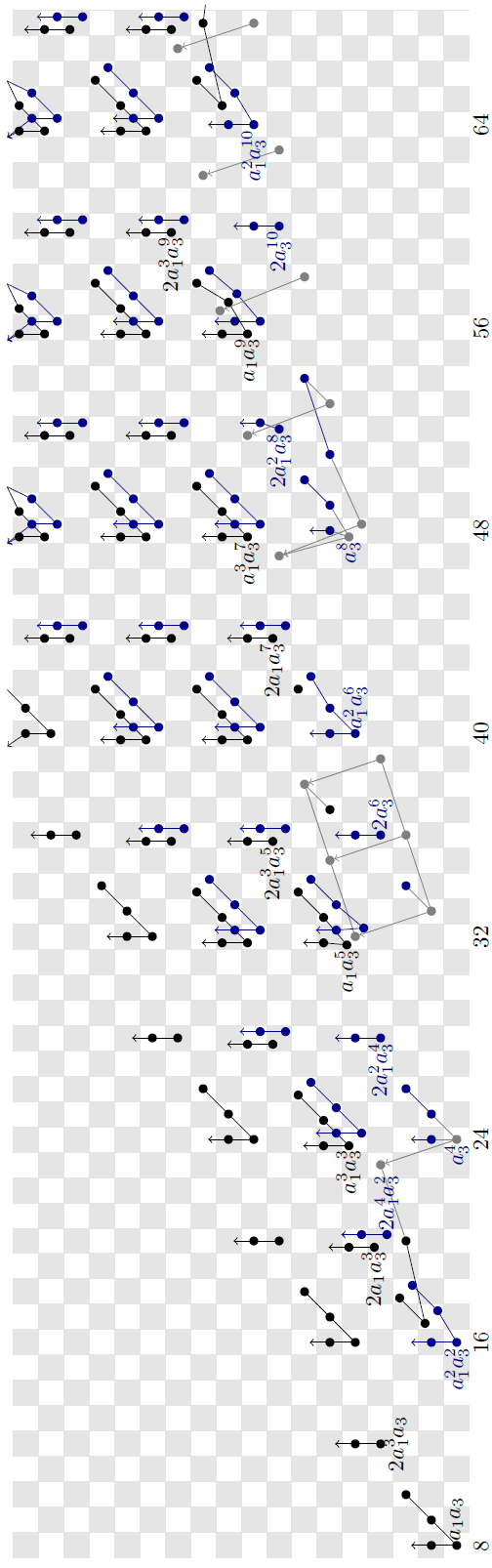}
	\caption{Differentials and hidden extensions in the portion of the ASS for $\tmf_*\tmf$ detected by $\Sigma^8 \bou_1 \oplus \Sigma^{16}\td{\bou}_2$ coming from $\TMF_0(3)$.}\label{fig:tmf3}
\end{figure}

\begin{figure}
\centering
	\includegraphics[height =\textheight]{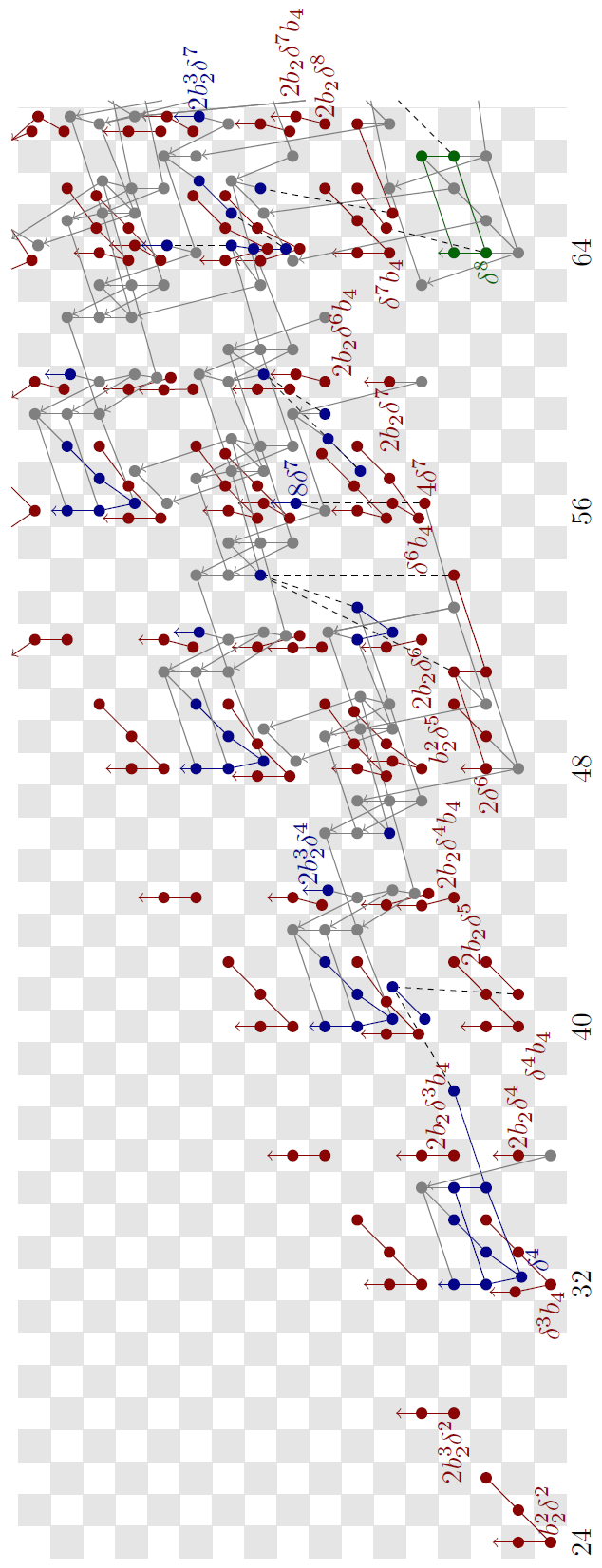}
	\caption{Differentials and hidden extensions in the portion of the ASS for $\tmf_*\tmf$ detected by $\Sigma^{16} \td{\td{\bou}}_2 \oplus \Sigma^{24}\bou_3 \oplus\Sigma^{32}\td{\bou}_4$ coming from $\TMF_0(5)$.}\label{fig:tmf5}
\end{figure}

We begin with $\Sigma^{8} \bou_1 \oplus \Sigma^{16} \td{\bou}_2$.  Figure~\ref{fig:tmf3} displays this portion of the $E_2$-term of the ASS for $\tmf_*\tmf$, with differentials and hidden extensions.  The $v_0^{-1}\Ext_{A(2)}$-generators in the chart are also labeled with $\Gamma_0(3)$-modular forms.  These are the leading terms of the $\Gamma_0(3)$-modular forms that they map to under the map $\Psi_{3}$ (see (\ref{eq:Psi_3})).  The Adams differentials and hidden extensions are all deduced from the behavior of $\Psi_3$ on these torsion-free classes, as we will now explain.  We will also describe how the $h_0$-torsion in this portion of the ASS detects homotopy classes which map isomorphically under $\Psi_3$ onto torsion in $\pi_*\TMF_0(3) $.  We freely make reference to the descent spectral sequence
$$ H^{s}(\Mcl{3} , \omega^{\otimes t}) \Rightarrow \pi_{2t-s}\TMF_0(3) , $$
 as computed in \cite{MRlevel3}.
\begin{description}
\item[Stem 17] We have
$$ \Psi_3(\eta f_4) = \eta a_1^3a_3^3 + \cdots. $$
Mahowald and Rezk \cite {MRlevel3} define a class $x$ in $\pi_{17}\TMF_0(3)$ such that
$$ c_4 x = \eta a_1^3a_3^3 + \cdots.  $$
There is a class $z_{17}$ in $\Ext^{1,18}_{A(2)_*}(\Sigma^8\bou_1)$ such that
$$ [c_4] z_{17} = h_1 [f_4]. $$ 
The class $z_{17}$ is a permanent cycle, and detects an element $y_{17} \in \tmf_{17}\tmf $.
We deduce
\begin{align*}
\Psi_3(y_{17}) & = x, \\
\Psi_3(\eta y_{17}) & = \eta x, \\
\Psi_3(\nu y_{17}) & = \nu x. \\
\end{align*}

\item[Stem 24]  The modular form $a_3^4$ is not a permanent cycle in the descent spectral sequence for $\TMF_0(3)$.  It follows that the corresponding element of $\Ext_{A(2)_*}(\td{\bou}_2)$ must support an ASS differential.  There is only one possible target for this differential.

\item[Stem 33] 
There is a class $z_{33} \in \Ext^{1,34}_{A(2)_*}(\Sigma^{16} \td{\bou}_2)$ satisfying
$$ [c_4] z_{33} = h_1[f_8].$$
There are no possible non-trivial differentials supported by $h_1 z_{33}$.
Dividing both sides of 
$$ \Psi_3(\eta^2 f_8) = \eta^2 a_1^2 a_3^6 + \cdots $$
by $c_4$, we deduce that there is an element $y_{34} \in \tmf_{34}\tmf $ detected by $h_1 z_{33}$ satisfying
$$ \Psi_3(y_{34}) = x^2. $$
Since $x^2$ is not $\eta$-divisible, we deduce that $z_{33}$ must support an Adams differential, and there is only one possible target for such a differential.
Since
$$ \Psi_3(\bar\kappa y_{17}) = \bar\kappa x = \nu x^2 $$
it follows that the element $g y_{17} \in \Ext^{5,42}_{A(2)*}(\bou_1)$ detects $\nu y_{33}$, which maps to $\nu x^2$ under $\Psi_3$.
We then deduce that
$$ \Psi_3(\bra{\eta,\nu,\nu y_{33}}) = \bra{\eta,\nu,\nu x^2} = a_1a_3 x^2. $$

\item[Stem 48] Let $z_{48} \in \Ext^{4,52}_{A(2)_*}(\td{\bou}_2)$ denote the unique non-trivial class with $h_1 z_{48} = 0$, so that $[\Delta f_5] + z_{48}$ is the unique class in that bidegree which supports non-trivial $h_1$ and $h_2$-multiplication.  Note that there is only one potential target for an Adams differential supported by $[\delta f_5]$ or $z_{48}$.  Since $a_3^8$ supports non-trivial $\eta$ and $\nu$ multiplication,  it follows that $[\Delta f_5] + z_{48}$ must be a permanent cycle in the ASS, detecting an element $y_{48} \in \tmf_*\tmf $ satisfying
$$ \Psi_3(y_{48}) = a_3^8. $$
Since $\nu^2 a_3^8$ is not $\eta$-divisible, we conclude that $h_{2,1} z_{48}$ cannot be a permanent cycle.  We deduce using $h_{2,1}$-multiplication (i.e. application of $\bra{\nu,\eta,-}$) that
$$ d_3(h_{2,1}^i z_{48}) = h^{i-1}_{2,1} d_3(h_{2,1}z_{48}) $$
for $i \ge 1$, and that\
$$ d_3(z_{48}) = d_3([\delta f_5]) \ne 0. $$
\end{description}

We now proceed to analyze $\Sigma^{16} \td{\td{\bou}}_2 \oplus \Sigma^{24} \bou_3 \oplus \Sigma^{32}\td{\bou}_4$.  Figure~\ref{fig:tmf5} displays this portion of the $E_2$-term of the ASS for $\tmf_*\tmf$, with differentials and hidden extensions.  The $v_0^{-1}\Ext_{A(2)}$-generators in the chart are also labeled with $\Gamma_0(5)$-modular forms.  These are the leading terms of the $\Gamma_0(5)$-modular forms that they map to under $\Psi_{5}$ (see (\ref{eq:Psi_5})).  As in the case of $\Sigma^8\bo_1 \oplus \Sigma^{16}\td{\bo}_2$, the Adams differentials and hidden extensions are all deduced from the behavior of $\Psi_5$ on these torsion-free classes.  We will also describe how the $h_0$-torsion in this portion of the ASS detects homotopy classes which map isomorphically under $\Psi_5$ onto torsion in $\pi_*\TMF_0(5) $.  We freely make reference to the descent spectral sequence
$$ H^{s}(\Mcl{5} , \omega^{\otimes t}) \Rightarrow \pi_{2t-s}\TMF_0(5) , $$
 as computed in \cite{Q5}, for instance.  Most of the differentials and extensions follow from the fact that the element $[f_9]$ which generates 
$$ \Ext_{A(2)_*}(\Sigma^{16}\td{\td{\bou}}_{2}) \cong \Ext_{A(2)_*}(\Sigma^{32}\FF_2[1]) $$
must be a permanent cycle in the ASS, and that the ASS for $\tmf \wedge \tmf$ is a spectral sequence of modules over the ASS for $\tmf$
$$ \Ext^{*,*}_{A(2)_*}(\FF_2) \Rightarrow \pi_* \tmf^\wedge_2. $$
Below we give some brief explanation for the main differentials and hidden extensions which do not follow from this.
\begin{description}
\item[Stem 36]  We have 
$$ \Psi_5(f_{10}) = b_2 \delta^4 + \cdots. $$
Since $b_2 \delta^4$ is not a permanent cycle in the descent spectral sequence for $\TMF_0(5) $, we deduce that $f_{10}$ must support a differential.  There is only one possibility (taking into account the differential $d_3(h_2 z_{33})$ coming from $\TMF_0(3)$),
$$ d_4([f_{10}]) = h_1^3 [f_9].$$
This is especially convenient, in light of the fact that $\eta^3 \delta^4 = 0$.

\item[Stem 41] The hidden extension follows from dividing
$$ \Psi_5(\eta [\td{f_7f_2}]) = \eta b_2^2\delta^5 + \cdots $$
by $c_4$.

\item[Stem 54] The three hidden extensions to the element $[\kappa c_4 \td{f}_9]$ all follow from the fact that $\nu^2  (2\delta^6)$ is non-trivial, and that
$$ 2  (\nu^2\delta^6) = \eta^2\bar{\kappa}\delta^2. $$

\item[Stem 56] The hidden extension follows from the Toda bracket manipulation
$$ 2\bra{\nu, 2\bar\kappa, 2\td{f_9}} = \bra{2,\nu, 2\bar\kappa}2\td{f_9}. $$

\item[Stem 64] The differential on $[\td{f^2_9}/2]$ follows from the fact that $\delta^8$ is not $2$-divisible.  The hidden extensions follow from the fact that $\eta \delta^8 \ne 0$ and $\nu^2 \delta^8 \ne 0$.

\item[Stem 65] The hidden $\eta$-extension follows from the fact that $\delta^4 \kappa \bar\kappa$ is $\eta$-divisible, and $\nu  (\delta^4 \kappa \bar\kappa) = (2\delta^6)\bar\kappa$. 

\end{description}

\subsection{Connective covers of $\TMF_0(3)$ and $\TMF_0(5)$ in the $\tmf$-resolution}\label{sec:cover}

In this section we will topologically realize the summands
\begin{gather*}
\Ext^{*,*}_{A(2)_*}(\Sigma^{8} \bou_1 \oplus \Sigma^{16} \td{\bou}_2), \\
\Ext^{*,*}_{A(2)_*}(\Sigma^{16} \td{\td{\bou}}_2 \oplus \Sigma^{24} \bou_3 \oplus \Sigma^{32}\td{\bou}_4)
\end{gather*}
of $\Ext(\tmf \wedge \tmf)$, which we showed detect $\tmf_*$-submodules that map to $\tmf_*$-lattices of $\pi_*\TMF_0(3) $ and $\pi_*\TMF_0(5) $ under the maps $\Psi_3$ and $\Psi_5$, respectively.  From now on, everything is implicitly $2$-local.

For the purposes of context, we shall say that a spectrum
$$ X\rightarrow \tmf $$ 
over $\tmf$ is a \emph{$\tmf$-Brown-Gitler spectrum} if the induced map
$$ H_* X \rightarrow H_*\tmf $$
maps $H_* X$ isomorphically onto one of the $A_*$-subcomodules $\ul{\tmf}_i \subset H_*\tmf$ defined in Section~\ref{sec:ass}.  

Not much is known about the existence of $\tmf$-Brown-Gitler spectra, but the most optimistic hope would be that the spectrum $\tmf$ admits a filtration by $\tmf$-Brown-Gitler spectra $\tmf_i$.  The case of $i = 0$ is trivial (define $\tmf_0 = S^0$) and the case of $i = 1$ is almost as easy: a spectrum $\tmf_1$ can be defined to be the $15$-skeleton:
$$ \tmf_1 := \tmf^{[15]} \hookrightarrow \tmf. $$
In light of the short exact sequences
$$ 0 \rightarrow \ul{\tmf}_{i-1} \rightarrow \ul{\tmf}_i \rightarrow \Sigma^{8i} \bou_i \rightarrow 0 $$
one would anticipate that such $\tmf$-Brown-Gitler spectra would be built from $\bo$-Brown-Gitler spectra, so that
$$ \tmf_i \simeq \bo_0 \cup \Sigma^8 \bo_1 \cup \cdots \cup \Sigma^{8i} \bo_i. $$
Davis, Mahowald, and Rezk \cite{MRlevel3}, \cite{MahowaldConnective} nearly construct a spectrum $\tmf_2$; they show that there is a subspectrum
$$ \Sigma^8 \bo_1 \cup \Sigma^{16} \bo_2 \hookrightarrow \br{\tmf} $$
(where $\br{\tmf}$ is the cofiber of the unit $S^0 \rightarrow \tmf$) realizing the subcomodule
$$ \Sigma^8 \bou_1 \oplus \Sigma^{16} \bou_2 \subseteq H_* \br{\tmf}. $$  

We will not pursue the existence of $\tmf$-Brown-Gitler spectra here, but instead will consider the easier problem of constructing the beginning of a potential filtration of $\tmf \wedge \tmf$ by $\tmf$-modules, which we denote $\tmf \wedge \tmf_i$ even though we do not require the existence of the individual spectra $\tmf_i$. We would have
$$\tmf \wedge \tmf_i \simeq \tmf \wedge \bo_0 \cup \Sigma^8 \tmf \wedge \bo_1 \cup \cdots \cup \Sigma^{8i} \tmf \wedge \bo_i,$$ 
such that the map
$$ H_* \tmf \wedge \tmf_i \rightarrow H_* \tmf \wedge \tmf. $$ 
maps $H_* \tmf \wedge \tmf_i$ onto the sub-comodule
$$  (A\mmod A(2))_* \otimes \ul{\tmf}_i \subset H_* \tmf \wedge \tmf. $$
Note that in the case of $i = 0$, we may take
$$ \tmf \wedge \tmf_0 := \tmf \xrightarrow{\eta_L} \tmf \wedge \tmf. $$
Since this is the inclusion of a summand, with cofiber denoted $\br{\tmf}$, it suffices to instead look for a filtration
$$ \tmf \wedge \br{\tmf}_1 \hookrightarrow \tmf \wedge \br{\tmf}_2 \hookrightarrow \cdots \hookrightarrow \tmf \wedge \br{\tmf} $$
of $\tmf$-modules.  Our previous discussion indicates that the cases of $i = 1$ is easy, and now the work of Davis-Mahowald-Rezk fully handles the case of $i = 2$.  In this section we will address the case of $i = 3$, and a ``piece'' of the case of $i = 4$. We state a proposition and two theorems before moving onto their proofs.

\begin{prop}\label{prop:tmf_3} $\quad$
\begin{enumerate}
\item There is a $\tmf$-module
$$ \tmf \wedge \br{\tmf}_3 \simeq \Sigma^8 \tmf \wedge \bo_1 \cup \Sigma^{16} \tmf \wedge \bo_2 \cup \Sigma^{24} \tmf \wedge \bo_3' \hookrightarrow \tmf \wedge \br{\tmf} $$
which realizes the submodule 
$$ (A\mmod A(2))_* \otimes (\Sigma^8 \bou_1 \oplus \Sigma^{16} \bou_2 \oplus \Sigma^{24} \bou_3) \subset H_* \tmf \wedge \br{\tmf} $$
where $\tmf \wedge \bo'_3$ is a $\tmf$-module with 
$$ H_*(\tmf \wedge \bo_3') \cong (A\mmod A(2))_* \otimes \bou_3 $$
(but which may not be equivalent to $\tmf \wedge \bo_3$ as a $\tmf$-module).

\item There is a map of $\tmf$-modules
$$ \Sigma^{63} \tmf \xrightarrow{\alpha} \tmf \wedge \br{\tmf}_3  $$
and an extension
$$
\xymatrix{
\tmf \wedge \br{\tmf}_3 \ar[r] \ar[d] & \tmf \wedge \tmf . \\
\tmf \wedge \br{\tmf}_3 \cup_\alpha \Sigma^{64} \tmf \ar@{.>}_{\iota}[ur]
}
$$

\item There is a modified Adams spectral sequence
$$ \Ext^{*,*}_{A(2)_*}(\Sigma^8 \bou_1 \oplus \Sigma^{16} \bou_2 \oplus \Sigma^{24} \bou_3 \oplus \Sigma^{32}\td{\bou}_4) \Rightarrow \pi_* \tmf \wedge \br{\tmf}_3 \cup_\alpha \Sigma^{64}\tmf, $$
and the map $\iota$ induces a map from this modified ASS to the ASS for $\tmf \wedge \tmf$ such that the induced map on $E_2$-terms is the inclusion of the summand
$$ \Ext^{*,*}_{A(2)_*}(\Sigma^8 \bou_1 \oplus \Sigma^{16} \bou_2 \oplus \Sigma^{24} \bou_3 \oplus \Sigma^{32}\td{\bou}_4) \hookrightarrow \Ext^{*,*}_{A(2)_*}((A\mmod A(2))_*).
$$
\end{enumerate}
\end{prop}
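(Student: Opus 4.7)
The plan is to construct $\tmf\wedge\br{\tmf}_3$ by extending Davis--Mahowald--Rezk's $\tmf\wedge\br{\tmf}_2$ one Brown-Gitler layer at a time, using the $A(2)_*$-comodule short exact sequences $0\to\ul{\tmf}_{j-1}\to\ul{\tmf}_j\to\Sigma^{8j}\bou_j\to 0$ (analogues for the $\tmf$-Brown-Gitler modules of the sequences in Lemma~\ref{lem:HZSES}) to identify which algebraic summand of $H_*(\tmf\wedge\br{\tmf})$ each stage realizes.  The extra $\Sigma^{64}\tmf$ cell of part~(2) is dictated by the decomposition~\eqref{eq:boSES_low} of $\Sigma^{32}\bou_4$, whose summand $\Sigma^{64}\FF_2[1]$ is exactly the $\Sigma^{32}\td{\bou}_4$ piece; the filtration shift $[1]$ will translate into an attaching map $\alpha$ of Adams filtration $1$.

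For part~(1), let $C_2$ denote the cofiber of the Davis-Mahowald-Rezk inclusion $\tmf\wedge\br{\tmf}_2\hookrightarrow\tmf\wedge\br{\tmf}$, whose bottom cohomological layer is $\Sigma^{24}(A//A(2))_*\otimes\bou_3$.  I would construct a $\tmf$-module $\bo_3'$ with $H_*\bo_3'\cong\bou_3$ together with a $\tmf$-module map $\Sigma^{24}\tmf\wedge\bo_3'\to C_2$ inducing this inclusion on cohomology, building $\bo_3'$ cell by cell in the $\tmf$-module category following the cell diagram of $\bou_3$.  At each stage the obstruction to extending over the next cell is a class in an $\Ext$-group visible in the charts of Section~\ref{sec:ass} (Figure~\ref{fig:bo3andbo4}); those obstructions which do not vanish outright can be absorbed by modifying the $\tmf$-module attaching maps of $\bo_3'$ without altering its mod~$2$ homology (this is the reason for the tilde on $\bo_3'$).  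A further obstruction analysis lifts the map from $C_2$ to $\tmf\wedge\br{\tmf}$ itself: the obstructions in the higher layers $\Sigma^{8j}\bou_j$ for $j\geq 4$ lie in sufficiently high Adams filtration to vanish within the dimension range covered by a $6$-cell object like $\bo_3'$.  The resulting iterated cofiber defines $\tmf\wedge\br{\tmf}_3\hookrightarrow\tmf\wedge\br{\tmf}$.

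For part~(2), I would read off $\alpha$ from the exact-sequence decomposition: the generator of the $\Sigma^{64}\FF_2[1]$ summand of $\Ext_{A(2)_*}(\Sigma^{32}\bou_4)$ sits at internal degree $64$ and Adams filtration $1$, and corresponds to an attaching-map class $\alpha\in\pi_{63}(\tmf\wedge\br{\tmf}_3)$ of Adams filtration $1$.  Taking $\alpha$ to be a $\tmf$-module representative of this class, the composite $\Sigma^{63}\tmf\xrightarrow{\alpha}\tmf\wedge\br{\tmf}_3\hookrightarrow\tmf\wedge\br{\tmf}$ is nullhomotopic, because this class is precisely the one that becomes exposed only after adjoining the extra cell; the nullhomotopy provides the extension $\iota$.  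For part~(3), the cellular filtration $F_\bullet$ of $\tmf\wedge\br{\tmf}_3\cup_\alpha\Sigma^{64}\tmf$ with associated graded $\Sigma^8\tmf\wedge\bo_1$, $\Sigma^{16}\tmf\wedge\bo_2$, $\Sigma^{24}\tmf\wedge\bo_3'$, and $\Sigma^{64}\tmf$ (the last attached in Adams filtration $1$) determines the modified Adams spectral sequence as a cellular ASS whose $E_2$-page is precisely the claimed direct sum of $\Ext$-groups, the filtration-$1$ attaching of $\alpha$ producing the Ext-shift $[1]$ built into $\td{\bou}_4$.  Naturality of the cellular ASS along $\iota$ then identifies the map on $E_2$-terms with the inclusion of the summand in $\Ext(\tmf\wedge\tmf)$.

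The main obstacle will be part~(1): identifying $\bo_3'$ and checking coherently that the attaching maps can be chosen so that the whole tower assembles inside $\tmf\wedge\br{\tmf}$.  The six-cell structure of $\bou_3$ and its nontrivial interaction with the $\bou_1$ and $\bou_2$ layers already present in $\tmf\wedge\br{\tmf}_2$ requires careful bookkeeping of obstruction classes on the charts, and it is genuinely possible that $\bo_3'\not\simeq\bo_3$ as $\tmf$-modules (though the homologies agree), an analogue of the Davis--Mahowald--Rezk phenomenon that prevents a clean Brown-Gitler splitting of $\tmf\wedge\tmf$.
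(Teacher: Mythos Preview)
Your overall strategy differs substantially from the paper's. You propose a \emph{bottom-up} construction: extend $\tmf\wedge\br{\tmf}_2$ by building $\bo_3'$ cell by cell and mapping it into the cofiber $C_2$, then lift. The paper instead works \emph{top-down}: it takes the $46$-skeleton $\tmf\wedge\br{\tmf}^{[46]}$ (with respect to a minimal $\tmf$-cell structure) and constructs projection maps $\gamma_5,\gamma_4$ onto the truncated pieces $\Sigma^{40}\tmf\wedge\bo_5^{[6]}$ and $\Sigma^{32}\tmf\wedge\bo_4^{[14]}$, defining $\tmf\wedge\br{\tmf}_3$ as the iterated fiber. The obstructions to these projections are located via Bruner's software, and the decisive tool for killing them is Bailey's splitting $\bo\wedge\tmf\simeq\bigvee_i\Sigma^{8i}\bo\wedge\bo_i$: one checks that the potential obstructions survive the map $\Ext_{A(2)_*}\to\Ext_{A(1)_*}$, and then observes that after applying $\bo\wedge_{\tmf}-$ the required maps manifestly exist as projections onto wedge summands. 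Your obstruction argument never invokes this, and the assertion that obstructions from higher $\bou_j$ ``lie in sufficiently high Adams filtration to vanish'' is not justified---in fact the relevant obstructions lie in $\Ext^2$ and require the Bailey trick (and, for $\gamma_4$, a further zig-zag through an intermediate quotient $\bo_4^{[8]}$) to eliminate. The exotic form $\bo_3'$ arises in the paper not from cell-by-cell flexibility but from Toda's Realization Theorem: there is a single potential obstruction in $\Ext^{2,1}_{A(2)_*}(\bou_3,\bou_3)$ to the projection $\gamma_3:\tmf\wedge\br{\tmf}_3\to\Sigma^{24}\tmf\wedge\bo_3$, and the two possible values of this obstruction correspond to two $\tmf$-module forms with homology $(A//A(2))_*\otimes\bou_3$; whichever is unobstructed is declared $\bo_3'$.

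Your part~(2) contains a genuine error. The attaching map $\alpha$ is \emph{not} of Adams filtration~$1$: it is detected by a class $x_{63}\in\Ext^{4,67}_{A(2)_*}(\Sigma^{24}\bou_3)$, in Adams filtration~$4$, sitting in the $\bou_3$ summand of the ASS for $\tmf\wedge\br{\tmf}_3$ (visible in Figure~\ref{fig:bo3andbo4}). You have conflated the filtration-$1$ position at which the new $\Sigma^{64}$ cell is \emph{placed} in the modified resolution with the Adams filtration of the attaching map itself; the generator of the $\Sigma^{64}\FF_2[1]$ summand lives in $\Ext_{A(2)_*}(\Sigma^{32}\bou_4)$, not in $\pi_*(\tmf\wedge\br{\tmf}_3)$, and cannot be ``read off'' as an element of the latter. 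The extension $\iota$ exists not for the reason you give but because, in the ASS for $\tmf\wedge\tmf$, the class $x_{63}$ is the target of the differential $d_3([\td{f_9^2}/2])=x_{63}$ established in Section~\ref{sec:diffext}, so the image of $\alpha$ in $\pi_{63}(\tmf\wedge\tmf)$ vanishes. The modified ASS of part~(3) is then built by hand: the first map in the resolution is the composite $\tmf\wedge\br{\tmf}_3\cup_\alpha\Sigma^{64}\tmf\to H\wedge(\tmf\wedge\br{\tmf}_3\cup_\alpha\Sigma^{64}\tmf)\xrightarrow{s} H\wedge\tmf\wedge\br{\tmf}_3$ with $s$ a retraction, deliberately pushing the $\Sigma^{64}$ cell into filtration~$1$. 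The compatibility with the ASS for $\tmf\wedge\tmf$ then holds precisely because the class $[\td{f_9^2}/2]$ killing $x_{63}$ there also has Adams filtration~$1$.
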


In \cite{MRlevel3},\cite{MahowaldConnective}, Davis, Mahowald, and Rezk construct a map
$$ \Sigma^{32} \tmf \xrightarrow{\beta} \tmf \wedge \br{\tmf}_2$$
such that
cofiber $\tmf \wedge \br{\tmf}_2 \cup_\beta \Sigma^{33} \tmf$ has an ASS with $E_2$-term
$$ E^{*,*}_2 \cong \Ext^{*,*}_{A(2)_*}(\Sigma^8 \bou_1 \oplus \td{\bou}_2) $$
and there is an equivalence
$$ v_2^{-1}( \tmf \wedge \br{\tmf}_2 \cup_\beta \Sigma^{33} \tmf) \simeq \TMF_0(3). $$
What they do not address is how this connective cover is related to $\tmf \wedge \tmf$ and the map $\Psi_3$ to $\TMF_0(3)$.

\begin{thm}\label{thm:tdtmf03}$\quad$
\begin{enumerate}
\item There is a choice of attaching map $\beta$ such that the $\tmf$-module
$$
\td{\tmf}_0(3) := \Sigma^8 \tmf \wedge \br{\tmf}_2 \cup_\beta \Sigma^{33} \tmf  \\ 
$$
fits into a diagram
\begin{equation}\label{eq:tdtmf03}
\xymatrix{
\tmf \wedge \br{\tmf}_2 \ar@{^{(}->}[r] \ar[d] &
\tmf \wedge \tmf \ar[r]^{\Psi_3} &
\TMF_0(3). \\
\td{\tmf}_0(3) \ar@{.>}[urr] 
}
\end{equation}

\item The $E_2$-term of the ASS for $\td{\tmf}_0(3)$ is given by 
$$ E_2^{*,*} = \Ext^{*,*}_{A(2)_*}(\Sigma^{8}\bou_1 \oplus \Sigma^{16}\td{\bou}_2). $$

\item The map $\tmf \wedge \br{\tmf}_2 \rightarrow \td{\tmf}_0(3)$ of Diagram~(\ref{eq:tdtmf03}) induces the projection
$$ \Ext^{*,*}_{A(2)_*}(\Sigma^{8}\bou_1 \oplus \Sigma^{16}\bou_2) \rightarrow \Ext^{*,*}_{A(2)_*}(\Sigma^{8}\bou_1 \oplus \Sigma^{16}\td{\bou}_2)$$
on Adams $E_2$-terms.

\item The map $\td{\tmf}_0(3) \rightarrow \TMF_0(3)$ of Diagram~(\ref{eq:tdtmf03}) makes $\td{\tmf}_0(3)$ a connective cover of $\TMF_0(3)$.
\end{enumerate}
\end{thm}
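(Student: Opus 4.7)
The plan is to construct $\td{\tmf}_0(3)$ as the mapping cone of an attaching map $\beta\colon\Sigma^{32}\tmf\to\tmf\wedge\br{\tmf}_2$ of Adams filtration $1$, whose detection class corresponds to the $\Sigma^{32}\FF_2[1]$ summand of $\Ext(\Sigma^{16}\bou_2)$ appearing in the decomposition~(\ref{eq:boSES_low}). This summand has its bottom class in stem $32$, filtration $1$, matching the detection of a filtration-$1$ element of $\pi_{32}(\tmf\wedge\br{\tmf}_2)$. The first step is to verify that this Adams class is a permanent cycle; this follows either from direct chart analysis using the data in Section~\ref{sec:ass} (ruling out all possible nonzero differentials in the relevant range), or from invoking the parallel construction of Davis, Mahowald, and Rezk in \cite{MRlevel3, MahowaldConnective}.

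The second step is to produce the map $\td{\tmf}_0(3)\to\TMF_0(3)$ of diagram~(\ref{eq:tdtmf03}) by extending $\Psi_3$ restricted to $\tmf\wedge\br{\tmf}_2$ over the cofiber. It suffices to verify $\Psi_3(\beta)=0\in\pi_{32}\TMF_0(3)$: by Section~\ref{sec:psicomp}, the rational generators of the $\tmf_*$-lattice $(I_3)_\QQ$ come exclusively from $\Ext(\Sigma^8\bou_1\oplus\Sigma^{16}\td{\bou}_2)$, so any class detected by the complementary $\Ext(\Sigma^{32}\FF_2[1])$ summand must lie in the kernel of $\Psi_3$. The extension then exists; any indeterminacy can be absorbed into the choice of $\beta$ within its indeterminacy coset.

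For parts (2) and (3), apply the cofiber sequence $\Sigma^{32}\tmf\xrightarrow{\beta}\tmf\wedge\br{\tmf}_2\to\td{\tmf}_0(3)$ on mod $2$ homology. Since $\beta$ has positive Adams filtration, it acts as zero on $H_*$, so $H_*\td{\tmf}_0(3)\cong H_*(\tmf\wedge\br{\tmf}_2)\oplus\Sigma^{33}H_*\tmf$ as $A_*$-comodules. The resulting tri-graded filtered $E_1$-page carries a $d_1$-differential induced by $\beta$ that cancels the summand $\Ext(\Sigma^{33}\FF_2)$ against the $\Ext(\Sigma^{32}\FF_2[1])$ piece of $\Ext(\Sigma^{16}\bou_2)$, leaving exactly $\Ext(\Sigma^8\bou_1\oplus\Sigma^{16}\td{\bou}_2)$. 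This is the modified ``$E_2$'' of part~(2) (in the sense of Proposition~\ref{prop:tmf_3}(3)), and the induced map on $E_2$ sends $\Ext(\Sigma^8\bou_1\oplus\Sigma^{16}\bou_2)$ to the summand $\Ext(\Sigma^8\bou_1\oplus\Sigma^{16}\td{\bou}_2)$ by the natural projection, establishing~(3).

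Part~(4) then follows by combining Section~\ref{sec:psicomp} with (2) and (3): $\Psi_3$ sends the $\tmf_*$-submodule of $\tmf_*\tmf$ detected by $\Ext(\Sigma^8\bou_1\oplus\Sigma^{16}\td{\bou}_2)$ isomorphically onto the $\tmf_*$-lattice $I_3\subset\pi_*\TMF_0(3)$ (rationally on leading terms, with torsion behavior then forced), and parts (2)--(3) identify this submodule with $\pi_*\td{\tmf}_0(3)$, so the map of (1) exhibits $\td{\tmf}_0(3)$ as a connective cover. The main obstacle is the construction of $\beta$ itself: one must verify both that the relevant Adams class is a permanent cycle and that a representative can be chosen so that $\Psi_3(\beta)=0$. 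The former is a delicate piece of chart analysis; the latter reduces to the leading-term computations of Section~\ref{sec:psicomp}, but matching the cofiber indeterminacy to the kernel of $\Psi_3$ in stem $32$ is where the subtlety concentrates.
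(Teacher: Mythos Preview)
Your overall strategy matches the paper's: choose $\beta$ detected by the class $[f_9]$ generating $\Ext_{A(2)_*}(\Sigma^{32}\FF_2[1])=\Ext_{A(2)_*}(\Sigma^{16}\td{\td{\bou}}_2)$, arrange $\Psi_3(\beta)=0$ so the diagram~(\ref{eq:tdtmf03}) commutes, and then read off parts (2)--(4) from the cofiber sequence and Section~\ref{sec:psicomp}. The paper's proof is in fact very short and follows exactly this outline.

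However, your justification of $\Psi_3(\beta)=0$ contains a genuine gap. You write that ``the rational generators of $(I_3)_\QQ$ come exclusively from $\Ext(\Sigma^8\bou_1\oplus\Sigma^{16}\td{\bou}_2)$, so any class detected by the complementary $\Ext(\Sigma^{32}\FF_2[1])$ summand must lie in the kernel of $\Psi_3$.'' This inference is incorrect as stated: for a generic lift (e.g.\ the $2$-variable modular form $f_9$ itself), $\Psi_3(f_9)\in M_{16}(\Gamma_0(3))$ is \emph{nonzero}. What Section~\ref{sec:psicomp} actually shows is that $\Psi_3(f_9)$ lies in the $\QQ$-span of $\Psi_3$ applied to the degree-$32$ generators coming from $\Sigma^8\bou_1\oplus\Sigma^{16}\td{\bou}_2$; it does not show that any particular lift of $[f_9]$ maps to zero. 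To get $\Psi_3(\beta)=0$ one must exhibit a correction term of Adams filtration $\ge 2$ whose $\Psi_3$-image exactly cancels $\Psi_3(f_9)$ \emph{integrally}, not merely rationally, and one must check that this correction lies in $\pi_{32}(\tmf\wedge\br{\tmf}_2)$ rather than requiring terms from $\bou_j$ with $j\ge 3$.

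The paper resolves this by direct computation: it produces the explicit $2$-variable modular form
\[
\td{\td{f_9}} = f_9 - \tfrac{212}{315}\, c_4 f_4 - \tfrac{34}{441}\, c_4 f_5 + \tfrac{2501}{11025}\, f_1^2 c_4^2 - 851\, f_1 \Delta,
\]
obtained by an integral row-reduction (cf.\ Example~\ref{ex:echelon1}), and verifies by computation that $\Psi_3(\td{\td{f_9}})=0$ and $[\td{\td{f_9}}]=[f_9]$. Each correction term is detected in $\Ext(\Sigma^8\bou_1\oplus\Sigma^{16}\bou_2)$ with Adams filtration strictly greater than $1$, so the class lifts to $\pi_{32}(\tmf\wedge\br{\tmf}_2)$ and is still detected by $[f_9]$. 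Your proposal correctly flags this as ``where the subtlety concentrates,'' but does not supply the argument; the paper's contribution at this step is precisely the explicit formula.
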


We also will provide the following analogous connective cover of $\TMF_0(5)$.

\begin{thm}\label{thm:tdtmf05} $\quad$
\begin{enumerate}
\item There is a $\tmf$-module
$$
\td{\tmf}_0(5) := \Sigma^{32} \tmf \cup \Sigma^{24} \tmf \wedge \bo'_3 \cup \Sigma^{64}\tmf 
$$
which fits into a diagram
\begin{equation}\label{eq:tdtmf05}
\xymatrix{
\td{\tmf}_0(5) \ar[dr] \ar@{.>}[r] & \tmf \wedge \br{\tmf}_3 \cup_{\alpha} \Sigma^{64} \tmf \ar[d] \ar[r] &
\tmf \wedge \tmf \ar[r]_{\Psi_5} &
\TMF_0(5).
\\
& \Sigma^{24} \tmf \wedge \bo_3' \cup_{\td{\alpha}} \Sigma^{64}\tmf
} 
\end{equation}

\item There is a modified ASS
$$ \Ext^{*,*}_{A(2)_*}(\Sigma^{16} \td{\td{\bou}}_2 \oplus \Sigma^{24}\bou_3 \oplus \Sigma^{32}\td{\bou}_4) \Rightarrow \pi_*(\td{\tmf}_0(5)).$$

\item The map $\td{\tmf}_0(5) \rightarrow \tmf \wedge \br{\tmf}_3 \cup_\alpha \Sigma^{64}\tmf$ of Diagram~(\ref{eq:tdtmf05}) induces a map of modified ASS's, which on $E_2$-terms is given by the inclusion
$$ \Ext^{*,*}_{A(2)_*}(\Sigma^{16} \td{\td{\bou}}_2 \oplus \Sigma^{24} \bou_3 \oplus \Sigma^{32}\td{\bou}_4) \hookrightarrow \Ext^{*,*}_{A(2)_*}(\Sigma^8 \bou_1 \oplus \Sigma^{16} \bou_2 \oplus \Sigma^{24} \bou_3 \oplus \Sigma^{32}\td{\bou}_4). $$

\item The composite $\td{\tmf}_0(5) \rightarrow \TMF_0(5)$ of Diagram~(\ref{eq:tdtmf05}) makes $\td{\tmf}_0(5)$ a connective cover of $\TMF_0(5)$.
\end{enumerate}
\end{thm}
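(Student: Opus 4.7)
The plan is to adapt the proof of Theorem \ref{thm:tdtmf03} to the $\TMF_0(5)$ setting, building $\td{\tmf}_0(5)$ from three $\tmf$-module cells that lift the summand $\Sigma^{16}\td{\td{\bou}}_2\oplus\Sigma^{24}\bou_3\oplus\Sigma^{32}\td{\bou}_4$ of the modified Adams $E_2$-term from Proposition \ref{prop:tmf_3}(3). The middle piece $\Sigma^{24}\tmf\wedge\bo'_3$ is already the cofiber of the inclusion $\tmf\wedge\br{\tmf}_2\hookrightarrow \tmf\wedge\br{\tmf}_3$ produced in Proposition \ref{prop:tmf_3}(1). The top cell $\Sigma^{64}\tmf$ is the cone of the attaching map $\alpha$ of Proposition \ref{prop:tmf_3}(2), corresponding to the isomorphism $\Sigma^{32}\td{\bou}_4\cong \Sigma^{64}\FF_2[1]$ arising in the $\bou_4$ decomposition from Section \ref{sec:boSES}. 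The bottom cell $\Sigma^{32}\tmf$ comes from the short exact sequence $0\to \Sigma^{16}\td{\bou}_2 \to \Sigma^{16}\bou_2 \to \Sigma^{16}\td{\td{\bou}}_2 \to 0$ together with the identification $\Sigma^{16}\td{\td{\bou}}_2\cong \Sigma^{32}\FF_2[1]$: the $\xib_1^4\xib_2^2\xib_3$-class is attached into $\tmf\wedge\bo_1\subset \tmf\wedge\br{\tmf}_2$ by an Adams filtration $1$ class, yielding a $\tmf$-module map $\Sigma^{32}\tmf \to \tmf\wedge\br{\tmf}_2 \subset \tmf\wedge\br{\tmf}_3$.

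Next I would assemble these three cells. Composing $\alpha$ with the cofiber projection $\tmf\wedge\br{\tmf}_3 \to \Sigma^{24}\tmf\wedge\bo'_3$ defines $\td{\alpha}\colon \Sigma^{63}\tmf \to \Sigma^{24}\tmf\wedge\bo'_3$ and produces the intermediate quotient $\Sigma^{24}\tmf\wedge\bo'_3\cup_{\td{\alpha}}\Sigma^{64}\tmf$ appearing in the diagram of the statement. Further gluing on the $\Sigma^{32}\tmf$ cell along attaching data dictated by the modified Adams spectral sequence of Proposition \ref{prop:tmf_3}(3) produces $\td{\tmf}_0(5)$ together with the dotted map to $\tmf\wedge\br{\tmf}_3\cup_\alpha \Sigma^{64}\tmf$. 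By construction, the induced map on cohomology is the inclusion of the $A(2)_*$-comodule summand $(A//A(2))_*\otimes(\Sigma^{16}\td{\td{\bou}}_2\oplus\Sigma^{24}\bou_3\oplus\Sigma^{32}\td{\bou}_4)$, so the modified Adams spectral sequence restricts to one with the claimed $E_2$-term, settling parts (1), (2), and (3).

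For (4), the composite $\td{\tmf}_0(5)\to \TMF_0(5)$ sends $\pi_*\td{\tmf}_0(5)/\mathit{tors}$ isomorphically onto the $\tmf_*$-lattice $I_5\subset \pi_*\TMF_0(5)$ by the leading-term calculation \eqref{eq:Psi_5} of Section \ref{sec:psicomp}, with the torsion classes pinned down by the hidden extensions of Figure \ref{fig:tmf5}. Because $I_5$ is a $\tmf_*$-lattice, $\Delta^{-1}I_5 = \pi_*\TMF_0(5)$, so inverting $v_2$ yields an equivalence $v_2^{-1}\td{\tmf}_0(5)\simeq \TMF_0(5)$, confirming that $\td{\tmf}_0(5)\to \TMF_0(5)$ is a connective cover. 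The principal obstacle will be the attaching-map step: arranging that the algebraic splittings of Sections \ref{sec:boSES} and \ref{sec:rationalgens} lift to compatible $\tmf$-module splittings inside $\tmf\wedge\br{\tmf}_3$, and in particular that the $\Sigma^{32}\tmf$ cell detects the $2$-variable modular form $\td{f_9}$ from Section \ref{sec:psicomp}, whose $\Psi_5$-image leads with $\delta^4$. As with the analogous construction of $\beta$ in \cite{MRlevel3,MahowaldConnective} for $\td{\tmf}_0(3)$, this is a delicate obstruction-theoretic computation that is checked cell-by-cell against the Adams charts of Section \ref{sec:diffext}.
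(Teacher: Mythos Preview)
Your plan has the right pieces but assembles them in the wrong order, and this matters for producing the dotted map in part~(1). You build $\Sigma^{24}\tmf\wedge\bo'_3\cup_{\td\alpha}\Sigma^{64}\tmf$ as a \emph{quotient} of $\tmf\wedge\br{\tmf}_3\cup_\alpha\Sigma^{64}\tmf$, and then propose to ``glue on'' a $\Sigma^{32}\tmf$ cell. But the map you produce, $\Sigma^{32}\tmf\to\tmf\wedge\br{\tmf}_2$, is exactly the attaching map $\beta$ for the top cell of $\td{\tmf}_0(3)$; it does not give you any attaching map into $\Sigma^{24}\tmf\wedge\bo'_3\cup_{\td\alpha}\Sigma^{64}\tmf$, nor does it by itself give a map from the resulting three-cell complex into $\tmf\wedge\br{\tmf}_3\cup_\alpha\Sigma^{64}\tmf$. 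Saying the attaching data is ``dictated by the modified ASS'' hides the actual obstruction-theoretic step you need.

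What the paper does instead is exploit the relationship with $\td{\tmf}_0(3)$. One first checks, by inspecting low-dimensional $\Ext$ groups, that the attaching map $\upsilon:\Sigma^{23}\tmf\wedge\bo'_3\to\tmf\wedge\br{\tmf}_2$ of Proposition~\ref{prop:tmf_3} necessarily factors as $\beta\circ\upsilon'$ through the unique nontrivial class $\upsilon':\Sigma^{23}\tmf\wedge\bo'_3\to\Sigma^{32}\tmf$. Verdier's axiom applied to this factorization then identifies $\Sigma^{32}\tmf\cup_{\upsilon'}\Sigma^{24}\tmf\wedge\bo'_3$ as the \emph{fiber} of $\tmf\wedge\br{\tmf}_3\to\td{\tmf}_0(3)$, so the map into $\tmf\wedge\br{\tmf}_3$ comes for free. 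A short ASS check shows the composite $\Sigma^{63}\tmf\xrightarrow{\alpha}\tmf\wedge\br{\tmf}_3\to\td{\tmf}_0(3)$ is null, so $\alpha$ lifts to a map $\alpha'$ into the fiber; defining $\td{\tmf}_0(5)$ as the cofiber of $\alpha'$ and applying Verdier's axiom once more yields the fiber sequence $\td{\tmf}_0(5)\to\tmf\wedge\br{\tmf}_3\cup_\alpha\Sigma^{64}\tmf\to\td{\tmf}_0(3)$, from which parts~(1)--(3) follow directly. Your argument for~(4) via the $\Psi_5$ lattice computation is fine once the construction is in place.
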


The remainder of this section will be devoted to proving Proposition~\ref{prop:tmf_3}, Theorem~\ref{thm:tdtmf03}, and Theorem~\ref{thm:tdtmf05}. The proofs of all of these will be accomplished by taking fibers and cofibers of a series of maps, using brute force calculation of the ASS.  These brute force calculations boil down to having low degree computations of the groups $\Ext_{A(2)_*}(\bou_i, \bou_j)$ for various small values of $i$ and $j$.  The computations were performed using R.~Bruner's $\Ext$-software \cite{Bruner}.  The software requires module definition input that completely describes the $A(2)$-module structure of the modules $H^*\bo_i$.  The first author was fortunate to have an undergraduate research assistant, Brandon Tran, generate module files using Sage.

\begin{proof}[Proof of Proposition~\ref{prop:tmf_3}]
Endow $\tmf \wedge \br{\tmf}$ with a minimal $\tmf$-cell structure corresponding to an $\FF_2$-basis of $H_*\br{\tmf}$.   Let $\tmf\wedge \br{\tmf}^{[46]}$ denote the $46$-skeleton of this $\tmf$-cell module, so we have
\begin{equation}\label{eq:Hbrtmf46}
H_*(\tmf \wedge \br{\tmf}^{[46]}) \cong (A\mmod A(2))_* \otimes (  \Sigma^8 \bou_1 \oplus \Sigma^{16} \bou_2 \oplus \Sigma^{24} \bou_3 \oplus \Sigma^{32} \bou^{[14]}_4 \oplus \Sigma^{40}\bou^{[6]}_5).
\end{equation}
We first wish to form a $\tmf$-module $X_1$ with 
\begin{equation}\label{eq:HX1}
 H_* X_1 \cong (A\mmod A(2))_* \otimes (  \Sigma^8 \bou_1 \oplus \Sigma^{16} \bou_2 \oplus \Sigma^{24} \bou_3 \oplus \Sigma^{32} \bou^{[14]}_4 )
\end{equation}
by taking the fiber of a suitable map of $\tmf$-modules
$$ \gamma_5: \tmf \wedge \br{\tmf}^{[46]} \rightarrow \tmf \wedge \Sigma^{40}\bo_5^{[6]}. $$
We use the ASS
$$ \Ext^{s,t}_{A(2)_*}(H_*\br{\tmf}^{[46]}, \Sigma^{40}\bou_5^{{6}}) \Rightarrow [\Sigma^{t-s}\tmf \wedge \br{\tmf}^{[46]},\Sigma^{40}\tmf \wedge \bo_5^{[6]}]_\tmf. $$
The decomposition (\ref{eq:Hbrtmf46}) induces a corresponding decomposition of $\Ext$ groups.  The only non-zero contributions near $t-s = 0$ come from $\Sigma^{40}\bou_5^{[6]}$, $\Sigma^{32} \bou_4^{[14]}$, and $\Sigma^{24} \bou_3$; the corresponding $\Ext$ charts are depicted below.
\begin{center}
	\includegraphics[width = \textwidth]{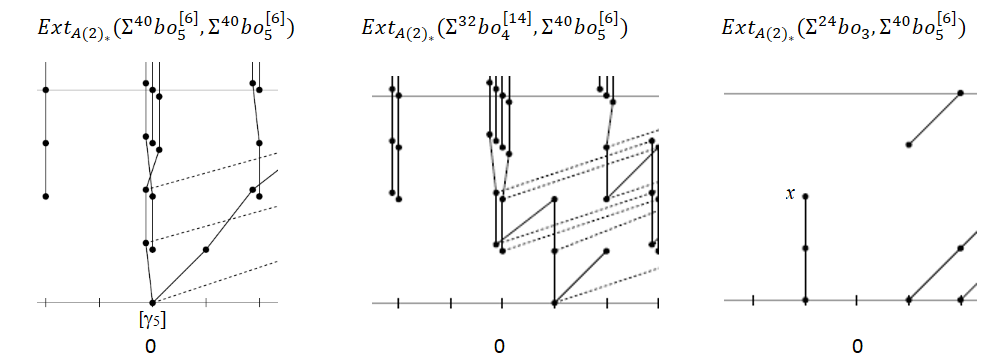}
\end{center}
The generator $[\gamma_5] \in \Ext^{0,0}_{A(2)_*}(\Sigma^{40}\bou^{[6]}_5, \Sigma^{40}\bou_5^{[6]})$ would detect the desired map $\gamma_5$.  We just need to show that this generator is a permanent cycle in the ASS.  As the charts indicate, the only potential target is the non-trivial class 
$$ x \in \Ext^{2,-1+2}_{A(2)_*}(\Sigma^{24}\bou_3, \Sigma^{40}\bou^{[6]}_5). $$
We shall call $x$ the \emph{potential obstruction} for $\gamma_5$; if $d_2(\gamma_5) = x$ then we will say that $\gamma_5$ is \emph{obstructed} by $x$.  The key observation is that in the vicinity of $t-s = 0$, the groups $\Ext_{A(1)_*}(\Sigma^{24}\bou_3, \Sigma^{40}\bou_5^{[6]})$ are depicted below.
\begin{center}
	\includegraphics[width=1.75in]{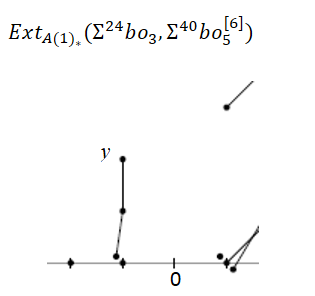}
\end{center}
Under the map of ASS's
$$
\xymatrix{
\Ext^{s,t}_{A(2)_*}(H_*\br{\tmf}^{[46]}, \Sigma^{40}\bou_5^{[6]}) \ar@{=>}[r] \ar[d] & [\Sigma^{t-s}\tmf \wedge \br{\tmf}^{[46]},\Sigma^{40}\tmf \wedge \bo_5^{[6]}]_\tmf \ar[d]^{\bo \wedge_{\tmf} -}
\\
\Ext^{s,t}_{A(1)_*}(H_*\br{\tmf}^{[46]}, \Sigma^{40}\bou_5^{[6]}) \ar@{=>}[r] & [\Sigma^{t-s}\bo \wedge \br{\tmf}^{[46]},\Sigma^{40}\bo \wedge \bo_5^{[6]}]_{\bo} 
}
$$
the potential obstruction $x$ maps to the nonzero class
$$ y \in \Ext^{2,-1+2}_{A(1)_*}(\Sigma^{24}\bou_3, \Sigma^{40}\bou^{[6]}_5). $$
Therefore if $\gamma_5$ is obstructed by $x$, then $y$ is the obstruction to the existence of a corresponding map of $\bo$-modules
$$ \bo \wedge_\tmf \gamma_5: \bo \wedge \br{\tmf}^{[46]} \rightarrow \Sigma^{40}\bo \wedge \bo_5^{[6]}. $$
However, Bailey showed in \cite{Bailey} that there is a splitting of $\bo$-modules
$$ \bo \wedge \tmf \simeq \bigvee_i \Sigma^{8i} \bo \wedge \bo_i. $$
In particular, the map $\bo \wedge_\tmf \gamma_5$ is realized by restricting the splitting map
$$ \bo \wedge \br{\tmf} \rightarrow \Sigma^{40} \bo \wedge \bo_5 $$
to $46$-skeleta (in the sense of $\bo$-cell spectra).  Therefore, $\bo \wedge_\tmf \gamma_5$ is unobstructed, and we deduce that $\gamma_5$ cannot be obstructed.

The $\tmf$-module $\tmf \wedge \br{\tmf}_3$ may then be defined to to be the fiber of a map
$$ \gamma_4: X_1 \rightarrow \Sigma^{32} \bo_4^{[14]} $$
which on homology is the projection on the summand
$$ H_* X_1 \rightarrow \Sigma^{32} (A\mmod A(2))_* \otimes \bou^{[14]}_4 .$$
Again, we use the ASS
$$ \Ext^{s,t}_{A(2)_*}(\br{\ul{\tmf}}_3 \oplus \Sigma^{32} \bou^{[14]}_4, \Sigma^{32}\bou_4^{[14]}) \Rightarrow [\Sigma^{t-s}X_1 ,\Sigma^{32}\bo_4^{[14]}]_\tmf. $$
The $E_2$-term is computed using the decomposition (\ref{eq:HX1}).  The only non-zero contributions come from the following summands:
\begin{center}
	\includegraphics[width=3in]{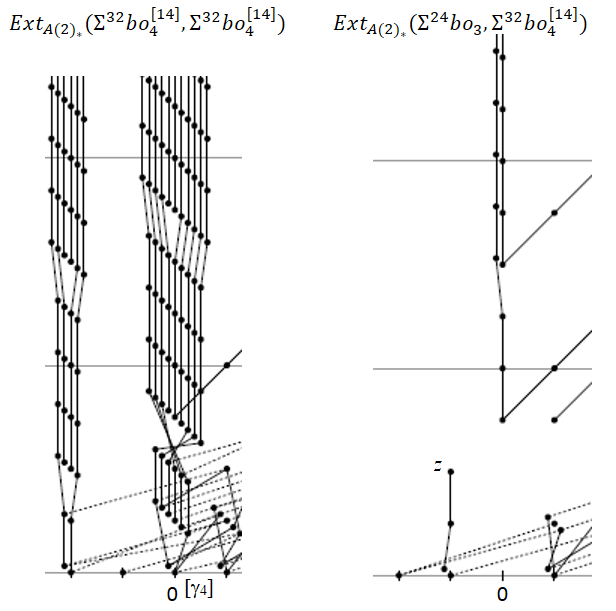}
\end{center}
We discover that the only potential obstruction to the existence of $\gamma_4$ is the non-trivial class
$$ z \in \Ext^{2,-1+2}_{A(2)_*}(\Sigma^{24}\bou_3, \Sigma^{32}\bou^{[14]}_4). $$
Unfortunately we cannot simply imitate the argument in the previous paragraph, because $z$ is in the kernel of the homomorphism
$$  \Ext^{2,-1+2}_{A(2)_*}(\Sigma^{24}\bou_3, \Sigma^{32}\bou^{[14]}_4) 
\rightarrow \Ext^{2,-1+2}_{A(1)_*}(\Sigma^{24}\bou_3, \Sigma^{32}\bou^{[14]}_4).
$$
Nevertheless, a more roundabout approach will eliminate this potential obstruction.  We first observe that there is a map of $\tmf$-modules
$$ \gamma'_4: X_1 \rightarrow \Sigma^{32} (\bo_4)_{[9]}^{[14]} $$
(with $(\bo_4)_{[9]}^{[14]}$ denoting the quotient $\bo_4^{[14]}/\bo_4^{[8]}$),
which on homology is the canonical composite
$$ H_* X_1 \rightarrow \Sigma^{32} (A\mmod A(2))_* \otimes \bou^{[14]}_4 \rightarrow \Sigma^{32}(A\mmod A(2))_* \otimes (\bou_4)_{[9]}^{[14]}.$$
The existence of $\gamma_4'$ is verified by the ASS
$$ \Ext^{s,t}_{A(2)_*}(\br{\ul{\tmf}}_3 \oplus \Sigma^{32}\bou_4^{[14]}, \Sigma^{32}(\bou_4)_{[9]}^{[14]}) \Rightarrow [\Sigma^{t-s}X_1 ,\Sigma^{32}(\bo_4)_{[9]}^{[14]}]_\tmf. $$
The $E_2$-term is computed using the decomposition (\ref{eq:HX1}).  The only non-zero contributions in the vicinity of $t-s = 0$ come from the following summands:
\begin{center}
	\includegraphics[width=3in]{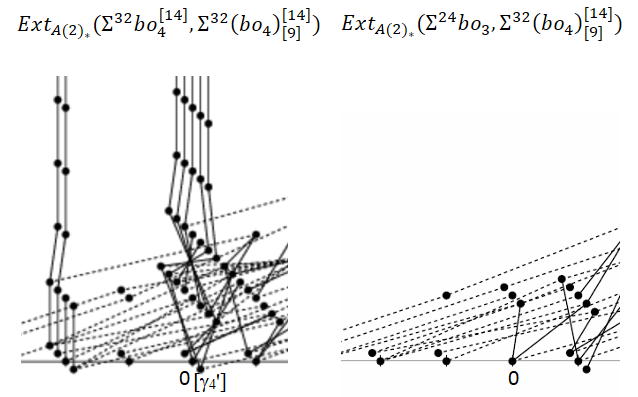}
\end{center}
We see that there are no potential obstructions for the existence of $\gamma'_4$.  Let $X_2$ denote the fiber of $\gamma'_4$, so that we have
\begin{equation}\label{eq:HX2}
 H_* X_2 \cong (A\mmod A(2))_* \otimes (  \Sigma^8 \bou_1 \oplus \Sigma^{16} \bou_2 \oplus \Sigma^{24} \bou_3 \oplus \Sigma^{32} \bou^{[8]}_4 ).
\end{equation}
We instead contemplate the potential obstructions to the existence of a map of $\tmf$-modules
$$ \gamma''_4: X_2 \rightarrow \Sigma^{32} \tmf \wedge \bo_4^{[8]} $$
which on homology induces the projection
$$ H_*X_2 \rightarrow \Sigma^{32} (A\mmod A(2))_* \otimes \bou_4^{[8]}. $$
The $E_2$-term of the ASS
$$ \Ext^{s,t}_{A(2)_*}(\br{\ul{\tmf}}_3 \oplus \Sigma^{32}\bou_4^{[8]}, \Sigma^{32}\bou_4^{[8]}) \Rightarrow [\Sigma^{t-s}X_2 ,\Sigma^{32}\bo_4^{[8]}]_\tmf $$
is computed using the decomposition (\ref{eq:HX2}), and in particular the contribution coming from the summand $\Sigma^{24} \bou_3 \subset \br{\ul{\tmf}}_3$ gives the following classes in the vicinity of $t-s = 0$:  
\begin{center}
	\includegraphics[width=1.5in]{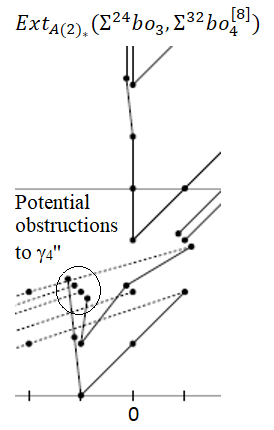}
\end{center}
We see that there are many potential obstructions to the existence of $\gamma''_4$ in 
$$ \Ext_{A(2)_*}^{2,-1+2}(\Sigma^{24}\bou_3, \Sigma^{32}\bou_4^{[8]}). $$
The potential obstructions for the related map
$$ \bo \wedge_\tmf \gamma''_4: X_2 \rightarrow \Sigma^{32} \tmf \wedge \bo_4^{[8]} $$
of $\bo$-modules in the ASS
$$ \Ext^{s,t}_{A(1)_*}(\br{\ul{\tmf}}_3 \oplus \Sigma^{32}\bou_4^{[8]}, \Sigma^{32}\bou_4^{[8]}) \Rightarrow [\Sigma^{t-s}X_2 ,\Sigma^{32}\bo_4^{[8]}]_\bo $$
may be analyzed, and the contribution coming from the summand $\Sigma^{24} \bou_3 \subset \br{\ul{\tmf}}_3$ gives the following classes in the vicinity of $t-s = 0$:  
\begin{center}
	\includegraphics[width=1.5in]{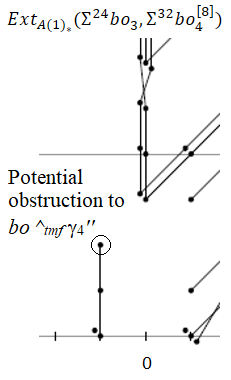}
\end{center}
We see that there is one potential obstruction to the existence of $\bo \wedge_\tmf \gamma''_4$ in 
$$ \Ext_{A(1)_*}^{2,-1+2}(\Sigma^{24}\bou_3, \Sigma^{32}\bou_4^{[8]}). $$
We analyze these potential obstructions through the following zig-zag of ASS's:
$$
\xymatrix{
\Ext^{s,t}_{A(2)_*}(\br{\ul{\tmf}}_3 \oplus \Sigma^{32}\bou_4^{[14]}, \Sigma^{32}\bou_4^{[14]}) \ar@{=>}[r] \ar[d]_r & [\Sigma^{t-s}X_1,\Sigma^{32}\tmf \wedge \bo_4^{[14]}]_\tmf \ar[d]
\\
\Ext^{s,t}_{A(2)_*}(\br{\ul{\tmf}}_3 \oplus \Sigma^{32}\bou_4^{[8]}, \Sigma^{32}\bou_4^{[14]}) \ar@{=>}[r] & [\Sigma^{t-s}X_2,\Sigma^{32}\tmf \wedge \bo_4^{[14]}]_\tmf
\\
\Ext^{s,t}_{A(2)_*}(\br{\ul{\tmf}}_3 \oplus \Sigma^{32}\bou_4^{[8]}, \Sigma^{32}\bou_4^{[8]}) \ar[u]^{i} \ar[d]_{j} \ar@{=>}[r] & [\Sigma^{t-s}X_2,\Sigma^{32}\tmf \wedge \bo_4^{[8]}]_\tmf \ar[u] \ar[d]^{\bo \wedge_{\tmf} -}
\\
\Ext^{s,t}_{A(1)_*}(\br{\ul{\tmf}}_3 \oplus \Sigma^{32}\bou_4^{[8]}, \Sigma^{32}\bou_4^{[8]})  \ar@{=>}[r] & [\Sigma^{t-s}\bo \wedge_\tmf X_2,\Sigma^{32}\bo \wedge \bo_4^{[8]}]_\bo .
}
$$
In the above diagram the potential obstruction $z$ to the existence of $\gamma_4$ maps under $r$ to a non-trivial class, so that if $z$ obstructs $\gamma_4$, then $r(z)$ obstructs the composite  
$$ \gamma_4|_{X_2}: X_2 \rightarrow X_1 \xrightarrow{\gamma_4} \Sigma^{32} \tmf \wedge \bo_4^{[14]}. $$
The key fact to check using Bruner's $\Ext$-software is that in bidegree $(t-s,s) = (-1,2)$, the maps $i$ and $j$ are both surjective, with the same kernel.  It follows that if $\gamma_4|_{X_2}$ is obstructed by $r(z)$, then the map
$$ \bo \wedge_\tmf \gamma''_4: \bo \wedge_\tmf X_2 \rightarrow \Sigma^{32}\bo \wedge \bo_4^{[8]} $$
is obstructed.  We may now avail ourselves of the Bailey splitting of $\bo \wedge \tmf$: the map $\bo \wedge_\tmf \gamma''_4$ is unobstructed, because it is realized by the projection
$$ \bo \wedge_\tmf X_2 \simeq \bo \wedge (\Sigma^8 \bo_1 \vee \Sigma^{16} \bo_2 \vee \Sigma^{24} \bo_3 \vee \Sigma^{32} \bo_4^{[8]})  \rightarrow \Sigma^{32} \bo \wedge \bo_4^{[8]}. $$
We conclude that $z$ cannot obstruct the existence of $\gamma_4$.  We may therefore define $\tmf \wedge \br{\tmf}_3$ to be the fiber of the map $\gamma_4$. 

We now need to show that the $\tmf$-module $\tmf \wedge \br{\tmf}_3$ is built as
$$ \Sigma^8 \tmf \wedge \bo_1 \cup \Sigma^{16} \tmf \wedge \bo_2 \cup \Sigma^{24} \tmf \wedge \bo_3'. $$
In order to establish this decomposition, our first task is to construct a map of $\tmf$-modules
$$ \gamma_3: \tmf \wedge \br{\tmf}_3 \rightarrow \Sigma^{24} \tmf \wedge \bo_3 $$
by analyzing the ASS
$$ \Ext^{s,t}_{A(2)_*}(\br{\ul{\tmf}}_3, \Sigma^{24}\bou_3) \Rightarrow [\Sigma^{t-s}\tmf \wedge \br{\tmf}_3 ,\Sigma^{24}\bo_3]_\tmf. $$
The only contributions in the vicinity of $t-s = 0$ come from the summands $\Sigma^{16} \bou_2$ and $\Sigma^{32} \bou_3$ of $\br{\ul{\tmf}}_3$:
\begin{center}
	\includegraphics[width=3in]{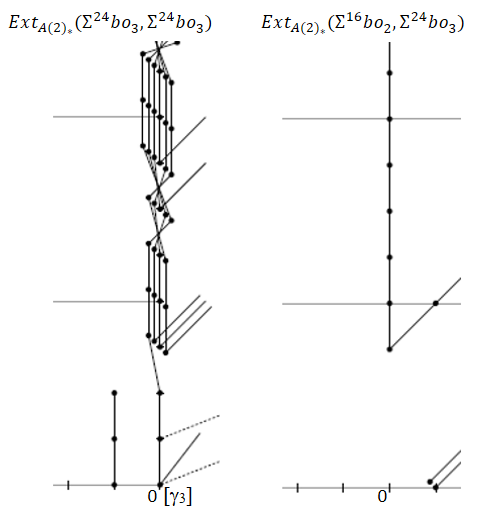}
\end{center}
As we see from the charts above there is a potential obstruction to the existence of $\gamma_3$ in
$$ \Ext^{2,-1+2}_{A(2)_*}(\Sigma^{24}\bou_3, \Sigma^{24}\bou_3). $$
The Bailey splitting does not eliminate this potential obstruction, as
$$ \Ext^{2,-1+2}_{A(1)_*}(\Sigma^{24}\bou_3, \Sigma^{24}\bou_3) = 0. $$
However, by Toda's Realization Theorem \cite{Toda}, \cite[Sec.3]{BhattacharyaEgger}, this potential obstruction also corresponds to the existence of a different ``form'' of the $\tmf$-module $\tmf \wedge \bo_3$, with the same homology.  Since $\Ext^{s,-2+s}_{A(2)_*}(\bou_3, \bou_3) = 0$ for $s \ge 3$, both forms are realized.  It follows that if $\gamma_3$ is obstructed with the standard form, then it is unobstructed for the other form.  Let $\tmf \wedge \bo'_3$ be the unobstructed form, so that there exists a map
$$ \gamma_3 : \tmf \wedge \br{\tmf}_3 \rightarrow \Sigma^{24} \tmf \wedge \bo'_3. $$
The fiber of $\gamma_3$ is $\tmf \wedge \br{\tmf}_2$, where 
$$ \br{\tmf}_2 \simeq \Sigma^8 \bo_1 \cup \Sigma^{16} \bo_2 $$ 
is the spectrum constructed by Davis, Mahowald, and Rezk.  We note that there is a fiber sequence
$$ \Sigma^8 \tmf \wedge \bo_1 \rightarrow \tmf \wedge \br{\tmf}_2 \rightarrow \Sigma^{16}\tmf \wedge \bo_2 $$
since a quick check of $\Ext^{s,-1+s}(\bou_i, \bou_i)$ reveals there are no exotic ``forms'' of $\tmf \wedge \bo_i$ for $i = 1,2$, and $\Sigma^8 \bo_1$ is the $15$-skeleton of the $\tmf$-cell complex $\tmf \wedge \br{\tmf}_2$.

We now must produce the map of $\tmf$-modules
$$ \alpha : \Sigma^{63}\tmf \rightarrow \tmf \wedge \br{\tmf}_3. $$
This just corresponds to an element $\alpha \in \pi_{63} \tmf \wedge\br{\tmf}_3$.  In the ASS
$$ \Ext^{s,t}_{A(2)_*}(\ul{\br{\tmf}}_3) \Rightarrow \pi_{t-s}(\tmf \wedge \br{\tmf}_3) $$
there is a class
$$ x_{63} \in \Ext^{4,4+63}_{A(2)_*}(\Sigma^{24}\bou_3) \subseteq \Ext^{4,4+63}_{A(2)_*}(\ul{\br{\tmf}}_3) $$
(see Figure~\ref{fig:bo3andbo4}).  Moreover, according to Figures~\ref{fig:bo1andbo2} and \ref{fig:bo3andbo4}, there are no possible targets of an Adams differential supported by this class.  Therefore, $x_{63}$ corresponds to a permanent cycle: take $\alpha$ to be the element in homotopy detected by it.  The factorization
$$
\xymatrix{
\tmf \wedge \br{\tmf}_3 \ar[r] \ar[d] & \tmf \wedge \tmf \\
\tmf \wedge \br{\tmf}_3 \cup_\alpha \Sigma^{64} \tmf \ar@{.>}_{\iota}[ur]
}
$$
exists because the element $x_{63}$, when regarded as an element of the ASS
$$ \Ext^{s,t}_{A(2)_*}(H_*\tmf) \Rightarrow \pi_{t-s} (\tmf \wedge \tmf), $$ 
is the target of a differential
$$ d_3([\td{f^2_9}/2]) = x_{63} $$
(see Figure~\ref{fig:tmf5}).

The modified ASS 
$$ \Ext^{*,*}_{A(2)_*}(\Sigma^8 \bou_1 \oplus \Sigma^{16} \bou_2 \oplus \Sigma^{24} \bou_3 \oplus \Sigma^{32}\td{\bou}_4) \Rightarrow \pi_* \tmf \wedge \br{\tmf}_3 \cup_\alpha \Sigma^{64}\tmf $$
is constructed by taking the modified Adams resolution
$$
\xymatrix{
\tmf \wedge \br{\tmf}_3 \cup_\alpha \Sigma^{64}\tmf \ar[d]_{\rho} 
& Y_1 \ar[l] \ar[d] 
& Y_2 \ar[l] \ar[d]
& \cdots, \ar[l]
\\
H \wedge \tmf \wedge \br{\tmf}_3
& H \wedge Y_1 
& H \wedge Y_2 
}
$$
where the map $\rho$ is the composite 
$$ \rho: \tmf \wedge \br{\tmf}_3 \cup_\alpha \Sigma^{64}\tmf \rightarrow H \wedge \tmf \wedge \br{\tmf}_3 \cup_\alpha \Sigma^{64}\tmf \xrightarrow{s} H \wedge \tmf \wedge \br{\tmf}_3, $$
$s$ is a section of the inclusion
$$ H \wedge \tmf \wedge \br{\tmf}_3 \hookrightarrow  H \wedge \tmf \wedge \br{\tmf}_3 \cup_\alpha \Sigma^{64}\tmf, $$
$Y_1$ is the fiber of $\rho$, and $Y_i$ is the fiber of the map
$$ Y_{i-1} \rightarrow H \wedge Y_{i-1}. $$
The map from this modified ASS to the ASS for $\tmf \wedge \tmf$ arises from the existence of a commutative diagram
$$ 
\xymatrix{
\tmf \wedge \br{\tmf}_3 \cup_\alpha \Sigma^{64}\tmf \ar[r]^-{\iota} \ar[d]_\rho
& \tmf \wedge \tmf \ar[d] 
\\
H \wedge \tmf \wedge \br{\tmf}_3 \ar@{^{(}->}[r]  &
H \wedge \tmf \wedge \tmf.
}
$$
(This diagram commutes since the class $[\td{f^2_9}/2]$ killing $x_{63}$ in the ASS for $\tmf \wedge \tmf$ has Adams filtration $1$.)
\end{proof}

\begin{proof}[Proof of Theorem~\ref{thm:tdtmf03}]
Define a $2$-variable modular form
$$
\td{\td{f_9}} = f_9 -\frac{212}{315} c_4 f_4 - \frac{34}{441} c_4 f_5 + \frac{2501}{11025} f_1^2 c_4^2 - 851 f_1 \Delta
$$
so that $[\td{\td{f_9}}] = [f_9]$ and $\Psi_3(\td{\td{f_9}}) = 0$.  (This form was produced by executing an integral variant of the ``row-reduction'' method outlined in Step 1 of Example~\ref{ex:echelon1}.) 
Then we may take the attaching map
$$ \beta : \Sigma^{32} \tmf \rightarrow \tmf \wedge \tmf $$
to be the map of $\tmf$-modules corresponding to the homotopy class
$$ \td{\td{f_9}} \in \pi_{32}\tmf \wedge \tmf. $$
We define
$$
\td{\tmf}_0(3) := \Sigma^8 \tmf \wedge \br{\tmf}_2 \cup_\beta \Sigma^{33} \tmf. 
$$
Since $\Psi_3(\td{\td{f_9}}) = 0$, there is a factorization
$$
\xymatrix{
\tmf \wedge \br{\tmf}_2 \ar@{^{(}->}[r] \ar[d] &
\tmf \wedge \tmf \ar[r]^{\Psi_3} &
\TMF_0(3). \\
\td{\tmf}_0(3) \ar@{.>}[urr] 
}
$$
The rest of the theorem is fairly straightforward given this, and our analysis of $\Psi_3$ in the previous section.
\end{proof}

\begin{proof}[Proof of Theorem~\ref{thm:tdtmf05}]
An analysis of the Adam $E_2$-terms in low dimensions reveals that the only non-trivial attaching map of $\tmf$-modules
$$ \upsilon: \Sigma^{23} \tmf \wedge \bo'_3 \rightarrow \tmf \wedge 
\br{\tmf}_2 $$
must factor as
\begin{equation}\label{eq:upsilon}
\upsilon: \Sigma^{23} \tmf \wedge \bo'_3 \xrightarrow{\upsilon'} \Sigma^{32} \tmf \xrightarrow{\beta} \tmf \wedge \br{\tmf}_2,
\end{equation}
where $\upsilon'$ is the unique non-trivial class in that degree.  The existence of differentials in Figure~\ref{fig:tmf5} from $\bou_3$-classes to $\td{\td{\bou}}_2$-classes implies that in $\tmf \wedge \br{\tmf}_3$, $\tmf \wedge \bo_3$ must be attached non-trivially to $\tmf \wedge \br{\tmf}_2$, and we therefore have
$$ \tmf \wedge \br{\tmf}_3 \simeq \tmf \wedge \br{\tmf}_2 \cup_\upsilon \Sigma^{24} \tmf \wedge \bo'_3. $$
When applied to the factorization (\ref{eq:upsilon}), Verdier's Axiom implies that there is a fiber sequence
$$ \Sigma^{32}\tmf \cup_{\upsilon'} \Sigma^{24} \tmf \wedge \bo'_3  \rightarrow \tmf \wedge \br{\tmf}_3 \rightarrow \td{\tmf}_0(3). $$
Now, an easy check with the ASS reveals that the composite
$$ \Sigma^{63} \tmf \xrightarrow{\alpha} \tmf \wedge \br{\tmf}_3 \rightarrow \td{\tmf}_0(3) $$
is null, from which it follows that there is a lift
$$
\xymatrix{
& \Sigma^{32} \tmf \cup_{\upsilon'} \Sigma^{24} \tmf \wedge \bo_3'  \ar[d] 
\\
\Sigma^{63} \tmf \ar[r]_{\alpha} \ar@{.>}[ur]^{{\alpha'}}  &
\tmf \wedge \br{\tmf}_3.
} 
$$
Define
$$ \td{\tmf}_0(5) := \Sigma^{32}\tmf \cup_{\upsilon'} \Sigma^{24}\tmf \wedge \bo'_3 \cup_{\alpha'} \Sigma^{64} \tmf. $$
Verdier's axiom, applied to the factorization above, gives a fiber sequence
$$ \td{\tmf}_0(5) \rightarrow \tmf \wedge \br{\tmf}_3 \cup_\alpha \Sigma^{64}\tmf \rightarrow \td{\tmf}_0(3). $$
Given our analysis of $\Psi_5$, the rest of the statements of the theorem are now fairly straightforward.
\end{proof}

\bibliographystyle{amsalpha}
\bibliography{tmfcoop}
\end{document}